\newif\ifshowcomments
\newtheorem{theorem}{Theorem}[section]
\newtheorem{lemma}[theorem]{Lemma}
\newtheorem{proposition}[theorem]{Proposition}
\newtheorem{corollary}[theorem]{Corollary}
\newtheorem{conjecture}[theorem]{Conjecture}
\newtheorem{problem}[theorem]{Problem}
\newtheorem{letterthm}{Theorem}
\newtheorem{letterprop}[letterthm]{Proposition}
\newtheorem*{rep@theorem}{\rep@title}
\newcommand{\newreptheorem}[2]{%
\newenvironment{rep#1}[1]{%
 \def\rep@title{#2 \ref{##1}}%
 \begin{rep@theorem}}%
 {\end{rep@theorem}}}
\theoremstyle{definition}
\newtheorem{definition}[theorem]{Definition}
\theoremstyle{remark}
\newtheorem{remark}[theorem]{Remark}
\newtheorem{example}[theorem]{Example}
\newcommand{\N}{\mathbb{N}}
\newcommand{\Q}{\mathbb{Q}}
\newcommand{\R}{\mathbb{R}}
\newcommand{\Z}{\mathbb{Z}}
\newcommand{\Lk}{\mathrm{Lk}}
\newcommand{\cl}{\mathrm{cl}}
\newcommand{\scl}{\mathrm{scl}}
\newcommand{\rk}{\mathrm{rk}}
\newcommand{\Area}{\mathrm{Area}}
\newcommand{\Irred}{\mathrm{Irred}}
\newcommand{\Surf}{\mathrm{Surf}}
\newcommand{\curlyA}{\mathcal{A}}
\newcommand{\curlyB}{\mathcal{B}}
\newcommand{\curlyG}{\mathcal{G}}
\newcommand{\curlyH}{\mathcal{H}}
\newcommand{\curlyP}{\mathcal{P}}
\newcommand{\curlyX}{\mathcal{X}}
\newcommand{\opp}[1]{{#1}^*}
\newcommand{\comment}[1]{\marginpar{\sffamily{\tiny #1
\par}\normalfont}}
\newcommand{\comment}[1]{}
\title{Rational curvature invariants for 2-complexes}
\author{Henry Wilton}
\newcommand{\Addresses}{{
  \bigskip
  \footnotesize

  \textsc{DPMMS, Centre for Mathematical Sciences, Wilberforce Road, Cambridge, CB3 0WB, UK}\par\nopagebreak
  \textit{E-mail address:} \texttt{h.wilton@maths.cam.ac.uk}

}}
\begin{document}

\maketitle

\begin{abstract}
New invariants for 2-dimensional cell complexes are defined, which can be interpreted as curvature bounds. These invariants are proved to be rational and computable in a companion article.  This document is a survey that collects theorems about these invariants, computes examples, and lays out a programme of conjectures. 
\end{abstract}

\tableofcontents

\section{Introduction}

\subsection{Historical background}

In a 1911 paper, Max Dehn initiated the study of infinite discrete groups of symmetry: he posed the \emph{word problem}, and solved it in the case of the fundamental group of a closed surface. Already, in Dehn's seminal work, there is a tension between the combinatorial and geometric approaches: he gave a geometric solution to the word problem for surfaces in 1911 \cite{dehn_unendliche_1911}, then followed it with a combinatorial solution the following year \cite{dehn_transformation_1912}. This tension persists today.

The combinatorial approach rose to the fore in the immediate aftermath of Dehn's work, and the field of \emph{combinatorial group theory} was born. Combinatorial group theory can be characterised as the study of groups defined by presentations, i.e.\ sets of generators subject to relations. The paradigmatic example is the theory of groups with one relator, developed by Dehn's student Wilhelm Magnus in the 1930s. Magnus solved the word problem in all one-relator groups using combinatorial methods \cite{magnus_identitatsproblem_1932}, and combinatorial group theory remained the dominant method of studying infinite discrete groups for half a century.

Dehn himself seems to have had reservations about the combinatorial method: he famously said `So you went about it blind!' when he saw Magnus' combinatorial proof \cite[p.126]{nyberg-brodda_newman_2021}. Nevertheless, the geometric method took a back seat until the 1980s, when Mikhail Gromov, a differential geometer, repopularised Dehn's work. Gromov observed that Dehn's 1911 solution to the word problem, using the geometry of the hyperbolic plane, applies to a much larger class of group: the \emph{(word-)hyperbolic groups}, which enjoy a coarse form of negative curvature \cite{gromov_hyperbolic_1987}. So began the field of \emph{geometric group theory}, which studies groups via their actions on metric spaces. Hyperbolic groups became the paradigmatic examples, and geometric group theory is now the dominant method of studying infinite discrete groups.

While many examples fit into both the combinatorial and geometric paradigms, no over-arching theory unifies the two. The geometric point of view is satisfying and offers very general arguments for certain classes of groups, but leaves other groups untouched. Baumslag--Solitar groups, as well as other small and natural examples, which are well handled by the combinatorial approach, are pathological from the geometric perspective. 

Our purpose here is to propose a new approach that enjoys some of the advantages of both. Like Dehn, we are motivated by topology, and therefore we will work with 2-dimensional cell complexes rather than presentations. (There is no loss of generality, since a presentation naturally defines a presentation complex.) To each such complex $X$ we will associate several numerical invariants, which synthesise several recent strands of research in group theory and low-dimensional topology. They encompass both one-relator groups and many natural geometric conditions, including small-cancellation conditions and some of Gromov's metric curvature conditions. Despite being defined combinatorially, we argue that these invariants should be thought of geometrically, as curvature bounds.

\subsection{Curvature invariants}

We start with the following crude definition. The \emph{average curvature} of a finite 2-complex $X$ is defined to be
\[
\kappa(X):=\frac{\chi(X)}{\Area(X)}
\]
where $\chi(X)$ is the Euler characteristic of $X$ and $\Area(X)$ is  the number of 2-cells of $X$. This definition is motivated by the Gauss--Bonnet theorem, which tells us that the average Gaussian curvature of a closed Riemannian surface $S$ is $2\pi\chi(S)/\Area(S)$. 

The average curvature is not very different from the Euler characteristic, and so will not on its own contain much information about $X$ or its fundamental group. 
However, it is a familiar idea in topology and geometry that we may define interesting invariants of a space $X$ by probing it with maps from other spaces $Y$. Following this idea, we define more refined invariants by extremising $\kappa$ over certain classes of 2-complexes $Y$ that map to $X$. (In fact, it turns out to be fruitful to allow $Y$ to be a slightly more general object called a \emph{branched} 2-complex.\footnote{See \S\ref{sec: Branching and area} for the definition of branched 2-complexes and \S\ref{sec: Curvature invariants} for the definition of $\kappa(X)$ in that context.})

There are various ways in which it may be possible to simplify a 2-complex $X$, such as if $X$ has a free face, a separating vertex etc. A 2-complex that cannot be simplified in any of these ways is called \emph{irreducible}. Our first curvature invariant is the \emph{maximal irreducible curvature}
\[
\rho_+(X):=\sup_{Y\in\Irred(X)}\kappa(Y)
\]
where $\Irred(X)$ consists of all finite, irreducible, branched 2-complexes $Y$ admitting essential maps $Y\to X$. Likewise, the \emph{minimal irreducible curvature} $\rho_-(X)$ is the corresponding infimum.\footnote{See \S\ref{sec: Branching and area} for the definition of essential maps, \S\ref{sec: Irreducible complexes} for the definition of irreducible complexes, and \S\ref{sec: Curvature invariants} for the precise definition of $\rho_{\pm}(X)$.}

Although we believe that these definitions have not appeared in the literature before, they are prefigured in various places. In the early 2000s, Wise defined the \emph{non-positive-} and \emph{negative-immersions} properties for 2-complexes, as a route towards studying the fundamental group $\pi_1(X)$, especially in the case where $X$ is a one-relator presentation complex \cite{wise_nonpositive_2003,wise_sectional_2004}. These properties are in a similar spirit to the definitions of non-positive and negative irreducible curvature, respectively, and indeed this document can be thought of as a modification of the programme suggested by Wise in \cite{wise_invitation_2020} and \cite{wise_coherence_2022}. The independent proofs that one-relator groups have non-positive immersions by Helfer and Wise \cite{helfer_counting_2016} and by Louder and the author \cite{louder_stackings_2017}, and the subsequent work on various notions of negative immersions for one-relator groups \cite{louder_one-relator_2020,louder_uniform_2024,louder_negative_2022,wise_coherence_2022}, provide large classes of examples to which these definitions apply. Puder's \emph{primitivity rank} \cite{puder_primitive_2014} is another closely related invariant, discussed in \S\ref{sec: Primitivity rank}.

Irreducible 2-complexes are the largest naturally defined class of finite 2-complexes over which we can usefully extremise $\kappa$. We can define a second pair of invariants by extremising $\kappa$ over the smallest naturally defined class of irreducible 2-complexes mapping to $X$, namely closed surfaces. The \emph{maximal surface curvature} of $X$ is defined to be
\[
\sigma_+(X):=\sup_{Y\in\Surf(X)}\kappa(Y)
\]
where $\Surf(X)$ consists of all closed surfaces $Y$ admitting essential maps $Y\to X$, and the \emph{minimal surface curvature}, $\sigma_-(X)$, is defined to be the corresponding infimum.\footnote{See \S\ref{sec: Curvature invariants} for the definitions  $\sigma_{\pm}(X)$.}  In contrast to the case of irreducible curvature, it is not immediately obvious that $\Surf(X)$ is non-empty for most complexes $X$. However, it follows from the results of \cite{wilton_essential_2018} that  $\Surf(X)\neq\varnothing$ whenever $X$ is irreducible, so the invariants $\sigma_\pm(X)$ are well defined in that case. (See Theorem \ref{thm: Essential surfaces exist}.)

Again, although we believe these definitions to be new, they do have various precedents in the literature.  Wise's \emph{planar sectional curvature} \cite{wise_sectional_2004} provides sufficient conditions for $X$ to have negative or non-positive surface curvature. In a different direction, the quantity $(1-\sigma_+(X))/2$ can be regarded as an \emph{unoriented} version of stable commutator length; see \S\ref{sec: Stable commutator length} for a discussion.  The two most important theorems about stable commutator length in free groups -- the Duncan--Howie lower bound $\scl(w)\geq1/2$ \cite{duncan_genus_1991} and Calegari's rationality theorem \cite{calegari_stable_2009} -- prefigure some of the most important theorems that we discuss here.

The fundamental result about our invariants is the \emph{rationality theorem}, which tells us that the extrema in the above definitions are realised, and in so doing justifies the words `maximal' and `minimal' in the names of the curvatures. The proof realises $\rho_\pm(X)$ and $\sigma_\pm(X)$ as extrema of rational linear programming problems, along the lines of Calegari's rationality theorem for stable commutator length in free groups \cite{calegari_stable_2009}. In particular, as well as being rational, these invariants can be computed, at least in principle.

\begin{theorem}[Rationality theorem]\label{thm: Rationality theorem}
If $X$ is a finite, irreducible 2-complex, then
\[
\rho_+(X)=\max_{Y\in \Irred(X)}\kappa(Y)
\]
and
\[
\sigma_+(X)=\max_{Y\in \Surf(X)}\kappa(Y)\,.
\]
Likewise, $\rho_-(X)$ and $\sigma_-(X)$ are the corresponding minima. In particular, $\rho_\pm(X),\sigma_\pm(X)\in\Q$. Furthermore, they can be computed algorithmically from $X$. 
\end{theorem}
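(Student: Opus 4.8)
The plan is to follow the strategy of Calegari's rationality theorem and realise each invariant as the optimum of a rational linear-fractional program over a rational polyhedral cone. The key preliminary observation is that $\kappa(Y)=\chi(Y)/\Area(Y)$ is unchanged under operations that scale $Y$ uniformly, such as taking disjoint copies or finite covers, since these multiply $\chi$ and $\Area$ by the same factor. Consequently $\kappa$ descends to the projectivisation of any cone of ``weighted'' complexes, and I would work on the affine slice $\{\Area=1\}$, on which $\kappa$ restricts to the single linear functional $\chi$. Provided this slice is a compact rational polytope---which requires ruling out directions of zero area, and which I would deduce from irreducibility---the extrema of $\chi$ over it are attained at rational vertices.

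To build the cone, I would first encode an essential map $Y\to X$ by the vector of non-negative weights counting, for each combinatorial \emph{type} of cell, how many cells of $Y$ of that type occur. Since $X$ is finite there are finitely many types: cutting $Y$ along the preimage of the $1$-skeleton $X^{(1)}$ decomposes it into polygonal pieces, one family mapping to each $2$-cell of $X$, glued along edges and vertex links that map to those of $X$. The requirement that a collection of pieces actually assembles into a complex mapping to $X$---that the two sides of every edge are matched and that the link data at each vertex is consistent---is a finite homogeneous system of linear equations with integer coefficients, whose non-negative solutions form a rational polyhedral cone $\curlyC$. Crucially, and this is where \emph{branching} is used, the converse also holds: every integer point of $\curlyC$ is realised by an honest branched $2$-complex $Y\to X$, because the matching equations supply a consistent edge-gluing and any failure of the links to close up into circles can be absorbed at branch points.

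With the cone in hand, I would verify that both numerator and denominator are rational linear functionals of the weight vector. That $\Area$ is the sum of the $2$-cell weights is immediate, while for $\chi$ one applies the combinatorial Gauss--Bonnet principle: after the decomposition above, $\chi(Y)$ is recovered from the piece counts by inclusion--exclusion across the gluings, and so is again an integer linear combination of the coordinates. Thus $\kappa$ is genuinely linear-fractional on $\curlyC$, as required.

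It remains to cut out the subcones corresponding to $\Surf(X)$ and $\Irred(X)$ and to invoke linear programming; this last part is where I expect the real difficulty. The surface case is comparatively clean, since requiring every vertex link to be a single circle is a \emph{local} condition and hence selects a subcone cut out by further linear equalities, whose integer points are realised by genuine (unbranched) closed surfaces. Irreducibility is the main obstacle, because it combines local constraints---the absence of free faces, which forbid certain piece configurations and so do restrict $\curlyC$---with conditions of a more global nature, such as the absence of a separating vertex. To handle the global conditions I would use the fact that $\kappa$ behaves like a mediant under decomposition: if a complex splits into two pieces then its curvature lies between (a controlled perturbation of) those of the pieces, so an extremiser may be taken indecomposable, and hence irreducible, without moving the optimum. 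Granting that the relevant class is captured by a rational subcone of $\curlyC$, the conclusion is standard: a rational linear-fractional function attains its extrema over a rational polytope at rational vertices, those vertices are computable from the defining inequalities, and every optimal vertex is realised by an actual complex in the appropriate class. Hence the suprema and infima in the definitions are maxima and minima, the values $\rho_\pm(X)$ and $\sigma_\pm(X)$ lie in $\Q$, and they can be computed algorithmically from $X$.
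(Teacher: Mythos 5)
You should first note that this survey does not prove Theorem \ref{thm: Rationality theorem} at all: the text immediately after the statement defers the entire proof to the companion paper \cite{wilton_rationality_2022}, describing only the strategy (realising $\rho_\pm$ and $\sigma_\pm$ as extrema of rational linear programs, after Calegari). Your outline matches that declared strategy, but it contains two genuine gaps. The first is your objective functional. You propose to extremise $\chi(Y)/\Area(Y)$ and assert that $\chi(Y)$ is linear in the piece counts; in the branched setting this fails, because the number of faces is not a linear function of your coordinates: a face of multiplicity $m$ over a face $f$ of $X$ contributes $m$ copies of each corner of $f$ and area $m\,\Area(f)$, but only \emph{one} face. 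This is exactly why the paper's Definition \ref{def: Total and average curvature} sets $\kappa(Y)=\tau(Y)/\Area(Y)$ with $\tau(Y)=\Area(Y)+\chi(Y_{(1)})$, which (unlike $\chi(Y)$) is an integer linear functional of corner and area data. Your scaling observation survives, but the linear program must be run with $\tau$, not $\chi$.

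The second gap is your treatment of irreducibility, which is where your proposed argument would actually fail. You classify the absence of a separating vertex as a global condition and propose a ``mediant'' argument to pass to an indecomposable extremiser; but wedging strictly decreases $\chi$ of the $1$-skeleton, so $\kappa$ of a decomposable complex can lie \emph{below} both pieces, and the mediant argument gives no control on the minimisation side. In fact no global argument is needed: Definition \ref{def: Visibly reducible and unfoldable} forbids a \emph{locally} separating vertex, and by Remark \ref{rem: Reducibility and links} all four reducibility conditions are conditions on vertex links (at most one vertex; disconnected; a valence-one vertex; a cut vertex). The missing mechanism that makes this usable in a linear program is folding plus link injectivity: every essential map folds (Lemma \ref{lem: Folding 2-complexes}) to a branched immersion with the same $\kappa$, the folded complex is still irreducible by Whitehead's lemma (Lemma \ref{lem: Whitehead's lemma}), and a branched immersion is \emph{injective on links}, so every vertex link of $Y$ is one of the finitely many subgraphs of links of the finite complex $X$. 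Hence visible irreducibility (and, in the surface case, ``link is a circle'') is a finite restriction on the allowed vertex types, cutting out a rational subcone in finitely many coordinates; this same finiteness, via the minimum-valence-two condition on links, yields the bound $|\chi(Y_{(1)})|\le C\cdot\Area(Y)$ that your compactness claim on the slice $\{\Area=1\}$ needs but which you only assert. With these repairs your sketch becomes the strategy of \cite{wilton_rationality_2022}; the remaining substantial work there is precisely the step you label standard, namely realising the optimal rational points by honest essential maps from visibly irreducible branched complexes.
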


The proof of Theorem \ref{thm: Rationality theorem} is long and technical, and is therefore in a self-contained paper \cite{wilton_rationality_2022}. 

Theorem \ref{thm: Rationality theorem} provokes the question of which ranges of values the curvature invariants can take.  The next theorem constrains them inside a certain three-dimensional simplex.

\begin{letterthm}[Curvature inequalities]\label{thm: Curvature inequalities}
If $X$ is a finite, irreducible 2-complex, then
\[
-\infty<\rho_-(X)=\sigma_-(X)\leq\sigma_+(X)\leq\rho_+(X)\leq 1\,.
\]
\end{letterthm}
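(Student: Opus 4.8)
The plan is to peel off the formal (in)equalities first and then concentrate on the single substantive step. Three of the relations are immediate from the definitions. Since a closed surface is in particular an irreducible branched 2-complex, we have the inclusion $\Surf(X)\subseteq\Irred(X)$; taking suprema over the smaller class gives $\sigma_+(X)\leq\rho_+(X)$, and taking infima gives $\rho_-(X)\leq\sigma_-(X)$. Because $X$ is irreducible, Theorem \ref{thm: Essential surfaces exist} ensures $\Surf(X)\neq\varnothing$, so that $\sigma_-(X)=\inf_{Y\in\Surf(X)}\kappa(Y)\leq\sup_{Y\in\Surf(X)}\kappa(Y)=\sigma_+(X)$. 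Finally $-\infty<\rho_-(X)$ follows at once from the rationality theorem (Theorem \ref{thm: Rationality theorem}): the infimum defining $\rho_-(X)$ is attained at some $Y\in\Irred(X)$, so $\rho_-(X)=\kappa(Y)\in\Q$ is finite. (A direct lower bound in terms of the combinatorics of $X$ is also possible, but the rationality theorem renders it unnecessary.)

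For the bound $\rho_+(X)\leq 1$ it suffices to show $\kappa(Y)\leq 1$ for every $Y\in\Irred(X)$. Writing $V,E,F$ for the numbers of vertices, edges and 2-cells of $Y$, one has $\kappa(Y)=\chi(Y)/\Area(Y)=(V-E+F)/F=1+(V-E)/F$, so the claim is equivalent to $E\geq V$, that is, to $Y^{(1)}$ containing a cycle. This is exactly where irreducibility is used: in an irreducible complex the attaching map of each 2-cell is an immersion, so it traces a reduced closed path of positive length in $Y^{(1)}$. A tree carries no reduced closed path of positive length, so $Y^{(1)}$ is not a tree; as $Y$ is connected this gives $b_1(Y^{(1)})=E-V+1\geq 1$, hence $E\geq V$ and $\kappa(Y)\leq 1$, so $\rho_+(X)\leq 1$.

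It remains to establish the one equality with real content, $\sigma_-(X)=\rho_-(X)$; after the formal inequality $\rho_-(X)\leq\sigma_-(X)$ above, only $\sigma_-(X)\leq\rho_-(X)$ requires argument. By the rationality theorem the value $\rho_-(X)$ is realised by some irreducible branched 2-complex $Y_0$, so it is enough to produce a closed essential surface $S\to X$ with $\kappa(S)\leq\kappa(Y_0)$; then $\sigma_-(X)\leq\kappa(S)\leq\kappa(Y_0)=\rho_-(X)$. The strategy is to resolve $Y_0$ along its singular (branch) locus, un-gluing and re-pairing the incident 2-cell sides wherever the local picture fails to be a surface, and retaining only what is needed to carry the essential map to $X$. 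The 2-cells are retained, so $\Area$ is unchanged; if moreover the re-gluing is arranged so that $\chi$ does not increase, then $\kappa=\chi/\Area$ does not increase. Verifying that such a resolution can always be performed compatibly with the map $Y_0\to X$, so that the output genuinely lies in $\Surf(X)$, is the main obstacle, and it is here that the essential-surface technology of \cite{wilton_essential_2018} underpinning Theorem \ref{thm: Essential surfaces exist} is indispensable.
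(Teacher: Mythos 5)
Your handling of the formal parts is correct and coincides with the paper's: the chain $-\infty\leq\rho_-(X)\leq\sigma_-(X)\leq\sigma_+(X)\leq\rho_+(X)\leq 1$ is exactly Lemma \ref{lem: Trivial inequalities}, and finiteness of $\rho_-(X)$ via the rationality theorem is exactly the paper's first step. One small inaccuracy: for a branched complex $Y$ one has $\kappa(Y)=\tau(Y)/\Area(Y)=1+\chi(Y_{(1)})/\Area(Y)$, so your identity $\kappa(Y)=(V-E+F)/F$ is wrong in general -- the denominator is the total area, not the number of faces, and the numerator is the total curvature $\tau$, not $\chi(Y)$. Your conclusion survives because all you need is $\chi(Y_{(1)})\leq 0$, which holds since the $1$-skeleton of a (visibly) irreducible complex is a core graph, with disconnected $Y$ handled componentwise via Remark \ref{rem: Average of connected components}; but the formula as written is only valid for standard complexes.

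The genuine gap is in the only substantive step, $\sigma_-(X)\leq\rho_-(X)$, where your ``resolution along the branch locus'' is a sketch that concedes rather than closes the main point. Two things are missing. First, your procedure makes no use of the minimality of $Y_0$, yet minimality is indispensable: a curvature-non-increasing passage from an arbitrary $Y\in\Irred(X)$ to an essential surface would prove the theorem for every $Y$ simultaneously, so asserting that ``the re-gluing is arranged so that $\chi$ does not increase'' is essentially circular -- nothing in a local un-gluing of $2$-cell sides prevents $\chi$ from increasing, and for general irreducible $Y$ no such resolution exists. The paper extracts from minimality a structural conclusion: by Lemma \ref{lem: Minimal average curvature implies surface type} (imported from \cite{wilton_essential_2018}, building on \cite{wilton_one-ended_2011}), any $Y$ that is \emph{not} irrigid admits $Y'\in\Irred(Y)$ with $\kappa(Y')<\kappa(Y)$, so the minimiser $Y_0$ must be irrigid, i.e.\ its group pair carries a surface decomposition in the sense of Definition \ref{def: Surface decomposition and irrigid complexes}. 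This is the real engine of the proof, and it is precisely the content you defer to as ``the main obstacle.'' Second, even granted irrigidity, the passage to a surface is not a local resolution keeping the $2$-cells and the area fixed: Lemma \ref{lem: Surface to surface type} first passes to a branched cover (via Marshall Hall's theorem \cite{hall_jr_subgroups_1949}) to make the edge maps of the decomposition isomorphisms, then takes a \emph{double} of the complex re-glued along the cyclic vertices according to a chosen cyclic order -- so $\Area$ and $\chi$ both double, preserving $\kappa$ rather than leaving the area unchanged -- and finally invokes Culler's lemma (Lemma \ref{lem: Culler's lemma}) to replace the resulting complex of surface type by an honest surface $S$, after which $\sigma_-(X)\leq\sigma_-(Y_0)\leq\kappa(S)=\rho_-(X)$ by Lemma \ref{lem: Monotonicity}. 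Without these ingredients your final step remains an unproven assertion rather than a proof.
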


Of the various relationships contained in Theorem \ref{thm: Curvature inequalities}, much the most striking and difficult to prove is that $\rho_-(X)=\sigma_-(X)$, which means that the minimal irreducible curvature is always realised by an essential map from a branched surface.

These results motivate a broad programme, which can be summarised in two problems that we call, for ease of reference, the \emph{botany problem} and the \emph{geography problem}. Given the historical emphasis in geometric group theory on upper curvature bounds, the cases of \emph{non-positive curvature} (i.e.\ $\rho_+(X)\leq 0$ or $\sigma_+(X)\leq 0$) and \emph{negative curvature} (i.e.\ $\rho_+(X)<0$ or $\sigma_+(X)< 0$) are of particular interest.

The botany problem asks for examples of 2-complexes satisfying curvature bounds.

\begin{problem}[Botany problem]\label{prob: Botany problem}
For natural classes of 2-complexes $X$, compute the invariants $\rho_+(X)$, $\sigma_+(X)$. Find examples of 2-complexes $X$ that are \emph{non-positively curved} or \emph{negatively curved}.
\end{problem}

In this paper, we collect several solutions to the botany problem. Some of the most significant are summarised in the next theorem.

\begin{letterthm}[Examples]\label{thm: Botany results}
Let $X$ be a finite, irreducible 2-complex.
\begin{enumerate}[(i)]
\item If $X$ is locally CAT(0) or satisfies the C(6) small-cancellation condition then $\sigma_+(X)\leq 0$. If $X$ is locally CAT(-1) or C(7) then $\sigma_+(X)< 0$.
\item If $X$ is the presentation complex of a torsion-free one-relator group then $\rho_+(X)\leq 0$.
\item If $X$ is is the spine of a compact 3-manifold\footnote{Formally, this hypothesis asks that $X$ should be \emph{geometric} in the sense of Definition \ref{def: 3-manifold spine}.} then $\sigma_+(X)=\rho_+(X)$.
\end{enumerate}
\end{letterthm}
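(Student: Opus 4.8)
The three parts call for three different techniques, but (i) is the model case, so I would treat it first via a combinatorial Gauss--Bonnet argument. Recall that an angle structure on a 2-complex assigns an angle $\theta(c)$ to every corner $c$; the associated curvature of an $n$-gonal face $f$ is $\kappa(f)=2\pi-n\pi+\sum_{c\in f}\theta(c)$, the curvature of a vertex $v$ is $\kappa(v)=2\pi-\sum_{c\text{ at }v}\theta(c)$, and for a closed surface these satisfy $\sum_f\kappa(f)+\sum_v\kappa(v)=2\pi\chi$. The plan is to fix a nonpositively curved angle structure on $X$ and pull it back along an essential map $Y\to X$ from a closed surface $Y\in\Surf(X)$.

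For the CAT(0) or C(6) case I would choose angles on $X$ making every face flat, so $\kappa(f)=0$ — Euclidean polygon angles in the CAT(0) case, and all corner angles equal to $\pi/3$ in the C(6) case, where the hypothesis guarantees faces meet at least six sides. Pulling this structure back to $Y$, every face of $Y$ still has $\kappa(f)\le 0$. The essential hypothesis forces the link of each vertex $v$ of $Y$ (a circle, since $Y$ is a surface) to map to a locally injective, hence essential, loop in $\Lk(X)$; the CAT(0)/C(6) link condition says such loops have angular length at least $2\pi$, so $\sum_{c\text{ at }v}\theta(c)\ge 2\pi$ and $\kappa(v)\le 0$. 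Gauss--Bonnet then gives $2\pi\chi(Y)\le 0$, whence $\kappa(Y)\le 0$ and $\sigma_+(X)\le 0$. For the CAT($-1$) or C(7) cases the same argument applies, but now each vertex of $Y$ receives curvature bounded above by a negative constant $-\delta$ depending only on the finitely many vertex links of $X$ (for C(7), using angle $2\pi/7$ and counting sides; for CAT($-1$), because essential loops in links have length uniformly above $2\pi$). Since the number of vertices of $Y$ is comparable to its area, this yields $\chi(Y)\le -\delta'\Area(Y)$ for a uniform $\delta'>0$, so $\sigma_+(X)\le-\delta'/2\pi<0$. The only delicate point is extracting the uniform gap, which follows from finiteness of the local data of $X$ once essentiality is read combinatorially.

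Part (ii) I would reduce to the non-positive immersions theorem for one-relator presentation complexes of Helfer--Wise and Louder--Wilton, which states that every immersion of a compact connected complex into $X$ has non-positive Euler characteristic unless its fundamental group is trivial. Given an irreducible branched $Y$ with an essential map to $X$, the task is to replace this essential map by an immersion without increasing $\chi$: I would fold the map in the Stallings sense, tracking the effect of each identification on the branched structure and using irreducibility to prevent the collapse of any 2-cell, so that the resulting immersion $Y'\to X$ satisfies $\chi(Y)\le\chi(Y')$ and $\pi_1(Y')\neq 1$. The non-positive immersions theorem then gives $\chi(Y')\le 0$, hence $\kappa(Y)\le 0$ and $\rho_+(X)\le 0$. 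The main obstacle in (ii) is precisely this reduction: reconciling the branched and essential framework of this paper with the immersion-based statement in the literature, since folding a branched complex can create or destroy branch points and identify faces, and one must verify throughout that the Euler characteristic moves in the favourable direction.

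Finally, for (iii), Theorem \ref{thm: Curvature inequalities} already gives $\sigma_+(X)\le\rho_+(X)$, so it suffices to prove the reverse inequality when $X$ is a geometric 3-manifold spine. Here I would exploit the ambient 3-manifold: an irreducible branched complex $Y$ mapping essentially to $X$ can be regarded, after a homotopy, as lying near the spine inside the 3-manifold, where the branch locus of $Y$ maps into the (triple) edges of the spine. I would then surger $Y$ along its branch locus, resolving each branch curve into embedded sheets using the local product structure supplied by the 3-manifold, to obtain a closed surface $Y'$ mapping essentially to $X$. The resolution is arranged to leave the area unchanged while not decreasing the Euler characteristic, so $\kappa(Y')\ge\kappa(Y)$ and therefore $\sigma_+(X)\ge\rho_+(X)$. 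The crux is this normal-surface-style resolution at the branch locus: one must check that the geometric hypothesis on $X$ furnishes enough room in the transverse direction to separate the sheets coherently, and that this can always be done without introducing new negative contributions to $\chi$.
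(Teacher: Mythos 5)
Your treatment of the CAT(0) and CAT($-1$) cases is sound and is essentially the paper's: fix the metric angle structure on $X$, pull it back along an essential map from a closed surface, and apply the combinatorial Gauss--Bonnet inequality (Lemma \ref{lem: Combinatorial Gauss--Bonnet} via Proposition \ref{prop: Face curvature bound}). But the small-cancellation half of (i) has a genuine gap. Assigning every corner of $X$ the angle $\pi/3$ (resp.\ $2\pi/7$) and claiming the link systole condition from C(6) conflates two different hypotheses: C(6) bounds the number of \emph{pieces} needed to cover a face boundary, and says nothing about the girth of vertex links. A $C'(1/6)$ one-relator complex typically has a Whitehead graph containing cycles of length $3$ or $4$; a vertex of $Y$ whose link maps over such a cycle has angle sum $\pi$ or $4\pi/3<2\pi$, your vertex curvature bound fails, and Gauss--Bonnet gives nothing. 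No fixed angle structure on $X$, pulled back to $Y$, can work in general. The paper's proof of Proposition \ref{prop: Surface curvature of C(6) complexes} instead works on $Y$ itself: after deleting valence-2 vertices, each boundary arc of a face of $Y$ between consecutive valence-$\geq 3$ vertices maps to a piece of $X$, so a face of multiplicity $m_\phi(f)$ has $n(f)\geq 6m_\phi(f)$ such corners; counting edges as $\sum_f n(f)/2$ and vertices as at most $\sum_f n(f)/3$ gives $\tau(Y)\leq 0$ directly. In effect the implicit angles depend on the combinatorics of $Y$, not of $X$.

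For (ii), the paper gives no new argument: it quotes Theorem \ref{thm: Irreducible curvature of one-relator groups}, the Helfer--Wise and Louder--Wilton theorem. Your proposed reduction to the non-positive immersions property fails at exactly the point you flag: folding an essential map produces a \emph{branched} immersion (Lemma \ref{lem: Folding 2-complexes}), and a face of multiplicity $m>1$ contributes $m$ to $\Area(Y)$, so what is needed is $\tau(Y)=\Area(Y)+\chi(Y_{(1)})\leq 0$, strictly stronger than $\chi(Y')\leq 0$ for immersed standard complexes. The branching cannot be folded away, and the paper records, after Lemma \ref{lem: Non-positive irreducible curvature implies non-positive immersions}, that whether (even contracting) non-positive immersions implies $\rho_+\leq 0$ is an open question --- so your reduction runs in precisely the unknown direction. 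One must invoke the multiplicity-sensitive counting statements (the $W$-cycles/stackings theorems) rather than non-positive immersions as a black box.

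For (iii), your normal-surface-style resolution of the branch locus, keeping area fixed while not decreasing $\chi$, is unsubstantiated, and I see no reason such a resolution should exist. The paper's argument (Proposition \ref{prop: Poincar\'e duality} and Corollary \ref{cor: Curvatures of 3-manifold spines}) is different and canonical: the geometric structure pulls back along any essential map $Y\to X$ with $Y\in\Irred(X)$, and one takes the closed surface $\partial M(Y)$, the boundary of the thickened handlebody-plus-2-handles, which immerses (hence maps essentially) into $Y$. A count using the spherical links shows $\Area(\partial M(Y))=2\Area(Y)$ and $\chi(\partial M(Y)_{(1)})=2\chi(Y_{(1)})$, so $\kappa(\partial M(Y))=\kappa(Y)$ exactly: the average curvature is preserved not because nothing changes but because everything doubles. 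This also yields the full conclusion $\rho_\pm(X)=\sigma_\pm(X)$, not merely the inequality $\sigma_+(X)\geq\rho_+(X)$ that your construction targets.
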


Item (i) of the theorem is the content of Corollaries \ref{cor: Surface curvature of CAT(0) 2-complexes} and \ref{cor: Surface curvature of CAT(-1) 2-complexes} together with Propositions \ref{prop: Surface curvature of C(6) complexes} and \ref{prop: Surface curvature of C(7) complexes}, item (ii) is Theorem \ref{thm: Irreducible curvature of one-relator groups}, and item (iii) is Corollary \ref{cor: Curvatures of 3-manifold spines}.

These results should be compared with the standard notions of curvature in geometric group theory.  Item (i) of Theorem \ref{thm: Botany results} suggests that $\sigma_+(X)$ carries similar information to the CAT($\kappa$) and small-cancellation notions of curvature, which are already widely used throughout geometric group theory. On the other hand, item (ii) implies that our notions of non-positive curvature encompass some examples, such as the presentation complexes of Baumslag--Solitar groups, that do not satisfy any of the standard notions of non-positive curvature in geometric group theory. Finally, item (iii) suggests that irreducible and surface curvatures capture information that explains some of the special properties of 3-manifold groups.

These examples are discussed in \S\ref{sec: Examples}, along with some conjectures which posit that the scope of our invariants can be expanded to capture the behaviour of many of the other favourite low-dimensional examples of geometric group theory.   Figure \ref{fig: Conjectural botany} illustrates examples of 2-complexes $X$ with various combinations of $\rho_+(X)$ and $\sigma_+(X)$.

Our second broad problem is the geography problem, which asks for constraints on 2-complexes that satisfy curvature bounds.

\begin{problem}[Geography problem]\label{prob: Geography problem}
Prove theorems about 2-complexes $X$ satisfying constraints on $\rho_\pm(X)$ or $\sigma_\pm(X)$, especially under the hypotheses that either $\rho_+(X)$ or $\sigma_+(X)$ is negative or non-positive.
\end{problem}

We offer a few contributions to the geography problem here.  The first is an easy observation, which characterises \emph{concise} 2-complexes.

\begin{definition}\label{defn: Concise 2-complex}
A 2-complex is called \emph{concise} unless it has either two 2-cells with homotopic attaching maps or a single 2-cell whose boundary is a proper power. 
\end{definition}

Equivalently, for a group presentation $\langle a_1,\ldots,a_m\mid r_1,\ldots,r_n\rangle$, the associated presentation complex $X$ is concise if the family of cyclic subgroups $\{\langle r_1\rangle,\ldots,\langle r_n\rangle\}$ of the free group $F_m=\langle a_1,\ldots,a_m\rangle$ is malnormal.  This definition is slightly stronger than the definition of a concise 2-complex used by Chiswell--Collins--Huebschmann \cite{chiswell_aspherical_1981}, but is convenient for our purposes.

\begin{letterprop}\label{prop: rho_+=1}
For $X$ a finite, irreducible 2-complex, the following are equivalent:
\begin{enumerate}[(i)]
\item \label{prop: rho_+=1 item (i)} $\rho_+(X)=1$;
\item \label{prop: rho_+=1 item (ii)} $\sigma_+(X)=1$;
\item \label{prop: rho_+=1 item (iii)}$X$ is not concise.
\end{enumerate}
\end{letterprop}

 The proof is given in \S\ref{sec: Concise 2-complexes}. The proposition implies that there is no finite 2-complex $X$ with $\sigma_+(X)<\rho_+(X)=1$.  I expect this to be the only collection of curvature constraints compatible with Theorem \ref{thm: Curvature inequalities} that are not realised by any finite, irreducible 2-complex.
 
 The next contribution is more interesting, and illustrates the strength of these invariants. It is not difficult to see that any closed surface $X$ has constant irreducible curvature -- that is, $\rho_+(X)=\rho_-(X)$ (see Example \ref{eg: Surfaces}). Remarkably, 2-complexes with constant irreducible curvature can be completely classified, and they are all closely related to surfaces.  More precisely, they are \emph{irrigid}, which roughly means that they can be obtained from surfaces with boundary by gluing their boundary circles together.\footnote{See Definition \ref{def: Surface decomposition and irrigid complexes} for a precise definition.}

\begin{letterthm}\label{thm: Constant irreducible curvature}
Let $X$ be a finite, irreducible 2-complex. If $\rho_+(X)=\rho_-(X)$ then $X$ is irrigid.
\end{letterthm}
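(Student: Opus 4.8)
The plan is to build a \emph{surface decomposition} of $X$ directly, in the sense of Definition~\ref{def: Surface decomposition and irrigid complexes}, using the hypothesis at each step to rule out the non-surface-like local configurations. The first move is to unwind the hypothesis using Theorem~\ref{thm: Curvature inequalities}: the chain $\rho_-(X)=\sigma_-(X)\le\sigma_+(X)\le\rho_+(X)$ collapses, so $\rho_+(X)=\rho_-(X)=:c$ forces $\sigma_\pm(X)=c$ as well. Thus \emph{every} $Y\in\Irred(X)$, and in particular every essential surface, satisfies $\chi(Y)=c\,\Area(Y)$; equivalently, the functional $\chi-c\,\Area$ vanishes identically on $\Irred(X)$. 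Combined with Theorem~\ref{thm: Rationality theorem}, which realises $\rho_\pm$ as the extrema of a rational linear programme, this says that $\chi-c\,\Area$ vanishes on the entire feasible cone, hence on each of its extremal generators. These generators are the indecomposable essential pieces, so the problem reduces to understanding which local structure of $X$ makes each such piece have curvature exactly $c$.

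Next I would localise the curvature. Writing $\chi-c\,\Area$ as a sum of contributions supported on the cells of an essential immersion $Y\immerses X$ (a combinatorial Gauss--Bonnet decomposition, with the branching data recorded along edges), the global vanishing must hold for \emph{all} essential $Y$ at once. Probing $X$ with small test complexes supported near a single edge or a single vertex link, I would establish a dichotomy at each edge and each link: either the local picture is surface-like, so that the ``sides'' of the incident $2$-cells pair up into a \emph{matching} under which the $2$-cells assemble into surfaces, with the unmatched data concentrated on a $1$-dimensional branch locus; or there are at least two competing local recombinations of the incident $2$-cell boundaries. In the latter case, splicing the two recombinations into global essential maps changes the number of boundary components or handles -- hence $\chi$ -- while fixing $\Area$, producing two members of $\Irred(X)$ of different curvature and contradicting $\rho_+(X)=\rho_-(X)$. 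The hypothesis therefore forces the surface-like alternative everywhere.

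Having established a consistent local matching, I would assemble the pairings into a global surface decomposition of $X$, gluing the surface pieces back along the branch locus and checking the consistency conditions -- that the local matchings agree across edges, that orientations and the branched structure of \S\ref{sec: Branching and area} are compatible, and that the resulting surfaces-with-boundary are genuinely essential so that no spurious piece has been introduced. This exhibits $X$ as obtained from surfaces with boundary by identifying their boundary circles, which is exactly the definition of irrigid; the fact that closed surfaces are irrigid and have constant curvature (Example~\ref{eg: Surfaces}) is the base case that the construction recovers.

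The main obstacle is the local-to-global detection argument of the second paragraph: converting the single global constraint $\chi\equiv c\,\Area$ into a rigid combinatorial statement about $X$. The difficulty is twofold. First, one must guarantee that any failure of surface structure is witnessed by a genuinely \emph{irreducible} and \emph{essential} test complex; a priori the most economical recombination could be reducible or could fail to immerse, so folding, together with the exclusion of free faces and separating vertices, is needed to stay inside $\Irred(X)$. Second, one must verify that the competing recombinations really do alter $\chi$ rather than cancelling, which is where the precise bookkeeping of the branch locus and of boundary components enters and where the argument is most sensitive to the branched-complex conventions of \S\ref{sec: Branching and area}.
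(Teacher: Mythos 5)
Your proposal has a genuine gap, and it sits exactly where you flag ``the main obstacle''. The entire mathematical content of Theorem~\ref{thm: Constant irreducible curvature} is the claim you defer in your second paragraph: that failure of irrigidity produces a member of $\Irred(X)$ whose curvature differs from $\kappa(X)$. That claim is precisely Lemma~\ref{lem: Minimal average curvature implies surface type} (a restatement of Lemma~5.9 of \cite{wilton_essential_2018}, resting on the main argument of \cite{wilton_one-ended_2011}), and the paper's proof is nothing but this lemma plus a two-line contrapositive: if $X$ is not irrigid, take $Y\in\Irred(X)$ with $\kappa(Y)<\kappa(X)$ and observe
\[
\rho_-(X)\leq\kappa(Y)<\kappa(X)\leq\rho_+(X)\,,
\]
using only that $X\in\Irred(X)$. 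In particular your first paragraph's detour through Theorem~\ref{thm: Curvature inequalities} and the extremal rays of the linear programme of Theorem~\ref{thm: Rationality theorem} is unnecessary (and the LP structure is not even developed in this paper, only in \cite{wilton_rationality_2022}); also, for branched $Y$ the constant functional is $\tau-c\,\Area$ with $\tau(Y)=\Area(Y)+\chi(Y_{(1)})$, not $\chi-c\,\Area$, since $\tau(Y)\geq\chi(Y)$ with equality only in the standard case (Remark~\ref{rem: Total curvature and Euler characteristic}). Gesturing at a splicing construction and acknowledging it as the hard step does not discharge it: it is a substantial theorem, not a verification.

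Moreover, the specific plan --- a dichotomy detected by ``probing with small test complexes supported near a single edge or a single vertex link'' --- cannot work as stated, because irrigidity is not a local condition on links. It is a property of the associated group pair $(F_X,\curlyA_X)$ (Definition~\ref{def: Surface decomposition and irrigid complexes}), invariant under Nielsen equivalence, and the surface pieces of a surface decomposition are in general not subcomplexes of $X$ and leave no trace in any single vertex link. Example~\ref{eg: BS groups are of surface type} is the basic cautionary case: the presentation complex of $BS(m,n)$ is irrigid, yet no matching of the corners of its one $2$-cell at the unique vertex assembles it into a surface piece; the three-holed sphere appears only after splitting the free group $\langle a,b\rangle$ over the cyclic subgroup $\langle a\rangle$, a global algebraic move. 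This is why deciding irrigidity requires Cashen's algorithm for splitting line patterns \cite{cashen_splitting_2016} rather than an inspection of links, and why the paper detects the curvature variation via the global machinery of \cite{wilton_one-ended_2011} rather than a local Gauss--Bonnet bookkeeping. Any correct completion of your sketch would, at minimum, have to work up to essential equivalence and unfolding rather than cellularly on $X$, and would in effect reprove Lemma~\ref{lem: Minimal average curvature implies surface type}; citing that lemma instead collapses the whole argument to the paper's three lines.
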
 

It is doubtful that any other collection of curvature constraints leads to a complete characterisation beyond Proposition \ref{prop: rho_+=1} and Theorem \ref{thm: Constant irreducible curvature}. However, curvature constraints can have profound consequences for the group theory of the fundamental group. The next theorem collects some examples of such consequences, which all derive from the work of Wise, Louder and the author. Recall that a group is \emph{locally indicable} if every non-trivial finitely generated subgroup surjects $\mathbb{Z}$; it is \emph{2-free} if every 2-generator subgroup is free; and it is \emph{coherent} if every finitely generated subgroup is finitely presented.

\begin{letterthm}[Upper bounds on irreducible curvature]\label{thm: Consequences of upper bounds on irreducible curvature}
Let $X$ be a finite, irreducible 2-complex.
\begin{enumerate}[(i)]
\item If $\rho_+(X)\leq 0$ then $X$ is aspherical and $\pi_1(X)$ is locally indicable.
\item If $\rho_+(X)<0$ then $\pi_1(X)$ is 2-free and coherent.
\end{enumerate}
\end{letterthm}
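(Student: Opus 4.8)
The plan is to reduce both assertions to structural facts about essential maps from low-curvature branched 2-complexes, and then to invoke the existing theorems of Wise, Louder, and the author as black boxes. The key conceptual point is that an upper bound on \emph{irreducible} curvature, $\rho_+(X)\leq 0$ or $\rho_+(X)<0$, is by definition a statement about \emph{every} finite irreducible branched 2-complex $Y$ admitting an essential map $Y\to X$: it says that $\kappa(Y)=\chi(Y)/\Area(Y)$ is always $\leq 0$, respectively $<0$. Because $\Area(Y)>0$, this is equivalent to $\chi(Y)\leq 0$ (respectively $\chi(Y)<0$) for all such $Y$. So the first step is to translate the curvature hypotheses into Euler-characteristic inequalities valid along all essential immersions of irreducible complexes, which is exactly the shape of hypothesis that Wise's non-positive- and negative-immersions properties are designed to exploit.

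For item (i), I would argue as follows. Asphericity: a finite 2-complex fails to be aspherical precisely when $\pi_2(X)\neq 0$, and by standard tower/immersion arguments (in the spirit of the Helfer--Wise and Louder--Wilton treatments of one-relator asphericity) a nontrivial element of $\pi_2$ produces a reduced spherical diagram, hence an essential map from a finite irreducible branched 2-complex $Y$ with $\chi(Y)>0$, i.e.\ $\kappa(Y)>0$, contradicting $\rho_+(X)\leq 0$. Thus $\rho_+(X)\leq 0$ forces $\pi_2(X)=0$, so $X$ is aspherical. Local indicability: this is the statement that every finitely generated subgroup $H\leq\pi_1(X)$ surjects $\Z$, equivalently that the relative presentation/cover realizing $H$ has positive first Betti number. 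A finitely generated subgroup corresponds to a finite cover or a finite essential complex $Y\to X$ with $\pi_1(Y)\twoheadrightarrow H$, and $b_1(Y)\geq 1$ would follow from $\chi(Y)\leq 0$ together with connectivity and the absence of $\pi_2$, via $b_1(Y)=1-\chi(Y)+b_2(Y)\geq 1$ once $b_0=1$ and $\chi(Y)\leq 0$; asphericity controls $b_2$. The content here is precisely Wise's implication ``non-positive immersions $\Rightarrow$ local indicability,'' which I would cite, having reduced the hypothesis to the non-positive-immersions property via the curvature inequality $\rho_+(X)\leq 0$.

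For item (ii), the hypothesis $\rho_+(X)<0$ supplies a \emph{strict} deficiency $\chi(Y)<0$ along all essential irreducible $Y$, which is the negative-immersions property. Two-freeness and coherence are then quotations of the known consequences of negative immersions: the two-generator subgroup theorem of Louder--Wilton and the coherence theorem of Wise (and Louder--Wilton) both take negative immersions (or the closely related ``uniform negative immersions'' / primitivity-rank bounds) as hypothesis and conclude, respectively, that every two-generator subgroup is free and that $\pi_1(X)$ is coherent. The only real work is to confirm that the curvature condition $\rho_+(X)<0$ matches the precise quantitative form of negative immersions used in those papers; since $\rho_+(X)=\max_{Y}\kappa(Y)$ is a \emph{realized} maximum by Theorem~\ref{thm: Rationality theorem}, the supremum being negative yields a uniform gap $\chi(Y)/\Area(Y)\leq\rho_+(X)<0$, which is stronger than a mere strict inequality for each $Y$ and should comfortably imply the uniform version needed.

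The main obstacle I anticipate is not any of the group-theoretic conclusions themselves—those are imported wholesale—but the dictionary step: verifying that ``$\rho_+(X)\leq 0$ (resp.\ $<0$)'' is genuinely equivalent to the (non-positive, resp.\ negative) immersions property as stated in the source papers. The subtlety is that $\Irred(X)$ ranges over \emph{branched} 2-complexes and requires \emph{essential} maps, whereas Wise's definitions are phrased for ordinary immersions of 2-complexes; one must check that branching and the essentiality condition do not create counterexamples that evade the classical immersions hypothesis, and conversely that every immersion witnessing large Euler characteristic can be upgraded to an essential map from an irreducible branched complex (using the simplification moves—free faces, separating vertices—that define irreducibility). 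Once this equivalence is nailed down, the rest of the proof is a sequence of citations.
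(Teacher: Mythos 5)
Your proposal follows essentially the same route as the paper. The dictionary step you identify as the main obstacle is exactly what the paper isolates: Lemma \ref{lem: Non-positive irreducible curvature implies non-positive immersions} shows $\rho_+(X)\leq 0$ implies Wise's \emph{contracting} non-positive immersions, proved not by ``upgrading'' a given immersion $Y\to X$ but by decomposing $Y$ via the unfolding lemma (Lemma \ref{lem: Unfolding 2-complexes}) into a wedge of a graph, irreducible pieces $Y_i$ (each of which admits an essential map to $X$, since branched immersions are essential and compositions of essential maps are essential, so $\chi(Y_i)=\tau(Y_i)\leq 0$), and discs; item (i) then follows from \cite[Theorems 1.3, 1.6]{wise_coherence_2022}, and for item (ii) the implication to uniform negative immersions is immediate with $\epsilon=-\rho_+(X)$ (note you do not need the rationality theorem here: a negative supremum already gives the uniform gap), after which coherence is quoted from \cite[Corollary 6.4 and Theorem 6.5]{louder_uniform_2021}. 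Three points of precision. First, $\kappa(Y)=\tau(Y)/\Area(Y)$, not $\chi(Y)/\Area(Y)$, for branched $Y$; since $\tau(Y)\geq\chi(Y)$ (Remark \ref{rem: Total curvature and Euler characteristic}), the inequality fortunately runs in the direction you need. Second, you ask to verify a genuine \emph{equivalence} between $\rho_+(X)\leq 0$ and non-positive immersions, but only the forward implication is needed, and the converse is in fact open (the paper explicitly notes it is unknown whether contracting non-positive immersions can coexist with $\rho_+(X)>0$); insisting on equivalence would stall your argument unnecessarily. Third, and this is the one real inaccuracy: 2-freeness is \emph{not} available as a quotation in this generality --- the negative-immersions 2-freeness theorem of \cite{louder_negative_2022} concerns one-relator groups, and the paper says the general statement is only ``implicit'' there, supplying instead a short direct argument that you would need to reproduce: a freely indecomposable 2-generator subgroup $H$ is represented by an immersion $Y\to X$ with $Y$ finite, connected and irreducible by \cite[Theorem 6.3]{louder_uniform_2021}, whence $\chi(Y)=1-b_1(Y)+b_2(Y)<0$ together with $b_1(Y)\leq 2$ forces $b_1(Y)=2$ and $b_2(Y)=0$, and Stallings' theorem \cite{stallings_homology_1965} then shows $H$ is free. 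With that paragraph added, your plan is the paper's proof.
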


Item (i) of Theorem \ref{thm: Consequences of upper bounds on irreducible curvature} is Theorem \ref{thm: Geography with non-positive irreducible curvature}, while item (ii) is a sample of the conclusions of Theorem \ref{thm: Subgroups of groups with negative irreducible curvature}. 

Conjecturally, upper bounds on irreducible curvature should have even stronger consequences; see, for instance, Conjectures \ref{conj: Non-positive irreducible curvature and coherence} and \ref{conj: Negative irreducible curvature implies hyperbolic and locally quasiconvex}. There are also corresponding conjectures about 2-complexes with upper bounds on their surface curvature; see Conjectures \ref{conj: Non-positive surface curvature implies aspherical}, \ref{conj: Non-positive surface curvature implies solvable word problem} and \ref{conj: Negative surface curvature implies hyperbolic}.  Figure \ref{fig: Conjectural geography} gives a conjectural picture of the properties of $X$ that are determined by $\rho_+(X)$ and $\sigma_+(X)$.

This paper is structured as follows. The basic definitions of branched 2-complexes and the maps between them are given in \S\ref{sec: Branching and area}. The fundamental notion of an irreducible complex is introduced in \S\ref{sec: Irreducible complexes}, and surfaces appear in \S\ref{sec: Surfaces}. The stage is then set to define $\rho_\pm(X)$ and $\sigma_\pm(X)$ in \S\ref{sec: Curvature invariants}.  Some immediate consequences of the definitions are explained in \S\ref{sec: Inequalities and first examples}. One advantage of these definitions over other similar curvature notions is that they are \emph{Nielsen invariants}, meaning that they do not change if the 1-skeleton is modified by a homotopy equivalence; this is explained in \S\ref{sec: Nielsen equivalence}. Theorems \ref{thm: Curvature inequalities} and \ref{thm: Constant irreducible curvature} are proved in \S\ref{sec: Irrigid complexes}, where irrigid complexes are discussed. Relationships with other similar invariants are discussed in \S\ref{sec: Relations to other invariants}. Finally, \S\ref{sec: Examples} addresses the botany problem by collecting examples, and \S\ref{sec: Properties of groups with curvature bounds} addresses the geography problem by collecting properties of 2-complexes with curvature bounds.

\subsection*{Acknowledgements}

This work would have been impossible without the input of Lars Louder. Our collaborative work on one-relator groups convinced me that these definitions were sufficiently broadly applicable to be worth studying in their own right. The debt that this work also owes to the vision of Dani Wise cannot be overstated -- it's clear from reading some of his early papers on coherence \cite{wise_nonpositive_2003,wise_sectional_2004} that he anticipated much of what is written here. Danny Calegari's ideas, and especially to his proof of the rationality theorem for stable commutator length in free groups \cite{calegari_stable_2009}, have also had a profound influence. 

Thanks are also due to Edgar Bering, Martin Blufstein, Koji Fujiwara, Daniel Groves, Nir Lazarovich, Jason Manning, Alexis Marchand, Doron Puder and Jacob Willis for useful conversations.

\section{Branching and area}\label{sec: Branching and area}

A 2-dimensional complex $X$ is the result of attaching 2-cells to a topological graph along their boundaries. This graph is the 1-skeleton $X_{(1)}$, and is constructed by attaching gluing edges from an edge set $E_X$ to a vertex set $V_X$.\footnote{Several formalisms for graphs can be used. For instance, $X_{(1)}$ can be taken to be a Serre graph, as in Stallings' definitive paper  \cite{stallings_topology_1983}.  Our discussion here does not rely on the details of the formalism chosen.}  The 2-cells of a 2-complex $X$ are called \emph{faces}, and the set of faces is denoted by $F_X$.  Formally, the faces can be modelled as circles mapping into $X_{(1)}$. After a homotopy, we may always assume that the attaching maps of the faces are immersions, at the cost of removing any face whose attaching maps is homotopically trivial in the 1-skeleton.

\begin{example}[Presentation complex]\label{eg: Presentation complex}
A group presentation $\curlyP=\langle A \mid R\rangle$ defines a presentation complex $X=X_\curlyP$. The 1-skeleton $X_{(1)}$ is a wedge of circles indexed by $A$. The faces correspond to the elements of $R$: each $r\in R$ defines a loop in $X_{(1)}$, and this loop defines the attaching map of the corresponding face.
\end{example}

The maps we allow  between 2-complexes are \emph{combinatorial}, meaning that the send vertices to vertices, edges to edges and faces to faces, all homeomorphically.

This much is standard, but it turns out to be fruitful to extend the category of 2-complexes that we work with in a slightly non-standard way. 

\begin{definition}\label{def: Branched 2-complex}
A \emph{branched 2-complex} $X$ is a 2-complex equipped with a notion of \emph{area}, namely a map
\[
\Area: F_X\to \Z_{> 0}\,.
\] 
The notation $\Area(X)$ is also used to denote the sum of the areas of the faces of $X$, and we insist that $\Area(X)> 0$.

We also allow a slightly wider selection of maps between branched 2-complexes. A \emph{branched morphism} between branched 2-complexes $\phi:Y\to X$ still sends vertices to vertices and edges to edges homeomorphically. It also send faces to faces, but on faces the maps are allowed a single ramification point in the centre; that is, on the faces, a map is modelled on the polynomial map
\[
z\mapsto z^m
\]
of the unit disc in the complex plane, for some positive integer $m$. In particular, $\phi$ assigns to each face $f$ of $Y$ a \emph{multiplicity} $m_\phi(f)\in \N$, namely the ramification index at the centre of $f$. However, the branching of the map $\phi$ is constrained to respect area in the following sense:
\[
\Area(f)=m_\phi(f)\Area(\phi(f))
\]
for each face $f\in F_Y$.
\end{definition}

Each 2-complex has a canonical structure as a branched 2-complex, obtained by setting the area of each face equal to 1. We will sometimes call such a 2-complex \emph{standard}, to emphasise that there is no branching. Note that a branched morphism between two standard 2-complexes is just a combinatorial map in the standard sense.

Branched coverings of surfaces provide a wealth of natural examples, as long as one is careful to place the branch points in the centres of faces.

\begin{example}[Branched self-coverings of the 2-sphere]\label{eg: Branched self-coverings of the 2-sphere}
For each $n\geq 1$, let $X_n$ be the cellulation of the 2-sphere with two faces glued along an equatorial circle with $n$ vertices and $n$ edges. Let $X=X_1$, and make it into a standard 2-complex by giving both faces area 1. There is an $n$-sheeted map $\phi_n:X_n\to X$ that wraps the equator of $X_n$ $n$ times around the equator of $X$. If we set the area of each face of $X_n$ equal to $n$, then the map $\phi_n$ is a branched morphism.
\end{example}

Our goal is to understand a 2-complex $X$ by analysing a large set of maps $Y\to X$, so it is of the utmost importance to constrain the kinds of maps we allow.

\begin{definition}[Branched coverings and immersions]\label{def: Branched coverings and immersions}
A branched map $\phi:Y\to X$ is a \emph{branched immersion} if it is locally injective away from the midpoints of the 2-cells; equivalently, a branched immersion is a map that is injective on the links of vertices. If $X$ and $Y$ are standard then $\phi$ is an \emph{immersion}, and is locally injective everywhere.

If $\phi$ is a bijection on the links of vertices, then $\phi$ is a \emph{branched covering}. An immersion which is a branched covering is exactly the usual topological notion of \emph{covering map}.
\end{definition}

Stallings famously explained how to study the subgroups of free groups by folding maps of graphs \cite[\S5.4]{stallings_topology_1983}. Stallings' folding procedure provides an algorithm to factor a map of finite graphs as a composition of a $\pi_1$-surjection and an immersion. Our set-up makes it easy to extend folding to 2-complexes: one just folds the 1-skeleta, and then identifies any 2-cells with the same image. The next lemma makes this precise. 

\begin{lemma}[Folding 2-complexes]\label{lem: Folding 2-complexes}
Every map of finite, connected, branched 2-complexes  $\phi:Y\to X$ factors as
\[
Y\stackrel{\phi_0}{\to}\bar{Y}\stackrel{\bar{\phi}}{\to} X
\]
where $\phi_0$ is a surjection on fundamental groups and $\bar{\phi}$ is a branched immersion.
\end{lemma}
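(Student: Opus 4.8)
The plan is to adapt Stallings' folding procedure for graphs to the setting of branched 2-complexes, carrying it out in two stages: first on the 1-skeleton, and then on the faces.

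\medskip

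\textbf{Folding the 1-skeleton.} First I would apply Stallings' theorem directly to the map of 1-skeleta $\phi_{(1)}:Y_{(1)}\to X_{(1)}$. By \cite[\S5.4]{stallings_topology_1983}, this factors as $Y_{(1)}\xrightarrow{\psi_0} Z \xrightarrow{\bar\psi} X_{(1)}$, where $\psi_0$ is a composition of folds and hence a $\pi_1$-surjection, and $\bar\psi$ is an immersion of graphs (injective on links of vertices). Each elementary fold identifies two edges of $Y_{(1)}$ sharing a vertex that have the same image in $X_{(1)}$; since $Y$ is finite, the process terminates. The graph $Z$ will become the 1-skeleton $\bar Y_{(1)}$ of the intermediate complex $\bar Y$.

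\medskip

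\textbf{Reassembling the faces.} Next I would push the faces of $Y$ forward along $\psi_0$. Each face $f$ of $Y$ is a circle immersing into $Y_{(1)}$; composing with $\psi_0$ gives a loop in $Z=\bar Y_{(1)}$. After folding this loop remains an immersion (folding can only shorten it, and the attaching maps were immersions to begin with), so it is a legitimate attaching map for a face of $\bar Y$. The key step is then to \emph{identify any two faces of $Y$ whose images in $\bar Y_{(1)}$ are the same immersed loop}, declaring them to be a single face of $\bar Y$. To respect the branched structure I would set $\Area$ on each resulting face of $\bar Y$ so that the quotient map $\phi_0:Y\to\bar Y$ satisfies $\Area(f)=m_{\phi_0}(f)\Area(\phi_0(f))$; concretely, the attaching map of $f$ factors through the identified loop with some wrapping degree $m$, and one sets the multiplicity accordingly (a common divisor argument, or simply using the degree, determines a consistent positive integer area, which is why the target $\Z_{>0}$ rather than $\Z_{\geq 0}$ matters). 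Since each face of $X$ is an immersed loop, $\bar\phi$ then sends each face of $\bar Y$ to a face of $X$ with a well-defined multiplicity, so $\bar\phi$ is a branched morphism, and one checks it is a branched immersion by verifying injectivity on links of vertices, which is inherited from the immersion $\bar\psi$ on the 1-skeleton together with the fact that distinct faces of $\bar Y$ now have distinct attaching loops.

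\medskip

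\textbf{Main obstacle.} The routine part is the folding of the 1-skeleton, which is classical. I expect the genuine difficulty to lie in the face-identification step and in verifying that $\bar\phi$ is a branched immersion, rather than merely an immersion. One must check that after identifying faces with coincident attaching loops, the link of each vertex of $\bar Y$ injects into the link of its image in $X$: the constraint is that two \emph{distinct} faces of $\bar Y$ could a priori contribute corners to a single vertex that are then identified in $X$, and one has to rule this out using that $\bar\psi$ is already injective on links and that we have quotiented by exactly the faces sharing an image. Getting the area bookkeeping to be simultaneously consistent across all faces sharing an attaching loop — assigning a single $\Area(\phi_0(f))$ so that every preimage $f$ has integral multiplicity — is the subtle point, and is the place where the branched structure does real work beyond the classical graph-folding argument.
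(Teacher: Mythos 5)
Your overall strategy is the same as the paper's (which only sketches this lemma -- ``fold the 1-skeleta, then identify 2-cells with the same image'' -- deferring details to the companion rationality paper), but as written the face step has a genuine gap: merging only those faces of $Y$ whose folded attaching loops are \emph{the same immersed loop} does not make $\bar{\phi}$ injective on links. There are two failure modes. First, a single face whose folded attaching loop is a proper power: let $Y$ be the rose with one edge $e$ and one face attached along $e^2$ (area $2$), mapping to the complex $X$ with one face along $e$ (area $1$), with $m_\phi = 2$. The 1-skeleton map is already folded and your identification step does nothing, yet the link of the vertex of $Y$ contains two corner-edges (the two corners of the $e^2$-face) mapping to the single corner-edge in the link of $X$, so the resulting map is \emph{not} a branched immersion. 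Second, two faces wrapping a common loop with different degrees, say along $u^2$ and $u^3$ over the same face of $X$: their attaching loops are distinct, so your criterion leaves them untouched, but their corners collide in $X$. The missing idea is that after folding the 1-skeleton one must replace each face's pushed attaching loop $u_0^k$ by its primitive root $u_0$, letting $\phi_0$ itself branch with multiplicity $k$ on that face -- this is precisely why the lemma must be stated in the branched category -- and then identify faces having the same primitive root (up to rotation) \emph{and} the same image face of $X$. Once this is done, your area worry evaporates rather than being the subtle point: if $\bar{\psi}$ maps $u_0$ to the $j$-th power of the boundary of the face $g$, the new face is forced to have area $j\,\Area(g)\in\Z_{>0}$, and all the constraints $\Area(f)=m\,\Area(\phi(f))$ are automatically consistent.

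Two further corrections to your verifications. The pushed attaching loops are indeed immersed, but not because ``folding can only shorten'' them -- folding can perfectly well create backtracks in the image of an immersed loop. The correct reason is that the composite circle $\to \bar{Y}_{(1)}\to X_{(1)}$ equals the original composite, which wraps the immersed boundary of a face of $X$ and is hence immersed; since $\bar{\psi}$ is an immersion, a backtrack in $\bar{Y}_{(1)}$ would descend to one in $X_{(1)}$. Similarly, link-injectivity of $\bar{\phi}$ is not ``inherited from distinct faces having distinct attaching loops'' (see the $u^2$, $u^3$ example above); the right argument uses uniqueness of path-lifting through the graph immersion $\bar{\psi}$: if two corners at the same vertex of $\bar{Y}$, necessarily with the same direction pair, map to the same corner of $X$, then the two attaching loops agree from those positions onwards, forcing either periodicity of a primitive loop (a contradiction) or that the two faces were already identified. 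With primitivity of the attaching loops in hand this argument closes the proof; without it, the lemma as you have constructed $\bar{Y}$ is false.
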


Cousins of this lemma have already been used extensively \cite{louder_stackings_2017,louder_one-relator_2020,louder_negative_2022,louder_uniform_2024}. The reader is referred to \cite[Lemma 3.8]{wilton_rationality_2022} for a proof.

It would be convenient if we could always work with branched immersions. However, sometimes it is necessary to \emph{unfold} in order to understand the structure of a given complex, as the following simple example shows.

\begin{example}[Folded surface]\label{eg: Folded surfaces}
Consider a compact surface $S$ equipped with a simplicial triangulation (i.e.\ every edge and face has distinct vertices), and let $X$ be the result of folding a single pair of adjacent edges of $S$ at a common vertex. Then there are no branched immersions from closed surfaces to $X$.
\end{example}

In order to handle examples like this, we relax the definition of a branched immersion slightly to an \emph{essential map}. In related contexts, this definition already played important roles in \cite{wilton_essential_2018} and \cite{louder_uniform_2024}. The idea is that folding an essential map $Y\to X$ does not change any important information about $Y$.

\begin{definition}[Essential map]\label{def: Essential map}
A map of branched 2-complexes $\phi:Y\to X$ is \emph{essential} if, after folding, the resulting map $\phi_0:Y\to \bar{Y}$ satisfies the following conditions.
\begin{enumerate}[(i)]
\item The induced map on the 1-skeleton $\phi_0:Y_{(1)}\to \bar{Y}_{(1)}$ is a homotopy equivalence.
\item When restricted to the interiors of faces, $\phi_0$ is a homeomorphism. Equivalently,  $\phi_0$ preserves area.
\end{enumerate}
If $\phi$ is essential and, furthermore, $\bar{\phi}:\bar{Y}\to X$ is an isomorphism, then $\phi$ is called an \emph{essential equivalence}.
\end{definition}

Note that the fold map $S\to X$ from Example \ref{eg: Folded surfaces} is essential, indeed an essential equivalence. This illustrates an important principle: given a 2-complex $X$, there may be an essential equivalence $Y\to X$ such that the structure of $Y$ is more apparent than the structure of $X$.

\section{Group pairs}\label{sec: Group pairs}

It is often convenient to think about 2-complexes in a more algebraic way, provided by the following definition. Throughout this section, the 2-complex $X$ is assumed to be connected.

\begin{definition}[Group pair]\label{def: Group pair}
A \emph{group pair} $(F,\curlyA)$  consists of a finitely generated free group $F$ together with a cofinite action of $F$ on a set $\curlyA$ such that every stabiliser is infinite cyclic.
\end{definition}

There is a natural theory of group pairs, which it is not hard to develop on its own terms. However, the point of this definition is that it captures some of the underlying information of a 2-complex, and so we will develop the theory in parallel with the corresponding properties of 2-complexes and the maps between them.

\begin{definition}[Group pair of a 2-complex]
 Every finite, connected 2-complex $X$ has an \emph{associated group pair} $(F_X,\curlyA_X)$, defined as follows.  The free group $F_X$ is the fundamental group of the 1-skeleton $X_{(1)}$. The boundaries of the faces define a finite set $\{\langle w_1\rangle,\ldots,\langle w_n\rangle\}$ of infinite cyclic subgroups of $F_X$, each defined up to conjugacy. The set $\curlyA_X$ is then the disjoint union of the sets of left-cosets $F_X/\langle w_i\rangle$, with the natural action by left multiplication.
\end{definition}

\begin{remark}\label{rem: Concise group pairs}
Usually $X$ is assumed to be concise. This means that the set $\{\langle w_1\rangle,\ldots,\langle w_n\rangle\}$ is \emph{malnormal} -- that is, any intersection $g\langle w_i\rangle g^{-1}\cap \langle w_j\rangle$ is trivial unless $i=j$ and $g\in \langle w_i\rangle$. In this case, $\curlyA_X$ can be identified with the disjoint union of the sets of conjugates of the $\langle w_i\rangle$, with the natural action by conjugation. We give this slightly more complicated definition in order to handle 2-complexes that are not concise.
\end{remark}

A \emph{morphism} of a group pairs $\phi:(F,\curlyA)\to (G,\curlyB)$ consists of a homomorphism $\phi:F\to G$ and a set map $\phi:\curlyA\to\curlyB$ subject to the usual equivariance condition, i.e.\ that $\phi(g.a)=\phi(g)\phi(a)$.  Any map of 2-complexes $\phi:Y\to X$ gives rise to a natural map of the associated group pairs
\[
\phi_*: (F_Y,\curlyA_Y)\to (F_X,\curlyA_X)
\]
defined as follows. The group homomorphism $\phi_*:F_Y\to F_X$ is the map induced on fundamental groups of 1-skeleta by $\phi$, but there is also an equivariant map $\phi_*:\curlyA_Y\to\curlyA_X$: if $\phi_*$ sends the face corresponding to $\langle u_i\rangle$ to the face corresponding to $\langle w_j\rangle$ then, after choosing conjugacy representatives so that $\phi_*(u_i)\in \langle w_j\rangle$, the map $\curlyA_Y\to\curlyA_X$ is defined by
\[
\phi_*(h\langle w_i\rangle)=\phi_*(h)\langle u_j\rangle\,.
\]

A morphism of group pairs is said to be \emph{injective} if both the map of free groups and the map of sets is injective.  Perhaps surprisingly, this corresponds exactly to the notion of essential map of 2-complexes defined in the previous section. The following lemma \cite[Lemma 1.12]{wilton_essential_2018} characterises when $\phi$ is essential in terms of $\phi_*$.

\begin{lemma}\label{lem: Essential maps in group theory}
A map of finite, connected 2-complexes $\phi:Y\to X$ is essential if and only if the induced map of associated group pairs
\[
\phi_*:(F_Y,\curlyA_Y)\to (F_X,\curlyA_X)\,,
\]
is injective.
\end{lemma}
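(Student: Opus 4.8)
The plan is to factor through folding. By Lemma~\ref{lem: Folding 2-complexes} write $\phi=\bar\phi\circ\phi_0$ with $\phi_0:Y\to\bar Y$ a $\pi_1$-surjective fold and $\bar\phi:\bar Y\to X$ a branched immersion; correspondingly the group-pair morphism factors as $\phi_*=\bar\phi_*\circ(\phi_0)_*$. I would establish two facts: (A) a branched immersion always induces an injective morphism of group pairs; and (B) the fold $\phi_0$ induces an injective morphism if and only if it satisfies conditions (i) and (ii) of Definition~\ref{def: Essential map}. Granting these, note that by definition $\phi$ is essential precisely when $\phi_0$ satisfies (i) and (ii); and since (A) makes $\bar\phi_*$ injective, the composite $\phi_*$ is injective if and only if $(\phi_0)_*$ is. Chaining these equivalences yields the lemma in both directions.

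On the free-group coordinate everything reduces to Stallings' folding theory. The fold $\phi_0$ is $\pi_1$-surjective on $1$-skeleta, a branched immersion restricts to a locally injective map of graphs and is therefore $\pi_1$-injective, and a $\pi_1$-surjection between finite connected graphs is injective exactly when it is a homotopy equivalence. Thus $\bar\phi_*$ is injective on free groups, and $(\phi_0)_*$ is injective on free groups if and only if condition (i) holds. This settles the free-group halves of (A) and (B).

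The substance lies in the set $\curlyA$, which I would analyse through elevations: the cosets in $F_X/\langle w_j\rangle$ index the lifts of the attaching loop of the $j$-th face to the universal cover of $X_{(1)}$. For the fold, condition (i) makes $(\phi_0)_*$ carry each $\langle u_i\rangle$ isomorphically onto the corresponding cyclic subgroup of $F_{\bar Y}$ (a fold introduces no branching, so an attaching circle maps once onto its image), whence each component $F_Y/\langle u_i\rangle$ of $\curlyA_Y$ maps bijectively onto its target component; injectivity of $(\phi_0)_*$ on $\curlyA_Y$ then holds across components if and only if distinct faces of $Y$ remain distinct in $\bar Y$, which is exactly the area-preservation condition (ii). For a branched immersion the point is that injectivity on links forbids distinct elevations from being identified: were two cosets to share an image, two corners meeting at a vertex of $\bar Y$ would map to a single corner at the image vertex, contradicting injectivity on that link. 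The same link condition, together with the area relation $\Area(f)=m_{\bar\phi}(f)\,\Area(\bar\phi(f))$, keeps the wrapping number of each face honest and so gives injectivity of the coset map within each face.

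I expect the genuine obstacle to be this last point, namely converting ``injective on links'' into ``no collapse of elevations'' for branched immersions, the delicate case being a face whose attaching word is a proper power: one must verify that the branching multiplicity recorded by $\Area$ matches the wrapping of the boundary loop, so that $F_{\bar Y}/\langle\bar u_i\rangle\to F_X/\langle w_j\rangle$ cannot fail to be injective. Once this is pinned down, the remaining bookkeeping across the factorisation is routine.
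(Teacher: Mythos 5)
Your plan is sound, and it is worth noting at the outset that the paper does not prove this lemma itself: it is quoted from \cite[Lemma 1.12]{wilton_essential_2018}, so your argument is an independent one rather than a variant of an in-paper proof. That said, the route you take through Lemma \ref{lem: Folding 2-complexes} is exactly in the spirit of the surrounding text, and the reduction is valid: $\phi$ is essential by definition precisely when the fold $\phi_0$ satisfies (i)--(ii) of Definition \ref{def: Essential map}, and once $\bar\phi_*$ is known to be injective, $\phi_*=\bar\phi_*\circ(\phi_0)_*$ is injective exactly when $(\phi_0)_*$ is, so the lemma does follow from your claims (A) and (B). Claim (B) checks out as you describe: a fold has multiplicity one on every face, so assuming (i) the map on each component $F_Y/\langle u_i\rangle\to F_{\bar Y}/\langle \bar u_j\rangle$ is bijective, and injectivity can only fail across components, which happens exactly when two faces of $Y$ acquire the same image --- that is, exactly when (ii) fails. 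The degenerate case in which folding makes an attaching map null-homotopic is absorbed automatically: then $(\phi_0)_*$ kills a non-trivial element of $F_Y$, so condition (i) and injectivity on the free-group coordinate fail simultaneously, and both sides of the equivalence fail together.

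The one place where your sketch is thinner than the argument needs is claim (A), and you have correctly located it. ``Two cosets share an image'' does not immediately produce two corners at a common vertex of $\bar Y$; one must first translate the algebra into geometry. The translation goes as follows: if two elements of $\curlyA_{\bar Y}$ (whether in one component or in two) have the same image, then after conjugating, the relevant attaching words both map into $\langle w\rangle$; since $\bar\phi_*$ is injective on free groups, the preimage $\bar\phi_*^{-1}(\langle w\rangle)$ embeds in $\Z$ and hence is cyclic, say $\langle t\rangle$, so the attaching words are powers of the common element $t$ and the attaching circles run, in phase, over the common immersed cycle in $\bar Y_{(1)}$ representing $t$. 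Only at this point do corners actually collide. In the within-component (proper power) case, $u=t^j$ with $j\geq 2$, and the $j$ seam corners of the period at a vertex of the $t$-cycle are parallel edges of the link; because $\bar\phi_*(t)\in\langle w\rangle$, consecutive seams map to boundary positions of the image face differing by a multiple of its boundary length, hence all $j$ map to a single corner of the image link. In the two-face case, injectivity of the coset maps within each component forces both attaching words to generate $\langle t\rangle$, so both faces traverse the $t$-cycle exactly once with identical image, and at each vertex their two corners map to one corner of the image link. Either way link-injectivity is violated, which proves (A). So your route works in both directions; supplying this elevation-to-corner dictionary is the only substantive writing left to do.
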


The lemma shows that, up to essential equivalence, essential maps to $X$ from finite, connected complexes $Y$ correspond naturally to subgroups $F_Y$ of $F_X$ and $F_Y$-invariant cofinite subsets $\curlyA_Y$ of $\curlyA_X$. 

\begin{remark}\label{rem: Composition of essential maps is essential}
It follows immediately from Lemma \ref{lem: Essential maps in group theory} that a composition of essential maps is essential.
\end{remark}

Two-complexes that are essentially equivalent  in the sense of \S\ref{sec: Branching and area} have, in particular, isomorphic group pairs. More generally, 2-complexes with isomorphic group pairs are said to be \emph{Nielsen equivalent}.

\begin{definition}\label{def: Nielsen equivalence}
An isomorphism of group pairs $(F,\curlyA)$ and $(H,\curlyB)$ consists of a group isomorphism $\phi:F\to H$ and a $\phi$-equivariant bijection $\curlyA\to\curlyB$. If the associated group pairs of 2-complexes $X$ and $Y$ are isomorphic, then $X$ and $Y$ are said to be \emph{Nielsen equivalent}.
 \end{definition} 

In particular, an essential equivalence of connected 2-complexes $f:Y\to X$ is a morphism of 2-complexes that induces an isomorphism between the associated group pairs.

For presentation complexes, Definition \ref{def: Nielsen equivalence} coincides with the usual definition of Nielsen equivalence of group presentations: $\langle a_1,\ldots,a_m\mid r_1,\ldots,r_n\rangle$ and $\langle a'_1,\ldots,a'_m\mid r'_1,\ldots,r'_n\rangle$ are Nielsen equivalent if and only if there is an isomorphism of free groups
\[
\alpha:\langle a_1,\ldots a_m\rangle\to \langle a'_1,\ldots a'_m\rangle
\]
such that, up to permuting indices, $\alpha(r_i)$ is conjugate to $r'_i$.

For branched 2-complexes, the area of the faces needs to be added to the data of the pair.

\begin{definition}[Branched group pair]\label{def: Branched group pair}
A \emph{branched group pair} consists of a group pair $(F,\curlyA)$ together with an $F$-invariant map $\Area:\curlyA\to \R_{\geq 0}$.  Two branched group pairs are said to be isomorphic if the underlying group pairs $(F,\curlyA)$ and $(H,\curlyB)$ are isomorphic and the associated bijection $\curlyA\to\curlyB$ preserves area.
\end{definition}

It is easy to see that a branched 2-complex determines a branched group pair, and so it makes sense to say that branched 2-complexes are Nielsen equivalent if their associated branched group pairs are isomorphic.

\section{Irreducible complexes}\label{sec: Irreducible complexes}

Two-dimensional complexes are too complicated to classify up to arbitrary homotopy equivalence. However, sometimes they can be simplified, and the following definition, which also played central roles in \cite{wilton_essential_2018} and \cite{louder_uniform_2024}, encapsulates various cases in which this is possible. The material in this section is very similar to \cite[\S3.1--2]{louder_uniform_2024}. Note that we do not assume that our 2-complexes are connected.

\begin{definition}[Visibly reducible and unfoldable]\label{def: Visibly reducible and unfoldable}
If a 2-complex $X$ satisfies any of the following three conditions then it is said to be \emph{visibly reducible}:
\begin{enumerate}[(i)]
\item the one-skeleton of $X$ has a vertex of valence 0 or 1;
\item a vertex of $X$ is locally separating;
\item $X$ has a \emph{free face}, i.e.\ an edge that is hit exactly once by one face.
\end{enumerate}
There is one further simplification move that plays an important role.
\begin{enumerate}[(i)]
\setcounter{enumi}{3}
\item  There is an essential equivalence $X'\to X$ that is not an isomorphism.
\end{enumerate}
If $X$ satisfies condition (iv) but none of conditions (i)-(iii) then it is said to be \emph{unfoldable}. If $X$ satisfies none of conditions (i)-(iv) then it is said to be \emph{visibly irreducible}.
\end{definition}

\begin{remark}\label{rem: Reducibility and links}
Each of conditions (i)-(iv) in Definition \ref{def: Visibly reducible and unfoldable} can be recognised from the structure of the links of the vertices of $X$:
\begin{enumerate}[(i)]
\item the link of some vertex has at most one vertex;
\item the link of some vertex is disconnected;
\item the link of some vertex contains a vertex of valence 1;
\item the link of some vertex has a cut vertex.
\end{enumerate}
In each case, there is a corresponding move that simplifies $X$.
\end{remark}

In summary, every 2-complex is visibly reducible, or unfoldable, or visibly irreducible.  Unfolding increases the number of vertices and edges while keeping the number of faces the same, and so it is not hard to see that a finite complex can only be unfolded finitely many times, at which point it will be either visibly reducible or visibly irreducible. Thus, the following definition applies to all 2-complexes.

\begin{definition}[Irreducible 2-complex]\label{def: Irreducible 2-complex}
A finite 2-complex $X$ is \emph{irreducible} if there is an essential equivalence $X'\to X$ such that $X'$ is visibly irreducible. Otherwise, $X$ is \emph{reducible}.
\end{definition}

Since it may be possible to unfold a 2-complex $X$ in several different ways, it is \emph{a priori} possible that $X$ could, on the one hand, be essentially equivalent to a visibly irreducible 2-complex $X'$ and, on the other hand, also be essentially equivalent to a visibly reducible 2-complex $X''$.  Using a famous lemma of Whitehead \cite{whitehead_equivalent_1936}, it transpires that irreducibility of a 2-complex can be characterised via algebraic properties of the associated group pair. We will use the following convenient formulation (cf.\ \cite[Proposition 3.8]{louder_uniform_2024}).

\begin{lemma}[Whitehead's lemma]\label{lem: Whitehead's lemma}
A finite 2-complex $X$ is reducible if and only if the associated group pair $(F,\curlyA)$ satisfies one of the following conditions:
\begin{enumerate}[(i)]
\item $F$ is trivial;
\item $F$ is cyclic and $\#\curlyA=1$;
\item there is a non-trivial free splitting $F=A*B$ and each stabiliser of an element of $\curlyA$ is conjugate into either $A$ or $B$.
\end{enumerate}
\end{lemma}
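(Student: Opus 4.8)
The plan is to characterise reducibility by relating each of the three simplification moves in Definition \ref{def: Visibly reducible and unfoldable} to the algebraic conditions (i)--(iii) on the group pair, and then to invoke Whitehead's cut-vertex lemma to show that no hidden unfolding can avoid these conditions. The subtle point is Definition \ref{def: Irreducible 2-complex}: $X$ is reducible if \emph{some} essential equivalence $X'\to X$ produces a visibly reducible complex, whereas there may be \emph{other} essential equivalences leading to visibly irreducible complexes. Since essential equivalences induce isomorphisms of group pairs (by Lemma \ref{lem: Essential maps in group theory} and Definition \ref{def: Nielsen equivalence}), the conditions (i)--(iii) in the statement are Nielsen invariants, so it suffices to prove the equivalence at the level of the group pair $(F,\curlyA)$, working with a conveniently chosen representative.

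\emph{First} I would handle the easy implications of visible reducibility. Conditions (i) and (ii) of Definition \ref{def: Visibly reducible and unfoldable} (valence $\leq 1$ vertices, local separation) translate under the $\pi_1$-description $F=\pi_1(X_{(1)})$ directly into degeneracies of $(F,\curlyA)$: a valence-$0$ component, after collapsing, forces $F$ trivial with no faces, or a $\#\curlyA=0$ situation, giving condition (i) or (ii); a locally separating vertex yields a free splitting of $F$ compatible with the faces, which is condition (iii). A free face means some $w_i$ can be Tietze-eliminated --- the generator dual to that edge appears exactly once in exactly one relator --- producing either a smaller $F$ (if the relator is thereby killed, condition (i)/(ii)) or a visible free splitting (condition (iii)). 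These are routine translations of Nielsen/Tietze moves into the group-pair language.

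\emph{Next}, for the converse, I would show that each of the algebraic conditions (i)--(iii) forces visible reducibility of some essentially equivalent complex. If $F$ is trivial, $X_{(1)}$ is a point or a tree and a vertex of valence $0$ or $1$ appears after collapsing (condition (i) of the definition). If $F$ is cyclic with $\#\curlyA=1$, the single face wraps around the unique circle and one finds either a free face or a proper-power boundary that unfolds and then reduces. The main content is condition (iii): a free splitting $F=A*B$ with every stabiliser conjugate into $A$ or $B$. Here I would realise the splitting geometrically, building a separating structure on $X_{(1)}$ (e.g.\ a graph-of-spaces decomposition with trivial edge group) such that each face, having its $\langle w_i\rangle$ conjugate into a factor, can be homotoped off the separating locus; this exposes a locally separating vertex, which is condition (ii) of visible reducibility.

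\emph{The hard part} will be the no-unfolding-escape argument: ruling out the scenario where $X$ is simultaneously essentially equivalent to a visibly irreducible complex and the group pair satisfies one of (i)--(iii). This is exactly where Whitehead's lemma \cite{whitehead_equivalent_1936} enters. The cut-vertex version of Whitehead's lemma asserts that if a cyclic word (or family of cyclic words) in $F=A*B$ has all its elements conjugate into the factors, then the corresponding Whitehead graph has a cut vertex --- equivalently, some link in a visibly irreducible representative has a cut vertex, which is condition (iv) of Definition \ref{def: Visibly reducible and unfoldable} and so precludes visible irreducibility. Concretely, I would argue contrapositively: assume $X$ is essentially equivalent to a visibly irreducible $X'$; then by Remark \ref{rem: Reducibility and links} no link of $X'$ has an isolated, disconnecting, valence-$1$, or cut vertex, and Whitehead's lemma translates these link conditions into the statement that $(F,\curlyA)$ admits \emph{no} free splitting respecting the stabilisers and is neither trivial nor the degenerate cyclic pair --- i.e.\ none of (i)--(iii) holds. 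Since the group pair is a Nielsen invariant, this conclusion is independent of the representative, closing the equivalence.
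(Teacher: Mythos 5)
The paper does not actually prove this lemma: it is imported from \cite[Proposition 3.8]{louder_uniform_2021}, and your overall route --- reduce everything to the group pair via Nielsen invariance, then run Whitehead's cut-vertex lemma on the links of a visibly irreducible representative --- is the same strategy as that source. Two structural remarks before the main objection. Since $X$ is reducible exactly when \emph{no} essential equivalence $X'\to X$ has $X'$ visibly irreducible (Definition \ref{def: Irreducible 2-complex}), the implication (i)--(iii) $\Rightarrow$ reducible is carried \emph{entirely} by your ``hard part'' contrapositive; your middle paragraph is therefore redundant, which is fortunate, because as written it is unsound: ``realising the splitting geometrically'' and ``homotoping faces off the separating locus'' produces a complex that is merely \emph{Nielsen} equivalent to $X$ (Definition \ref{def: Nielsen equivalence}), and a Nielsen-equivalent visibly reducible complex says nothing about reducibility of $X$, which quantifies only over complexes that fold onto $X$. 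Nielsen equivalence does not imply essential equivalence.

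The genuine gap is in the direction ``reducible $\Rightarrow$ (i)--(iii)''. Your claim that the visible-reducibility moves ``translate routinely'' into (i)--(iii) is false for precisely those moves that do not change the group pair: attach a single valence-one edge (a whisker) to, say, the presentation complex of $\langle a,b\mid [a,b]\rangle$. The result is visibly reducible (conditions (i) and (ii) of Definition \ref{def: Visibly reducible and unfoldable}), every complex essentially equivalent to it still carries a whisker-type vertex, and yet the associated pair is the rigid pair $(F_2,\,F_2/\langle[a,b]\rangle)$, satisfying none of (i)--(iii). (Relatedly, your ``$\#\curlyA=0$ situation'' is not one of the lemma's conditions, and cannot occur since $\Area(X)>0$.) The same problem arises for a locally separating vertex cutting off a simply connected piece: one gets $F=A*B$ with $B$ trivial, which is not a non-trivial splitting. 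So the forward direction needs an actual argument: after a maximal unfolding, one must show that the visible reduction encountered is of free-face type (this does give (ii) or (iii): a boundary word crossing a free face exactly once is a basis element, so $F=\langle w\rangle * F''$ with the remaining stabilisers conjugate into $F''$) or of genuinely separating type (both pieces non-simply-connected, or, in the locally-but-not-globally separating case, $F=F'*\Z$ with all stabilisers in $F'$), while the degenerate whisker and tree moves must be ruled out by normalising to core 1-skeleta --- a hypothesis your sketch never imposes and which the argument needs. Finally, your ``hard part'' invokes Whitehead's lemma for arbitrary multi-vertex core graphs and for families of coset stabilisers; Whitehead's paper \cite{whitehead_equivalent_1936} treats cyclic words over a rose, and the extension to links of visibly irreducible complexes is exactly the content of \cite[Proposition 3.8]{louder_uniform_2021}, so at that step you are citing the statement you set out to prove rather than proving it.
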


In particular, if $X$ is irreducible, then any unfolding sequence eventually terminates in a visibly irreducible 2-complex. 

Irreducible 2-complexes are very common. Indeed, even if $X$ is reducible, as long as $\pi_1(X)$ is sufficiently complicated, a procedure of repeatedly unfolding and simplifying will eventually produce a canonical irreducible 2-complex that maps to $X$. The following lemma is proved in \cite[Lemma 3.10 and 3.14]{louder_uniform_2024}.

\begin{lemma}[Irreducible cores]\label{lem: Irreducible cores}
Let $X$ be a connected, finite, standard 2-complex. If $\pi_1X$  is not free and doesn't split non-trivially as a free product, then there is an immersion $\phi:\widehat{X}\to X$ such that $\widehat{X}$ is irreducible and $\phi$ is an isomorphism of fundamental groups. Furthermore, any branched immersion from an irreducible branched complex $Y\to X$ factors canonically through $\widehat{X}$.
\end{lemma}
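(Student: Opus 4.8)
The plan is to build $\widehat{X}$ by repeatedly simplifying $X$, using only moves that are \emph{immersions into} the complex being simplified and that induce an isomorphism on the fundamental group of the whole complex. Concretely, whenever the current complex is visibly reducible in the sense of Definition~\ref{def: Visibly reducible and unfoldable}, I would do one of the following: collapse a free face, or delete a vertex of valence $\leq 1$ together with its edge (each is the inclusion of a subcomplex, hence injective on links and therefore an immersion, and each is an elementary collapse, hence a homotopy equivalence); or, at a locally separating vertex, separate the link and discard the pieces that carry no fundamental group, retaining the subcomplex that carries $\pi_1$. The hypothesis that $\pi_1 X$ is neither free nor a nontrivial free product is exactly what guarantees that at each locally separating vertex at most one side carries nontrivial fundamental group---otherwise van Kampen would exhibit $\pi_1 X$ as a nontrivial free product---so that discarding the remaining simply-connected sides is a $\pi_1$-isomorphism. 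Each such move strictly decreases the number of cells, so after finitely many steps we reach a subcomplex $\widehat{X}\subseteq X$ admitting none of the visible reductions (i)--(iii). The inclusion $\phi\colon\widehat{X}\to X$ is then a composite of immersions, hence an immersion, and a composite of $\pi_1$-isomorphisms.

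The crux is to upgrade \textit{``no visible reduction of type (i)--(iii)''} to genuine irreducibility in the sense of Definition~\ref{def: Irreducible 2-complex}, that is, to rule out that $\widehat{X}$ is unfoldable and yet reducible. Here I would invoke Whitehead's lemma (Lemma~\ref{lem: Whitehead's lemma}): were $\widehat{X}$ reducible, its group pair $(F,\curlyA)$ would satisfy one of conditions (i)--(iii). Conditions (i) and (ii) would force $\pi_1\widehat{X}$ to be trivial or the presentation to contain a free face, both of which are already excluded. Condition (iii), a nontrivial free splitting $F=A*B$ with every face-boundary conjugate into a factor, yields a free-product decomposition of $\pi_1 X$ into the quotients of $A$ and of $B$ by the relators each contains; the $\pi_1$-hypothesis forces one quotient to be trivial, so the corresponding faces form a reduced, simply-connected sub-presentation-complex. \textbf{The main obstacle} is to show that such a hidden simply-connected factor cannot survive in a complex with no visible reductions and non-free, freely indecomposable $\pi_1$: this is the delicate case of a reduced $S^2$-like piece attached along a contractible subgraph rather than at a single separating vertex, and it is precisely where the interplay between Whitehead's algorithm and the hypothesis on $\pi_1 X$ does its work. (If one instead allows unfolding in the construction, the relevant essential equivalences are $\pi_1$-isomorphisms and remain essential under composition by Remark~\ref{rem: Composition of essential maps is essential}; but then ensuring the final map is an \emph{immersion} is exactly the subtlety resolved by excluding the reducible case above.) Having excluded it, $\widehat{X}$ is irreducible.

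For the universal property, let $\psi\colon Y\to X$ be a branched immersion with $Y$ irreducible. Since immersions are essential (they are already folded, so the folding map is the identity and trivially satisfies Definition~\ref{def: Essential map}), the branched analogue of Lemma~\ref{lem: Essential maps in group theory} makes both $\psi_*$ and $\phi_*$ injective on the associated branched group pairs, while $\phi$ being a $\pi_1$-isomorphism identifies $\pi_1\widehat{X}$ with $\pi_1 X$. I would then factor $\psi$ by lifting: realise $\psi$ and $\phi$ as subcomplexes of covers of $X$ in the manner of Stallings' folding (Lemma~\ref{lem: Folding 2-complexes}), and use that $\widehat{X}$ carries the full fundamental group to lift $\psi$ through $\phi$, producing $\psi'\colon Y\to\widehat{X}$ with $\phi\circ\psi'=\psi$. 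Irreducibility of $Y$ is what guarantees that its image meets none of the collapsible free faces or the simply-connected pieces discarded during the construction, so the lift genuinely lands in $\widehat{X}$ and is itself a branched immersion; injectivity of $\phi_*$ yields uniqueness of $\psi'$, making the factorisation canonical.
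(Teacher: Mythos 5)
There is a genuine gap, and it sits exactly where you flagged it. Your reduction procedure uses only the visible moves (i)--(iii) of Definition~\ref{def: Visibly reducible and unfoldable} and deliberately never unfolds, so it terminates at a subcomplex $\widehat{X}$ that is merely \emph{not visibly reducible}; by that definition such a complex may still be unfoldable, and by Definition~\ref{def: Irreducible 2-complex} irreducibility is a property of the whole essential-equivalence class. You correctly identify the remaining danger and appeal to Whitehead's lemma (Lemma~\ref{lem: Whitehead's lemma}), but the contradiction you need never materialises: if the group pair of $\widehat{X}$ satisfies condition (iii) with one free factor presenting the trivial group, this contradicts nothing you have established about $\widehat{X}$ --- it only says that after finitely many \emph{unfolds} the trivial factor becomes a visible wedge summand. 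Indeed such hidden factors genuinely survive your moves: glue a Miller--Schupp complex $C$ (Example~\ref{eg: Contractible 2-complex}) to an irreducible complex $Z$ with $\pi_1 Z$ non-free and freely indecomposable along a single edge (subdividing first so the edge has distinct endpoints). The result has no free face, no low-valence vertex and no locally separating vertex --- the links are one-point unions, i.e.\ have cut vertices, so the complex is unfoldable --- yet it is reducible, $\pi_1$ is unchanged, and your procedure halts on it. So the sentence ``Having excluded it, $\widehat{X}$ is irreducible'' asserts precisely the statement that remains to be proved. Note that the paper itself does not prove this lemma but quotes it from \cite{louder_uniform_2021} (Lemmas 3.10 and 3.14); the argument there interleaves unfolding with your collapsing moves and proves termination (unfolding preserves the number of faces and can only be iterated finitely often, while each collapse strictly decreases the number of cells). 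This interleaving is also what makes the final map an immersion: an unfold map $X'\to X$ is not itself locally injective, and one must check that after discarding cells the composite to $X$ still is --- the subtlety your parenthetical remark defers to the same unproved exclusion.

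The universal property has a secondary soft spot. Completing immersions to coverings in the style of Stallings works for graphs but not for 2-complexes (one cannot in general add faces to complete a branched immersion to a branched covering), so ``realise $\psi$ and $\phi$ as subcomplexes of covers of $X$'' needs justification. The standard route instead follows the construction step by step: branched immersions pull back along essential equivalences (\cite[Lemma 3.13]{louder_uniform_2021}, already invoked in the proof of Proposition~\ref{prop: Invariance under branched coverings}), which lifts $Y\to X$ through each unfolding, and your observation about collapses is essentially right and completable --- if the image of $Y$ used a free face of the intermediate complex, an edge of $Y$ over the once-traversed edge would itself be traversed at most once, contradicting visible irreducibility of the folded representative of $Y$; connectivity and injectivity of links likewise keep the image on one side of a locally separating vertex. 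But as written this part inherits the unfixed construction, so the factorisation argument is also incomplete.
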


The complex $\widehat{X}$ is called the \emph{irreducible core} of $X$.  In fact, any finite complex $X$ can be factored into irreducible 2-complexes and simple discs, where a \emph{simple disc} is a 2-complex $D$ with a single face and homeomorphic to a disc. 

\begin{lemma}[Unfolding a 2-complex]\label{lem: Unfolding 2-complexes}
Let $X$ be any finite, connected, standard 2-complex. There is a finite, connected graph $G$, a finite set of connected, irreducible 2-complexes $X_1,\ldots,X_m$, and a finite set of discs $D_1,\ldots,D_n$ such that there is an essential equivalence
\[
X'= G\vee X_1\vee\ldots \vee X_m\vee D_1\vee\ldots\vee D_n \to X
\]
for some choice of iterated wedge $X'$ of the factors.
\end{lemma}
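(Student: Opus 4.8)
The plan is to descend to the associated group pair, read off the decomposition from a relative Grushko decomposition, and then realise that decomposition geometrically by a single essential equivalence. First I would pass to the group pair $(F,\curlyA)=(F_X,\curlyA_X)$, with its finite family of peripheral cyclic subgroups $\langle w_1\rangle,\ldots,\langle w_n\rangle$ coming from the faces, and produce a free product decomposition
\[
F=F_1*\cdots*F_p*L
\]
in which $L$ is free and carries none of the peripheral subgroups, each $\langle w_i\rangle$ is conjugate into exactly one factor $F_j$, and each $F_j$ is freely indecomposable relative to the peripheral subgroups it contains. Such a decomposition is obtained by starting from the trivial decomposition and repeatedly refining: whenever a factor carrying some peripheral subgroups admits a compatible free splitting -- that is, condition (iii) of Whitehead's Lemma \ref{lem: Whitehead's lemma} -- I split along it, keeping every peripheral subgroup elliptic. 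This terminates because, by Grushko's theorem, the total rank $\rk F_1+\cdots+\rk F_p+\rk L$ is bounded by $\rk F$. Lumping the face-free factors into $L$, every remaining factor satisfies $\curlyA_j\neq\varnothing$.

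Next I would classify and realise the pieces. By construction no $(F_j,\curlyA_j)$ satisfies condition (iii), and $F_j$ is nontrivial, so condition (i) also fails; hence Whitehead's Lemma \ref{lem: Whitehead's lemma} shows that $(F_j,\curlyA_j)$ is reducible only if condition (ii) holds, i.e.\ $F_j$ is infinite cyclic carrying a single peripheral coset, forcing its relator to be a generator -- in which case it is realised by a disc $D_j$. Otherwise $(F_j,\curlyA_j)$ is irreducible, and I realise it by the presentation complex $X_j$ on a basis of $F_j$ with the relevant $w$'s as relators; this $X_j$ is an irreducible complex, again by Whitehead's Lemma \ref{lem: Whitehead's lemma}, since its group pair is the irreducible pair $(F_j,\curlyA_j)$. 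Realising $L$ by a graph $G$ and forming the iterated wedge
\[
X'=G\vee X_1\vee\cdots\vee X_m\vee D_1\vee\cdots\vee D_n
\]
at a common vertex, the group pair of $X'$ is the free product of the group pairs of its factors, which is isomorphic to $(F,\curlyA)$. Thus $X'$ is Nielsen equivalent to $X$.

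The final and hardest step is to upgrade this Nielsen equivalence to an honest essential equivalence $\phi\colon X'\to X$. Fixing an isomorphism of group pairs $\Phi$, I would send the wedge vertex to a vertex of $X$ and each generating loop of $X'_{(1)}$ to an edge-path in $X_{(1)}$ representing its $\Phi$-image; after subdividing the edges of $X'$ (an essential equivalence that leaves its wedge structure intact) this becomes a combinatorial map, which I arrange to be surjective, enlarging $G$ by tree edges if necessary to cover any valence-one part of $X_{(1)}$. Since $\Phi$ carries the faces of $X'$ bijectively to the faces of $X$ with matching boundary conjugacy classes, and all complexes are standard, after choosing conjugacy representatives each face boundary of $X'$ maps onto the immersed boundary of the corresponding face of $X$, so $\phi$ extends over faces as an unbranched homeomorphism. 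The resulting $\phi$ induces $\Phi$ on group pairs, hence is essential by Lemma \ref{lem: Essential maps in group theory}. The main obstacle is to check that $\phi$ is an essential \emph{equivalence}: folding $\phi$ as in Lemma \ref{lem: Folding 2-complexes} yields a surjective branched immersion $\bar\phi\colon\bar{X'}\to X$ inducing an isomorphism of group pairs, and I must argue that such a map is an isomorphism. On $1$-skeleta a surjective immersion that is a $\pi_1$-isomorphism is a degree-one covering of the core of $X_{(1)}$, and the preliminary enlargement of $G$ guarantees that this core is all of $X_{(1)}$; the face correspondence is already a bijection. Controlling the fold so that it recovers $X$ exactly -- rather than a proper quotient or cover -- is the crux, and examples in which the free splitting of $F$ is not visible as a separating vertex of $X$ show that the subdivision of $X'$ must be chosen with this folding in mind.
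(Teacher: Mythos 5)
Your overall architecture---iterate condition (iii) of Whitehead's Lemma \ref{lem: Whitehead's lemma} to obtain a relative Grushko decomposition of the group pair, sort the factors into irreducible pieces, discs and a free part, and realise the decomposition as a wedge---is a sensible skeleton, and the first two steps are essentially correct. But there is a genuine gap in the final step, and it sits one move earlier than where you locate it. Having built $X'$ abstractly as a wedge of roses with faces attached, you assert that ``after choosing conjugacy representatives each face boundary of $X'$ maps onto the immersed boundary of the corresponding face of $X$, so $\phi$ extends over faces as an unbranched homeomorphism.'' This is false as stated: choosing conjugacy representatives makes the boundary word of a face of $X'$ equal to the attaching word of the corresponding face of $X$ \emph{as an element of} $\pi_1(X_{(1)})$, but the image edge-path is the unreduced concatenation of the paths representing the generators, and in general it backtracks. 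A branched morphism of standard complexes must carry the boundary circle of a face bijectively onto the attaching path of its image face; there is no room for a homotopy removing the backtracking, so the combinatorial map $\phi\colon X'\to X$ is never actually constructed. (Nor can you simply re-attach the faces of $X'$ along the unreduced words, since attaching maps are required to be immersions.) The paper's own Lemma \ref{lem: Culler's lemma} is a warning sign here: even in the surface case, upgrading a Nielsen equivalence to an essential equivalence requires a genuine surgery argument, and its output is an essential equivalence from a \emph{combinatorially different} complex $S'$, not from the complex one first wrote down. Your plan needs an analogue of Culler's lemma for arbitrary wedges, which you do not supply. Ironically, the part you single out as the crux---ruling out that the fold of $\phi$ yields a proper quotient or cover of $X$---is fine and follows from the Stallings facts you cite: a surjective graph immersion inducing a $\pi_1$-isomorphism is an isomorphism (after your tree-edge enlargement), and the face bijection then forces $\bar\phi$ to be an isomorphism. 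The unfixable-as-written step is constructing any combinatorial map realising the pair isomorphism in the first place.

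For comparison: the paper gives no detailed proof, explicitly leaving the lemma as an exercise modelled on the proofs of Lemmas \ref{lem: Whitehead's lemma} and \ref{lem: Irreducible cores} (i.e.\ on \cite[\S3]{louder_uniform_2021}). That intended argument is bottom-up rather than top-down: one repeatedly modifies $X$ itself by the moves of Definition \ref{def: Visibly reducible and unfoldable} and Remark \ref{rem: Reducibility and links}---unfolding at cut vertices of links, pulling the complex apart at locally separating vertices, absorbing valence-one material into the graph factor, splitting off discs---so that each intermediate map refolds onto the previous complex and is therefore an essential equivalence by construction; termination holds because the total length of the attaching maps is fixed, which bounds the number of unfoldings, while Grushko's theorem bounds the splitting moves exactly as in your first step. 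Because $X'$ is produced from $X$ by reverse folds, the essential equivalence is tautological, and the group-pair bookkeeping you do via Whitehead's lemma is needed only to certify that the terminal pieces are irreducible complexes, discs, or a graph. If you want to salvage your top-down approach, you would have to build the $1$-skeleta of the $X_j$ not as roses but from subpaths of the attaching paths of $X$, chosen so that the generator-images concatenate without cancellation---which is precisely the unfolding construction in disguise.
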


The proof is similar to the proofs of Lemmas \ref{lem: Whitehead's lemma} and \ref{lem: Irreducible cores}, and left as an exercise to the reader. The 2-complex $X'$ is called an \emph{unfolding} of $X$. We record one important consequence here. 

\begin{lemma}[Irreducible immersion]\label{lem: Irred is nonempty unless X is homotopic to a graph}
Let $X$ be a connected, finite, standard 2-complex. Unless $X$ is homotopy equivalent to a graph, there is an immersion $Y\to X$ with $Y$ irreducible.
\end{lemma}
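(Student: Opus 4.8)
The plan is to deduce the lemma directly from the unfolding decomposition of Lemma \ref{lem: Unfolding 2-complexes}, which is surely why that lemma is recorded immediately beforehand. First I would apply it to obtain an essential equivalence
\[
X' = G\vee X_1\vee\cdots\vee X_m\vee D_1\vee\cdots\vee D_n \to X,
\]
with each $X_i$ connected and irreducible and each $D_j$ a simple disc.

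Next I would show that the hypothesis forces $m\geq 1$. Since an essential equivalence is in particular a homotopy equivalence (the folding map is a homotopy equivalence on $1$-skeleta and a homeomorphism on the interiors of faces, so it does not change the homotopy type) and each simple disc $D_j$ is contractible, we get $X\simeq X'\simeq G\vee X_1\vee\cdots\vee X_m$. If $m=0$ this reads $X\simeq G$, a graph; so if $X$ is not homotopy equivalent to a graph then at least one irreducible factor, say $X_1$, must be present.

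Now I would produce the desired immersion by folding. The inclusion $\iota\colon X_1\hookrightarrow X'$ of a wedge summand is injective on the link of every vertex --- at the wedge point the link of $X'$ is the disjoint union of the links of the factors, and elsewhere $\iota$ is a local homeomorphism --- so $\iota$ is an immersion and hence essential. Composing with the essential equivalence $X'\to X$ gives, by Remark \ref{rem: Composition of essential maps is essential}, an essential map $\psi\colon X_1\to X$. Folding $\psi$ as in Lemma \ref{lem: Folding 2-complexes} factors it as $X_1\stackrel{\phi_0}{\to}\bar{X}_1\stackrel{\bar\psi}{\to}X$ with $\bar\psi$ a branched immersion. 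Because $\psi$ is essential, the fold $\phi_0$ is a homotopy equivalence on $1$-skeleta and a homeomorphism on face interiors; in particular it preserves area, so $\bar{X}_1$ is again standard and $\bar\psi$ is a genuine immersion. Finally $\phi_0$ induces an isomorphism of the associated group pairs, so $\bar{X}_1$ is Nielsen equivalent to $X_1$; since reducibility is a property of the group pair alone (Lemma \ref{lem: Whitehead's lemma}), $\bar{X}_1$ is irreducible. Taking $Y=\bar{X}_1$ then completes the argument.

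The step I expect to be the main obstacle is the last one: confirming that folding the essential map $\psi$ preserves both properties we need. Staying \emph{standard} (no branching is introduced) is exactly the area-preservation built into the definition of an essential map, and staying \emph{irreducible} requires the observation that irreducibility is a Nielsen invariant, which one reads off from Whitehead's lemma. The auxiliary claim that essential equivalences are homotopy equivalences, used to detect the graph case, is routine but should be stated carefully from the definition of essential maps.
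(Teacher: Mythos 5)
Your proposal is correct and takes essentially the same route as the paper: the paper's proof likewise invokes Lemma \ref{lem: Unfolding 2-complexes}, takes $Y$ to be the result of folding the map $X_1\to X$ when $m\geq 1$, and observes that $m=0$ forces $X'$ (and hence $X$) to be homotopy equivalent to a graph. The extra verifications you supply --- that the wedge inclusion is essential, that folding an essential map preserves standardness via area-preservation, and that irreducibility survives folding because it is a Nielsen invariant by Whitehead's lemma --- are precisely the details the paper leaves implicit.
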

\begin{proof}
Consider an unfolding $X'$ coming from Lemma \ref{lem: Unfolding 2-complexes}. If $m\geq 1$ then we may take $Y$ to be the result of folding the map  $X_1\to X$. Otherwise, $X'$ is a obtained by wedging discs onto the graph $G$, and so is homotopy equivalent to a graph, which implies that $X$ is too.
\end{proof}

Thus, as proposed in the introduction,  we can study very general classes of 2-complexes $X$ via maps from irreducible 2-complexes.  The reader should note, however, that even the very weak hypothesis of Lemma \ref{lem: Irred is nonempty unless X is homotopic to a graph} is not sharp, since there are irreducible 2-complexes that are homotopic to a point.

\begin{example}\label{eg: Contractible 2-complex}
Let $X$ be the presentation complex associated to the presentation $\langle a,b\mid bab^{-1}a^{-2},b\rangle$. Since this is a balanced presentation of the trivial group, it follows from the Hurewicz and Whitehead theorems that $X$ is contractible. But the link of the unique vertex of $X$ is the complete graph on 4 generators, so $X$ is irreducible by Remark \ref{rem: Reducibility and links}.
\end{example}

Example \ref{eg: Contractible 2-complex} is the first of an infinite family of examples of contractible, irreducible 2-complexes constructed by Miller and Schupp \cite{miller_presentations_1999}.

\section{Surfaces}\label{sec: Surfaces}

A finite 2-complex $X$ is said to be a \emph{surface} if its realisation is a closed topological surface. Equivalently, $X$ is a surface if and only if the link of every vertex is a circle. Since circles do not satisfy any of the conditions of Remark \ref{rem: Reducibility and links}, it follows that any surface is irreducible.  Surfaces provide some of the most important examples of irreducible complexes, and they will play a special role. 

Just as Whitehead's lemma characterises the group pairs associated to irreducible complexes, we will also need a result that characterises the 2-complexes that are Nielsen equivalent to surfaces. This is provided by the following lemma; the proof follows a theorem of Culler \cite{culler_using_1981}.

\begin{lemma}[Culler's lemma]\label{lem: Culler's lemma}
If $X$ is a finite 2-complex which is Nielsen equivalent to a surface $S$ then there is a  (homeomorphic, but possibly combinatorially distinct) surface $S'$ and an essential equivalence $S'\to X$.
\end{lemma}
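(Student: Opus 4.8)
The plan is to reduce the statement to a purely combinatorial construction and then carry it out by transporting the gluing pattern of $S$ across the Nielsen equivalence. By Lemma~\ref{lem: Essential maps in group theory} a combinatorial map of finite connected $2$-complexes is essential exactly when it induces an injection of group pairs; and a map inducing an \emph{isomorphism} of group pairs folds, via Lemma~\ref{lem: Folding 2-complexes}, to a branched immersion that itself induces a group-pair isomorphism and is therefore an isomorphism, so the original map is an essential equivalence. Hence it suffices to build \emph{some} surface $S'$, homeomorphic to $S$, together with a combinatorial map $f\colon S'\to X$ realising the given isomorphism $\alpha\colon(F_S,\curlyA_S)\to(F_X,\curlyA_X)$. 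Crucially, the statement allows $S'$ to carry a new cell structure, so I am free to choose the combinatorics of $S'$ to fit $X$ rather than to match $S$. Fixing $\alpha$ identifies the $n$ faces of $S$ with the $n$ faces of $X$ so that the $i$-th face boundary $v_i$ of $S$ is sent by $\alpha$ to a conjugate of the immersed attaching word $w_i$ of the $i$-th face of $X$.

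First I would record $S$ as a gluing of disks: $S$ is obtained from its face disks $D_1,\dots,D_n$, with $\partial D_i$ subdivided according to $v_i$, by identifying their boundary edges in pairs, each edge of $S_{(1)}$ being the image of exactly two boundary slots. This pairing $\tau$ — a perfect matching on the letter-occurrences of $v_1,\dots,v_n$, together with orientation data — reconstructs $S$, since \emph{any} pairwise gluing of disk boundaries yields a closed surface. The goal is to produce the analogous matching for $X$. I would realise $\alpha$ by a combinatorial homotopy equivalence $S_{(1)}\to X_{(1)}$, so that each edge of $S_{(1)}$ is sent to an edge-path in $X_{(1)}$ and each $v_i$ to a loop freely homotopic to $w_i$. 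Substituting these paths into $\tau$ and cancelling the backtracking that reduces $\alpha(v_i)$ to the immersed word $w_i$ pushes $\tau$ forward to a matching $\tau'$ on the boundary slots of the disks $E_1,\dots,E_n$ of $X$'s faces, where $\partial E_i=w_i$. By construction any two slots paired by $\tau'$ carry the same edge of $X_{(1)}$, so gluing the $E_i$ along $\tau'$ produces a closed surface $S'$ with a combinatorial map $f\colon S'\to X$ sending faces to faces homeomorphically and folding $S'_{(1)}$ onto $X_{(1)}$.

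Finally I would verify the three required properties. Being a pairwise gluing of disks, $S'$ is a closed surface, with $\chi(S')=1-\rk(F_X)+n$, since $S'_{(1)}$ is connected with $\pi_1\cong F_X$ and $S'$ has $n$ faces; as $\rk(F_X)=\rk(F_S)$ and the face count $n$ are preserved by $\alpha$, we get $\chi(S')=\chi(S)$. Together with the fact that orientability is determined by the orientation data carried by $\tau$ and preserved under the transport, the classification of closed surfaces gives $S'\cong S$. By construction $f$ induces $\alpha$ on the fundamental groups of the $1$-skeleta and the $\alpha$-matching on faces, hence realises the group-pair isomorphism, so by the criterion above it is an essential equivalence.

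The main obstacle is the middle step: manufacturing $\tau'$ so that it is simultaneously compatible with the map to $X$ (paired slots sharing a common edge) and combinatorially faithful to $\tau$ after transport through $\alpha$. The difficulty is that $v_i$ and $w_i$ generally have different lengths, and an edge of $X_{(1)}$ may be traversed by the faces a number of times of the wrong parity, so no compatible pairing can live on the given $1$-skeleton of $X$; one is forced to refine $X_{(1)}$ as prescribed by $\alpha$ and to control the cancellation taking $\alpha(v_i)$ to $w_i$ so that it respects the pairing and therefore acts on the glued surface only by moves preserving its homeomorphism type. This cancellation bookkeeping — guaranteeing that folding and reduction never alter the topology of the surface — is exactly the content supplied by Culler's surface technique, and is where the real work lies.
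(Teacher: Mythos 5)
There is a genuine gap, and you have in fact named it yourself: the entire content of the lemma lives in your middle step, which you defer rather than prove. You assert that substituting edge-paths for edges and ``cancelling the backtracking that reduces $\alpha(v_i)$ to the immersed word $w_i$'' pushes the matching $\tau$ forward to a matching $\tau'$, and that these cancellation moves act on the glued surface ``only by moves preserving its homeomorphism type.'' Nothing in the proposal establishes this. Each cancellation of an adjacent pair $e\bar{e}$ forces a re-pairing of the partners of the two deleted slots (or a deletion of a matched pair outright), and one must prove that every such move preserves the homeomorphism type \emph{and connectedness} of the quotient surface, that the process terminates on the cyclic words $w_i$ (conjugation and cyclic permutation need separate care), and that slots whose image paths are empty can be handled. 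Your closing paragraph concedes that this bookkeeping ``is exactly the content supplied by Culler's surface technique, and is where the real work lies'' --- which is to say the proposal is an outline whose crucial step is the lemma itself. Moreover, your verification that $S'\cong S$ is circular: you compute $\chi(S')=1-\rk(F_X)+n$ by asserting that $S'_{(1)}$ is connected with $\pi_1(S'_{(1)})\cong F_X$, but a gluing of the face discs $E_1,\ldots,E_n$ along an arbitrary matching $\tau'$ has no reason to have this property; that the induced map $S'_{(1)}\to X_{(1)}$ is a homotopy equivalence is precisely what must be proved for $f$ to be essential, so it cannot be invoked to identify the surface. Connectedness and orientability of $S'$ are likewise asserted, not shown.

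For comparison, the paper's proof sidesteps all of this bookkeeping by working topologically on the surface itself. It takes a closed tubular neighbourhood $N$ of $S_{(1)}$, realises the Nielsen equivalence by a homotopy equivalence $\phi\colon N\to X_{(1)}$ respecting the boundary/face correspondence, and makes $\phi$ transverse to the midpoints of the edges of $X_{(1)}$. The preimages are circles and properly embedded arcs; since the complementary regions map to vertex neighbourhoods and $\phi$ is a homotopy equivalence, every complementary region is a disc, so circles and boundary-parallel arcs can be removed by homotopies of $\phi$. The surviving arc system cuts $N$ into discs, and its dual graph is taken as the new 1-skeleton $S'_{(1)}$, with the boundary circles of $N$ as attaching maps. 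The decisive advantage is that $S'$ is by construction a re-cellulation of the \emph{same} underlying surface ($N$ with discs capped onto its boundary circles), so $S'\cong S$ is automatic --- no Euler characteristic or orientability argument is needed --- and the disc-complementary-regions property is exactly what guarantees that $\phi'$ is a homotopy equivalence of 1-skeleta. If you want to salvage your combinatorial transport of $\tau$, you would need to supply precisely these invariance and termination arguments for the cancellation moves; the transversality formulation is the standard way (Culler's) of getting them for free.
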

\begin{proof}
Let $N$ be a closed tubular neighbourhood of the 1-skeleton of $S$ in its realisation. Then $N$ is homotopy equivalent to $S_{(1)}$ and the boundary components of $N$ correspond exactly to the faces of $S$. Because $X$ and $S$ are Nielsen equivalent, the fundamental groups of the 1-skeleta $S_{(1)}$ and $X_{(1)}$ are isomorphic, and because graphs are Eilenberg--Mac~Lane spaces, this isomorphism is induced by a homotopy equivalence $\phi:N\to X_{(1)}$.

Since the group isomorphism comes from a Nielsen equivalence it also respects the faces, and $\phi_*$ sends the conjugacy classes of the cyclic subgroups of $\pi_1(N)$ corresponding to the boundary components of $N$ bijectively to the conjugacy classes of the cyclic subgroups of $\pi_1(X_{(1)})$ corresponding to the faces of $X$.

We now follow the proof of \cite[Theorem 1.4]{culler_using_1981} to endow the realisation of $S$ with a new 2-complex structure, so that $\phi$ becomes a morphism of 2-complexes. After a homotopy, we may take $\phi$ to be transverse to the midpoints of edges of $X_{(1)}$. The preimages of these midpoints are then disjoint unions of properly embedded circles and arcs in $N$. Furthermore, the complementary regions map to neighbourhoods of vertices and so, since $\phi$ is a homotopy equivalence, every complementary region is a topological disc.

In particular, each embedded circle in the preimage bounds a disc, and so can be removed by a homotopy of $\phi$ which reduces the number of components of preimages. In the same way, we may also remove any arcs that are isotopic into the boundary of $N$.

After these modifications, the union of the preimages of the midpoints of edges of $X_{(1)}$ is a collection of properly embedded arcs, which cut $N$ into discs. The dual of this decomposition of $N$ is a graph $S'_{(1)}$, and $\phi$ naturally induces a morphism of graphs $\phi'$ from $S'_{(1)}$ to $X_{(1)}$. Furthermore, $\phi'$ is a homotopy equivalence because $\phi$ was. Finally, we view the natural maps from the boundary components of $N$ to $S'_{(1)}$ as attaching maps to define a 2-complex $S'$, and $\phi'$ extends to a morphism of 2-complexes $S'\to X$. We have already seen that $\phi'$ is a homotopy equivalence of 1-skeleta, and the construction also guarantees that $\phi'$ sends faces to faces bijectively, so $\phi'$ is in fact an essential equivalence.
\end{proof}

While Lemma \ref{lem: Irred is nonempty unless X is homotopic to a graph} implies that there exist essential maps from irreducible 2-complexes to a 2-complex $X$ under very mild hypotheses on $X$, it is far from obvious that there should be a similar result guaranteeing the existence of essential maps from surfaces to general $X$. However, such a result exists, and follows by combining Culler's lemma with \cite[Theorem F]{wilton_essential_2018}.

\begin{theorem}[Essential maps from surfaces exist]\label{thm: Essential surfaces exist}
Every finite, irreducible complex $X$ admits an essential branched map $S\to X$, where $S$ is a surface.
\end{theorem}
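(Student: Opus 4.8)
<br>

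The plan is to prove Theorem \ref{thm: Essential surfaces exist} by combining the cited result \cite[Theorem F]{wilton_essential_2018} with Culler's lemma (Lemma \ref{lem: Culler's lemma}) in two stages: first producing \emph{some} essential map from a surface at the level of group pairs, and then realising it as an honest morphism of 2-complexes. The natural reading of the theorem is that the existence of essential surfaces is fundamentally an algebraic statement about the group pair $(F_X,\curlyA_X)$, and the topology is recovered at the end.

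First I would translate the problem into the language of group pairs developed in \S\ref{sec: Group pairs}. By Lemma \ref{lem: Essential maps in group theory}, an essential branched map $S\to X$ corresponds to an injective morphism of (branched) group pairs $(F_S,\curlyA_S)\to (F_X,\curlyA_X)$, where $(F_S,\curlyA_S)$ is the group pair of a surface. So it suffices to find a surface group pair that injects into $(F_X,\curlyA_X)$. The hypothesis that $X$ is irreducible is exactly what is needed to feed into \cite{wilton_essential_2018}: by Whitehead's lemma (Lemma \ref{lem: Whitehead's lemma}), irreducibility rules out the degenerate cases where $F$ is trivial, $F$ is cyclic with a single coset, or the group pair splits compatibly as a free product. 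Theorem F of \cite{wilton_essential_2018} should then guarantee a subsurface or an essential surface map at this level; I would invoke it to produce a subgroup $F_S\leq F_X$, together with an invariant cofinite subset $\curlyA_S\subseteq\curlyA_X$, arising as the group pair of a surface $S$.

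Next I would upgrade this algebraic injection to a morphism of 2-complexes. The injective morphism of group pairs exhibits a 2-complex $Y$ that is Nielsen equivalent to a surface and admits an essential map $Y\to X$ (via Lemma \ref{lem: Essential maps in group theory} applied in reverse, building $Y$ from the subgroup and invariant subset). Now Culler's lemma applies directly: since $Y$ is Nielsen equivalent to a surface $S$, there is a homeomorphic surface $S'$ and an essential equivalence $S'\to Y$. Composing, and using Remark \ref{rem: Composition of essential maps is essential} that a composition of essential maps is essential, yields an essential map $S'\to X$ from a genuine surface, branched because we are working in the branched category throughout.

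The main obstacle I anticipate is the first step: extracting an essential \emph{surface} group pair from \cite[Theorem F]{wilton_essential_2018}, since the cited theorem presumably produces an essential map from \emph{some} complex satisfying a homological or combinatorial condition (for instance, a complex carrying a fundamental-class-like 2-cycle), and one must verify that the output can be taken to be, or converted into, a surface group pair rather than a more general irreducible one. Bridging this gap — confirming that the combinatorial data furnished by \cite{wilton_essential_2018} assembles into a closed surface, with the branching concentrated at face centres so that the area condition of Definition \ref{def: Branched 2-complex} is respected — is where the real content lies; once that is in hand, Culler's lemma and the functoriality of essential maps make the rest routine.
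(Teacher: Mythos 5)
Your argument is correct, but it is not the proof the paper itself gives: it is, in essence, the \emph{original} proof, which the paper only gestures at in \S\ref{sec: Surfaces} (``follows by combining Culler's lemma with \cite[Theorem F]{wilton_essential_2018}''). Your two-stage plan --- translate to group pairs via Lemma \ref{lem: Essential maps in group theory}, use Whitehead's lemma (Lemma \ref{lem: Whitehead's lemma}) to check the irreducibility hypothesis of \cite[Theorem F]{wilton_essential_2018} and extract an injective surface pair $(F_S,\curlyA_S)\to(F_X,\curlyA_X)$, realise that injection as an essential map $Y\to X$ with $Y$ Nielsen equivalent to a surface, then finish with Culler's lemma (Lemma \ref{lem: Culler's lemma}) and Remark \ref{rem: Composition of essential maps is essential} --- is exactly that route, and the ``obstacle'' you flag is genuine but is precisely what the cited theorem supplies: its conclusion is an essential \emph{surface} pair, not merely an irreducible one, so no conversion step is needed. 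The paper's own proof, given in \S\ref{sec: Irrigid complexes}, is genuinely different: it invokes the rationality theorem (Theorem \ref{thm: Rationality theorem}, proved in the companion paper \cite{wilton_rationality_2022}) to produce $Y\in\Irred(X)$ realising the minimum $\kappa(Y)=\rho_-(X)$, deduces from Lemma \ref{lem: Minimal average curvature implies surface type} that such a minimiser must be irrigid, and then applies Lemma \ref{lem: Surface to surface type} (a doubling construction which itself terminates in Culler's lemma) to convert $Y$ into a branched surface with an essential map $S\to Y\to X$. The trade-off is instructive: the paper's route yields quantitative control --- the surface can be taken to realise $\rho_-(X)$, which is exactly the mechanism behind the proof of Theorem \ref{thm: Curvature inequalities} ($\rho_-=\sigma_-$) --- whereas your route is independent of the rationality theorem but imports the full strength of \cite[Theorem F]{wilton_essential_2018}, where the real work lives.
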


Surfaces are in some sense the simplest irreducible 2-complexes. The next theorem offers one way of making this heuristic precise.

\begin{theorem}[Surfaces are minimal among irreducible complexes]\label{thm: Surfaces are minimal}
A finite, irreducible 2-complex $X$ is a surface if and only if it satisfies the following property: every essential map $\phi:Y\to X$, with $Y$ finite and irreducible, is a finite-sheeted branched covering map.
\end{theorem}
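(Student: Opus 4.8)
The statement is a biconditional, and I would treat the two implications separately, reserving most of the work for the forward direction.

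For the (easier) converse I would argue the contrapositive: if $X$ is not a surface, I produce an essential map from a finite, irreducible complex that fails to be a branched covering. Since $X$ is irreducible, Theorem \ref{thm: Essential surfaces exist} supplies an essential branched map $\psi\colon S\to X$ with $S$ a closed surface, and $S$ is itself finite and irreducible (every surface is irreducible, as its vertex links are circles). I claim $\psi$ cannot be a branched covering. Indeed, a branched covering is a bijection on every link (Definition \ref{def: Branched coverings and immersions}), so if $\psi$ were one, then each link of $S$ being a circle would force the link of every vertex in the image to be a circle; moreover a branched covering onto the connected complex $X$ is surjective, so \emph{every} vertex of $X$ would have circular link, making $X$ a surface --- a contradiction. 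Hence $X$ fails the stated property, proving the converse.

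For the forward direction, suppose $X$ is a surface and let $\phi\colon Y\to X$ be essential with $Y$ finite and irreducible. Applying Lemma \ref{lem: Folding 2-complexes} I factor $\phi$ as $Y\xrightarrow{\phi_0}\bar Y\xrightarrow{\bar\phi}X$ with $\bar\phi$ a branched immersion; because $\phi$ is essential, $\phi_0$ is an essential equivalence (Definition \ref{def: Essential map}), so $\bar Y$ is Nielsen equivalent to $Y$ and hence irreducible (irreducibility depends only on the associated group pair, by Whitehead's Lemma \ref{lem: Whitehead's lemma}). The key step is to analyse $\bar\phi$ on links. Since $\bar\phi$ is a branched immersion and $X$ is a surface, each link $\Lk_{\bar Y}(v)$ embeds in the circle $\Lk_X(\bar\phi(v))$. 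An irreducible complex is not visibly reducible, so by Remark \ref{rem: Reducibility and links} no vertex link of $\bar Y$ is disconnected or contains a vertex of valence $\le 1$; and a connected subgraph of a circle in which every vertex has valence $2$ must be the whole circle. Therefore every link of $\bar Y$ is a full circle, which shows that $\bar Y$ is itself a surface and that $\bar\phi$ is a bijection on links, i.e.\ a finite-sheeted branched covering. It remains to remove the fold $\phi_0$: because $\bar Y$ is a surface its links are circles, which have no cut vertices, so $\bar Y$ satisfies none of conditions (i)--(iv) of Definition \ref{def: Visibly reducible and unfoldable} and is visibly irreducible. In particular $\bar Y$ does not satisfy condition (iv), which means that every essential equivalence with target $\bar Y$ is an isomorphism; applying this to $\phi_0\colon Y\to\bar Y$ shows $\phi_0$ is an isomorphism. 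Consequently $\phi=\bar\phi\circ\phi_0$ is a finite-sheeted branched covering, completing the forward direction.

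The main obstacle is the forward direction, and specifically the passage from ``branched immersion into a surface'' to ``branched covering'': the content is that irreducibility forbids a vertex link from being a \emph{proper} arc inside a circle, a configuration that would manifest as a free face or a local separation. Once this link analysis is in place, the elimination of the fold $\phi_0$ is a clean consequence of the characterisation of visible irreducibility through the failure of condition (iv). I would expect the only technical point needing care to be the standard fact, used implicitly above, that a visibly reducible complex is reducible (equivalently, that irreducible complexes are never visibly reducible), so that the irreducibility of $\bar Y$ genuinely rules out the visibly reducible link configurations.
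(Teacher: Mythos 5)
Your proposal is correct and takes essentially the same route as the paper's proof: fold $\phi$, use Whitehead's lemma to transfer irreducibility to $\bar Y$, observe that the branched immersion $\bar\phi$ embeds links into circles whose proper non-empty subgraphs would force a visibly reducible configuration, and rule out unfolding because circles have no cut vertices; your converse is simply the contrapositive of the paper's direct argument via Theorem \ref{thm: Essential surfaces exist}. The only differences are presentational --- you make explicit two steps the paper compresses, namely why $\phi_0$ must be an isomorphism (failure of condition (iv) for $\bar Y$) and why a branched covering onto a connected complex is surjective.
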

\begin{proof}
We start with the `only if' direction. Suppose that $X$ is a surface, so every link of a vertex in $X$ is a circle. Consider an essential map $\phi:Y\to X$, where $Y$ is irreducible. If $\bar{\phi}:\bar{Y}\to X$ is the folded representative then $\bar{Y}$ is also irreducible, by Whitehead's lemma. Since the folded representative is a branched immersion, it induces injective maps on links; in particular, the link of each vertex of $\bar{Y}$ embeds into a circle. But every proper, non-empty subgraph of a circle has a vertex of valence either zero or one, which would imply that $\bar{Y}$ is visibly reducible, a contradiction. Therefore, $\bar{\phi}$ is surjective on links and hence a branched covering map. Finally, since circles do not have cut vertices, $\bar{Y}$ is not unfoldable, and hence $Y=\bar{Y}$.

To prove the `if' direction, suppose that $X$ satisfies the given property, so that any essential map $Y\to X$ is a finite-sheeted branched covering map if $Y$ is finite and irreducible. By Theorem \ref{thm: Essential surfaces exist}, there is an essential map $S\to X$ with $S$ of surface type. By the hypothesis the map $S\to X$ is a branched covering map, so the induced maps on links are isomorphisms. Thus every link of $X$ is also a circle, and so $X$ is also a surface.
\end{proof}

\section{Curvature invariants}\label{sec: Curvature invariants}

We are now ready to define our notions of curvature.  The first definitions adapt Euler characteristic to the context of branched complexes.

\begin{definition}[Total and average curvature]\label{def: Total and average curvature}
The \emph{total curvature} of a finite branched 2-complex $X$ is the quantity
\[
\tau(X):=\Area(X)+\chi(X_{(1)})
\]
where $\chi(X_{(1)})$ is the usual Euler characteristic of the 1-skeleton $X_{(1)}$, namely $\#V_X-\#E_X$. The \emph{average curvature} of $X$ is
\[
\kappa(X):=\frac{\tau(X)}{\Area(X)}=1+\frac{\chi(X_{(1)})}{\Area(X)}\,.
\]
\end{definition}

The total curvature is closely related to the Euler characteristic.

\begin{remark}\label{rem: Total curvature and Euler characteristic}
If $X$ is standard then $\tau(X)$ is the Euler characteristic $\chi(X)$. Furthermore, if $\phi:Y\to X$ is a branched map then each face $f$ has area $m_\phi(f)\geq 1$, and so $\tau(Y)\geq\chi(Y)$.
\end{remark}

On their own, the total and average curvatures cannot determine much about $X$. The idea of our next invariants is to extremise these quantities over complexes mapping to $X$.  

\begin{definition}[Irreducible curvature]\label{def: Irreducible curvatures}
Let $X$ be a finite branched 2-complex, and let $\Irred(X)$ denote the set of essential maps $Y\to X$, where $Y$ is any finite, irreducible, branched 2-complex.  The quantity
\[
\rho_+(X):=\sup_{Y\in\Irred(X)}\kappa(Y)
\]
is called the \emph{maximal irreducible curvature} of $X$. Likewise, the \emph{minimal irreducible curvature} $\rho_-(X)$ is defined to be the corresponding infimum. By convention, $\rho_\pm(X)=\mp\infty$ if $\Irred(X)$ is empty.
\end{definition}

We saw in \S\ref{sec: Irreducible complexes} that $\Irred(X)$ is usually non-empty; for instance, Lemma \ref{lem: Irred is nonempty unless X is homotopic to a graph} says that $\Irred(X)$ is non-empty unless $X$ is homotopy-equivalent to a graph.

Definition \ref{def: Irreducible curvatures} can be used to say that a 2-complex $X$ has \emph{non-positive} irreducible curvature (if $\rho_+(X)\leq 0$), \emph{negative} irreducible curvature (if $\rho_+(X)< 0$),  \emph{constant} irreducible curvature ($\rho_+(X)=\rho_-(X)$), and so on. Since geometric group theorists are traditionally most interested in upper bounds on curvature, we will be especially concerned with $\rho_+(X)$.

Instead of extremising over essential maps from the relatively large class of irreducible complexes, one may instead restrict attention to the smaller class of (closed) surfaces, and this leads to another pair of invariants in the same way.

\begin{definition}[Surface curvature]\label{def: Surface curvatures}
 Let $X$ be a finite branched 2-complex, and let $\Surf(X)$ denote the set of essential maps $Y\to X$, where $Y$ is a closed surface.  The quantity
\[
\sigma_+(X):=\sup_{Y\in\Surf(X)}\kappa(Y)
\]
is called the \emph{maximal surface curvature} of $X$. Likewise, the \emph{minimal surface curvature} $\sigma_-(X)$ is defined to be the corresponding infimum. We again adopt the convention that $\sigma_\pm(X)=\mp\infty$ if $\Surf(X)$ is empty.
\end{definition}

Since a composition of essential maps is essential, Lemma \ref{lem: Irred is nonempty unless X is homotopic to a graph} and Theorem \ref{thm: Essential surfaces exist} together guarantees that $\Surf(X)$ is non-empty unless $X$ is homotopy equivalent to a graph.

As before, Definition \ref{def: Surface curvatures} can be used to make sense of the ideas of a 2-complex $X$ having \emph{non-positive} surface curvature (if $\sigma_+(X)\leq 0$), \emph{negative} surface curvature (if $\rho_+(X)< 0$) etc., and we will be especially concerned with $\sigma_+(X)$, because of geometric group theory's traditional focus on upper curvature bounds.

\section{Inequalities and first examples}\label{sec: Inequalities and first examples}

We start with a few easy observations. Throughout this section we will assume that $X$ is an irreducible branched complex. However, the irreducibility hypothesis can usually be relaxed to the weaker assumption that $\Irred(X)$ is non-empty.

The first result records some easy inequalities between the invariants.

\begin{lemma}[Trivial inequalities]\label{lem: Trivial inequalities}
If $X$ is a finite, irreducible, branched 2-complex then 
\[
-\infty\leq\rho_-(X)\leq\sigma_-(X)\leq\sigma_+(X)\leq\rho_+(X)\leq 1\,.
\]
\end{lemma}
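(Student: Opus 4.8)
The plan is to prove the chain of inequalities
\[
-\infty\leq\rho_-(X)\leq\sigma_-(X)\leq\sigma_+(X)\leq\rho_+(X)\leq 1
\]
by treating the links separately. The outer bound $-\infty\leq\rho_-(X)$ and the middle bound $\sigma_-(X)\leq\sigma_+(X)$ are formal consequences of the definitions (an infimum is at least $-\infty$, and an infimum over a nonempty set is at most the corresponding supremum, using that $\Surf(X)\neq\varnothing$ by Theorem \ref{thm: Essential surfaces exist}). So the real content lies in the two ``nesting'' inequalities $\rho_-(X)\leq\sigma_-(X)$ and $\sigma_+(X)\leq\rho_+(X)$, together with the upper bound $\rho_+(X)\leq 1$.

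First I would dispose of the two nesting inequalities simultaneously. The key observation is that every closed surface is irreducible (noted in \S\ref{sec: Surfaces}, since the link of each vertex is a circle and circles satisfy none of the conditions of Remark \ref{rem: Reducibility and links}). Hence any essential map $Y\to X$ with $Y$ a closed surface is in particular an essential map with $Y$ finite and irreducible, which shows the set-theoretic inclusion $\Surf(X)\subseteq\Irred(X)$. Taking suprema over the larger set can only increase the value, giving $\sigma_+(X)\leq\rho_+(X)$; taking infima over the larger set can only decrease the value, giving $\rho_-(X)\leq\sigma_-(X)$. This is the cleanest route and requires no computation beyond the inclusion of index sets.

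The remaining inequality $\rho_+(X)\leq 1$ is where the genuine geometric input enters, and I expect it to be the main obstacle. By the definition of $\kappa$, the statement $\kappa(Y)\leq 1$ is equivalent to $\chi(Y_{(1)})\leq 0$, i.e.\ to the assertion that the $1$-skeleton of any finite irreducible branched complex $Y$ admitting an essential map to $X$ has nonpositive Euler characteristic. The point is that irreducibility forbids $Y_{(1)}$ from being too simple: condition (i) of Definition \ref{def: Visibly reducible and unfoldable} rules out isolated vertices and valence-$1$ vertices, so every vertex of a visibly irreducible complex has valence at least $2$; a standard Euler-characteristic count then forces $\#E_Y\geq\#V_Y$ and hence $\chi(Y_{(1)})=\#V_Y-\#E_Y\leq 0$. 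To make this fully rigorous I would first pass, via Definition \ref{def: Irreducible 2-complex}, to a visibly irreducible essential representative $Y'\to Y$, noting that an essential equivalence preserves both the fundamental group of the $1$-skeleton (so $\chi(Y'_{(1)})=\chi(Y_{(1)})$) and the area (so $\kappa$ is unchanged); one must also handle the degenerate case where $Y$ is homotopy equivalent to a graph, but irreducibility together with $\Area(Y)>0$ prevents $Y_{(1)}$ from being contractible. The one subtlety to check carefully is that the valence bound survives the branching: since area is defined on faces and the $1$-skeleton of a branched morphism is an honest combinatorial map, the valence count is unaffected by multiplicities, so the argument goes through verbatim in the branched setting.
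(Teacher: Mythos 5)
Your proposal is correct and follows essentially the same route as the paper: the nesting inequalities come from the inclusion $\Surf(X)\subseteq\Irred(X)$ (surfaces are irreducible), the middle inequality from $\Surf(X)\neq\varnothing$ via Theorem \ref{thm: Essential surfaces exist}, and the bound $\rho_+(X)\leq 1$ from $\chi(Y_{(1)})\leq 0$ for irreducible $Y$. Your valence-at-least-$2$ count, including the careful passage to a visibly irreducible representative via an essential equivalence (which preserves $\chi(Y_{(1)})$ and $\Area(Y)$), simply makes explicit the paper's one-line assertion that $Y_{(1)}$ is a core graph.
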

\begin{proof}
The first inequality, that $-\infty\leq \rho_-(X)$, is vacuous. Since surfaces are irreducible, there is a natural inclusion $\Surf(X)\subseteq\Irred(X)$, whence $\rho_-(X)\leq\sigma_-(X)$ and $\sigma_+(X)\leq\rho_+(X)$. By Theorem \ref{thm: Essential surfaces exist}, $\Surf(X)\neq\varnothing$, so $\sigma_-(X)\leq\sigma_+(X)$. Finally, for any essential $Y\to X$, we have
\[
\kappa(Y)=1+\frac{\chi(Y_{(1)})}{\Area(Y)}
\] 
by definition. If $Y$ is irreducible then in particular the 1-skeleton $Y_{(1)}$ is a core graph, so $\chi(Y_{(1)})\leq 0$ and  $\kappa(Y)\leq 1$. Taking the supremum over all $Y\in \Irred(X)$, the final inequality follows,
\end{proof}

Eventually, we will sharpen these inequalities to prove Theorem \ref{thm: Curvature inequalities}. The next lemma records that the invariants are monotonic in the obvious way under essential maps.

\begin{lemma}[Monotonicity]\label{lem: Monotonicity}
If $\psi:X'\to X$ is an essential branched morphism of finite, irreducible, branched 2-complexes then:
\begin{enumerate}[(i)]
\item $\rho_+(X')\leq\rho_+(X)$ and $\rho_-(X')\geq \rho_-(X)$;
\item $\sigma_+(X')\leq\sigma_+(X)$ and $\sigma_-(X')\geq \sigma_-(X)$.
\end{enumerate}
\end{lemma}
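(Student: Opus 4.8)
The plan is to exploit the fact that the curvature invariants are extremised over maps \emph{into} the target, so that enlarging the target can only enlarge the collection of admissible domains. Concretely, I would show that post-composition with $\psi$ carries $\Irred(X')$ into $\Irred(X)$ and $\Surf(X')$ into $\Surf(X)$, while leaving the domain---and hence its average curvature $\kappa$---untouched; the four inequalities on $\sup$ and $\inf$ then follow formally.

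First I would record that the composite of two branched morphisms is again a branched morphism. Given an essential branched morphism $\phi:Y\to X'$ together with the given $\psi:X'\to X$, the composite $\psi\circ\phi$ sends vertices to vertices and edges to edges homeomorphically, and on each face $f\in F_Y$ it is modelled on $z\mapsto z^m$ with multiplicity $m_{\psi\circ\phi}(f)=m_\psi(\phi(f))\,m_\phi(f)$. The area condition is then immediate from the two area conditions for $\phi$ and $\psi$, since $\Area(f)=m_\phi(f)\Area(\phi(f))=m_\phi(f)\,m_\psi(\phi(f))\,\Area\bigl(\psi(\phi(f))\bigr)$. This is a routine verification, and I expect no difficulty here.

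Second, I would invoke Remark \ref{rem: Composition of essential maps is essential}: a composition of essential maps is essential. Hence if $\phi:Y\to X'$ is essential with $Y$ finite and irreducible (resp.\ a closed surface), then $\psi\circ\phi:Y\to X$ is essential with the \emph{same} domain $Y$, so it lies in $\Irred(X)$ (resp.\ $\Surf(X)$). The only genuine input to the argument is this remark, which itself rests on Lemma \ref{lem: Essential maps in group theory} and the fact that injective morphisms of group pairs compose to injective morphisms; thus there is really no hard step, only the bookkeeping of assembling the pieces.

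Finally, because $\kappa(Y)=1+\chi(Y_{(1)})/\Area(Y)$ depends only on $Y$ and not on its structure map, the set of values $\{\kappa(Y):Y\in\Irred(X')\}$ is contained in $\{\kappa(Y):Y\in\Irred(X)\}$, and likewise for $\Surf$. Taking suprema yields $\rho_+(X')\leq\rho_+(X)$ and $\sigma_+(X')\leq\sigma_+(X)$, and taking infima yields $\rho_-(X')\geq\rho_-(X)$ and $\sigma_-(X')\geq\sigma_-(X)$, which are exactly the four claimed inequalities. With the $\mp\infty$ conventions of Definitions \ref{def: Irreducible curvatures} and \ref{def: Surface curvatures} the degenerate cases remain consistent, although by irreducibility of $X'$ together with Theorem \ref{thm: Essential surfaces exist} both collections are in fact non-empty.
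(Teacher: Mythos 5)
Your proposal is correct and follows essentially the same route as the paper's proof: both rest on the fact that post-composition with $\psi$ of an essential map is essential (Remark \ref{rem: Composition of essential maps is essential}), giving maps $\Irred(X')\to\Irred(X)$ and $\Surf(X')\to\Surf(X)$ that fix the domain and hence $\kappa$, from which the four inequalities follow formally. Your explicit check that branched morphisms compose, with multiplicities multiplying and the area condition preserved, is a detail the paper leaves implicit but is entirely consistent with its argument.
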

\begin{proof}
Since a composition of essential maps is essential, there is a natural map $\psi_*:\Irred(X')\to \Irred(X)$ obtained by composing an essential map $Y\to X$ with $\psi$. Since $\kappa(\psi_*(Y))=\kappa(Y)$, this proves item (i). Furthermore, $\psi_*(\Surf(X'))\subseteq\Surf(X)$, so item (ii) also follows.
\end{proof}

When the essential map in question is a branched covering, we can say more. The next lemma encodes the observation that average curvature is constant under passing to branched coverings. Note that the restriction of any branched covering map $\phi:X'\to X$ to 1-skeleta is a covering map of graphs. If $X$ is connected then this covering map has a degree, and we may define the \emph{degree} of $\phi$ to be the degree of the covering map of 1-skeleta. In particular, $\phi$ is said to be \emph{finite sheeted} if $\deg(\phi)$ is finite.

The next lemma shows that the quantities $\tau(X)$ and $\kappa(X)$ interact well with finite-sheeted branched covering maps.

\begin{lemma}[Finite-sheeted branched coverings]\label{lem: Branched coverings have equal average curvature}
If $\phi:X'\to X$ is a finite-sheeted branched covering map of finite, irreducible 2-complexes then $\tau(X')=\deg(\phi)\tau(X)$ and $\kappa(X')=\kappa(X)$.
\end{lemma}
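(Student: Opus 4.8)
The plan is to verify both equalities by a direct counting argument, decomposing each quantity into contributions from vertices, edges, and faces, and tracking how a finite-sheeted branched covering multiplies each contribution by the degree. Recall that $\phi:X'\to X$ restricts to an honest covering map of graphs $\phi:X'_{(1)}\to X_{(1)}$ of degree $d:=\deg(\phi)$. Since Euler characteristic is multiplicative under finite-sheeted coverings of graphs, I immediately obtain
\[
\chi(X'_{(1)})=d\cdot\chi(X_{(1)})\,.
\]
This handles the 1-skeletal part of the total curvature with no further work.

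The main content is therefore to show that $\Area(X')=d\cdot\Area(X)$, after which $\tau(X')=\Area(X')+\chi(X'_{(1)})=d(\Area(X)+\chi(X_{(1)}))=d\cdot\tau(X)$ follows by adding the two pieces. To prove the area identity, I would argue face by face. Fix a face $g$ of $X$; I claim the faces of $X'$ mapping to $g$ have areas summing to $d\cdot\Area(g)$. The key point is that the branched-morphism constraint from Definition \ref{def: Branched 2-complex} gives $\Area(f)=m_\phi(f)\Area(g)$ for each face $f$ of $X'$ over $g$, so it suffices to show that the multiplicities $m_\phi(f)$ over a fixed $g$ sum to $d$. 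This is where the branched-covering hypothesis enters: since $\phi$ is a bijection on the link of every vertex, a standard monodromy or link-counting argument shows that the boundary circle of $g$, which meets $d$ sheets over each point of $X_{(1)}$, is covered by the boundary circles of the faces $f$ over $g$ in such a way that the total wrapping degree is exactly $d$; that is, $\sum_{f\mapsto g}m_\phi(f)=d$. Summing over all faces $g$ of $X$ then yields $\Area(X')=\sum_g\sum_{f\mapsto g}m_\phi(f)\Area(g)=d\sum_g\Area(g)=d\cdot\Area(X)$.

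The step I expect to be the main obstacle is the verification that the multiplicities sum to the degree, $\sum_{f\mapsto g}m_\phi(f)=d$. The subtlety is that a branched covering is only required to be a bijection on vertex links and is permitted to ramify at the face centres, so the ``degree $d$'' is defined purely via the 1-skeleton, whereas $m_\phi(f)$ records the wrapping of the attaching map of $f$ over the attaching map of $g$. I would make this precise by noting that the attaching map of $g$ is an immersed circle in $X_{(1)}$, that the preimage under the covering $\phi:X'_{(1)}\to X_{(1)}$ of this circle is a disjoint union of circles covering it with total degree $d$, and that these preimage circles are exactly the attaching maps of the faces $f$ over $g$, each with wrapping number $m_\phi(f)$. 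Once the area identity is in hand, the curvature identity is immediate:
\[
\kappa(X')=\frac{\tau(X')}{\Area(X')}=\frac{d\cdot\tau(X)}{d\cdot\Area(X)}=\kappa(X)\,,
\]
using $\Area(X')>0$, which holds since $\Area(X)>0$ and $d\geq 1$.
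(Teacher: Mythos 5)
Your proof is correct and is essentially the paper's own argument: the paper simply compresses it, saying the identity $\tau(X')=\deg(\phi)\tau(X)$ follows as in the standard proof of the Riemann--Hurwitz theorem and that $\Area(X')=\deg(\phi)\Area(X)$ holds trivially. Your explicit decomposition --- multiplicativity of $\chi$ under the degree-$d$ covering of 1-skeleta, plus the fibrewise count $\sum_{f\mapsto g}m_\phi(f)=\deg(\phi)$ obtained from the bijection on links (equivalently, from the elevations of each attaching circle) --- is exactly what that citation unwinds to, so you have filled in the details rather than taken a different route.
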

\begin{proof}
The proof that  
\[
\tau(X')=\deg(\phi)\tau(X)
\]
is identical to the standard proof of the Riemann--Hurwitz theorem. Since $\Area(X')=\deg(\phi)\Area(X)$ trivially, it follows that  $\kappa(X')=\kappa(X)$.
\end{proof}

It follows that our curvature bounds $\rho_\pm$ and $\sigma_\pm$ are also invariant under passing to branched coverings.

\begin{proposition}[Branched coverings and curvature bounds]\label{prop: Invariance under branched coverings}
If $X'\to X$ is a finite-sheeted branched covering map of finite, irreducible 2-complexes then $\rho_\pm(X')=\rho_\pm(X)$ and $\sigma_\pm(X')=\sigma_\pm(X)$.
\end{proposition}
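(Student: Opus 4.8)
The plan is to leverage the two-directional interaction between branched coverings and the curvature invariants. The key observation is that a finite-sheeted branched covering $\phi:X'\to X$ allows us to transport essential maps in both directions: essential maps into $X'$ can be pushed forward by composition with $\phi$, and essential maps into $X$ can be pulled back by a fibre-product construction. Combined with the fact that average curvature $\kappa$ is unchanged by finite-sheeted branched coverings (Lemma \ref{lem: Branched coverings have equal average curvature}), this should force equality of the suprema and infima defining $\rho_\pm$ and $\sigma_\pm$.

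First I would establish one inequality using monotonicity. Since $\phi:X'\to X$ is in particular an essential branched morphism, Lemma \ref{lem: Monotonicity} immediately gives $\rho_+(X')\leq\rho_+(X)$, $\rho_-(X')\geq\rho_-(X)$, and the analogous inequalities for $\sigma_\pm$. So it remains to prove the reverse inequalities, namely $\rho_+(X)\leq\rho_+(X')$ and so on. For this I would take an arbitrary essential map $\psi:Y\to X$ with $Y$ finite and irreducible, and produce from it an essential map $Y'\to X'$ with the \emph{same} average curvature $\kappa(Y')=\kappa(Y)$. The natural candidate for $Y'$ is the pullback (fibre product) of $Y\to X\leftarrow X'$ over the covering $\phi$; because $\phi$ is a branched covering, its restriction to $1$-skeleta is a genuine covering map of graphs, so the pulled-back map $Y'\to Y$ is itself a finite-sheeted branched covering. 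By Lemma \ref{lem: Branched coverings have equal average curvature} this gives $\kappa(Y')=\kappa(Y)$, and the composite $Y'\to X'$ is essential (a composition of essential maps is essential, by Remark \ref{rem: Composition of essential maps is essential}). Since branched coverings of irreducible complexes are irreducible (indeed surfaces pull back to surfaces, as a covering of a circle link is a disjoint union of circles, which must be connected here), $Y'\in\Irred(X')$, and if $Y$ was a surface then $Y'\in\Surf(X')$. Taking suprema and infima then yields the reverse inequalities for all four invariants simultaneously.

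The main obstacle I anticipate is the correct formulation of the pullback in the branched category, so that it genuinely lands in $\Irred(X')$ or $\Surf(X')$ with the right area structure. One must verify that the fibre product $Y'$ is connected (or replace it by a suitable connected component), that its faces inherit a well-defined area making $Y'\to Y$ an area-respecting branched covering, and that the branching indices of $\phi$ and $\psi$ interact correctly on the overlapping ramification points in the centres of faces. The cleanest route may be to phrase the whole argument in the language of group pairs from \S\ref{sec: Group pairs}: the covering $\phi$ corresponds to a finite-index subgroup $F_{X'}\leq F_X$ together with the induced cofinite invariant subset $\curlyA_{X'}\subseteq\curlyA_X$, and pulling back $\psi$ amounts to intersecting $F_Y$ with $F_{X'}$ and restricting the $\curlyA$-data accordingly; injectivity of the associated morphism of group pairs is then immediate, giving essentiality via Lemma \ref{lem: Essential maps in group theory}. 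Once the pullback is set up correctly, the equality of average curvatures and hence of all four invariants follows formally.
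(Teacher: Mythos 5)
Your overall strategy is exactly the paper's: Lemma \ref{lem: Monotonicity} gives one set of inequalities, and the reverse ones come from pulling back an arbitrary essential map $Y\to X$ along the branched covering to obtain a finite-sheeted branched covering $Y'\to Y$ together with an essential map $Y'\to X'$, after which Lemma \ref{lem: Branched coverings have equal average curvature} gives $\kappa(Y')=\kappa(Y)$ and taking suprema/infima finishes. The paper invokes precisely this pullback, citing \cite[Lemma 3.13]{louder_uniform_2021} (``branched immersions pull back along essential maps''), so your fibre-product construction is the right mechanism, and your worries about areas, multiplicities and connected components are all absorbed by that lemma together with Remark \ref{rem: Average of connected components}. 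However, there is one step you assert without an adequate argument, and your proposed fallback does not repair it: the claim that $Y'$ is irreducible. Your parenthetical justification only treats the surface case (circle links pull back to circle links), and irreducibility of a general complex is not a property of the links of that complex alone --- by Definition \ref{def: Irreducible 2-complex} it requires the existence of an essential equivalence from a \emph{visibly} irreducible complex, and it is not formally obvious that this passes to branched covers. Your group-pair reformulation via Lemma \ref{lem: Essential maps in group theory} does give essentiality of $Y'\to X'$ cheaply, as you say, but irreducibility of the pulled-back pair would then require showing that the finite-index sub-pair of a pair satisfying none of the conditions of Lemma \ref{lem: Whitehead's lemma} again satisfies none of them; that is the nontrivial content, not an immediate consequence of injectivity.

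The paper's fix is short and worth recording. Before pulling back, replace $Y$ without loss of generality by a visibly irreducible representative: since $Y$ is irreducible there is an essential equivalence $Y_0\to Y$ with $Y_0$ visibly irreducible, essential equivalences preserve $\kappa$ (they preserve area and the homotopy type of the $1$-skeleton), and the composite $Y_0\to Y\to X$ is essential by Remark \ref{rem: Composition of essential maps is essential}. Now a branched covering is by definition bijective on links of vertices, so $Y'$ has the same links as $Y$; since by Remark \ref{rem: Reducibility and links} each of conditions (i)--(iv) of Definition \ref{def: Visibly reducible and unfoldable} is detected in the links, $Y'$ is visibly irreducible, hence irreducible, and so $Y'\in\Irred(X')$ (and in the surface case $Y'\in\Surf(X')$, where your circle-link argument is fine). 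With this insertion your proof is complete and coincides with the paper's.
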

\begin{proof}
We prove the proposition for the irreducible curvatures; the proof for surface curvatures is identical. By Lemma \ref{lem: Monotonicity}, $\rho_-(X)\leq\rho_-(X')$ and $\rho_+(X)\geq\rho_+(X')$, so it suffices to prove the reverse inequalities. 

Suppose now that $Y\in\Irred(X)$. Without loss of generality, we may assume that $Y$ is visibly irreducible. Since branched immersions pull back along essential maps \cite[Lemma 3.13]{louder_uniform_2024},  there is a finite-sheeted branched covering $Y'\to Y$ and an essential map $Y'\to X'$. Since $Y'$ has the same links as the visibly irreducible complex $Y$, it is itself visibly irreducible, so $Y'\in\Irred(X')$. Now $\kappa(Y')=\kappa(Y)$ by Lemma \ref{lem: Branched coverings have equal average curvature} and so, taking suprema or infima as appropriate, the result follows.
\end{proof}

A little caution is needed when applying Proposition \ref{prop: Invariance under branched coverings}. If $X$ is standard then $X'$ is only standard if the map $X'\to X$ is an \emph{unbranched} covering.

The results discussed so far make it easy to compute the curvatures of our first family of examples.

\begin{example}[Surfaces]\label{eg: Surfaces}
Consider a closed surface $S$. By Theorem \ref{thm: Surfaces are minimal}, $Y\in\Irred(S)$ if and only if there is a finite-sheeted branched covering $Y\to S$, so $\kappa(Y)=\kappa(S)$ by Lemma \ref{lem: Branched coverings have equal average curvature}. Thus, by definition,
\[
\rho_\pm(S)=\kappa(S)
\]
and therefore $\sigma_\pm(S)=\kappa(S)$ by Lemma \ref{lem: Trivial inequalities}. In particular, if $S$ is standard and has one face, then all curvatures equal the Euler characteristic $\chi(S)$.
\end{example}

In summary, surfaces have constant irreducible curvature. In fact, Theorem \ref{thm: Constant irreducible curvature} tells us that the equation $\rho_+(S)=\rho_-(S)$ is surprisingly close to a characterisation of surfaces.

\section{Nielsen equivalence}\label{sec: Nielsen equivalence}

The curvature invariants defined here are too sensitive to be invariant under homotopy equivalence. The irreducible, contractible 2-complex $X$ of Example \ref{eg: Contractible 2-complex} provides an absurd example, since
\[
\rho_+(X)\geq\kappa(X)=1/2
\]
but $X$ is homotopy equivalent to the 1-point complex $*$, which has $\rho_+(X)=-\infty$.

Thus, $\rho_+$ is not invariant under homotopy equivalence, and indeed similar reasoning tells us that neither are $\rho_-$ or $\sigma_\pm$. However, they are invariant under a stronger equivalence relation. Recall from \S\ref{sec: Group pairs} that two 2-complexes are \emph{Nielsen equivalent} if their associated group pairs are isomorphic. The purpose of this section is to prove that the quantities $\rho_\pm(X)$ and $\sigma_\pm(X)$  are invariant under Nielsen equivalence. That is, we need to show that $\rho_\pm(X)$ and $\sigma_\pm(X)$ only depend on the associated group pair $(F_X,\curlyA_X)$. 

Since taking fundamental groups only makes good sense for connected complexes, the first observation is that we may assume that every $Y\in \Irred(X)$ is connected.

\begin{remark}\label{rem: Average of connected components}
Let $Y$ be a finite branched 2-complex of positive area. If $Y=Y_1\sqcup \ldots \sqcup Y_n$, then $\tau(Y)$ is the sum of the $\tau(Y_i)$, so
\[
\kappa(Y)=\sum_i t_i\kappa(Y_i)
\]
where $0\leq t_i= \Area(Y_i)/\Area(Y)\leq 1$. In particular, 
\[
\min_i\kappa(Y_i)\leq\kappa(Y)\leq\max_i\kappa(Y_i)
\]
and so, in the definitions of $\rho_\pm(X)$ and $\sigma_\pm(X)$, we may restrict attention to connected $Y$  in $\Irred(X)$ or $\Surf(X)$.
\end{remark} 

By Lemma \ref{lem: Essential maps in group theory}, the branched group pairs of connected branched complexes in $\Irred(X)$ correspond exactly to branched group pairs $(F_Y,\curlyA_Y)$ equipped with injective maps $\phi_*:(F_Y,\curlyA_Y)\to (F_X,\curlyA_X)$. The next step is therefore to compute $\kappa(Y)$ from the group pair $(F_Y,\curlyA_Y)$, which comes down to computing $\chi(Y_{(1)})$ and $\Area(Y)$.  The former is straightforward, since
\[
\chi(Y_{(1)})=1-\rk F_Y
\]
when $Y$ is connected.

For the latter, we need to associate an area to each $b\in \curlyA_Y$, which corresponds to some face $f$ of $Y$. The image $a=\phi_*(b)\in\curlyA_X$ has stabiliser conjugate to some $\langle w_j\rangle$ which in turn corresponds to the face $\phi(f)$ of $X$. After conjugating, we may assume that the stabiliser of $b$ is some $\langle u_i\rangle$ and that $\phi_*(u_i)\in \langle w_j\rangle$. The multiplicity of $f$ can now be computed as the index
\[
m_\phi(f)=|\langle w_i\rangle:\langle \phi_*(u_j)\rangle |
\]
so, by the compatibility between area and multiplicity, we may compute the area of $f$ as
\[
\Area(f)=|\langle w_i\rangle:\langle \phi_*(u_j)\rangle |\Area(\phi(f))\,.
\]
Since $\Area(Y)$ is the sum of the areas of its faces, this shows that $\Area(Y)$ can also be computed from the group pair.

Since Whitehead's lemma (Lemma \ref{lem: Whitehead's lemma}) characterises those group pairs $(F_Y,\curlyA_Y)$ that correspond to \emph{irreducible} complexes $Y$, it follows that $\rho_\pm(X)$ are determined by the group pair $(F,\curlyA)$  associated to $X$. Thus we have proved:

\begin{proposition}[Nielsen invariance of irreducible curvatures]\label{prop: Nielsen invariance of irreducible curvatures}
If $X$ and $X'$ are Nielsen equivalent 2-complexes, then $\rho_\pm(X)=\rho_\pm(X')$.
\end{proposition}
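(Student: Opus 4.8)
The plan is to show that both the set over which $\kappa$ is extremised and the value $\kappa(Y)$ assigned to each member are determined entirely by the associated branched group pair $(F_X,\curlyA_X)$; the conclusion then follows at once, since a Nielsen equivalence is by definition an isomorphism of the (branched) group pairs. The preceding discussion has already assembled the ingredients, so the work is to organise them into a bijection argument.

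First I would reduce to connected test complexes. By Remark \ref{rem: Average of connected components}, the value $\kappa(Y)$ realised by a disconnected $Y\in\Irred(X)$ lies between the values realised by its components, so restricting the supremum and infimum to connected $Y$ leaves $\rho_\pm(X)$ unchanged. For connected $Y$, Lemma \ref{lem: Essential maps in group theory} identifies essential maps $Y\to X$, up to essential equivalence, with injective morphisms of group pairs $(F_Y,\curlyA_Y)\hookrightarrow(F_X,\curlyA_X)$. Hence the connected members of $\Irred(X)$ are parametrised by those injective morphisms whose source is the group pair of an irreducible complex, a condition characterised purely algebraically by the failure of conditions (i)--(iii) of Whitehead's lemma (Lemma \ref{lem: Whitehead's lemma}).

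Next I would record that $\kappa(Y)$ is a function of the branched group pair of $Y$. Since $Y$ is connected, $\chi(Y_{(1)})=1-\rk F_Y$, and the area of each face of $Y$ is recovered from the morphism as the appropriate index $|\langle w_i\rangle:\langle\phi_*(u_j)\rangle|$ multiplied by $\Area(\phi(f))$, exactly as computed above. Summing over faces shows that $\Area(Y)$, and therefore $\kappa(Y)=1+\chi(Y_{(1)})/\Area(Y)$, depends only on $(F_Y,\curlyA_Y)$ together with the area map pulled back from $(F_X,\curlyA_X)$ in the sense of Definition \ref{def: Branched group pair}.

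Assembling these observations, an isomorphism of branched group pairs $(F_X,\curlyA_X)\to(F_{X'},\curlyA_{X'})$ carries injective morphisms into $(F_X,\curlyA_X)$ bijectively to injective morphisms into $(F_{X'},\curlyA_{X'})$, preserving the isomorphism type of the source (hence irreducibility, by Whitehead) and the pulled-back area (hence $\kappa$). Thus the two complexes realise the same set of values $\kappa(Y)$ as $Y$ ranges over connected, irreducible test complexes, whence $\rho_+(X)=\rho_+(X')$ and $\rho_-(X)=\rho_-(X')$. The step needing most care is the realisation direction of the correspondence, namely that every abstract injective morphism with irreducible source actually arises from an essential map from a genuine irreducible branched $2$-complex carrying the correct induced areas; but this is precisely the content of Lemma \ref{lem: Essential maps in group theory} and Whitehead's lemma, so nothing new is required beyond tracking the branched structure.
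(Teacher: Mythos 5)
Your argument is correct and coincides with the paper's own proof: both reduce to connected $Y$ by Remark \ref{rem: Average of connected components}, identify connected members of $\Irred(X)$ with injective morphisms of branched group pairs via Lemma \ref{lem: Essential maps in group theory} (with Whitehead's lemma, Lemma \ref{lem: Whitehead's lemma}, detecting irreducibility algebraically), and recover $\kappa(Y)$ from $\chi(Y_{(1)})=1-\rk F_Y$ together with the indices $|\langle w_i\rangle:\langle \phi_*(u_j)\rangle|$ that determine the areas. The only stylistic difference is that you spell out the bijection induced by an isomorphism of branched group pairs and flag the realisation direction explicitly, which the paper leaves implicit in the phrase ``correspond exactly''.
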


An identical argument substituting Culler's lemma for Whitehead's lemma shows that $\sigma_\pm(X)$ also only depends on the group pair, yielding the following:

\begin{proposition}[Nielsen invariance of surface curvatures]\label{prop: Nielsen invariance of surface curvatures}
If two finite, irreducible 2-complexes $X$ and $X'$ are Nielsen equivalent, then $\sigma_\pm(X)=\sigma_\pm(X')$.
\end{proposition}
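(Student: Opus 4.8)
The plan is to follow the proof of Proposition~\ref{prop: Nielsen invariance of irreducible curvatures} essentially verbatim, with Culler's lemma (Lemma~\ref{lem: Culler's lemma}) playing the role that Whitehead's lemma played there. As in that argument, it suffices to show that $\sigma_\pm(X)$ is a function of the branched group pair $(F_X,\curlyA_X)$ alone; the proposition then follows at once, since Nielsen-equivalent complexes have isomorphic branched group pairs by Definition~\ref{def: Nielsen equivalence}.

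First I would reduce to connected surfaces. By Remark~\ref{rem: Average of connected components}, the supremum and infimum defining $\sigma_+(X)$ and $\sigma_-(X)$ may be taken over connected $Y\in\Surf(X)$, that is, essential maps to $X$ from connected closed surfaces. By Lemma~\ref{lem: Essential maps in group theory}, such maps, taken up to essential equivalence, correspond to injective morphisms of branched group pairs $\phi_*:(F_Y,\curlyA_Y)\to(F_X,\curlyA_X)$ whose source is the branched group pair of a connected closed surface. The computation carried out just before Proposition~\ref{prop: Nielsen invariance of irreducible curvatures} applies unchanged and shows that $\kappa(Y)$ is determined by $(F_Y,\curlyA_Y)$: one has $\chi(Y_{(1)})=1-\rk F_Y$, and $\Area(Y)$ is recovered from the index computation there. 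Hence $\kappa$ is constant on Nielsen-equivalence classes.

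The single place where Culler's lemma is needed, rather than Whitehead's, is in identifying which injective morphisms into $(F_X,\curlyA_X)$ actually arise from genuine surfaces in $\Surf(X)$. Every connected $S\in\Surf(X)$ certainly contributes a morphism whose source is a surface-type group pair. For the converse, suppose $\psi:(F_S,\curlyA_S)\to(F_X,\curlyA_X)$ is an injective morphism with $(F_S,\curlyA_S)$ the group pair of a surface $S$. By Lemma~\ref{lem: Essential maps in group theory} there is a complex $Y$ realizing $(F_S,\curlyA_S)$ together with an essential map $Y\to X$; crucially, $Y$ need not be a surface, only Nielsen-equivalent to one. This is exactly where Culler's lemma intervenes: it yields a genuine surface $S'$ and an essential equivalence $S'\to Y$, and composing with $Y\to X$ produces an honest element of $\Surf(X)$ (the composite is essential by Remark~\ref{rem: Composition of essential maps is essential}) with the same group pair, hence the same value of $\kappa$.

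I expect this last step to be the only real subtlety, and it is instructive to see why it is more delicate than in the irreducible case. Being irreducible is itself a property of the group pair (via Whitehead's lemma), so in Proposition~\ref{prop: Nielsen invariance of irreducible curvatures} any complex realizing an ``irreducible-type'' group pair automatically lies in $\Irred(X)$. Being a surface, by contrast, is a rigid combinatorial condition---every vertex link is a circle---that is not preserved under Nielsen equivalence, so a complex merely Nielsen-equivalent to a surface need not belong to $\Surf(X)$. Culler's lemma removes precisely this discrepancy by converting such a complex back into an actual surface. Once this is in place, taking the supremum, respectively infimum, of $\kappa$ over injective morphisms from surface-type group pairs into $(F_X,\curlyA_X)$ exhibits $\sigma_\pm(X)$ as a function of $(F_X,\curlyA_X)$ alone, completing the proof.
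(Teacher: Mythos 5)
Your proposal is correct and coincides with the paper's own proof, which disposes of this proposition in a single line by running the argument for Proposition \ref{prop: Nielsen invariance of irreducible curvatures} verbatim with Culler's lemma (Lemma \ref{lem: Culler's lemma}) substituted for Whitehead's lemma. Your third paragraph---observing that an injective morphism from a surface-type group pair is at first realised only by a complex $Y$ that is Nielsen equivalent to a surface, and that Culler's lemma then upgrades $Y$ to a genuine element of $\Surf(X)$ with the same value of $\kappa$---correctly fills in the one detail the paper leaves implicit.
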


\section{Irrigid complexes}\label{sec: Irrigid complexes}

This section is devoted to studying a class of 2-complexes that slightly generalise surfaces, and which played important roles in \cite{wilton_one-ended_2011} and \cite{wilton_essential_2018}.  Here we christen them \emph{irrigid complexes}. Two important theorems are proved: Theorem \ref{thm: Curvature inequalities}, specifically the fact that $\rho_-=\sigma_-$, and Theorem \ref{thm: Constant irreducible curvature}, the classification of complexes of constant irreducible curvature.

\begin{definition}\label{def: Surface decomposition and irrigid complexes}
 A \emph{surface decomposition} of a group pair $(F,\curlyA)$ is a graph of groups $\curlyG$ such that $\pi_1\curlyG=F$ of the following form. The edge groups are all infinite cyclic. The underlying graph is bipartite, with vertices of the following two types.
\begin{enumerate}[(i)]
\item \emph{Surface vertices} $v$ have an associated compact surface with boundary $\Sigma_v$, and the associated vertex group is identified with $\pi_1(\Sigma_v)$.
\item \emph{Cyclic vertices} have infinite-cyclic vertex groups.
\end{enumerate}
For a surface vertex $v$, the incident edge groups are identified isomorphically with the fundamental groups of the boundary components of $\Sigma_v$. Some subset of the cyclic vertices, called the \emph{peripheral} vertices, is identified with the $F$-orbits of $\curlyA$, and the vertex group of a peripheral cyclic vertex is the corresponding stabiliser (which is well-defined up to conjugacy).

A 2-complex $X$ is called \emph{irrigid} if the associated group pair of each connected component admits a surface decomposition.
\end{definition}

It is immediately clear that if $X$ is a surface then it is irrigid.  Baumslag--Solitar groups provide a more interesting class of examples.

\begin{example}\label{eg: BS groups are of surface type}
Let $X$ be the standard presentation complex of the Baumslag--Solitar group
\[
BS(m,n)=\langle a,b\mid ba^mb^{-1}a^{-n} \rangle
\]
and let $r$ denote the relator $ba^mb^{-1}a^{-n}$. The free group $a,b$ can be viewed as the fundamental group of the following space. Let $\Sigma$ be a 3-holed sphere with boundary components $c_0,c_1,r$, and let $C$ be a circle with fundamental group generated by a simple loop $a$.   Then construct a graph of spaces $\curlyX$ by identifying $c_0$ with $a^n$ and $c_1$ with $a^{-m}$. This gluing introduces a stable letter $b$ in the fundamental group of $\curlyX$, and the fundamental group of $\curlyX$ naturally has presentation
\[
\langle a,b,c_0,c_1,r\mid c_0c_1r\,,\,c_0a^{-n}\,,c_1ba^mb^{-1}\rangle\,.
\]
It follows that $\pi_1(\curlyX)$ is free on the generators $a,b$, while $c_0=a^n$, $c_1=ba^{-m}b^{-1}$, and $r=c_1^{-1}c_0^{-1}=ba^mb^{-1}a^{-n}$. Therefore, the graph-of-groups decomposition for $F$ implicit in the construction of $\curlyX$ is a surface decomposition of $F$ relative to $r$. Since the only non-cyclic vertex group is a surface, this shows that $X$ is irrigid.
\end{example}

In general, an algorithm of Cashen can be used to determine whether or not a given pair is irrigid \cite{cashen_splitting_2016}.

Although irrigid complexes are more general than surfaces, they can in practice be replaced by branched maps from surfaces.

\begin{lemma}\label{lem: Surface to surface type}
If $X$ is a finite, irreducible, irrigid complex, then there is a branched surface $S$ with $\kappa(S)=\kappa(X)$ and an essential map $S\to X$.
\end{lemma}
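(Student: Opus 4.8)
The plan is to use the surface decomposition guaranteed by irrigidity to build a branched surface $S$ mapping essentially to $X$, with the same average curvature. Let $\curlyG$ be a surface decomposition of the group pair $(F_X,\curlyA_X)$ as in Definition \ref{def: Surface decomposition and irrigid complexes}, assuming without loss of generality that $X$ is connected. The idea is to double the surface vertices to close up the boundary and absorb the cyclic vertices, producing a genuine closed surface whose group pair matches a branched cover of $X$'s pair.

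First I would handle the cyclic vertices. Each edge group of $\curlyG$ is infinite cyclic, and at each surface vertex $v$ the incident edge groups are identified with the boundary subgroups $\pi_1(\partial_i\Sigma_v)$. A non-peripheral cyclic vertex with cyclic group $\langle c\rangle$ meets incident edges whose edge groups are subgroups $\langle c^{k_1}\rangle,\ldots$ of finite index; a peripheral cyclic vertex corresponds to an $F$-orbit of $\curlyA_X$, i.e.\ to a face of $X$, and carries the attaching-map subgroup. To build a surface, I would take the surface vertices $\Sigma_v$ and glue their boundary components directly to one another along the pattern dictated by $\curlyG$, wrapping according to the indices $k_i$ so that a boundary circle of $\Sigma_v$ with boundary subgroup of index $m$ inside the relevant cyclic group is attached as an $m$-fold branched cover of the corresponding face of $X$. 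This is exactly the kind of ramified gluing permitted by Definition \ref{def: Branched 2-complex}, where the multiplicity $m_\phi(f)$ records the wrapping degree and $\Area(f)=m_\phi(f)\Area(\phi(f))$.

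The key step is to verify that the resulting $S$ is a closed surface, that the natural map $S\to X$ is essential, and that $\kappa(S)=\kappa(X)$. For the surface property, once every boundary circle of every $\Sigma_v$ has been glued to another (the irreducibility hypothesis, via the discussion preceding the excised Remark, guarantees no free boundary circles survive), the links of all vertices become circles, so $S$ is a surface by the criterion in \S\ref{sec: Surfaces}. For essentiality I would invoke Lemma \ref{lem: Essential maps in group theory}: the map on group pairs $(F_S,\curlyA_S)\to(F_X,\curlyA_X)$ is built from the inclusion $\pi_1\Sigma_v\hookrightarrow F$ coming from the graph-of-groups structure, which is injective, and the induced map on $\curlyA$ is injective because each boundary circle injects onto its image face (up to the branching covered by the multiplicity). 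The curvature equality is the heart of the matter: $\kappa$ is preserved under finite-sheeted branched coverings by Lemma \ref{lem: Branched coverings have equal average curvature}, and the point is that assembling $X$ out of the pieces of $\curlyG$ and then closing up by branched gluing is a local-Euler-characteristic-preserving operation, so a Riemann--Hurwitz count over the decomposition shows $\tau(S)$ and $\Area(S)$ scale compatibly.

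The main obstacle I expect is bookkeeping the branching precisely so that $\kappa(S)=\kappa(X)$ holds on the nose rather than merely $\kappa(S)\leq\kappa(X)$ or an inequality. Concretely, I must choose the wrapping degrees at each boundary gluing so that the areas match the multiplicity formula $\Area(f)=m_\phi(f)\Area(\phi(f))$ globally, and confirm that the contributions of the cyclic vertices (which carry no area but do carry Euler-characteristic data through the graph-of-groups edges) cancel correctly. I would organise this as a single Euler-characteristic computation: writing $\chi(S_{(1)})$ and $\Area(S)$ in terms of the surface pieces $\Sigma_v$ and the gluing data, and comparing with $\chi(X_{(1)})$ and $\Area(X)$ computed from the same decomposition, showing both numerator and denominator of $\kappa$ scale by the same total degree. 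The surface-vertex doubling, if needed to eliminate boundary, must be done so as not to disturb this balance, which is the delicate point.
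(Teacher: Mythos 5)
Your overall strategy --- assemble a closed surface from the pieces $\Sigma_v$ of the surface decomposition, absorb the cyclic vertices, and match curvatures --- is the right one, but the gluing mechanism you propose breaks down at the non-peripheral cyclic vertices, and this is exactly the point the paper's proof is engineered to handle. Branched morphisms are homeomorphisms on edges: branching is permitted only at the centres of faces. So wrapping a boundary circle of $\Sigma_v$ with multiplicity is legitimate only when the target circle bounds a face (this is fine at the peripheral vertices, which become branched faces of $S$). At a non-peripheral cyclic vertex, the incident boundary circles carry finite-index subgroups $\langle c^{k_1}\rangle,\langle c^{k_2}\rangle,\ldots$ of the vertex group and there is no face to absorb the discrepancy: gluing two such circles ``with wrapping according to the indices'' produces points whose links are not circles, and no branched morphism to $X$, unless all the indices equal $1$. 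The paper removes this obstruction \emph{before} gluing, invoking Marshall Hall's theorem to replace $X$ by a finite-sheeted branched cover in which every edge inclusion of $\curlyG$ is an isomorphism; your proposal has no analogue of this step, and without it the bookkeeping you defer to the end cannot be made to work, not even approximately.

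Second, even once the edge maps are isomorphisms, a non-peripheral cyclic vertex may have three or more incident edges, so there is no pairing of boundary circles ``dictated by $\curlyG$'' --- one must be chosen, and a careless choice can glue a circle to itself or fail to give an essential map. This is what the paper's doubling trick supplies, and it is more specific than the ``doubling if needed to eliminate boundary'' you gesture at: take two copies $\curlyG_1,\curlyG_2$ of $\curlyG$ (i.e.\ the disjoint double $X_1\sqcup X_2$ of $X$), fix a cyclic ordering of the edges at each cyclic vertex, and glue each edge group of $\curlyG_1$ to the \emph{successor} edge group of $\curlyG_2$. Irreducibility enters precisely here: each non-peripheral cyclic vertex has at least two incident edges, so every edge differs from its successor, which is what makes the identifications well defined and the resulting map essential. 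With these two devices the curvature count is immediate: $\kappa(X)=\kappa(X_1\sqcup X_2)$ by Lemma \ref{lem: Branched coverings have equal average curvature}, cutting and regluing along infinite cyclic subgroups (Euler characteristic zero) gives $\chi(Y_{(1)})=2\chi(X_{(1)})$, and each face appears twice, so $\Area(Y)=2\Area(X)$. Finally, the complex $Y$ so constructed is only of surface type a priori, so one further application of Culler's lemma (Lemma \ref{lem: Culler's lemma}) is needed to produce a genuine surface $S$ with an essential equivalence $S\to Y$ --- a step your outline also omits.
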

\begin{proof}[Sketch proof]
Working componentwise, we may assume that $X$ is connected, so the associated group pair $(F,\curlyA)$ has a surface decomposition $\curlyG$ by hypothesis. After replacing $X$ by a branched cover, we may assume that every edge attaching map of $\curlyG$ is an isomorphism by Marshall Hall's theorem \cite{hall_jr_subgroups_1949}.

For each cyclic vertex group of $\curlyG$, choose a cyclic ordering on the incident edges. Since $X$ is irreducible, each non-peripheral cyclic vertex has at least two incident edges. Therefore, each edge is distinct from its successor in the cyclic order. Consider the double cover $X'=X_1\sqcup X_2$ of $X$ that consists of the disjoint union of two copies of $X$, corresponding to two copies $\curlyG_1$ and $\curlyG_2$ of $\curlyG$. We now build a new graph of groups $\curlyH$ by deleting all of the cyclic vertices and, for each edge group of $\curlyG_1$, identifying it with the next edge group of $\curlyG_2$ in the relevant cyclic order.

The resulting graph of groups $\curlyH$ consists of surfaces glued along their boundary components and so, attaching 2-cells to each vertex group as they were attached in the two copies of $\curlyG$, it defines a group pair $(H,\curlyB)$ of surface type. By construction, there is an induced map of group pairs $(H,\curlyB)\to (F,\curlyA)$. This induced map can be seen to be essential by examining normal forms in the natural graph-of-groups decompositions, \footnote{This is where the fact that each edge group is distinct from its successor in the cyclic order is used.} so $(H,\curlyB)$ corresponds to an essential map $Y\to X$ for some 2-complex $Y$. Since $(H,\curlyB)$ is of surface type, there is a surface $S$ and elementary equivalence $S\to Y$ by Culler's lemma.

It remains to compute $\kappa(S)$. By Lemma \ref{lem: Branched coverings have equal average curvature}, $\kappa(X)=\kappa(X_1\sqcup X_2)$. Since $\curlyH$ is obtained from $\curlyG_1$ and $\curlyG_2$ by cutting and gluing infinite cyclic subgroups (which have Euler characteristic zero), it follows that $\chi(Y_{(1)})=2\chi(X_{(1)})$. Since each 2-cell of $X$ appears exactly twice in $X_1\sqcup X_2$ and hence twice in $Y$,  it follows that $\Area(Y)=2\Area(X)$. Hence $\kappa(Y)=\kappa(X)$ and the result follows since $S\to Y$ is an essential equivalence.
 \end{proof}

The next lemma is a restatement of \cite[Lemma 5.9]{wilton_essential_2018}, which in turn is based on the main argument of \cite{wilton_one-ended_2011}.

\begin{lemma}\label{lem: Minimal average curvature implies surface type}
Let $X$ be a finite, connected, irreducible, branched 2-complex. If $X$ is not irrigid then there is $Y\in\Irred(X)$ such that $\kappa(Y)<\kappa(X)$.
\end{lemma}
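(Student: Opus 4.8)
The plan is to argue the contrapositive in spirit: a curvature-minimal irreducible complex is forced to be irrigid, and conversely any failure of the surface-decomposition structure can be converted into a genuinely curvature-decreasing essential map. First I would reduce to the case where $X$ is connected and visibly irreducible, since essential equivalences preserve both $\kappa$ and the class $\Irred(X)$ (and by Remark~\ref{rem: Average of connected components} it suffices to treat connected competitors $Y$). After this reduction the problem is entirely about the associated group pair $(F,\curlyA)$, and the hypothesis ``not irrigid'' means exactly that $(F,\curlyA)$ admits no surface decomposition $\curlyG$ in the sense of Definition~\ref{def: Surface decomposition and irrigid complexes}. Recording that $\kappa(Y)=1-(\rk F_Y-1)/\Area(Y)$ for connected $Y$, the goal becomes: produce an essential sub-pair for which $(\rk F_Y-1)/\Area(Y)$ is strictly larger than its value on $X$.

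The first substantive step is a combinatorial Gauss--Bonnet bookkeeping attached to the links of vertices, which locates where the non-surface structure sits. Writing $\ell_f$ for the number of corners of a face $f$ and distributing its area evenly over those corners, one can split the total curvature as $\tau(X)=\sum_v\tau_v$ with $\tau_v=1-\tfrac12\#V(\Lk(v))+\sum_{c\text{ at }v}a_{f(c)}/\ell_{f(c)}$. Because $X$ is visibly irreducible, each $\Lk(v)$ is connected, has minimum valence at least $2$, and has no cut vertex; such a graph satisfies $\chi(\Lk(v))\le 0$, with equality precisely when $\Lk(v)$ is a circle, i.e.\ when $v$ is a surface point. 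Thus the failure of $X$ to be a surface is visible as a link that is not a circle, and the stronger failure to be irrigid is the assertion that this extra link complexity cannot be reorganised into surface pieces glued along cyclic subgroups.

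Given such a configuration I would construct the required $Y$ by a cover-and-resurgery dual to the construction in the proof of Lemma~\ref{lem: Surface to surface type}. Using that branched immersions pull back along essential maps (as in the proof of Proposition~\ref{prop: Invariance under branched coverings}), I would pass to a finite branched cover $X'\to X$ on which the relevant cyclic attaching data is unwrapped by Marshall Hall's theorem, and then cut along the cyclic subgroups at a non-surface vertex and reglue. In the balanced, surface-like gluing pattern this keeps $\chi(Y_{(1)})$ and $\Area(Y)$ in the fixed ratio and leaves $\kappa$ unchanged; the point is that when $(F,\curlyA)$ has no surface decomposition there is instead a regluing that creates additional rank (an extra ``handle'') without a compensating increase in area, strictly raising $(\rk F_Y-1)/\Area(Y)$ and hence strictly lowering $\kappa$. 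The resulting pair $(F_Y,\curlyA_Y)$ maps to $(F,\curlyA)$, so by Lemma~\ref{lem: Essential maps in group theory} it gives an essential map $Y\to X$, and $Y$ can be taken irreducible since its links agree with those of the visibly irreducible $X'$.

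The main obstacle is the final step: ensuring that the regluing (i) induces an \emph{injective} map of group pairs, so that $Y$ genuinely lies in $\Irred(X)$ rather than folding back down to $X$ itself, and (ii) yields a \emph{strict} rather than weak inequality. A single bad link is not enough, since a non-surface vertex could still be balanced by its neighbours; the strict gain requires using the global absence of a surface decomposition to rule out every surface-like regluing pattern at once. This is exactly the content of the argument of \cite{wilton_one-ended_2011}, reformulated as \cite[Lemma~5.9]{wilton_essential_2018}, and I would carry out the surgery in the language of group pairs and invoke that analysis to certify both injectivity and strictness.
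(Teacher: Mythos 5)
Your proposal is correct and ultimately takes the same route as the paper, which offers no internal proof of this lemma at all: it simply states that the lemma is a restatement of \cite[Lemma 5.9]{wilton_essential_2018}, itself based on the main argument of \cite{wilton_one-ended_2011}, exactly the result you invoke for the two load-bearing steps (injectivity of the reglued pair and strictness of the curvature drop). Your surrounding reductions -- restricting to connected, visibly irreducible $X$, rewriting $\kappa(Y)=1-(\rk F_Y-1)/\Area(Y)$, and the link-by-link Gauss--Bonnet bookkeeping -- are accurate but are scaffolding around the same citation the paper relies on.
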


With these results in hand, we can now easily prove two of the theorems from the introduction.

\begin{proof}[Proof of Theorem \ref{thm: Curvature inequalities}]
In the light of Lemma \ref{lem: Trivial inequalities}, it suffices to prove that
\[
-\infty<\rho_-(X)=\sigma_-(X)\,.
\]
The rationality theorem tells us that $\rho_-(X)$ is realised by some $Y\in\Irred(X)$, so $\rho_-(X)=\kappa(Y)>-\infty$. Furthermore, $Y$ must be irrigid by Lemma \ref{lem: Minimal average curvature implies surface type}, so Lemma \ref{lem: Surface to surface type} provides $S\in\Surf(Y)$ such that $\kappa(S)=\kappa(Y)=\rho_-(X)$.  Now
\[
\sigma_-(X)\leq \sigma_-(Y)\leq\kappa(S)=\rho_-(X)
\]
by Lemma \ref{lem: Monotonicity}, and the result follows.
\end{proof}

Our second theorem is the classification of complexes with constant irreducible curvature. Precisely, it asserts that every such 2-complex is irrigid.\footnote{Strictly speaking, this is not quite a classification because the necessary condition is not sufficient: there are irrigid 2-complexes with $\rho_+(X)>\rho_-(X)$. But it is not difficult to determine which irrigid 2-complexes have constant irreducible curvature.}

\begin{proof}[Proof of Theorem \ref{thm: Constant irreducible curvature}]
If $X$ is not irrigid then Lemma \ref{lem: Minimal average curvature implies surface type} gives $Y\in\Irred(X)$ such that $\kappa(Y)<\kappa(X)$. In particular,
\[
\rho_-(X)\leq \kappa(Y)<\kappa(X)\leq\rho_+(X)
\]
which contradicts the hypothesis that $\rho_-(X)=\rho_+(X)$.
\end{proof}

Since it plays an important role, it is also worth mentioning that the results stated here also give a proof that $\Surf(X)$ is non-empty for irreducible $X$ (i.e.\ Theorem \ref{thm: Essential surfaces exist}), which is slightly different to the original proof \cite{wilton_essential_2018}.

\begin{proof}[Proof of Theorem \ref{thm: Essential surfaces exist}]
Let $X$ be finite and irreducible, so $\Irred(X)\neq\varnothing$ since $X\in\Irred(X)$. By the rationality theorem, there is $Y\in \Irred(X)$ such that $\kappa(Y)=\rho_-(X)$. By Lemma \ref{lem: Minimal average curvature implies surface type}, $Y$ is irrigid, so there is $S\in\Surf(Y)$ by Lemma \ref{lem: Surface to surface type}. The natural embedding of $\Surf(Y)\to \Surf(X)$ now shows that $\Surf(X)$ is also non-empty, as required.
\end{proof}

\section{Relations to other invariants}\label{sec: Relations to other invariants}

Although $\sigma_\pm$ and $\rho_\pm$ appear to be new invariants, they are related to various other invariants already in the literature. The purpose of this section is to explain some of these relationships.

\subsection{Deficiency}

The deficiency of a presentation is an alternative normalisation of the Euler characteristic, traditionally used in combinatorial group theory.

\begin{definition}[Deficiency]\label{def: Deficiency}
The \emph{deficiency} of a finite group presentation
\[
\langle a_1,\ldots, a_n\mid r_1,\ldots, r_m\rangle
\]
is the quantity $n-m$. Note that this is $\delta(X)=1-\chi(X)$, where $X$ is the associated presentation complex.  The \emph{deficiency} $\delta(G)$ of a group $G$ is the maximum deficiency of a presentation for that group.
\end{definition}

Presentations with deficiencies bounded below have been a popular topic throughout the history of combinatorial group theory.\footnote{The Baumslag--Pride theorem, that groups with $\delta(G)\geq 2$ are large, is perhaps the most notable result in this direction \cite{baumslag_groups_1978}.} One limitation is that no general theorems can be proved about the subgroups of groups with deficiency bounded below, since by taking free products any finitely presented group appears as a free factor in a presentation with arbitrarily large deficiency. However, upper bounds on irreducible curvature in turn impose a natural lower bound on the deficiencies of the freely indecomposable finitely presented subgroups of $G=\pi_1(X)$.

\begin{remark}\label{rem: Irreducible curvature and deficiency}
Let $X$ be a finite, standard 2-complex. The inclusion of any non-free, freely indecomposable, finitely presented subgroup $H$ of $G=\pi_1(X)$ is represented by an immersion $Y\to X$, where $Y$ is irreducible \cite[Lemma 4.2]{louder_one-relator_2020}. If $\rho_+(X)= 0$ then $\tau(Y)\leq 0$ for every such $Y$, and so
\[
\delta(H)\geq\delta(Y) = 1-\chi(Y)\geq 1-\tau(Y)\geq 1\,.
\]
Likewise, if $\rho_+(X)<0$ then $\tau(Y)<0$ for every such $Y$, and so $\delta(H)\geq 2$.
\end{remark}

This suggests that $\rho_+(X)$ should be thought of as a a `stabilised' version of deficiency. The 2-complexes with non-positive irreducible curvature provide good candidates for presentations of deficiency at least one about which one can hope to prove useful theorems.  Likewise, the 2-complexes with negative irreducible curvature provide good candidates for presentations of deficiency at least two to study. A few such theorems are collected in \S\ref{sec: Properties of groups with curvature bounds} below.

\subsection{Primitivity rank}\label{sec: Primitivity rank}

Primitivity rank is an invariant of words in free groups introduced by Puder \cite{puder_primitive_2014}. A word $w$ in a free group $F$ is \emph{primitive} if it is contained in a minimal generating set, and otherwise is said to be \emph{imprimitive}.

\begin{definition}[Primitivity rank]\label{def: Primitivity rank}
Let $F$ be a free group and $w\in F$ a non-trivial word. The quantity
\[
\pi(w):=\min\{\rk(H)\mid w\mathrm{\,imprimitive\,in\,}H\leq F\}
\]
is the \emph{primitivity rank} of $w$. By convention, if $w$ is primitive in $F$ (and hence in all its subgroups) then $\pi(w)=\infty$.
\end{definition}

Puder and Parzanchevski related the primitivity rank to the behaviour of random word maps on symmetric groups \cite{puder_measure_2015}, while Louder and the author related it to the subgroup structure of 1-relator groups \cite{louder_negative_2022}\footnote{See \S\ref{subsec: One-relator groups} for further discussion of one-relator groups.}.The connection with one-relator groups stems from the observation that the primitivity rank is also related to an extremal value of the total curvature, when restricted to immersed 2-complexes of area 1.

\begin{lemma}\label{lem: Topological point of view on primitivity rank}
Let $F$ be a free group, let $w\in F$ be a non-trivial element, and let $X$ be the natural presentation complex of the one-relator group $F/\llangle w\rrangle$. Then
\[
2-\pi(w)=\max\{ \tau(Y)\mid Y\in \Irred(X)\,,\,\Area(Y)=1\}
\]
\end{lemma}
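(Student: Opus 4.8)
The plan is to interpret both sides in terms of the group pair $(F_X,\curlyA_X)$ and to relate area-one irreducible complexes $Y\in\Irred(X)$ to subgroups of $F$ witnessing the primitivity rank of $w$. Recall from \S\ref{sec: Nielsen equivalence} that a connected $Y\in\Irred(X)$ of area $1$ corresponds to an injective morphism of group pairs $(F_Y,\curlyA_Y)\to(F_X,\curlyA_X)$; since $X$ has a single face attached along $w$, the associated pair is $(F,F/\langle w\rangle)$. The area-one condition $\Area(Y)=1$ forces $\curlyA_Y$ to consist of a single $F_Y$-orbit whose stabiliser $\langle u\rangle$ maps to $\langle w\rangle$ with multiplicity $1$, i.e.\ $\phi_*(u)=w$ after choosing representatives. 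Thus $H:=F_Y$ is a subgroup of $F$ containing $w$ as a primitive generator of the attached cyclic subgroup, and for this data $\tau(Y)=\Area(Y)+\chi(Y_{(1)})=1+(1-\rk H)=2-\rk H$.

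First I would make the dictionary precise. The key identity $\tau(Y)=2-\rk(F_Y)$ reduces the right-hand side to $\max\{2-\rk H\}=2-\min\{\rk H\}$, where the minimum is taken over subgroups $H\leq F$ that arise as $F_Y$ for some area-one $Y\in\Irred(X)$. The task is then to show that this class of subgroups is exactly the class of $H$ witnessing $\pi(w)$, namely those $H$ containing $w$ in which $w$ is \emph{imprimitive}. Concretely, I would argue that $w\in H$ and that $Y$ is irreducible if and only if $w$ is imprimitive in $H$, using Whitehead's lemma (Lemma \ref{lem: Whitehead's lemma}). The reducibility condition (iii) of that lemma — $F_Y=A*B$ with the cyclic stabiliser conjugate into a factor — is precisely the statement that $w$ lies in a proper free factor of $H$, i.e.\ that $w$ is primitive in $H$ (after discarding the degenerate cases (i) and (ii), which correspond to $H$ cyclic or trivial and hence to $\pi(w)$ being realised at smaller rank or $w$ being a proper power).

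The heart of the argument is therefore the equivalence: for a subgroup $H\leq F$ with $w\in H$ primitive in the cyclic subgroup $\langle w\rangle$ (multiplicity one), the area-one complex $Y$ with group pair $(H,H/\langle w\rangle)$ is irreducible if and only if $w$ is imprimitive in $H$. I would establish this by translating each clause of Whitehead's lemma into a statement about $w$: clause (i) gives $H$ trivial (impossible since $w\neq 1$); clause (ii) gives $H$ infinite cyclic, which happens exactly when $\langle w\rangle$ has finite index, a case that can be absorbed since then $w$ is primitive in $H=\langle v\rangle$ precisely when $w=v^{\pm1}$, and otherwise $w$ is a proper power making $\pi(w)$ smaller; clause (iii) is the free-factor splitting compatible with $\langle w\rangle$, which is the definition of $w$ being primitive in $H$. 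Hence $Y$ is reducible iff $w$ is primitive in $H$, so ranging over irreducible area-one $Y$ ranges over exactly the $H$ with $w$ imprimitive.

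The main obstacle I anticipate is handling the low-rank degenerate cases cleanly — ensuring that the cyclic and trivial clauses of Whitehead's lemma do not produce spurious subgroups that would lower the maximum on the right below $2-\pi(w)$, and conversely confirming that a minimal-rank witness $H$ for $\pi(w)$ genuinely yields an \emph{irreducible} area-one complex rather than one that is merely unfoldable or visibly reducible. I would resolve this by invoking Lemma \ref{lem: Essential maps in group theory} to pass freely between essential maps and injective group-pair morphisms, and by checking that multiplicity one (forced by $\Area(Y)=1$) guarantees $\phi_*(u)=w$ rather than a proper power, so that the cyclic subgroup attached in $Y$ is exactly $\langle w\rangle\cap H$. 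Taking the supremum over all such $Y$ then yields $\max\tau(Y)=2-\min_H\rk H=2-\pi(w)$, and since all quantities are integers the maximum is attained, matching the statement.
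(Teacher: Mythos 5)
There is a genuine gap at what you yourself call the heart of the argument: the claimed equivalence ``$Y$ is irreducible if and only if $w$ is imprimitive in $H=F_Y$'' is false in one direction, because you conflate \emph{lying in a proper free factor} with \emph{being primitive}. Clause (iii) of Whitehead's lemma (Lemma \ref{lem: Whitehead's lemma}) says $Y$ is reducible when $H=A*B$ nontrivially with $\langle w\rangle$ conjugate into a factor, i.e.\ when $w$ is conjugate into a proper free factor of $H$ --- a strictly weaker condition than primitivity. Concretely, take $H=\langle a,b,c\rangle$ and $w=[a,b]$: then $w$ is imprimitive in $H$ (its image in the abelianisation is trivial), yet the area-one complex with group pair $(H,H/\langle w\rangle)$ is \emph{reducible} via the splitting $H=\langle a,b\rangle *\langle c\rangle$. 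So the irreducible area-one complexes correspond to a strictly smaller class of subgroups than $\{H : w \text{ imprimitive in } H\}$, and your identification of the two classes, on which the computation $\max\tau(Y)=2-\min_H\rk H$ rests, does not hold.

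The conclusion survives because the \emph{minima} of $\rk$ over the two classes agree, but establishing that requires an argument you do not supply: one must show that a minimal-rank witness $H$ for $\pi(w)$ cannot split freely relative to $w$. This follows from minimality --- if $H=A*B$ with $w$ conjugate into $A$, then $w$ would be imprimitive in $A$ (primitivity in a free factor implies primitivity in $H$), and $\rk(A)<\rk(H)$ contradicts minimality --- and only then does Whitehead's lemma yield that the complex $Y$ built from the core graph of $H$ with a face attached along the lift of $w$ is irreducible, giving $\tau(Y)=2-\pi(w)$ and hence the inequality $2-\pi(w)\leq\max\tau$. This is exactly how the paper proceeds (``Since $H$ cannot split freely relative to $w$, the complex $Y$ \dots is irreducible, by Whitehead's lemma''), paired with the easy direction (which you do have correctly): any irreducible area-one $Z$ yields $K$ containing $w$ imprimitively, so $\tau(Z)\leq 2-\pi(w)$. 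Your final paragraph does flag the worry that a minimal-rank witness might not give an irreducible complex, but the tools you invoke to resolve it (Lemma \ref{lem: Essential maps in group theory} and the multiplicity-one bookkeeping forced by $\Area(Y)=1$) address only the correspondence between essential maps and injective pair morphisms, not the free-splitting obstruction, which is the actual content of the step.
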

\begin{proof}
If $H\leq F$ realises the minimum in the definition of $\pi(w)$ then the associated immersion of core graphs $Y_{(1)}\to X_{(1)}$ carries a lift $\hat{w}$ of $w$. Since $H$ cannot split freely relative to $w$, the complex $Y$ constructed by attaching a face along $\hat{w}$ to $Y_{(1)}$ is irreducible, by Whitehead's lemma.  Since
\[
\tau(Y)=2-\rk H=2-\pi(w)
\]
this establishes that the left hand side of the desired equation is no greater than the right hand side.

To prove the reverse inequality, it remains to show that any other $Z\in\Irred(X)$ with $\Area(Z)=1$ has $\tau(Z)\leq \tau(Y)$.   Indeed, the 1-skeleton $Z_{(1)}$ defines a subgroup $K$ of $F$ containing $w$ imprimitively, again by Whitehead's lemma. But $\tau(Z)>\tau(Y)$ implies that $\rk(K)<\rk(H)$, because $\Area(Z)=\Area(Y)=1$, contradicting the hypothesis that $H$ realises the minimum. 
\end{proof}

The lemma quickly implies a relationship between the primitivity rank and the maximal irreducible curvature.

\begin{lemma}[Primitivity rank and $\rho_+(X)$]\label{lem: Relationship between primitivity rank and maximal irreducible curvature}
If $X$ is the presentation complex of the one-relator group $F/\llangle w\rrangle$ then 
\[
2-\pi(w)\leq \rho_+(X)\,.
\]
\end{lemma}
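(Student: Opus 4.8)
The plan is to deduce this directly from Lemma \ref{lem: Topological point of view on primitivity rank} together with the definition of $\rho_+(X)$. Recall that $\rho_+(X)=\sup_{Y\in\Irred(X)}\kappa(Y)$, and that for any $Y\in\Irred(X)$ we have $\kappa(Y)=\tau(Y)/\Area(Y)$. The key observation is that Lemma \ref{lem: Topological point of view on primitivity rank} produces a specific witness: there is some $Y_0\in\Irred(X)$ with $\Area(Y_0)=1$ and $\tau(Y_0)=2-\pi(w)$, since the maximum on the right-hand side is attained.

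First I would invoke Lemma \ref{lem: Topological point of view on primitivity rank} to obtain this extremal complex $Y_0\in\Irred(X)$ realising $\tau(Y_0)=2-\pi(w)$ with $\Area(Y_0)=1$. Then I would simply compute
\[
\kappa(Y_0)=\frac{\tau(Y_0)}{\Area(Y_0)}=\frac{2-\pi(w)}{1}=2-\pi(w)\,.
\]
Since $Y_0\in\Irred(X)$, it contributes to the supremum defining $\rho_+(X)$, whence
\[
2-\pi(w)=\kappa(Y_0)\leq\sup_{Y\in\Irred(X)}\kappa(Y)=\rho_+(X)\,,
\]
which is exactly the claimed inequality.

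One minor point worth addressing is the edge case where $w$ is primitive, so that $\pi(w)=\infty$ by convention and the left-hand side is $-\infty$; here the inequality $-\infty\leq\rho_+(X)$ is vacuous (and consistent with the convention $\rho_\pm(X)=\mp\infty$ when $\Irred(X)$ is empty, which occurs precisely when $X$ is homotopy equivalent to a graph). I do not anticipate any genuine obstacle: the entire content has been front-loaded into Lemma \ref{lem: Topological point of view on primitivity rank}, and this lemma is a one-line consequence extracting the area-one witness and comparing against the supremum. The only care needed is to ensure the witness $Y_0$ legitimately lies in $\Irred(X)$ so that it is admissible in the extremal problem defining $\rho_+$, but this is guaranteed by the statement of the previous lemma.
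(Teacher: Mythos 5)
Your proposal is correct and is essentially the paper's own proof: both extract the area-one witness $Y_0\in\Irred(X)$ from Lemma \ref{lem: Topological point of view on primitivity rank}, note that $\kappa(Y_0)=\tau(Y_0)=2-\pi(w)$ when $\Area(Y_0)=1$, and compare against the supremum defining $\rho_+(X)$. Your remark on the primitive case $\pi(w)=\infty$ is a sensible extra observation that the paper leaves implicit.
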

\begin{proof}
If $\Area(Y)=1$ then $\kappa(Y)=\tau(Y)$, so Lemma \ref{lem: Topological point of view on primitivity rank} shows that $2-\pi(w)=\kappa(Y)$ for some $Y\in \Irred(X)$. The result is then immediate.
\end{proof}

This result provides useful estimates of $\rho_+$ because the primitivity rank can often be computed by hand for small examples; see, for instance, \cite[Example 1.4]{louder_negative_2022}.\footnote{More generally, algorithms to compute $\pi(w)$ are described in \cite[Appendix A]{puder_measure_2015} and \cite[Lemma 6.4]{louder_negative_2022}.}

Lemma \ref{lem: Topological point of view on primitivity rank} suggests that the quantity $\rho_+(X)$ can be thought of as stabilised version of primitivity rank, in analogy with stabilising commutator length to give stable commutator length.\footnote{See \S\ref{sec: Stable commutator length} for more on stable commutator length.} We therefore propose the following definition.

\begin{definition}\label{def: Stable primitivity rank}
Let $F$ be a free group and $w\in F$ be a non-trivial element. The \emph{stable primitivity rank} of $w$ is defined to be
\[
s\pi(w):=1-\rho_+(X)
\]
where $X$ is the natural presentation complex associated to the one-relator group  $F/\llangle w \rrangle$.\footnote{The reader may reasonably wonder why $s\pi(w)$ is defined to be $1-\rho_+(X)$, rather than $2-\rho_+(X)$. This is because $s\pi(w)$ should be thought of as a (suitably normalised) Euler characteristic of a graph, not of a 2-complex, so we should subtract the term coming from the face of $X$.}
\end{definition}

A much stronger improvement of Lemma \ref{lem: Topological point of view on primitivity rank} was proved by Louder and the author in \cite[Lemma 6.10]{louder_negative_2022}.

\begin{theorem}\label{thm: Primitivity rank is an absolute extremum}
If $X$ is the presentation complex of the one-relator group $F/\llangle w\rrangle$ then 
\[
2-\pi(w)=\max_{Y\in\Irred(X)} \tau(Y)\,.
\]
\end{theorem}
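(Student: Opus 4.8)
The inequality $\max_{Y\in\Irred(X)}\tau(Y)\ge 2-\pi(w)$ is immediate from Lemma \ref{lem: Topological point of view on primitivity rank}, since the area-$1$ complexes form a subfamily of $\Irred(X)$ on which the maximum of $\tau$ already equals $2-\pi(w)$. All the content therefore lies in the reverse bound: every $Y\in\Irred(X)$ satisfies $\tau(Y)\le 2-\pi(w)$, and once this is known the maximum is attained by the area-$1$ extremiser supplied by the lemma. Using the additivity of $\tau$ over connected components, together with the fact that an essential map forces an injection $\curlyA_Y\hookrightarrow\curlyA_X$ (Lemma \ref{lem: Essential maps in group theory}), I would first reduce to the case that $Y$ is connected. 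For connected $Y$ we have $\tau(Y)=\Area(Y)+1-\rk(F_Y)$, so the target inequality is equivalent to the rank lower bound
\[
\rk(F_Y)\ \ge\ \Area(Y)-1+\pi(w)\,.
\]
In words: beyond the $\pi(w)$ already forced in the area-$1$ case, every additional unit of area must be paid for by a unit of rank.

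I would prove this by induction on $\Area(Y)$. The base case $\Area(Y)=1$ is exactly Lemma \ref{lem: Topological point of view on primitivity rank}: an area-$1$ irreducible complex mapping essentially to $X$ has a $1$-skeleton carrying $w$ imprimitively, so $\rk(F_Y)\ge\pi(w)$ by Whitehead's lemma. For the inductive step I would seek a complex $Y'\in\Irred(X)$ with $\Area(Y')=\Area(Y)-1$ and $\rk(F_{Y'})\le\rk(F_Y)-1$; feeding such a $Y'$ into the inductive hypothesis gives $\rk(F_Y)\ge\rk(F_{Y'})+1\ge\Area(Y)-1+\pi(w)$, as required. To produce $Y'$ one lowers the area by one, either by deleting a face (when all faces have multiplicity one and there are at least two of them) or by an unbranching move that lowers a face's multiplicity (when some face is branched). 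In either case the essential point is to force the rank to drop, and this is controlled by irreducibility via Whitehead's lemma: lowering the area typically destroys irreducibility by revealing a free splitting $F_Y=A*B$ compatible with the surviving faces, and discarding the free factor not needed to carry them lowers the rank by exactly the unit demanded by the induction.

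The main obstacle is precisely the scenario in which the area-reduced complex \emph{remains} irreducible: then no compatible free splitting appears, the rank does not drop, and the naive induction stalls one unit short. Ruling this out amounts to proving that the marked conjugates of $w$ carried by the faces are sufficiently entangled that they can never be packed more densely than one per generator above the primitivity-rank floor -- a statement about how copies of $w$ sit inside a free group without admitting a separating free splitting. Establishing this uniformly, rather than only in the area-$1$ case, is the genuine one-relator content, and is where the machinery of primitivity rank and $w$-subgroups is needed; it is carried out in full in \cite[Lemma 6.10]{louder_negative_2022}. By comparison, the remaining ingredients -- the reduction to the connected case, the translation of irreducibility into Whitehead's lemma, and the area-$1$ base case -- are routine.
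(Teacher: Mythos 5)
The paper does not actually prove this theorem: it is presented as a restatement of \cite[Lemma 6.10]{louder_negative_2022}, so your closing move of outsourcing the hard direction to that lemma coincides exactly with the paper's treatment. Your framing of the two halves is also accurate: the inequality $\max_{Y}\tau(Y)\geq 2-\pi(w)$ and the attainment both come from Lemma \ref{lem: Topological point of view on primitivity rank}, and for connected $Y$ the bound $\tau(Y)\leq 2-\pi(w)$ is equivalent to the rank inequality $\rk(F_Y)\geq \Area(Y)+\pi(w)-1$, which is indeed the form in which Louder--Wilton prove it. You are also right that the naive induction on area stalls precisely when the area-reduced complex stays irreducible; the actual argument in \cite{louder_negative_2022} is not an induction on area but a global analysis via $w$-subgroups and the stacking machinery, so your sketch should be read as motivation rather than as an outline of the cited proof.

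One caveat in your reduction to the connected case: additivity of $\tau$ over components only yields $\tau(Y)\leq 2-\pi(w)$ from the componentwise bound when $2-\pi(w)\leq 0$, i.e.\ when $\pi(w)\geq 2$. If $w$ is a proper power, so $\pi(w)=1$, connected irreducible complexes with $\tau=1$ exist (e.g.\ a circle carrying $u$ with a single face attached along $u^k$ for $w=u^k$), and a disjoint union of two \emph{distinct} such complexes, each a branched immersion into $X$, is essential and irreducible with $\tau=2$. So in that case the statement itself must be read with the standing convention that $Y$ is connected (as it is in the source lemma); connectivity is a hypothesis there, not something you can recover by the additivity argument.
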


This theorem provides a useful method to prove that $\rho_+(X)$ is negative -- see, for instance, Examples  \ref{eg: Manning's example} below.

\subsection{Stable commutator length}\label{sec: Stable commutator length}

Let $w$ be a non-trivial element of a free group $F$ as in the previous section, but this time we assume that $w$ is in the commutator subgroup $[F,F]$.  Recall the standard definitions of commutator length and its stable cousin.

\begin{definition}[Commutator length and stable commutator length]\label{def: Commutator length and stable commutator length}
The \emph{commutator length} of $w$, $\cl(w)$, is the minimal positive integer $n$ such that $w$ is a product of $n$ commutators.  The quantity
\[
\scl(w):=\inf_{n\in\N}\frac{\cl(w^n)}{n}
\]
is the \emph{stable commutator length} of $w$. (In fact this makes sense for any group.)
\end{definition} 

There are two important results about stable commutator length in a free group $F$. The Duncan--Howie theorem asserts that $\scl(w)\geq 1/2$ unless $w$ is a proper power \cite{duncan_genus_1991}. Calegari's rationality theorem asserts that $\scl(w)\in\Q$, and furthermore there is an algorithm to compute it \cite{calegari_stable_2009}. In fact, this algorithm is effective, and has been implemented \cite{calegari_scallop_????}.

As usual, we may consider a standard presentation complex $X$ for $F/\llangle{w}\rrangle$, with a single face $f$.  The next lemma documents a relationship between the surface curvature of $X$ and the stable commutator length.

\begin{lemma}[Surface curvature and stable commutator length]\label{lem: Surface curvature and stable commutator length (one face)}
For $w\in[F,F]$ and $X$ the standard presentation complex of $F/\llangle{w}\rrangle$,
\[
1-2\scl(w)\leq \sigma_+(X)\,.
\]
\end{lemma}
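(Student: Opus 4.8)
The plan is to exhibit, for each expression of $w$ as a product of $n$ commutators, an essential map from a suitable closed surface into $X$ and to compute its average curvature. Recall that $\scl(w)$ is computed by genus: if $w=\prod_{i=1}^n[x_i,y_i]$ then $w$ bounds a once-punctured genus-$n$ surface, i.e.\ there is a compact orientable surface $\Sigma$ of genus $n$ with a single boundary component and a map $\Sigma\to X_{(1)}$ sending the boundary to (a representative of) $w$. The relationship $\scl(w)=\inf \frac{2\cdot\mathrm{genus}-1}{2}$ over such admissible surfaces (in the normalisation where a single commutator gives a once-punctured torus) is the standard topological interpretation of stable commutator length. Allowing the boundary to wrap around the relator $n_f$ times and taking multiple boundary components leads to the general admissible-surface formulation that Calegari uses.

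First I would take an admissible surface $\Sigma\to X_{(1)}$ of degree $n$ whose boundary components map to powers of $w$, so that capping off each boundary circle with a copy of the face $f$ produces a \emph{closed} 2-complex $S$. Concretely, each boundary component of $\Sigma$ wraps some multiple of $w$, so I glue in a face of appropriate multiplicity (branching as in Definition \ref{def: Branched 2-complex}), obtaining a branched closed surface $S$ together with a natural branched map $S\to X$. The total number of faces glued in, weighted by multiplicity, is exactly the degree $n$, so $\Area(S)=n$. Meanwhile $\chi(S_{(1)})=\chi(\Sigma)$ since capping off circles with faces does not change the $1$-skeleton's homotopy type beyond what $\Sigma$ already records; and $\chi(\Sigma)=-2\,\mathrm{genus}-(\#\text{boundary components})+2$ for the connected case, which is controlled by the commutator count. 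I would then verify that this map is \emph{essential} in the sense of Definition \ref{def: Essential map}: since $\Sigma$ can be taken to be $\pi_1$-injective (this is part of the standard setup for scl via extremal surfaces, and one may fold to arrange injectivity of the induced map of group pairs as in Lemma \ref{lem: Essential maps in group theory}), the induced map $S\to X$ is essential, so $S\in\Surf(X)$.

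The key computation is then $\kappa(S)=1+\chi(S_{(1)})/\Area(S)$. Writing the Euler-characteristic contribution in terms of the number of commutators realised by $\Sigma$, one finds $\chi(S_{(1)})=-2\cdot(\text{commutator count})+(\text{correction from boundary})$, and after normalising by $\Area(S)=n$ the quantity $-\chi(S_{(1)})/\Area(S)$ converges, over an optimising sequence of admissible surfaces, to $2\scl(w)$ by the very definition of stable commutator length as an infimum of $-\chi/2n$ ratios (in the appropriate normalisation). Therefore $\kappa(S)=1+\chi(S_{(1)})/n\to 1-2\scl(w)$. Taking the supremum over all such $S\in\Surf(X)$ gives $\sigma_+(X)\geq 1-2\scl(w)$, which is the claimed inequality. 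I would present the ratio so that a single good admissible surface already realises a value arbitrarily close to $1-2\scl(w)$, so that passing to the supremum yields the bound without needing equality.

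The main obstacle, and the step requiring the most care, is the bookkeeping that matches the normalisations: ensuring that the branching multiplicities assigned to the capping faces are compatible with Definition \ref{def: Branched 2-complex} (so that $\Area(f)=m_\phi(f)\Area(\phi(f))$ holds), and that the resulting $\Area(S)$ equals the scl-degree $n$ rather than some off-by-a-factor variant. Equally delicate is confirming essentiality: one must guarantee the surface is $\pi_1$-injective into $X_{(1)}$ and that the boundary-to-face gluing does not introduce folding that collapses area, so that condition (ii) of Definition \ref{def: Essential map} (area preservation) holds and $S$ genuinely lies in $\Surf(X)$. I expect the inequality (rather than equality) to drop out naturally precisely because $\Surf(X)$ may contain non-orientable or higher-multiplicity surfaces not arising from commutator expressions, which is exactly why the statement is phrased as a lower bound and why $(1-\sigma_+(X))/2$ is described in the introduction as an \emph{unoriented} analogue of scl.
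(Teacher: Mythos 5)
Your outline (cap off an admissible surface for $w$ by branched faces, compute $\kappa$, conclude) matches the paper's strategy, but it contains a genuine gap at exactly the step you flag as ``delicate'': essentiality. You assert that $\Sigma$ ``can be taken to be $\pi_1$-injective (this is part of the standard setup for scl)'' and that one may ``fold to arrange injectivity.'' Neither claim holds. $\pi_1$-injectivity is \emph{not} part of the standard admissible-surface setup: a near-extremal admissible surface can be assumed incompressible along \emph{simple} closed curves (compression only improves $-\chi/2\deg$), but that says nothing about non-simple loops in the kernel, so an optimising sequence of admissible surfaces need not consist of $\pi_1$-injective maps at all. Nor can folding repair this: by Definition \ref{def: Essential map}, the capped-off map $S\to X$ is essential only if the fold $S\to\bar S$ is a homotopy equivalence on 1-skeleta, and when $\Sigma\to X_{(1)}$ is not $\pi_1$-injective the fold strictly drops rank, so the map is simply \emph{not} essential --- folding detects the failure rather than fixing it. A non-essential $S$ does not lie in $\Surf(X)$ and contributes nothing to $\sigma_+(X)$, so your supremum argument has no surfaces to run over.

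This is precisely why the paper's proof (Proposition \ref{prop: Surface curvature and scl}) does not use an optimising sequence. It invokes Calegari's rationality theorem to produce a single \emph{extremal} surface $S$ realising $\scl(w)=-\chi(S)/2\deg(S)$ exactly, takes it \emph{monotone} \cite[Proposition 2.13]{calegari_scl_2009}, and then applies Calegari's lemma \cite[Lemma 2.7]{calegari_surface_2008}, which says that extremal surfaces are essential. Extremality is thus an input to essentiality, not a convenience; the inequality comes from the one extremal $\widehat S\in\Surf(X)$ with $\kappa(\widehat S)=1-2\scl(w)$, not from a limit. A second, smaller omission in your write-up: monotonicity is also what licenses the capping-off. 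For a general admissible surface the condition $\partial S\mapsto \deg(S)\,\partial c$ is homological, so boundary components may wrap negative powers of $w$ or cancel against each other; gluing in branched discs then either fails outright or breaks the identity $\Area(S)=\deg(S)$ that your computation of $\kappa$ relies on. With extremality and monotonicity supplied, your bookkeeping of $\chi$ and $\Area$ is correct and the rest of the argument goes through.
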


The lemma will follow from the stronger Proposition \ref{prop: Surface curvature and scl} below. The point of Lemma \ref{lem: Surface curvature and stable commutator length (one face)} is that the quantity $1-2\scl(w)$ can be thought of as an \emph{oriented} version of the surface curvature for the complex $X$. In this oriented version the areas of the faces of surfaces mapping to $X$ are counted with signs. Therefore, the oriented area is necessarily smaller than the unoriented area, which leads to the stated inequality.

From this point of view, the Duncan--Howie theorem asserts that the oriented surface curvature of a one-relator complex is always non-positive (cf.\ the stronger Theorem \ref{thm: NI for one-relator groups} below), while Calegari's theorem asserts that the oriented surface curvature is rational and computable (cf. the rationality theorem \cite{wilton_rationality_2022}).

In fact, the above discussion can be generalised beyond the one-relator context, to an arbitrary oriented 2-complex.

\begin{definition}[Orientation]\label{def: Orientation}
An \emph{orientation} on a branched 2-complex $X$ is a cellular 2-cycle
\[
c  = \sum_{f\in F(X)} c_f.f
\]
such that $|c_f|=\Area(f)$ for each face $f$. An \emph{oriented 2-complex} is a 2-complex $X$ equipped with an orientation $c$.
\end{definition}

Since $c$ is a 2-cycle it defines a non-trivial element of $H_2(X,\R)$, and also a non-trivial element of the second relative homology of the 1-skeleton $X_{(1)}$ relative to the faces.  In particular, $\partial c$ is a boundary in the graph $X_{(1)}$, and so we can define the \emph{stable commutator length} of $\partial c$. We recall the definition below, and refer the reader to Calegari's monograph \cite{calegari_scl_2009} for an authoritative discussion.

\begin{definition}[Stable commutator length]\label{def: Stable commutator length}
Let $\mathrm{Admiss}(c)$ be the set of continuous maps of compact, oriented surfaces $S\to X_{(1)}$ (not necessarily connected, but without spherical components) such that
\[
\partial S \mapsto\deg(S)\partial c\,,
\]
where $\deg(S)$ is a positive real number. Then
\[
\scl(c):=\inf_{S\in\mathrm{Admiss}(c)}-\frac{\chi(S)}{2\deg(S)}\,.
\]
\end{definition}

In the case where $X$ is the presentation complex of a one-relator group $F/\llangle w\rrangle$, this coincides with the usual definition of stable commutator length given above \cite[Proposition 2.10]{calegari_scl_2009}. We can now phrase a relationship between stable commutator length and surface curvatures for all oriented 2-complexes.

\begin{proposition}\label{prop: Surface curvature and scl}
Let $c$ be an orientation on a branched 2-complex $X$. Then
\[
\sigma_-(X)\leq 1-\frac{2\scl(c)}{\Area(X)}\leq\sigma_+(X)
\]
\end{proposition}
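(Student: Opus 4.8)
The plan is to establish the two inequalities separately, each by relating the signed (oriented) area coming from an admissible surface to the unsigned area that governs $\sigma_\pm$, and then reading off the average curvature.

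First I would unwind the definitions to compute $\kappa(S)$ for a surface $S$ mapping essentially to $X$. Given an essential branched map $\phi:S\to X$, the area $\Area(S)$ is a positive integer multiple of $\Area(X)$; more precisely, writing $\deg$ for the relevant covering-type degree on faces, we have $\Area(S)=\deg\cdot\Area(X)$ and $\chi(S_{(1)})$ is controlled by $\chi(S)$ via the Riemann--Hurwitz-type bookkeeping already used in Lemma \ref{lem: Branched coverings have equal average curvature}. The upshot I would aim for is an identity of the shape
\[
\kappa(S)=1+\frac{\chi(S)}{\Area(S)}=1+\frac{\chi(S)}{\deg\cdot\Area(X)}\,,
\]
so that controlling $\kappa(S)$ is the same as controlling the normalised Euler characteristic $-\chi(S)/(2\deg)$ that appears in the definition of $\scl(c)$.

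For the upper bound $1-2\scl(c)/\Area(X)\leq\sigma_+(X)$, I would start from a nearly extremal admissible surface $S\in\mathrm{Admiss}(c)$ realising a value close to $\scl(c)$, with $\partial S\mapsto\deg(S)\,\partial c$. The orientation $c$ satisfies $|c_f|=\Area(f)$, so the signed boundary data matches the attaching maps of $X$ up to orientation, and I would cap off the boundary of $S$ by the faces prescribed by $c$ to produce a \emph{closed} surface $S'$ mapping essentially to $X$. Capping changes the Euler characteristic by the number of discs glued, and the key computation is that $\chi(S')=-2\deg(S)\cdot(-\chi(S)/(2\deg(S)))$ plus the contribution of the caps, which works out so that $\kappa(S')=1-2\scl(c)/\Area(X)$ in the limit. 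Since $S'\in\Surf(X)$, this exhibits values of $\kappa$ over $\Surf(X)$ approaching $1-2\scl(c)/\Area(X)$ from below, giving the inequality. For the lower bound $\sigma_-(X)\leq 1-2\scl(c)/\Area(X)$, I would run the argument in the reverse direction: any essential surface $S\in\Surf(X)$ whose orientation is compatible with $c$ can, after cutting along the preimages of face-centres, be regarded as an admissible surface for $c$, so that $-\chi(S)/(2\deg)\geq\scl(c)$; translating back through the displayed identity for $\kappa(S)$ yields $\kappa(S)\geq 1-2\scl(c)/\Area(X)$, and taking the infimum over $\Surf(X)$ gives the left-hand inequality.

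The main obstacle I anticipate is the orientation-matching step, since $\sigma_\pm$ is inherently \emph{unoriented} whereas $\scl(c)$ is defined via oriented surfaces: a general essential surface in $\Surf(X)$ need not respect the signs prescribed by $c$, so its faces may cover a given face of $X$ with mixed orientations, and conversely the capping construction must glue discs consistently with $c$. The careful point is that $\scl$ counts area with signs while $\sigma_\pm$ counts it with absolute values, so the inequalities (rather than an equality) arise precisely because $|\,\text{signed area}\,|\leq\text{unsigned area}$; I would make this cancellation explicit by comparing $\partial S\mapsto\deg(S)\partial c$ with the full unsigned area $\Area(S)$, verifying that passing between the oriented and unoriented count can only decrease the oriented quantity. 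Handling the branching correctly in the Euler-characteristic computation, and checking that the capped-off or cut-open surfaces remain essential in the sense of Definition \ref{def: Essential map}, are the places where I would expect the technical work to concentrate.
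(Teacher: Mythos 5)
Your proposal has two genuine gaps, one of logic and one of substance. The logical one is in your argument for the left-hand inequality: to prove $\sigma_-(X)\leq 1-\frac{2\scl(c)}{\Area(X)}$ you must \emph{exhibit} an essential surface whose average curvature is at most that value, but what you propose is to show $\kappa(S)\geq 1-\frac{2\scl(c)}{\Area(X)}$ for essential surfaces compatible with $c$ and then ``take the infimum'' --- an infimum of quantities bounded \emph{below} by $v$ yields $\inf\geq v$, which is the wrong direction, and moreover $\sigma_-(X)$ is an infimum over all of $\Surf(X)$, including non-orientable surfaces and surfaces whose faces cover those of $X$ with mixed signs, to which your cut-along-face-centres argument does not apply at all. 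The paper sidesteps this entirely by producing a \emph{single} surface realising the middle quantity exactly: Calegari's rationality theorem \cite{calegari_stable_2009} gives an extremal admissible surface $S$ with $\scl(c)=-\chi(S)/2\deg(S)$, which can be taken monotone \cite[Proposition 2.13]{calegari_scl_2009}; capping off gives a closed surface $\widehat{S}$ with $\kappa(\widehat{S})=1-\frac{2\scl(c)}{\Area(X)}$ on the nose, and then $\sigma_-(X)\leq\kappa(\widehat{S})\leq\sigma_+(X)$ holds trivially because an attained value lies between the inf and the sup. Both inequalities come from this one surface; there is no separate limiting argument.

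The substantive gap is your assertion that the capped-off surface $S'$ built from a \emph{nearly} extremal admissible surface lies in $\Surf(X)$. Essentiality in the sense of Definition \ref{def: Essential map} is a strong condition --- after folding, the map must be a homotopy equivalence on 1-skeleta and preserve area --- and it fails for generic admissible surfaces: for instance, one can add to an extremal surface a handle mapping into $X_{(1)}$ that folding collapses, killing $\pi_1$; at large degree the resulting surface is still arbitrarily close to extremal but is not essential, so ``nearly extremal'' buys you nothing here. This is precisely why the paper needs exact extremality, invoking Calegari's lemma \cite[Lemma 2.7]{calegari_surface_2008} that extremal surfaces are essential --- a key input your sketch omits. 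Relatedly, monotonicity is not merely a convenience for capping: without it, faces of $X$ covered with both signs contribute with absolute value to the unoriented $\Area(\widehat{S})$, so $\Area(\widehat{S})>\deg(S)\Area(X)$ and the identity $\kappa(\widehat{S})=1-\frac{2\scl(c)}{\Area(X)}$ is lost. Your final paragraph correctly senses that the oriented/unoriented discrepancy is where the difficulty lives, but the proof as proposed does not resolve it.
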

\begin{proof}
By Calegari's rationality theorem, there is $S\in\mathrm{Admiss}(c)$ such that
\[
\scl(c)=-\frac{\chi(S)}{2\deg(S)}\,.
\]
Calegari also shows that $S$ can be taken to be \emph{monotone}, meaning that the orientation on $\partial S$ restricted from the orientation on $S$ coincides with the orientation on $\partial S$ pulled back from $c$ \cite[Proposition 2.13]{calegari_scl_2009}. By capping off the boundary components, the surface $S$ defines a closed surface $\widehat{S}\to X$ with $\Area(\widehat{S})=\deg(S)\Area(X)$. Note also that
\begin{eqnarray*}
\kappa(\widehat{S})&=&1+\frac{\chi(S)}{\Area(\widehat{S})}\\
&=&1-\frac{2\deg(S)\scl(c)}{\deg(S)\Area(X)}\\
&=&1-\frac{2\scl(c)}{\Area(X)}\,.
\end{eqnarray*}
Such a surface $S$ is called \emph{extremal} and, by a lemma of Calegari \cite[Lemma 2.7]{calegari_surface_2008}, $\widehat{S}$ is essential. In particular, $\widehat{S}\in \Surf(X)$, and so
\[
\sigma_-(X)\leq \kappa(\widehat{S})\leq\sigma_+(X)
\]
which gives the result.
\end{proof}

Lemma \ref{lem: Surface curvature and stable commutator length (one face)} follows immediately by specialising to the case of a single face of area 1. Again, the moral is that a suitable normalisation of stable commutator length can be thought of as an oriented version of the surface curvature.

Recall that the Duncan--Howie theorem tells us that every one-relator complex has non-positive oriented surface curvature.  It remains an open question whether or not every 2-boundary $c$ in a free group satisfies $\scl(c)\geq 1/2$. However, from our point of view, the natural threshold corresponding to non-positive oriented surface curvature for an oriented complex is
\[
\scl(c) \geq \frac{\Area(X)}{2}\,,
\]
and some oriented 2-complexes do indeed have positive oriented surface curvature: for instance, any cellulation of the 2-sphere provides an example.

Since stable commutator length can be computed in practice, Proposition \ref{prop: Surface curvature and scl} offers a practical way to obtain lower bounds on $\sigma_+$ and upper bounds on $\sigma_-$.

\section{Examples}\label{sec: Examples}

So far, surfaces have provided the only classes of examples that we have considered. The goal of this section is to collect a wider variety of examples, which illustrate the  various possibilities that can occur. In the first few sections, however, we collect techniques for computing the invariants. 

\subsection{Angle structures and \texorpdfstring{$CAT(\kappa)$}{CAT(k)} spaces}

A more traditional way to impose curvature restrictions on a 2-complex $X$ is to use an \emph{angle structure}, namely a path metric on the link of each vertex.  Often, this angle structure is induced from a path metric on the whole of $X$, where each edge of each link is assigned the angle of the corresponding corner. Gromov's link condition implies that a Euclidean (respectively, hyperbolic) polygonal metric is locally $CAT(0)$ (respectively, locally $CAT(-1)$) if and only if, in the induced angle structure, the link of every vertex has systole at least $2\pi$ \cite[Lemma II.5.6]{bridson_metric_1999}.

An angle structure also makes it possible to impose bounds on the average curvature, following ideas of Wise \cite{wise_sectional_2004}. The next definition can be compared to  \cite[Definition 2.2]{wise_sectional_2004}.

\begin{definition}\label{def: Curvature of a 2-cell}
Consider a finite, irreducible 2-complex $X$, and suppose that $X$ is equipped with an angle structure. Let $f$ be a face of $X$ which is an $n(f)$-gon, meaning that its boundary is a circle with $n(f)$ edges, and with total interior angle $\alpha(f)$. The \emph{curvature} of $f$ is the quantity
\[
c(f):=2\pi\Area(f)-\pi n(f)+\alpha(f)\,.
\]
We say that the angle structure is \emph{non-positively curved} (respectively, \emph{negatively curved}) if:
\begin{enumerate}[(i)]
\item the link of every vertex has systole at least $2\pi$; and 
\item $c(f)\leq 0$ for each face $f$ (respectively, $c(f)<0$ for each face $f$).
\end{enumerate}
\end{definition}

This definition of the curvature of a face becomes easier to interpret with some input from the Gauss--Bonnet theorem for geodesic polygons.

\begin{lemma}[Riemannian Gauss--Bonnet]\label{lem: Riemannian Gauss--Bonnet}
Consider a face $f$ with $\Area(f)=1$. If $f$ is endowed with the angle structure of a geodesic polygon $P$ in some Riemannian surface, then
\[
c(f)=\int_P \kappa_{\mathrm{Gauss}}.dA
\]
where $\kappa_{\mathrm{Gauss}}$ is the Gauss curvature.
\end{lemma}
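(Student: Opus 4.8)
The plan is to identify the combinatorial quantity $c(f)=2\pi\Area(f)-\pi n(f)+\alpha(f)$ with the classical Gauss--Bonnet integrand for a geodesic polygon, working in the case $\Area(f)=1$ so that the $2\pi\Area(f)$ term is simply $2\pi$. First I would recall the Gauss--Bonnet theorem for a geodesic polygon $P$ with $n=n(f)$ geodesic sides in a Riemannian surface: since the sides are geodesics their geodesic curvature vanishes, so the boundary integral of $\kappa_g$ contributes nothing, and the theorem reads
\[
\int_P \kappa_{\mathrm{Gauss}}\,dA + \sum_{i=1}^{n}\epsilon_i = 2\pi\,,
\]
where $\epsilon_i$ is the \emph{exterior} angle at the $i$-th vertex. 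The task is then purely bookkeeping: to rewrite the exterior-angle sum in terms of the interior angles that make up $\alpha(f)$.

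The key step is the relation between interior and exterior angles. If $\beta_i$ denotes the interior angle at the $i$-th vertex of $P$, then $\epsilon_i=\pi-\beta_i$, so that
\[
\sum_{i=1}^{n}\epsilon_i = n\pi - \sum_{i=1}^{n}\beta_i = \pi n(f) - \alpha(f)\,,
\]
using that $\alpha(f)=\sum_i \beta_i$ is by definition the total interior angle of the face. Substituting this into the Gauss--Bonnet identity gives
\[
\int_P \kappa_{\mathrm{Gauss}}\,dA = 2\pi - \bigl(\pi n(f)-\alpha(f)\bigr) = 2\pi - \pi n(f) + \alpha(f)\,,
\]
which is exactly $c(f)$ when $\Area(f)=1$, since then $2\pi\Area(f)=2\pi$. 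This establishes the claimed equality.

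I do not expect any serious obstacle here, since the statement is essentially a repackaging of Gauss--Bonnet under the hypothesis that the polygon is \emph{geodesic}; the only point requiring mild care is the sign convention for the angles. The one place to be explicit is that the geodesic-polygon version of Gauss--Bonnet assumes the boundary edges are geodesic arcs (so $\kappa_g\equiv 0$ along them) and that the interior angles $\beta_i$ lie in $(0,\pi)$ so that the exterior-angle identity $\epsilon_i=\pi-\beta_i$ holds without a correction term; under the stated hypothesis that $f$ is realised as a geodesic polygon $P$, both conditions are built in. With these conventions fixed, the computation above is complete, and it simultaneously clarifies why the combinatorial definition of $c(f)$ is the correct discrete analogue of the total Gauss curvature over a single face.
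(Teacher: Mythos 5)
Your proof is correct and follows essentially the same route as the paper: the paper simply cites the Gauss--Bonnet theorem for geodesic polygons in the form $\int_P \kappa_{\mathrm{Gauss}}\,dA=\alpha(f)-(n(f)-2)\pi$ and compares with the definition of $c(f)$, while you derive that identity from the exterior-angle version, which is only a cosmetic difference. The bookkeeping $\alpha(f)-(n(f)-2)\pi = 2\pi-\pi n(f)+\alpha(f)$ matches, so nothing is missing.
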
 
\begin{proof}
The Gauss--Bonnet theorem for geodesic polygons asserts that
\[
\int_P \kappa_{\mathrm{Gauss}}.dA=\alpha(f)-(n(f)-2)\pi
\]
and the result follows immediately by the definition of $c(f)$.
\end{proof}

In particular, the angle structures coming from locally $CAT(0)$ Euclidean structures are non-positively curved, and the angle structures coming from locally $CAT(-1)$ hyperbolic structures are negatively curved. The use of this definition is in the following estimate, a combinatorial version of the Gauss--Bonnet theorem (cf.\ \cite[p.\ 107]{ballmann_nonpositively_1996}, \cite[Theorem 4.6]{mccammond_fans_2002}, \cite[Theorem 2.3]{wise_sectional_2004}).

\begin{lemma}[Combinatorial Gauss--Bonnet]\label{lem: Combinatorial Gauss--Bonnet}
Let $S$ be a finite branched surface equipped with an angle structure. If the total angle around every vertex is at least $2\pi$ then
\[
2\pi\tau(S)\leq \sum_{f\in F_S} c(f)\,.
\]
In particular, if the angle structure is non-positively curved then $\tau(S)\leq 0$. If the angle structure is negatively curved then $\tau(S)<0$.
\end{lemma}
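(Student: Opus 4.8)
The plan is to prove the combinatorial Gauss--Bonnet inequality $2\pi\tau(S)\leq\sum_{f}c(f)$ by a direct double-counting argument, summing the defining quantity $c(f)=2\pi\Area(f)-\pi n(f)+\alpha(f)$ over all faces and comparing the result with $2\pi\tau(S)=2\pi\Area(S)+2\pi\chi(S_{(1)})$. Recall that $\tau(S)=\Area(S)+\chi(S_{(1)})$ and $\chi(S_{(1)})=\#V_S-\#E_S$. First I would sum the three terms of $c(f)$ separately. The term $\sum_f 2\pi\Area(f)=2\pi\Area(S)$ matches the area contribution to $2\pi\tau(S)$ exactly, so that piece requires no estimation.

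The key combinatorial step is to re-express $\sum_f\bigl(\pi n(f)-\alpha(f)\bigr)$ in terms of the 1-skeleton data. For the edge term, each edge of $S$ is incident to faces, and $\sum_f n(f)$ counts edge-incidences of faces; since $S$ is a surface, each edge is hit exactly twice by the (oriented) boundaries of faces, so $\sum_f n(f)=2\#E_S$, giving $\sum_f \pi n(f)=2\pi\#E_S$. For the angle term, the total interior angle $\sum_f\alpha(f)$ decomposes as a sum over vertices of the total angle around each vertex; that is, $\sum_f\alpha(f)=\sum_{v\in V_S}\theta(v)$, where $\theta(v)$ is the total angle at $v$. This is precisely where the hypothesis enters: the assumption that the total angle around every vertex is at least $2\pi$ gives $\sum_f\alpha(f)\geq 2\pi\#V_S$.

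Assembling these, I would compute
\[
\sum_{f\in F_S}c(f)=2\pi\Area(S)-2\pi\#E_S+\sum_{f}\alpha(f)\geq 2\pi\Area(S)-2\pi\#E_S+2\pi\#V_S\,,
\]
and the right-hand side is exactly $2\pi\bigl(\Area(S)+\chi(S_{(1)})\bigr)=2\pi\tau(S)$, which establishes the main inequality. The two consequences then follow immediately: if the angle structure is non-positively curved then each $c(f)\leq 0$, so $2\pi\tau(S)\leq\sum_f c(f)\leq 0$, whence $\tau(S)\leq 0$; and if it is negatively curved then each $c(f)<0$ and, since $S$ has at least one face (as $\Area(S)>0$), the sum is strictly negative, giving $\tau(S)<0$.

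The main obstacle, such as it is, lies in justifying the two counting identities cleanly in the branched surface setting rather than in the bookkeeping. The identity $\sum_f n(f)=2\#E_S$ must be handled using the condition that $S$ is a surface (every edge lies on the boundary of faces with total multiplicity two, since links of vertices are circles), and one should confirm that the branching and the area function $\Area(f)$ do not disrupt this—indeed $n(f)$ counts combinatorial edges of the boundary circle of $f$ independently of $\Area(f)$, so the branching only affects the already-matched area term. Similarly, one must check that the angle decomposition $\sum_f\alpha(f)=\sum_v\theta(v)$ correctly accounts for each corner exactly once, which is immediate once each corner of each face is assigned to its vertex. I expect these verifications to be routine but worth stating explicitly, since the surface hypothesis is what makes the edge count exact and the vertex-angle hypothesis is what converts an equality into the desired inequality.
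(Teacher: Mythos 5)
Your proof is correct and is essentially identical to the paper's: the same term-by-term summation of $c(f)$, using $\sum_f \pi n(f)=2\pi\#E_S$ (each edge hit twice, since $S$ is a surface) and the vertex-angle hypothesis to bound $\sum_f\alpha(f)\geq 2\pi\#V_S$, assembling to $2\pi\tau(S)$. Your explicit verification that branching only enters through the already-matched area term, and that $\Area(S)>0$ guarantees strictness in the negatively curved case, spells out details the paper leaves implicit, but the argument is the same.
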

\begin{proof}
Since the link of each vertex has total angle at least $2\pi$, the number of vertices of $X$ is at most the sum of the interior angles across $X$ divided by $2\pi$. Hence
\begin{eqnarray*}
\sum_{f\in F_X} c(f) &=& \sum_{f\in F_X} 2\pi\Area(f)-\pi n(f)+\alpha(f) \\
& = & 2\pi\Area(X) - 2\pi \#E_X +  \sum_{f\in F_X}\alpha(f) \\
& \geq &  2\pi \Area(X) - 2\pi\#E_X + 2\pi \#V_X \\
& = & 2\pi\tau(X)
\end{eqnarray*}
as required.
\end{proof}

Furthermore, the curvature of a face interacts well with branched maps.

\begin{remark}\label{rem: Curvatures of 2-cells are multiplicative}
Suppose that $\phi:Y\to X$ is a branched map, and that $f$ is a face of $Y$. If the image $\phi(f)$ has $n$ sides then $f$ has $m_\phi(f)n$ sides; likewise, if $\alpha$ is the total interior angle of $\phi(f)$ then $m_\phi(f)\alpha$ is the total interior angle of $f$. Since area is also multiplicative, it follows that
\[
c(f)=m_\phi(f)c(\phi(f))\\
\]
by definition.
\end{remark}

As a result, the curvature of a face provides an upper bound on the surface curvature of a standard 2-complex.

\begin{proposition}[Face curvature bounds surface curvature]\label{prop: Face curvature bound}
Let $X$ be a standard 2-complex equipped with an angle structure. If the link of every vertex has systole at least $2\pi$ then
\[
2\pi\sigma_+(X)\leq\max_{f\in F_X} c(f)\,.
\]
\end{proposition}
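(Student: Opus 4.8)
\textbf{Proof strategy for Proposition \ref{prop: Face curvature bound}.}

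The plan is to bound $\sigma_+(X)$ by estimating $\kappa(S)$ for an arbitrary essential surface $S\to X$, and then to take the supremum over all $S\in\Surf(X)$. First I would fix an essential branched morphism $\phi:S\to X$, where $S$ is a closed surface, and pull back the angle structure from $X$ to $S$ along $\phi$. By Remark \ref{rem: Curvatures of 2-cells are multiplicative}, the curvature of each face $g$ of $S$ satisfies $c(g)=m_\phi(g)\,c(\phi(g))$, and since $\Area(g)=m_\phi(g)\Area(\phi(g))$, every face curvature $c(g)$ is bounded above by $\Area(g)\cdot\max_{f\in F_X}c(f)$.

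The next step is to verify that the pulled-back angle structure on $S$ satisfies the hypothesis of Lemma \ref{lem: Combinatorial Gauss--Bonnet}, namely that the total angle around every vertex of $S$ is at least $2\pi$. This follows from the systole hypothesis on the links in $X$: since $\phi$ is essential, after folding it becomes a branched immersion, so the induced map on links is injective, and a shortest loop in the link of any vertex of $S$ maps to a loop in the link of its image in $X$ of the same length; as that image loop has length at least the systole $2\pi$, the total angle around each vertex of $S$ is at least $2\pi$. With this in place, Lemma \ref{lem: Combinatorial Gauss--Bonnet} gives
\[
2\pi\tau(S)\leq\sum_{g\in F_S}c(g)\leq\Big(\max_{f\in F_X}c(f)\Big)\sum_{g\in F_S}\Area(g)=\Big(\max_{f\in F_X}c(f)\Big)\Area(S)\,.
\]
Dividing by $2\pi\Area(S)$ and recalling $\kappa(S)=\tau(S)/\Area(S)$ yields $2\pi\kappa(S)\leq\max_{f\in F_X}c(f)$.

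Finally I would take the supremum over all $S\in\Surf(X)$, which by definition gives $2\pi\sigma_+(X)\leq\max_{f\in F_X}c(f)$, as required. The main subtlety to handle carefully is the link/systole argument: one must check that essentiality (via folding to a branched immersion and injectivity on links) genuinely transfers the lower bound on the systole from $X$ to $S$, since it is only after folding that $\phi$ induces an injection on links. A minor point worth noting is that folding preserves the relevant data—area and the combinatorial face structure—so that the computation of $\tau$ and the face curvatures is unaffected; the essential-equivalence part of Definition \ref{def: Essential map} guarantees this.
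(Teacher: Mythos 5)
Your proof is correct and follows essentially the same route as the paper: pull back the angle structure, use the essentiality of $\phi:S\to X$ to show links of $S$ map to immersed loops in links of $X$ (hence total angle at least $2\pi$ at each vertex of $S$), then combine Lemma \ref{lem: Combinatorial Gauss--Bonnet} with Remark \ref{rem: Curvatures of 2-cells are multiplicative} and take the supremum over $\Surf(X)$. The only difference is that you spell out the folding step behind the link/systole claim, which the paper compresses into one sentence.
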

\begin{proof}
Consider an essential map from a surface $S\to X$. Since $S\to X$ is essential, links of $S$ map to immersed loops in links of $X$, and in particular are of length at least $2\pi$. Applying Lemma \ref{lem: Combinatorial Gauss--Bonnet} and Remark \ref{rem: Curvatures of 2-cells are multiplicative}, we get
\begin{eqnarray*}
2\pi\tau(S)&\leq &\sum_{f\in F_S} c(f)\\
& = &\sum_{f\in F_S} m_\phi(f) c(\phi(f))\\
&\leq & \Area(S) \max_{f\in F_X} c(f)
\end{eqnarray*}
and the result follows.
\end{proof}

In particular, a locally $CAT(0)$ Euclidean 2-complex has non-positive surface curvature.

\begin{corollary}[Surface curvature of locally $CAT(0)$ 2-complexes]\label{cor: Surface curvature of CAT(0) 2-complexes}
If $X$ is a finite 2-complex equipped with a non-positively curved angle structure (for instance, if $X$ is a locally CAT(0) Euclidean complex), then $\sigma_+(X)\leq 0$.
\end{corollary}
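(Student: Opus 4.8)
The plan is to read the corollary directly off Proposition \ref{prop: Face curvature bound}, whose hypotheses are tailored to match the definition of a non-positively curved angle structure. First I would unwind Definition \ref{def: Curvature of a 2-cell}: a non-positively curved angle structure consists of two conditions, namely (i) that the link of every vertex has systole at least $2\pi$, and (ii) that $c(f)\leq 0$ for every face $f$. The first of these is precisely the hypothesis required to invoke Proposition \ref{prop: Face curvature bound}.

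Applying that proposition gives
\[
2\pi\sigma_+(X)\leq\max_{f\in F_X}c(f)\,.
\]
By condition (ii) every term on the right-hand side is non-positive, so $\max_{f\in F_X}c(f)\leq 0$, and dividing through by $2\pi>0$ yields $\sigma_+(X)\leq 0$, as required.

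For the parenthetical claim I would verify that a locally $CAT(0)$ Euclidean complex does carry a non-positively curved angle structure in the sense of Definition \ref{def: Curvature of a 2-cell}. Gromov's link condition \cite[Lemma II.5.6]{bridson_metric_1999} identifies the local $CAT(0)$ property with condition (i). For condition (ii), each face of a standard complex has area $1$ and, in the Euclidean structure, is realised as a geodesic polygon in a flat surface, whose ambient Gauss curvature vanishes identically; Lemma \ref{lem: Riemannian Gauss--Bonnet} then gives $c(f)=\int_P\kappa_{\mathrm{Gauss}}\,dA=0\leq 0$. Hence both conditions hold and the main argument above applies verbatim.

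I expect no genuine obstacle here: all of the substance lies upstream in Proposition \ref{prop: Face curvature bound} (and, behind it, in the combinatorial Gauss--Bonnet estimate of Lemma \ref{lem: Combinatorial Gauss--Bonnet} together with the multiplicativity of face curvature recorded in Remark \ref{rem: Curvatures of 2-cells are multiplicative}). The corollary is therefore a formal consequence of those results once Definition \ref{def: Curvature of a 2-cell} is unwound, the only mild care being to confirm, via Lemma \ref{lem: Riemannian Gauss--Bonnet}, that the flat Euclidean structure really does realise the ``non-positively curved'' hypothesis.
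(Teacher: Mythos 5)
Your proof is correct and follows exactly the route the paper intends: the corollary is an immediate consequence of Proposition \ref{prop: Face curvature bound}, since condition (ii) of Definition \ref{def: Curvature of a 2-cell} makes the right-hand side of $2\pi\sigma_+(X)\leq\max_{f\in F_X}c(f)$ non-positive. Your verification of the parenthetical claim via Gromov's link condition and Lemma \ref{lem: Riemannian Gauss--Bonnet} matches the discussion preceding the corollary in the paper.
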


Likewise, locally $CAT(-1)$ hyperbolic 2-complexes have negative surface curvature.

\begin{corollary}[Surface curvature of locally $CAT(-1)$ 2-complexes]\label{cor: Surface curvature of CAT(-1) 2-complexes}
If $X$ is a finite 2-complex equipped with a negatively curved angle structure (for instance, if $X$ is a locally CAT(-1) hyperbolic complex), then $\sigma_+(X)< 0$.
\end{corollary}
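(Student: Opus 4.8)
The plan is to deduce this corollary directly from Proposition \ref{prop: Face curvature bound}, in exact parallel with the derivation of Corollary \ref{cor: Surface curvature of CAT(0) 2-complexes}, but tracking the strict inequality. The key input is the observation, recorded after Lemma \ref{lem: Riemannian Gauss--Bonnet}, that a negatively curved angle structure satisfies condition (i) of Definition \ref{def: Curvature of a 2-cell} --- namely that the link of every vertex has systole at least $2\pi$ --- so the hypotheses of Proposition \ref{prop: Face curvature bound} are met. By definition a negatively curved angle structure also has $c(f) < 0$ for \emph{every} face $f \in F_X$. Since $X$ is finite, $F_X$ is a finite set, so the maximum $\max_{f \in F_X} c(f)$ is attained and is therefore strictly negative.

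First I would invoke the conclusion of Proposition \ref{prop: Face curvature bound}, which gives
\[
2\pi\sigma_+(X)\leq\max_{f\in F_X} c(f)\,.
\]
Then, combining this with the strict bound $\max_{f\in F_X} c(f) < 0$ obtained from finiteness of $F_X$ and negativity of each $c(f)$, I would conclude $2\pi\sigma_+(X) < 0$, hence $\sigma_+(X) < 0$, as required. The parenthetical claim about locally $\mathrm{CAT}(-1)$ hyperbolic complexes follows from Lemma \ref{lem: Riemannian Gauss--Bonnet}: for such a structure each face carries the angle structure of a geodesic polygon in a hyperbolic surface, where the Gauss curvature is $-1$, so $c(f) = \int_P \kappa_{\mathrm{Gauss}}\,dA < 0$, confirming that a locally $\mathrm{CAT}(-1)$ structure is indeed negatively curved in the sense of Definition \ref{def: Curvature of a 2-cell}.

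The only subtlety worth flagging --- and the one point I would be careful about --- is the passage from ``$c(f) < 0$ for each $f$'' to ``$\max_{f} c(f) < 0$.'' This step genuinely uses the finiteness of $X$: an infinite supremum of strictly negative quantities could be zero, so the strict inequality would fail without the finiteness hypothesis already built into the standing assumptions on $X$. Since $X$ is finite by hypothesis, there is no obstacle here, and no calculation beyond citing the preceding results is needed; the corollary is essentially a strict-inequality bookkeeping refinement of the non-positively curved case.
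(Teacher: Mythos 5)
Your proposal is correct and follows exactly the route the paper intends: the corollary is an immediate consequence of Proposition \ref{prop: Face curvature bound}, since a negatively curved angle structure satisfies the systole hypothesis and has $c(f)<0$ for every face, so finiteness of $F_X$ gives $2\pi\sigma_+(X)\leq\max_{f\in F_X}c(f)<0$. Your flagged subtlety is also handled correctly --- the proposition provides a \emph{uniform} strict bound on $2\pi\kappa(S)$ for all essential surfaces $S\to X$, so the strict inequality survives the supremum, and the parenthetical CAT($-1$) claim follows from Lemma \ref{lem: Riemannian Gauss--Bonnet} just as the paper records.
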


The proposition has one more consequence which is very useful in practice, because it depends only on combinatorial information. By the \emph{girth} of a graph we mean the combinatorial length of the shortest non-trivial loop.

\begin{corollary}[Girths of links]\label{cor: Surface curvature and girths of links}
Let $X$ be a standard 2-complex in which every face has $n$ sides. Then
\[
\sigma_+(X)\leq 1-\frac{n}{2}\left(1-\frac{2}{\lambda}\right)
\]
where $\lambda$ is the smallest girth of the link of a vertex in $X$.
\end{corollary}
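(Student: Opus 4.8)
The plan is to apply Proposition \ref{prop: Face curvature bound} to a carefully chosen angle structure on $X$ that is as ``fair'' as possible. Since every face has exactly $n$ sides, the natural choice is to impose the angle structure in which every corner of every face receives the same angle $\alpha_0$. With this choice, the total interior angle of each face is $n\alpha_0$, and by Definition \ref{def: Curvature of a 2-cell} the curvature of each face (recalling $\Area(f)=1$ since $X$ is standard) is
\[
c(f)=2\pi - \pi n + n\alpha_0\,.
\]
First I would determine the largest value of $\alpha_0$ that is compatible with the systole hypothesis required by Proposition \ref{prop: Face curvature bound}, namely that the link of every vertex has systole at least $2\pi$.

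The key observation is that each edge of a link corresponds to a corner of a face, and under the uniform angle structure every such edge has length exactly $\alpha_0$. Hence the systole of a link, measured with this metric, is $\lambda'\alpha_0$, where $\lambda'$ is the combinatorial girth of that link. Since $\lambda$ is defined to be the smallest girth over all links of $X$, the shortest combinatorial essential loop in any link has at least $\lambda$ edges, so every essential loop has length at least $\lambda\alpha_0$. To guarantee systole at least $2\pi$, it therefore suffices to set $\alpha_0=2\pi/\lambda$. With this choice the systole condition in the hypothesis of Proposition \ref{prop: Face curvature bound} is satisfied.

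Next I would substitute $\alpha_0=2\pi/\lambda$ into the formula for $c(f)$, obtaining
\[
c(f)=2\pi-\pi n+\frac{2\pi n}{\lambda}=2\pi\left(1-\frac{n}{2}+\frac{n}{\lambda}\right)=2\pi\left(1-\frac{n}{2}\left(1-\frac{2}{\lambda}\right)\right)\,.
\]
Since this value is independent of the face $f$, the maximum $\max_{f\in F_X}c(f)$ equals this common value. Applying Proposition \ref{prop: Face curvature bound} then yields
\[
2\pi\sigma_+(X)\leq 2\pi\left(1-\frac{n}{2}\left(1-\frac{2}{\lambda}\right)\right)\,,
\]
and dividing by $2\pi$ gives the stated inequality.

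I do not expect any serious obstacle here; the argument is a direct specialisation of the machinery already established. The one point that deserves care is the identification of the metric systole of a link with $\lambda\alpha_0$: this requires noting that an essential map $S\to X$ sends links of $S$ to \emph{immersed} loops in links of $X$, so that the relevant notion is combinatorial length of immersed (hence reduced) loops, which is bounded below by the combinatorial girth. This is exactly the point exploited in the proof of Proposition \ref{prop: Face curvature bound}, so invoking that proposition handles it automatically. The only modelling assumption is that the uniform angle structure is a legitimate angle structure, which it is, since Definition \ref{def: Curvature of a 2-cell} places no positivity or geometric-realisability constraint on the assigned angles beyond their being used to define link metrics.
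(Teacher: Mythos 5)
Your proof is correct and is essentially identical to the paper's: the paper also equips $X$ with the angle structure in which every edge of every link (equivalently, every corner) has length $2\pi/\lambda$, verifies the systole condition via the girth bound, computes $c(f)=2\pi-\pi n+2\pi n/\lambda$, and concludes by Proposition \ref{prop: Face curvature bound}. Your closing remarks on immersed links and the legitimacy of the uniform angle structure are accurate but not needed beyond what the cited proposition already provides.
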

\begin{proof}
Equip $X$ with an angle structure in which every edge of every link has length $2\pi/\lambda$. Then the systole of every link is at least $2\pi$, and every face has curvature
\[
c(f)=2\pi-\pi n + 2\pi \frac{n}{\lambda}=2\pi\left( 1-\frac{n}{2}\left(1-\frac{2}{\lambda}\right) \right)
\]
and the result follows from Proposition \ref{prop: Face curvature bound}.
\end{proof}

\begin{remark}[Wise's sectional curvature]\label{rem: Wise's sectional curvature}
In Wise's terminology, the 2-complexes considered in this section are of non-positive or negative \emph{planar sectional curvature}. In \cite[Definition 2.4]{wise_sectional_2004}, Wise defines a stronger notion of (non-planar) \emph{sectional curvature}. In our terminology, his arguments show that a 2-complex $X$ with an angle structure of non-positive sectional curvature has $\rho_+(X)\leq 0$, while an angle structure of negative sectional curvature would guarantee that $\rho_+(X)< 0$.
\end{remark}

\subsection{Small-cancellation and random complexes}

Small-cancellation conditions are a useful combinatorial method of imposing curvature-like constraints on group presentations; see the book of Lyndon and Schupp for an authoritative treatment \cite[Chapter V]{lyndon_combinatorial_1977}.  

Informally, a \emph{piece} of $X$ is a path that appears in two different ways in the attaching maps of the 2-cells. Formally, it may be defined as follows. Let $S_X$ be the disjoint union of the boundaries of the 2-cells of $X$, so $X$ is characterised by the attaching map $S_X\to X_{(1)}$. A piece is then an off-diagonal component of the fibre product $S_X\times_{X_{(1)}} S_X$. A 2-complex satisfies the $C(p)$ \emph{small-cancellation condition} if no face of $X$ is a union of fewer than $p$ pieces. It is well known that the $C(6)$ condition can be thought of as a form of non-positive curvature, while the $C(7)$ condition can be thought of as a form of negative curvature. These heuristics fit precisely into the framework of surface curvature.

\begin{proposition}[$C(6)$ complexes have non-positive surface curvature]\label{prop: Surface curvature of C(6) complexes}
If $X$ is a finite, irreducible, standard, $C(6)$ complex then $\sigma_+(X)\leq 0$.
\end{proposition}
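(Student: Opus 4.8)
The plan is to reduce the statement to Proposition~\ref{prop: Face curvature bound}. That proposition says that if $X$ carries an angle structure in which every vertex link has systole at least $2\pi$, then $2\pi\sigma_+(X)\le\max_{f}c(f)$. So it suffices to exhibit an angle structure on $X$ with two properties: every link has systole at least $2\pi$, and every face satisfies $c(f)\le 0$. (The irreducibility hypothesis simply places us in the setting where $\sigma_+$ and this machinery apply, $X$ being standard so that $\Area(f)=1$ for every face.)

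To build the structure I would exploit the piece decomposition of each face boundary. Call a corner of a face $f$ at a vertex $v$ an \emph{interior} corner if $v$ lies in the interior of a piece of $\partial f$, and a \emph{piece-endpoint} corner otherwise; then assign interior corners the angle $\pi$ and piece-endpoint corners the angle $2\pi/3$. If $f$ has $n=n(f)$ edges and $\partial f$ decomposes into $k=k(f)$ pieces, there are exactly $k$ piece-endpoint corners and $n-k$ interior corners, so
\[
\alpha(f)=(n-k)\pi+\tfrac{2\pi}{3}k\,,\qquad c(f)=2\pi-\pi n+\alpha(f)=\tfrac{\pi}{3}\bigl(6-k\bigr)\,.
\]
The $C(6)$ hypothesis is precisely $k\ge 6$, whence $c(f)\le 0$ for every face, with equality exactly when $f$ is a union of six pieces.

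The crux, and the step I expect to be most delicate, is the systole bound, since the piece-endpoint corners carry angle $2\pi/3<\pi$ and could a priori produce short essential cycles in the links. Because the face attaching maps are immersions, no corner joins a direction to itself, so the links contain no length-one cycles and the shortest possible essential cycle is a bigon. The key observation to verify is that \emph{both} corners of any bigon are interior corners, and hence carry angle $\pi$: a bigon at $v$ joining directions $[e_1],[e_2]$ records two distinct occurrences of the length-two path $e_1e_2$ through $v$ among the face boundaries, so $e_1e_2$ is an off-diagonal overlap (possibly a self-overlap) and therefore lies inside a single piece; consequently $v$ is interior to that piece from the viewpoint of each of the two corners, and the bigon has total angle $2\pi$. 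Every cycle of length at least three has total angle at least $3\cdot\tfrac{2\pi}{3}=2\pi$, since every corner angle is at least $2\pi/3$. Thus every link has systole at least $2\pi$, and Proposition~\ref{prop: Face curvature bound} gives $2\pi\sigma_+(X)\le\max_f c(f)\le 0$, as required.
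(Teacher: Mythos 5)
There is a genuine gap, and it lies in the corner-counting step: your argument needs two incompatible readings of ``piece-endpoint corner'' at once. Pieces are components of the fibre product, i.e.\ maximal common subpaths, and distinct pieces can \emph{overlap} along $\partial f$; consequently face boundaries have no canonical partition into pieces, and the corners through which no piece passes need not number $k$. If ``interior corner'' means ``some piece passes through it'' (the reading your bigon argument uses), then $C(6)$ does not bound the number of piece-endpoint corners below: $C(6)$ bounds the minimal number of pieces needed to \emph{cover} $\partial f$, not the number of uncovered corners. Concretely, let $X$ be the one-vertex complex with edges $a,b,c$ and a single face attached along $w=abcacabacbcb$. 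Every cyclic length-$2$ subword of $w$ occurs exactly twice and every length-$3$ subword occurs once, so every piece has length exactly $2$ and covering the $12$-cycle requires $6$ pieces; thus $X$ is a finite, irreducible, standard $C(6)$ complex (its link is a doubled hexagon). Yet \emph{every} corner lies in the interior of a piece, so your angle structure assigns all corners $\pi$ and $c(f)=2\pi-12\pi+12\pi=2\pi>0$, and Proposition~\ref{prop: Face curvature bound} yields nothing. If instead you fix a minimal decomposition of $\partial f$ into $k\geq 6$ pieces and put $2\pi/3$ at its breakpoints (the reading your count ``exactly $k$ piece-endpoint corners'' presumes), then $c(f)\leq 0$ holds but the systole claim fails in the same example: each breakpoint corner's length-$2$ word recurs, so it is a corner of a bigon in the link whose other corner has angle at most $\pi$, giving total angle at most $5\pi/3<2\pi$ -- and this happens for every choice of decomposition. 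So under either reading one of your two conditions fails, and for this $X$ no assignment along these lines can satisfy both.

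The structural reason is that piece structure is relational -- which arcs of a face boundary act as pieces depends on the surface mapping in, not on $X$ alone -- so it cannot be recorded by a fixed angle structure on $X$. The paper's proof sidesteps this by working directly on an essential surface $\phi\colon Y\to X$: delete the valence-$2$ vertices of $Y_{(1)}$; each maximal arc between valence-$\geq 3$ vertices is a common boundary of two faces of $Y$, and essentialness (foldedness) guarantees the two sides give distinct occurrences in $X$, so each such arc maps into a single piece. Hence the boundary of a face $f$ of $Y$, which wraps $m_\phi(f)$ times around a face of $X$, meets at least $6m_\phi(f)$ valence-$\geq 3$ vertices by $C(6)$; counting edges as $\sum_f n(f)/2$ and vertices as at most $\sum_f n(f)/3$ then gives $\tau(Y)\leq\sum_{f}\bigl(\Area(f)-n(f)/6\bigr)\leq 0$. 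If you want to retain a Gauss--Bonnet framing, the angles must be assigned on $Y$ (where the valence-$\geq 3$ vertices canonically decompose face boundaries), not pushed down to $X$.
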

\begin{proof}
It suffices to prove that
\[
\tau(Y)\leq 0
\]
whenever $\phi:Y\to X$ is an essential map from a surface.

For a face $f$ of $Y$, let $n(f)$ be the number of vertices of $Y_{(1)}$ adjacent to $f$ of valence at least 3. Consider the natural graph structure on $Y_{(1)}$ obtained by deleting the vertices of valence 2.  The number of edges is then exactly
\[
\sum_{f\in F_Y} n(f)/2
\]
while the number of vertices is at most
\[
\sum_{f\in F_Y} n(f)/3\,,
\]
so 
\[
\tau(Y) \leq \sum_{f\in F_Y} \Area(f)-n(f)/2+n(f)/3 = \sum_{f\in F_Y} \Area(f) - n(f)/6\,.
\]
But the area of $f\in F_Y$ is $m_\phi(f)$ because $X$ is standard, and $n(f)\geq 6m_\phi(f)$ because $X$ is $C(6)$, so 
\[
\tau(Y)\leq \sum_{f\in F_Y} m_\phi(1-6/6)=0
\]
as required.
\end{proof}

In exactly the same way, we obtain a similar result for $C(7)$ complexes.

\begin{proposition}[$C(7)$ complexes have negative surface curvature]\label{prop: Surface curvature of C(7) complexes}
If $X$ is a finite, irreducible, standard $C(7)$ complex then $\sigma_+(X)< 0$.
\end{proposition}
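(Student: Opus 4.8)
The plan is to run the argument for Proposition \ref{prop: Surface curvature of C(6) complexes} with only one change: the $C(7)$ hypothesis supplies a strictly stronger per-face estimate than $C(6)$, and this is exactly enough to push the conclusion from non-positive to negative. First I would fix an arbitrary essential map $\phi:Y\to X$ from a closed surface and keep the same notation: for each face $f$ of $Y$, let $n(f)$ be the number of valence-$\geq 3$ vertices adjacent to $f$ in the graph obtained from $Y_{(1)}$ by suppressing the valence-$2$ vertices. The same edge/vertex count as in the $C(6)$ case gives
\[
\tau(Y) \leq \sum_{f\in F_Y}\left(\Area(f) - \frac{n(f)}{6}\right)\,.
\]

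Next I would insert the small-cancellation bound. Because $X$ is standard, $\Area(f) = m_\phi(f)$ and $\Area(Y) = \sum_{f\in F_Y} m_\phi(f)$; because $X$ is $C(7)$, each face of $Y$ is a union of at least $7 m_\phi(f)$ pieces, so $n(f) \geq 7 m_\phi(f)$. Substituting yields
\[
\tau(Y) \leq \sum_{f\in F_Y} m_\phi(f)\left(1 - \frac{7}{6}\right) = -\frac{1}{6}\,\Area(Y)\,.
\]
Dividing by $\Area(Y) > 0$ gives $\kappa(Y) \leq -1/6$, and taking the supremum over $\Surf(X)$ gives $\sigma_+(X) \leq -1/6 < 0$.

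I do not expect a genuine obstacle, since the combinatorics are identical to the $C(6)$ case. The one point that deserves care --- and the reason the strict inequality survives --- is that $\sigma_+$ is a supremum, so a strict inequality $\tau(Y) < 0$ for each individual surface would not by itself be enough. What saves the argument is that $C(7)$ converts the zero per-face deficit of the $C(6)$ computation into a fixed positive deficit of $1/6$ per unit area, producing the \emph{uniform} bound $\kappa(Y) \leq -1/6$ that does survive passage to the supremum.
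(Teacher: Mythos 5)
Your proposal is correct and is essentially the paper's own proof: the paper gives no separate argument for the $C(7)$ case, stating only that it follows ``in exactly the same way'' as Proposition \ref{prop: Surface curvature of C(6) complexes}, and your writeup carries out exactly that substitution, replacing $n(f)\geq 6m_\phi(f)$ with $n(f)\geq 7m_\phi(f)$ to get $\kappa(Y)\leq -1/6$. Your closing remark is also well taken: the \emph{uniform} per-area deficit, not mere strictness of $\tau(Y)<0$ for each $Y$, is what makes the supremum $\sigma_+(X)$ strictly negative.
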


Any $C(7)$ presentation complex $X$ with at least as many relators as generators now provides an example with $\sigma_+(X)<0$ but $\rho_+(X)>0$. For an especially large class of examples, one can use random presentations in the few-relator model. 

\begin{example}[Random few-relator complexes]\label{eg: Few-relator complexes}
Consider a randomly chosen presentation in the few-relator model with $n$ generators and $m$ relators of length $l$. The probability that this presentation is $C(7)$ tends to 1 as $l\to \infty$, and so the standard presentation complex also satisfies $\sigma_+(X)<0$ with high probability. On the other hand, $\chi(X)=1-n+m$ and the probability that $X$ is not visibly irreducible tends to 0 as $l\to\infty$. Therefore, with probability tending to 1,
\[
\rho_+(X)\geq\frac{\tau(X)}{\Area(X)}=\frac{1-n+m}{m}= 1+(1-n)/m>0
\]
unless $m<n$.
\end{example}

This provides a rich class of examples of complexes with negative surface curvature but positive irreducible curvature.

Since the inequality used in Example \ref{eg: Few-relator complexes} only provides a lower bound, it remains an interesting problem to estimate the irreducible curvatures of few-relator complexes with $m<n$. The following conjecture represents the most optimistic possibility.

\begin{conjecture}[Irreducible curvature bounds of few-relator complexes]\label{conj: Irreducible curvature bounds of few-relator complexes}
If $X$ is a random few-relator complex as in Example \ref{eg: Few-relator complexes}, then the probability that $\rho_+(X)=\kappa(X)$ tends to $1$ as $l\to \infty$. In particular, with high probability, $\rho_+(X)<0$ if $m<n-1$ and $\rho_+(X)=0$ if $m=n-1$.
\end{conjecture}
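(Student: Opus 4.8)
The plan is to reduce the conjecture to a sharp isoperimetric inequality for irreducible complexes mapping to $X$, and then to attack that inequality by generalising the small-cancellation arguments of Propositions~\ref{prop: Surface curvature of C(6) complexes} and~\ref{prop: Surface curvature of C(7) complexes} together with the one-relator extremality of Theorem~\ref{thm: Primitivity rank is an absolute extremum}. First, since $X$ is visibly irreducible with probability tending to $1$ (as noted in Example~\ref{eg: Few-relator complexes}), we have $X\in\Irred(X)$ and hence $\rho_+(X)\geq\kappa(X)$; moreover every branched cover of $X$ lies in $\Irred(X)$ and has the same average curvature by Proposition~\ref{prop: Invariance under branched coverings}, so the value $\kappa(X)=1-(n-1)/m$ is genuinely attained. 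The entire content of the conjecture is therefore the reverse inequality $\rho_+(X)\leq\kappa(X)$, and the two ``in particular'' statements follow immediately by substituting $\kappa(X)=1-(n-1)/m$, which is negative for $m<n-1$ and zero for $m=n-1$. By the rationality theorem (Theorem~\ref{thm: Rationality theorem}) the supremum defining $\rho_+(X)$ is attained by some $Y\in\Irred(X)$; after folding and working componentwise I would assume $Y$ is connected, visibly irreducible, and a branched immersion into $X$.

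Writing $A=\Area(Y)$ and using $\chi(Y_{(1)})=1-\rk F_Y$ for connected $Y$, the target inequality $\kappa(Y)\leq\kappa(X)$ unwinds to the purely combinatorial statement
\[
\rk F_Y-1\;\geq\;\frac{n-1}{m}\,A\,.
\]
Branched covers of $X$ realise equality here, so the inequality is sharp and the goal is to show that generically they, and $X$ itself, are the only configurations meeting it. It is essential that $Y$ be irreducible: Whitehead's lemma (Lemma~\ref{lem: Whitehead's lemma}) rules out the degenerate configurations that would otherwise violate the bound, such as a single face wrapping $m_1$ times around one relator, whose one-skeleton folds to a circle. Such a $Y$ has reduced rank $0$ but positive area, and it is excluded precisely because its group pair is cyclic with a single $\curlyA$-orbit.

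The main tool I would bring to bear is genericity. With probability tending to $1$ as $l\to\infty$, the relators satisfy a metric $C'(\lambda_l)$ condition with $\lambda_l\to 0$, every piece has length $O(\log l)$, and there are no coincidences among the cyclic words $r_j^{\pm1}$; these are the hypotheses that should force the one-skeleton of any irreducible $Y$ to ``use up'' rank in proportion to the area it carries. Concretely, I would set up a combinatorial Gauss--Bonnet or discharging argument in the spirit of Lemma~\ref{lem: Combinatorial Gauss--Bonnet} and Proposition~\ref{prop: Face curvature bound}: assign angles to the corners of $Y$ so that the link condition (systole at least $2\pi$) holds by irreducibility, and so that the total curvature of each face is tuned to the sharp threshold $(n-1)/m$ rather than merely to $0$. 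The identity $\rk F_Y-1=\#E_Y-\#V_Y$, combined with the facts that every edge of $Y_{(1)}$ is crossed at least twice (no free faces) and that every arc between branch vertices is a piece of length $O(\log l)$, is what should convert the angle bookkeeping into the displayed inequality.

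The hard part will be precisely the passage from surfaces to arbitrary irreducible complexes while retaining the exact constant. For surfaces the links are circles and the Gauss--Bonnet computation of Lemma~\ref{lem: Combinatorial Gauss--Bonnet} is clean; for a general irreducible $Y$ the link of each vertex is an arbitrary graph with no cut vertex and no vertex of valence $1$, and controlling the resulting curvature is genuinely difficult. In the one-relator case this sharp control is exactly the content of Theorem~\ref{thm: Primitivity rank is an absolute extremum}, whose proof already requires the full negative-immersions machinery of Louder and the author; the multi-relator version with the exact constant $(n-1)/m$ is what remains open. The borderline case $m=n-1$, where the conclusion $\rho_+(X)=0$ asserts equality rather than strict negativity, is especially delicate, since the argument must permit equality (attained on branched covers) while forbidding any excess. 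An alternative route would be to extract an explicit dual certificate from the linear-programming formulation underlying Theorem~\ref{thm: Rationality theorem} and to show that such a certificate exists with high probability; but I expect either approach to require substantial new input, which is why the statement is posed as a conjecture rather than a theorem.
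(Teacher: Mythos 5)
You have correctly identified that this statement is posed in the paper as Conjecture~\ref{conj: Irreducible curvature bounds of few-relator complexes}: the paper contains no proof of it, so there is nothing for your argument to match, and the only question is whether your proposal closes the conjecture. It does not. Your preliminary reductions are sound: visible irreducibility with high probability gives $X\in\Irred(X)$ and hence $\rho_+(X)\geq\kappa(X)=1-(n-1)/m$; the rationality theorem (Theorem~\ref{thm: Rationality theorem}) lets you assume the supremum is attained by a connected, visibly irreducible branched immersion $Y\to X$; and the conjecture is then equivalent to the sharp inequality $\rk F_Y-1\geq (n-1)\Area(Y)/m$. But everything after that is a programme, not a proof. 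The discharging argument ``tuned to the threshold $(n-1)/m$'' is never constructed, and that is exactly where the open content lives: the arguments of Propositions~\ref{prop: Surface curvature of C(6) complexes} and~\ref{prop: Surface curvature of C(7) complexes} work because a surface has circle links, whereas for a general irreducible $Y$ the links are arbitrary graphs with no cut vertex and no valence-one vertex, and nothing in your sketch converts the generic $C'(\lambda_l)$ condition into a rank-versus-area bound with the exact constant $(n-1)/m$. Note in particular that small-cancellation data sees piece lengths but is blind to the generator count $n$, which appears in the target constant, so some genuinely different input is needed.

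There is also one concrete inaccuracy worth flagging: you assert that in the one-relator case the needed sharp control ``is exactly the content of Theorem~\ref{thm: Primitivity rank is an absolute extremum}.'' That theorem bounds the \emph{total} curvature, $\tau(Y)\leq 2-\pi(w)$ for every $Y\in\Irred(X)$, a bound independent of $\Area(Y)$. For $m=1$ the conjecture asserts $\rho_+(X)=\kappa(X)=2-n$, which requires the area-linear bound $\tau(Y)\leq (2-n)\Area(Y)$ for every irreducible $Y$. Even granting that $\pi(w)$ attains its generic maximal value $n$, the constant bound only yields $\rho_+(X)=\tau(Y^*)/\Area(Y^*)\leq (2-n)/\Area(Y^*)<0$, i.e.\ strict negativity, not the value $\kappa(X)$; a $Y$ of large area with $\tau(Y)$ close to $2-n$ is not excluded. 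So even the $m=1$ case of the conjecture does not follow from the cited theorem, and the anchor you propose for generalising to $m>1$ is weaker than you suppose. Since you yourself concede at the end that substantial new input is required, the honest assessment is that your text is a reasonable research plan, consistent with the paper's decision to state this as a conjecture, but it proves nothing beyond the easy lower bound $\rho_+(X)\geq\kappa(X)$.
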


The \emph{density model} of random groups, in which $m$ is proportional to $(2n-1)^{dl}$ for some density $0<d<1$, is also much-studied, and the resulting group is infinite and hyperbolic with high probability if $d<1/2$ \cite{gromov_asymptotic_1993}. These presentation complexes  are irreducible and have positive Euler characteristic by definition, so certainly $\rho_+(X)>0$.  However,  a random complex satisfies $C(7)$ with high probability at density $d<1/12$ \cite{ollivier_small-cancellation_2007}, so such a complex $X$ does have negative surface curvature, and it is likely that this estimate extends to densities less than $1/2$.

\begin{conjecture}[Surface curvature of random complexes in the density model]\label{conj: Surface curvature of random complexes in the density model}
For a random presentation complex $X$ in the density model at density $d<1/2$, the probability that $\sigma_+(X)<0$ tends to $1$ as $l\to\infty$.
\end{conjecture}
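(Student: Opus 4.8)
The plan is to reduce the statement to a uniform upper bound $\tau(S)<0$ on the total curvature of essential surfaces $S\to X$, and then to establish such a bound using the linear isoperimetric inequality for random groups. By Proposition~\ref{prop: Face curvature bound} (or directly by the combinatorial Gauss--Bonnet inequality, Lemma~\ref{lem: Combinatorial Gauss--Bonnet}), it suffices to show that, with probability tending to $1$, every essential map $\phi\colon S\to X$ from a closed surface satisfies $\tau(S)<0$; indeed, a uniform bound $\tau(S)\le -c\,\Area(S)$ for some fixed $c>0$ gives $\sigma_+(X)\le -c<0$. As in the proofs of Propositions~\ref{prop: Surface curvature of C(6) complexes} and \ref{prop: Surface curvature of C(7) complexes}, I would work with the cell structure that $\phi$ pulls back onto $S$, collapse the valence-$2$ vertices, and record for each face $f$ its area $m_\phi(f)$ together with the lengths of the maximal \emph{pieces} (parallel arcs shared with neighbouring faces).

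A first, elementary pass already recovers part of the range. Since $S$ is a closed surface each edge is shared by exactly two faces, so $E=\frac{l}{2}\Area(S)$, and after deleting valence-$2$ vertices the usual double counting gives $\tau(S)\le \Area(S)-N/3$, where $N$ is the number of maximal pieces. At density $d$ one has, with high probability, that every piece has length at most $(d+\epsilon)l$, whence $N\ge \Area(S)/\bigl(2(d+\epsilon)\bigr)$ and therefore $\tau(S)<0$ once $d<1/6$. This is the exact analogue of the elementary small-cancellation estimate, and it is the best that purely local (piece-length) information can deliver.

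To push the conclusion all the way to $d<1/2$ I would invoke the global input of Gromov's theory, in the sharp form due to Ollivier: for every $\epsilon>0$, with probability tending to $1$, every reduced van Kampen diagram $D$ over the random presentation satisfies $|\partial D|\ge (1-2d-\epsilon)\,l\,|D|$ \cite{gromov_asymptotic_1993,ollivier_small-cancellation_2007}. The task is then to convert this boundary-versus-area inequality, which concerns diagrams with boundary, into a statement about the \emph{closed} surface $S$. I would first arrange that $S$ may be taken reduced (no cancelling pair of adjacent faces), using that the presentation complex is aspherical with high probability at $d<1/2$, so that cancelling configurations can be removed without decreasing $\kappa$; then cut $S$ along a system of curves to a disc diagram and run the isoperimetric inequality on this decomposition, organised as a combinatorial Gauss--Bonnet computation in which the isoperimetric deficiency $(1-2d-\epsilon)$ is distributed as negative curvature across the faces.

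The hard part will be precisely this last conversion. The elementary argument stalls at $1/6$ because local piece data cannot see the global cancellation that the isoperimetric inequality controls; extracting a \emph{per-face} negative-curvature contribution valid up to $d<1/2$ amounts to proving a local isoperimetric (combinatorial Gauss--Bonnet) inequality for random complexes, in the spirit of Ollivier--Wise. Establishing such a local inequality, together with the reduction to reduced essential surfaces, is where the real work lies; once it is in place, the bound $\tau(S)\le -c\,\Area(S)$, and hence $\sigma_+(X)<0$, follow by summation.
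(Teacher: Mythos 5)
The statement you were asked to prove is a \emph{conjecture} in the paper: no proof is given there, and the only thing the paper records is the partial result that at density $d<1/12$ a random complex satisfies $C(7)$ with high probability (by Ollivier's theorem), whence $\sigma_+(X)<0$ by Proposition~\ref{prop: Surface curvature of C(7) complexes}. Your proposal is, by your own account, a programme rather than a proof: the entire passage from Ollivier's isoperimetric inequality for van Kampen diagrams to a per-face negative-curvature bound for closed essential surfaces is left open, and that passage is exactly the content of the conjecture. So there is a genuine gap, and it is the main step. Concretely, the conversion as you describe it would fail in its naive form: cutting $S$ along a curve system to a disc produces a van Kampen diagram $D$ with $|D|=\Area(S)$ whose boundary is the doubled cut system, and the inequality $|\partial D|\geq(1-2d-\epsilon)\,l\,|D|$ then asserts only that every such cut system is \emph{long}. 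Since nothing bounds the cut length from above, this yields no contradiction and no negative contribution per face; localising the isoperimetric deficiency as curvature on the faces of a closed surface is precisely the missing idea, not a routine summation.

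Two further concrete problems in the part you do carry out. First, there is a factor-of-two error in the piece bound: in the density model the correct statement is that with high probability every piece has length at most $(2d+\epsilon)l$ (this is the $C'(2d+\epsilon)$ property of Gromov--Ollivier), not $(d+\epsilon)l$. Redoing your count, the total face boundary length $l\,\Area(S)$ is covered twice by at most $N$ pieces of length at most $(2d+\epsilon)l$, so $N\geq\Area(S)/\bigl(2(2d+\epsilon)\bigr)$, and $\tau(S)\leq\Area(S)-N/3<0$ only when $d<1/12$. Thus your ``elementary pass'' recovers exactly the range the paper already covers via $C(7)$, not $d<1/6$ as you claim. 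Second, your reduction to reduced surfaces via asphericity is both shaky and unnecessary: removing a cancelling pair changes the area and the topology of $S$, and the assertion that this can be done ``without decreasing $\kappa$'' is unjustified as stated. In fact essentiality already does this work for you: after folding, an essential map is a branched immersion, hence injective on links of vertices, and a cancelling pair of adjacent faces would force two corners in a link to have the same image, violating link injectivity. With these repairs your sketch is an honest and reasonable research plan---it identifies the right known region and the right open difficulty---but it does not constitute a proof of the conjecture.
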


\subsection{One-relator groups}\label{subsec: One-relator groups}

Groups with a single defining relation were much studied in the early years of combinatorial group theory. According to Chandler and Magnus, Dehn hypothesised a geometric solution to their word problem \cite[p.\ 114]{chandler_history_1982}.  However, Magnus' eventual solution was combinatorial \cite{magnus_identitatsproblem_1932}, and indeed none of the curvature conditions most commonly used in geometric group theory -- Gromov-hyperbolicity, automaticity, $CAT(0)$ metrics etc -- can apply to all one-relator groups, since they all imply a quadratic isoperimetric inequality.

In 2003, Wise introduced notions of \emph{non-positive} and \emph{negative immersions} \cite{wise_nonpositive_2003,wise_sectional_2004} as tools to attack certain open problems about one-relator groups, especially a famous question, due to Baumslag, asking whether or not one-relator groups are coherent \cite{baumslag_problems_1974}.\footnote{See \cite{wise_invitation_2020} for a survey of recent work on coherence.} These ideas have been developed over the last decade in papers by Helfer--Wise \cite{helfer_counting_2016} and Louder and the author \cite{louder_stackings_2017,louder_negative_2022}, culminating in resolutions of Baumslag's question for one-relator groups with torsion \cite{louder_one-relator_2020,wise_coherence_2022} and 2-free one-relator groups \cite{louder_uniform_2024}.  Very recently, Baumslag's question was completely resolved in the affirmative by Jaikin-Zapirain--Linton \cite{jaikin-zapirain_coherence_2023}.

These developments can be interpreted as the beginning of a geometric theory of one-relator groups. In this section, we will see that this new geometric theory of one-relator groups can also be interpreted using maximal irreducible curvature.

Consider the standard presentation complex $X$ for the presentation
\[
\langle a_1,\ldots,a_n\mid w\rangle
\]
of a one-relator group $G$. Recall that $G$ is torsion-free if and only if the relator $w$ is not a proper power \cite[Proposition 5.17]{lyndon_combinatorial_1977}.   Note also that, as long as $n>1$, the standard presentation complex $X$ is irreducible if and only if $w$ is not conjugate into a proper free factor.

The following theorem, proved independently by Helfer--Wise and by Louder and the author, can be thought of as the first proof of an upper bound on irreducible curvature \cite{helfer_counting_2016,louder_stackings_2017}. 

\begin{theorem}[Non-positive irreducible curvature for one-relator groups]\label{thm: Irreducible curvature of one-relator groups}
Let $X$ be the standard presentation complex of a one-relator group $G$, and suppose that $X$ is irreducible. If $G$ is torsion-free then $\rho_+(X)\leq 0$.
\end{theorem}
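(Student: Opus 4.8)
The plan is to show that the maximal total curvature $\tau(Y)$ over all $Y \in \Irred(X)$ is non-positive, which by the definition of $\kappa$ and $\rho_+$ (and the fact that $\kappa(Y) \le \tau(Y)/\Area(Y)$ scales correctly) yields $\rho_+(X) \le 0$. The key observation is that, since $X$ has a single face attached along $w$, the word $w$ is not a proper power (this is exactly the torsion-free hypothesis, by \cite[Proposition 5.17]{lyndon_combinatorial_1977}). The natural tool is Theorem \ref{thm: Primitivity rank is an absolute extremum}, which identifies $\max_{Y \in \Irred(X)} \tau(Y)$ with $2 - \pi(w)$. Thus the entire problem reduces to showing $\pi(w) \ge 2$, i.e.\ that $w$ is imprimitive in no subgroup of rank $1$.

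First I would recall that $\pi(w) \ge 2$ is equivalent to the statement that $w$ is not a proper power: the primitivity rank $\pi(w)$ equals the minimal rank of a subgroup $H$ in which $w$ sits imprimitively, and the only way $w$ can be imprimitive in a rank-$1$ subgroup $H = \langle u \rangle$ is if $w = u^k$ for some $|k| \ge 2$ (a generator of an infinite cyclic group is imprimitive precisely when it is a nontrivial power). Since $w$ is not a proper power, no such $H$ exists, so every subgroup realising imprimitivity has rank at least $2$, giving $\pi(w) \ge 2$ and hence $2 - \pi(w) \le 0$.

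Combining these, Theorem \ref{thm: Primitivity rank is an absolute extremum} gives
\[
\max_{Y \in \Irred(X)} \tau(Y) = 2 - \pi(w) \le 0.
\]
For each $Y \in \Irred(X)$ we then have $\kappa(Y) = \tau(Y)/\Area(Y) \le 0$, since $\Area(Y) > 0$ and $\tau(Y) \le 0$. Taking the supremum over all $Y \in \Irred(X)$ yields $\rho_+(X) = \sup_Y \kappa(Y) \le 0$, as desired.

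The main obstacle is entirely encapsulated in Theorem \ref{thm: Primitivity rank is an absolute extremum}, which asserts not merely the inequality $2 - \pi(w) \le \max_Y \tau(Y)$ (the easy direction, already recorded in Lemma \ref{lem: Topological point of view on primitivity rank} and Lemma \ref{lem: Relationship between primitivity rank and maximal irreducible curvature}) but the much stronger fact that the primitivity rank computes the \emph{global} maximum of $\tau$ over all of $\Irred(X)$, not just over area-$1$ complexes. Granting that theorem, the argument above is essentially a one-line translation; the genuine content is that a single relator of area $1$ already witnesses the extremal total curvature, so that passing to larger-area complexes cannot increase $\tau$. I would therefore structure the proof as a short deduction, citing \cite{louder_negative_2022} for the hard input.
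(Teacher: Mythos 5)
Your proposal is correct, but it takes a genuinely different route from the paper. The survey does not prove Theorem \ref{thm: Irreducible curvature of one-relator groups} internally: it imports it from the original sources \cite{helfer_counting_2016,louder_stackings_2017}, where the bound is established directly --- Helfer--Wise by counting $w$-cycles in labelled graphs, Louder--Wilton via stackings --- without any mention of primitivity rank. You instead deduce it from Theorem \ref{thm: Primitivity rank is an absolute extremum} (i.e.\ \cite[Lemma 6.10]{louder_negative_2022}): torsion-freeness means $w$ is not a proper power \cite[Proposition 5.17]{lyndon_combinatorial_1977}, any rank-one subgroup $\langle u\rangle$ containing $w$ has $w=u^{\pm 1}$ and hence contains $w$ primitively, so $\pi(w)\geq 2$, whence $\tau(Y)\leq 2-\pi(w)\leq 0$ for every $Y\in\Irred(X)$ and therefore $\kappa(Y)=\tau(Y)/\Area(Y)\leq 0$, giving $\rho_+(X)\leq 0$. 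This is valid, and it is exactly the deduction pattern the survey itself employs in Example \ref{eg: Manning's example}; nor is it circular, since the machinery of \cite{louder_negative_2022} is independent of the stackings theorem and in fact reproves it. The trade-off is clear: the paper's citation rests on the historically prior and technically lighter input, which was the first proof of any upper bound on irreducible curvature, while your route leans on a strictly stronger later black box that collapses the theorem to a two-line corollary --- perfectly legitimate logically, though it inverts the order of development and uses far heavier machinery than the statement requires.

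Two small repairs. First, in your opening paragraph you write ``$\kappa(Y)\le\tau(Y)/\Area(Y)$ scales correctly''; this should simply be the equality $\kappa(Y)=\tau(Y)/\Area(Y)$, which holds by Definition \ref{def: Total and average curvature} (you state it correctly later). Second, you should record that $w\neq 1$ --- automatic for a presentation complex, and forced in any case by the irreducibility hypothesis --- so that $\pi(w)$ is defined and the trivial subgroup is excluded from the minimisation defining it.
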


The theorem gives an exact computation of $\rho_+$ for two-generator, one-relator groups.

\begin{corollary}\label{cor: Irreducible curvature of two-generator, one-relator groups}
Let $X$ be the standard presentation complex of a two-generator, one-relator group. If $X$ is irreducible and $G$ is torsion-free then $\rho_+(X)=0$.
\end{corollary}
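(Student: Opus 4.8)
The plan is to observe that both inequalities $\rho_+(X)\leq 0$ and $\rho_+(X)\geq 0$ are already within reach, so the corollary is a short deduction rather than a fresh argument. The upper bound is handed to us directly: since $G$ is torsion-free and $X$ is irreducible, Theorem \ref{thm: Irreducible curvature of one-relator groups} gives $\rho_+(X)\leq 0$. Thus the entire content of the corollary is the matching lower bound $\rho_+(X)\geq 0$, and this is where the two-generator hypothesis enters.

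For the lower bound, the natural move is to use $X$ itself as a test object in the supremum defining $\rho_+$. By hypothesis $X$ is finite and irreducible, and the identity map $X\to X$ is an essential equivalence (it induces the identity isomorphism of group pairs, so it is essential by Lemma \ref{lem: Essential maps in group theory}); hence $X\in\Irred(X)$. Unwinding Definition \ref{def: Irreducible curvatures}, this yields
\[
\rho_+(X)=\sup_{Y\in\Irred(X)}\kappa(Y)\geq\kappa(X)\,.
\]
It then remains only to compute $\kappa(X)$. The standard presentation complex of a two-generator, one-relator group $\langle a_1,a_2\mid w\rangle$ has exactly one vertex, two edges, and a single face, so $\chi(X_{(1)})=1-2=-1$ and $\Area(X)=1$ (as $X$ is standard). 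Therefore $\tau(X)=\Area(X)+\chi(X_{(1)})=0$ and $\kappa(X)=\tau(X)/\Area(X)=0$, giving $\rho_+(X)\geq 0$. Combining with the upper bound forces $\rho_+(X)=0$.

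I do not expect any genuine obstacle here: the crux is that for \emph{two} generators and \emph{one} relator the Euler characteristic of $X$ vanishes exactly, so the trivial lower bound $\rho_+(X)\geq\kappa(X)$ already meets the ceiling supplied by Theorem \ref{thm: Irreducible curvature of one-relator groups}. The hypotheses do all the real work: torsion-freeness (equivalently, $w$ not a proper power) is what licenses the theorem, and irreducibility (equivalently, $w$ not conjugate into a proper free factor) is precisely what places $X$ in $\Irred(X)$ so that $\kappa(X)$ is admissible. The only point requiring any care is the bookkeeping of the cell structure of the standard presentation complex, which is routine.
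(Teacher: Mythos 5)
Your proof is correct and is essentially the paper's own argument: the paper likewise deduces the upper bound from Theorem \ref{thm: Irreducible curvature of one-relator groups} and observes that $\tau(X)=\chi(X)=0$ for a two-generator, one-relator presentation, so the identity map realises the bound. Your version merely spells out the routine details (the identity being essential via Lemma \ref{lem: Essential maps in group theory}, and the cell count $\chi(X_{(1)})=-1$, $\Area(X)=1$) that the paper leaves implicit.
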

\begin{proof}
Since the presentation has two generators,
\[
\tau(X)=\chi(X)=2-2=0
\]
so the identity map realises the upper bound from the theorem. 
\end{proof}

Theorem \ref{thm: Irreducible curvature of one-relator groups} also implies that $\sigma_+(X)\leq 0$ by Lemma \ref{lem: Trivial inequalities}. This result contravenes the standard heuristics of geometric group theory. For instance, it is usually posited that any reasonable notion of non-positive curvature for a group $G$ should imply that $G$ satisfies a quadratic isoperimetric inequality.  However, there are famously Baumslag--Solitar groups with exponential Dehn function \cite[Exercise 7.2.11]{bridson_geometry_2002}, and yet their presentation complexes are non-positively curved in our sense.

\begin{example}[Baumslag--Solitar groups]\label{eg: Baumslag-Solitar groups}
The standard presentation complex $X$ of the $(m,n)$-Baumslag--Solitar group
\[
BS(m,n)\cong\langle a,b\mid b a^mb^{-1}a^{-n}\rangle
\]
has $\rho_+(X)=0$ by Corollary \ref{cor: Irreducible curvature of two-generator, one-relator groups}. Furthermore, $X$ is irrigid by Example \ref{eg: BS groups are of surface type}, so Lemma \ref{lem: Surface to surface type} provides $S\in\Surf(X)$ with $\kappa(S)=\kappa(X)=0$, whence $\sigma_+(X)=0$ also.
\end{example}

One-relator groups include many examples that are even more pathological, such as the Baumslag--Gersten group $BG=\langle a,b\mid b^a a(b^a)^{-1}a^{-2}\rangle$ \cite{baumslag_non-cyclic_1969,gersten_dehn_1992}.  From our perspective, all of these are non-positively curved as long as they are torsion-free. (The case with torsion is addressed in Theorem \ref{thm: Negative irreducible curvature for one-relator groups with torsion} below.)

The question of negative curvature for one-relator groups is considerably more delicate.  In Theorem  \ref{thm: Subgroups of groups with negative irreducible curvature} below, we shall see that if $\rho_+(X)<0$ and $G=\pi_1(X)$ then every 2-generator subgroup of $G$ is free. The results of Louder and the author \cite{louder_negative_2022,louder_uniform_2024} show that this is the only obstruction to negative irreducible curvature for one-relator groups.

\begin{theorem}[Torsion-free one-relator groups with negative irreducible curvature]\label{thm: NI for one-relator groups}
If $X$ is the standard presentation complex of a torsion-free one-relator group $G$ and every two-generator, one-relator subgroup of $G$ is free then $\rho_+(X)< 0$.
\end{theorem}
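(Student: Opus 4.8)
The plan is to combine the upper bound already supplied by Theorem~\ref{thm: Irreducible curvature of one-relator groups} with the primitivity-rank description of the extremal total curvature and the rationality theorem, thereby reducing the statement to a single combinatorial claim about $w$: that the hypothesis forces $\pi(w)\geq 3$. Throughout I would assume $X$ is irreducible, since that is the setting of Theorem~\ref{thm: Irreducible curvature of one-relator groups} and the only case in which $\rho_+(X)$ carries content; for $n\geq 2$ this is equivalent to $w$ not being conjugate into a proper free factor, which in particular forces $\pi(w)<\infty$. Since $G$ is torsion-free, $w$ is not a proper power, so $w$ is not imprimitive in any cyclic (rank-one) subgroup, and hence $\pi(w)\geq 2$. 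By Theorem~\ref{thm: Irreducible curvature of one-relator groups} we already have $\rho_+(X)\leq 0$, so the entire task is to rule out the equality $\rho_+(X)=0$.

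First I would carry out the structural reduction, which is the soft part. Because $X$ is irreducible we have $X\in\Irred(X)$, so $\Irred(X)\neq\varnothing$, and the rationality theorem (Theorem~\ref{thm: Rationality theorem}) provides $Y_0\in\Irred(X)$ with $\rho_+(X)=\kappa(Y_0)=\tau(Y_0)/\Area(Y_0)$. Meanwhile, Theorem~\ref{thm: Primitivity rank is an absolute extremum} identifies the maximal total curvature as $\max_{Y\in\Irred(X)}\tau(Y)=2-\pi(w)$. Consequently, if I can establish $\pi(w)\geq 3$, then every $Y\in\Irred(X)$ satisfies $\tau(Y)\leq 2-\pi(w)\leq -1<0$; in particular $\tau(Y_0)<0$, and since $\Area(Y_0)>0$ this yields $\rho_+(X)=\tau(Y_0)/\Area(Y_0)<0$. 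The crucial role of the rationality theorem here is that it promotes the bound $\tau(Y)<0$ (which by itself only gives $\kappa(Y)<0$ pointwise, possibly with supremum $0$) into a strict bound on the \emph{attained} supremum. Thus the theorem follows from the single claim that the hypothesis implies $\pi(w)\geq 3$.

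The main obstacle is exactly this claim, which I would prove by contraposition: assuming $\pi(w)=2$, I must exhibit a non-free two-generator one-relator subgroup of $G$. By the definition of the primitivity rank there is a subgroup $H\leq F$ with $\rk H=2$ and $w\in H$ imprimitive. Writing $H=\langle a,b\rangle$, the group $Q=\langle a,b\mid w\rangle=H/\llangle w\rrangle_H$ is genuinely non-free, since a rank-two one-relator group is free only when its relator is trivial or primitive. The natural map $H\to G$ factors as $H\to Q\to G$, and the image $\bar H\leq G$ is the quotient of $Q$ by $(H\cap\llangle w\rrangle_F)/\llangle w\rrangle_H$. Since the inclusion $\llangle w\rrangle_H\subseteq H\cap\llangle w\rrangle_F$ is automatic, the crux is to prove the reverse inclusion, namely
\[
H\cap\llangle w\rrangle_F=\llangle w\rrangle_H,
\]
which forces $\bar H\cong Q$ and delivers the required non-free two-generator one-relator subgroup.

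Establishing this controlled intersection is the heart of the matter, and it is where the soft curvature formalism gives out and the fine combinatorial geometry of $w$ must be used. I expect to invoke the structural results of Louder and the author on one-relator groups \cite{louder_negative_2022}: the subgroups realising the primitivity rank are ``$w$-subgroups'' whose images in $G$ are themselves one-relator groups, which is precisely the assertion that no further relations appear in $\bar H$ and that the intersection above collapses to $\llangle w\rrangle_H$. This is the genuinely hard step and the part that cannot be bypassed; once it is in hand, the combination of Theorem~\ref{thm: Primitivity rank is an absolute extremum} with the rationality theorem completes the argument as above. (Combined with the converse implication recorded in Theorem~\ref{thm: Subgroups of groups with negative irreducible curvature}, this would also yield the clean equivalence, for torsion-free one-relator $G$, between negative irreducible curvature and the freeness of all two-generator subgroups.)
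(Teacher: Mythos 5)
Your proposal is correct and follows essentially the same route as the paper: the theorem is deduced there from the main result of \cite{louder_negative_2022} (freeness of all two-generator one-relator subgroups is equivalent to $\pi(w)\geq 3$, via the $w$-subgroup embedding machinery you invoke for the intersection $H\cap\llangle w\rrangle_F=\llangle w\rrangle_H$), combined with Theorem \ref{thm: Primitivity rank is an absolute extremum} and the rationality theorem --- exactly the mechanism the paper spells out in Example \ref{eg: Manning's example}. Your remark that the rationality theorem is what upgrades the pointwise bound $\tau(Y)\leq -1$ (which alone only gives $\kappa(Y)<0$ with possible supremum $0$) to the strict conclusion $\rho_+(X)<0$ is precisely the intended point.
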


In \cite{louder_negative_2022}, these one-relator groups are said to have \emph{negative immersions}. 

The above theorems apply to one-relator groups without torsion, but  the case with torsion is also of great interest. The following account reframes the proof of coherence given in \cite{louder_one-relator_2020} (similar to Wise's independent proof \cite{wise_coherence_2022}) in the language of irreducible curvature.

Consider a one-relator presentation
\[
G\cong\langle a_1,\ldots, a_n\mid u^k\rangle
\]
where $k>1$. The obvious presentation complex $X$ fails to be concise, and so $\rho_+(X)=1$ by Proposition \ref{prop: rho_+=1}, showing that the hypothesis that $G$ should be torsion-free in Theorem \ref{thm: Irreducible curvature of one-relator groups} cannot be removed. However, there is a natural modification of the ideas of this paper in which $X$ is allowed to be an \emph{orbi-complex}, and the face of $X$ can be endowed with a cone point. In this modified framework, $G=\pi_1(X)$, where the single face of $X$ is attached along the word $u$ and has a cone point of order $k$. The standard assignment of areas should give the unique face of $X$ an area of $1/k$. 

In this modified framework, the techniques of the proof of Theorem \ref{thm: Irreducible curvature of one-relator groups} apply as before, except that the areas of faces are divided by $k$.\footnote{Since one-relator groups are virtually torsion-free, one may also pass to a finite-sheeted cover which is a genuine complex, as opposed to an orbi-complex, as in \cite[Theorem 2.2]{louder_one-relator_2020}. This avoids the need to discuss orbi-complexes.} For any essential map from an irreducible complex $Y\to X$, the proof of Theorem \ref{thm: Irreducible curvature of one-relator groups} now gives
\[
k\Area(Y)+\chi(Y_{(1)})\leq 0\,.
\]
Since the left hand side can be rewritten as $(k-1)\Area(Y)+\tau(Y)$, it follows that
\[
\kappa(Y)=\frac{\tau (Y)}{\Area(Y)}\leq 1-k\,.
\]
In conclusion we have the following estimate on the irreducible curvature of the presentation orbi-complex of a one-relator group with torsion, which was implicit in both \cite{louder_one-relator_2020} and \cite{wise_coherence_2022}.  

\begin{theorem}[Negative irreducible curvature for one-relator groups with torsion]\label{thm: Negative irreducible curvature for one-relator groups with torsion}
If $X$ is the standard presentation orbi-complex for a one-relator group
\[
G\cong\langle a_1,\ldots, a_n\mid u^k\rangle
\]
then $\rho_+(X)\leq 1-k$. In particular, if $k>1$ then $\rho_+(X)<0$.
\end{theorem}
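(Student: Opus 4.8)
The plan is to run the combinatorial counting behind Theorem~\ref{thm: Irreducible curvature of one-relator groups}, while keeping careful track of the way the cone point of order $k$ rescales area. First I would fix the orbi-complex structure on $X$: its single face is attached along the primitive root $u$ and carries a cone point of order $k$, so that $\pi_1(X)\cong G$ and the standard area assignment gives that face area $1/k$. Then, for an arbitrary essential branched map $\phi\colon Y\to X$ with $Y$ finite and irreducible, each face $f$ of $Y$ wraps $u$ exactly $m_\phi(f)$ times but carries branched area only $m_\phi(f)/k$. (If $\Irred(X)=\varnothing$ there is nothing to prove, since then $\rho_+(X)=-\infty$.)

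The substance of the argument is to re-derive, in this cone-pointed setting, the inequality that the torsion-free proof of \cite{helfer_counting_2016,louder_stackings_2017} produces. That proof yields a bound of the form $(\text{combinatorial area})+\chi(Y_{(1)})\leq 0$, where the combinatorial area is the total number of times $u$ is traversed around $Y_{(1)}$. By the area bookkeeping of the previous paragraph this combinatorial area equals $k\Area(Y)$, so the same counting now gives
\[
k\Area(Y)+\chi(Y_{(1)})\leq 0.
\]
I expect this transfer to be the main obstacle: one must check that the structural inputs of the torsion-free argument — the control on vertex links coming from irreducibility, and the fact that $u$ is not a proper power — are unaffected by replacing the relator $u^k$ with a cone-pointed copy of its root $u$, so that only the area normalisation changes.

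Granting the displayed inequality, the conclusion is immediate. Rewriting its left-hand side as $(k-1)\Area(Y)+\tau(Y)$ and dividing through by $\Area(Y)>0$ gives
\[
\kappa(Y)=\frac{\tau(Y)}{\Area(Y)}\leq 1-k.
\]
Since this holds for every $Y\in\Irred(X)$, taking the supremum yields $\rho_+(X)\leq 1-k$, and for $k>1$ the right-hand side is strictly negative, which is the final assertion.

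Finally, to avoid discussing orbi-complexes at all, one could instead exploit that $G$ is virtually torsion-free and pass to a finite-index torsion-free subgroup realised by a genuine branched cover, as in \cite{louder_one-relator_2020}; the same estimate then follows by combining the torsion-free counting on the cover with the invariance of $\rho_+$ under finite-sheeted branched coverings (Proposition~\ref{prop: Invariance under branched coverings}). Either way, the crux is ensuring that the quantitative bound of Theorem~\ref{thm: Irreducible curvature of one-relator groups} survives the rescaling of area by $1/k$.
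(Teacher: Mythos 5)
Your proposal is correct and follows essentially the same route as the paper: the same orbi-complex set-up with the face attached along $u$ carrying area $1/k$, the same transferred inequality $k\Area(Y)+\chi(Y_{(1)})\leq 0$ from the torsion-free counting argument of Theorem \ref{thm: Irreducible curvature of one-relator groups}, and the same rewriting as $(k-1)\Area(Y)+\tau(Y)\leq 0$ to conclude $\kappa(Y)\leq 1-k$. Even your fallback—passing to a finite-sheeted cover of the virtually torsion-free group and invoking Proposition \ref{prop: Invariance under branched coverings}—is exactly the alternative the paper records in a footnote, and your honest flagging of the transfer step as the main point to verify matches the level of detail in the paper's own treatment.
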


This is in line with the general expectation that one-relator groups with torsion are negatively curved. For instance, Newman's spelling theorem asserts that all one-relator groups with torsion are hyperbolic \cite{newman_some_1968}.

Finally, we make a remark about one-relator groups and Nielsen equivalence. Propositions \ref{prop: Nielsen invariance of irreducible curvatures} and \ref{prop: Nielsen invariance of surface curvatures} show that $\rho_\pm$ and $\sigma_\pm$ are always invariant under Nielsen equivalence of \emph{presentations}. In the special case of one-relator groups, this can be extended to Nielsen equivalence of generating sets by a deep theorem of Magnus \cite[Proposition 5.8]{lyndon_combinatorial_1977}.

\begin{theorem}[Magnus' theorem]\label{thm: Magnus' theorem}
Consider two one-relator presentations
\[
\langle a_1,\ldots, a_m\mid w\rangle~\textrm{and}~\langle a'_1,\ldots, a'_m\mid w'\rangle
\]
for a group. If $\{a_1,\ldots, a_m\}$ and $\{a'_1,\ldots, a'_m\}$ are Nielsen equivalent as generating sets then the two presentations are also Nielsen equivalent.
\end{theorem}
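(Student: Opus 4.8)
The plan is to reduce the statement to the classical theorem of Magnus that two elements of a free group with equal normal closures are conjugate, up to inversion; once that is available, the remainder is a formal manipulation of epimorphisms from a free group. First I would set up a common source. Let $F_m$ be the free group on a standard basis $x_1,\dots,x_m$, and let $G$ be the common group presented by the two presentations. Define epimorphisms $p,p'\colon F_m\to G$ by $p(x_i)=a_i$ and $p'(x_i)=a'_i$ (writing $a_i,a'_i$ also for their images in $G$). Reading the relators off in the standard basis as words $w,w'\in F_m$, we have $\ker p=\llangle w\rrangle$ and $\ker p'=\llangle w'\rrangle$, and both are non-trivial (otherwise $G$ is free and the statement is immediate). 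The hypothesis that $(a_1,\dots,a_m)$ and $(a'_1,\dots,a'_m)$ are Nielsen equivalent generating tuples of $G$ is, by definition, exactly the statement that the two tuples lie in one orbit of the action of $\mathrm{Aut}(F_m)$ on generating tuples by precomposition; equivalently, there is an automorphism $\phi\in\mathrm{Aut}(F_m)$ with $p'=p\circ\phi$.

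Next I would extract an equality of normal closures. From $p'=p\circ\phi$ one gets $\ker p'=\phi^{-1}(\ker p)$, and since $\phi^{-1}$ carries a normal closure to the normal closure of the image, this reads
\[
\llangle w'\rrangle=\phi^{-1}\bigl(\llangle w\rrangle\bigr)=\llangle \phi^{-1}(w)\rrangle .
\]
Thus $w'$ and $\phi^{-1}(w)$ have the same normal closure in $F_m$. Here I would invoke Magnus's normal-closure theorem: in a free group, two elements with equal normal closures are conjugate to one another or to one another's inverse. Consequently $\phi^{-1}(w)$ is conjugate to $(w')^{\pm1}$.

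Finally I would translate this back into the language of \S\ref{sec: Group pairs}. Set $\alpha:=\phi^{-1}$, regarded under the identifications $\langle a_i\rangle\cong F_m\cong\langle a'_i\rangle$ as an isomorphism $\alpha\colon\langle a_1,\dots,a_m\rangle\to\langle a'_1,\dots,a'_m\rangle$. Then $\alpha(w)$ is conjugate to $(w')^{\pm1}$, so $\alpha$ carries the cyclic subgroup $\langle w\rangle$ to a conjugate of $\langle w'\rangle=\langle(w')^{-1}\rangle$; this is precisely an isomorphism of the associated group pairs, i.e.\ a Nielsen equivalence of the two presentations in the sense of Definition \ref{def: Nielsen equivalence}. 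The sign ambiguity is harmless, since inverting a relator does not change its cyclic subgroup and hence not the group pair.

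The only non-formal ingredient is Magnus's normal-closure theorem, and I expect it to be the sole genuine obstacle. It is exactly where the \emph{one-relator} hypothesis is essential — the analogous statement fails already for two relators, so nothing in this argument survives verbatim in higher rank — and it is the reason the theorem is ``deep''. Everything else (unwinding Nielsen equivalence of generating tuples into an automorphism of $F_m$, and matching conjugacy-up-to-inversion with isomorphism of group pairs) is elementary.
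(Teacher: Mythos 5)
Your proposal is correct, and it is the standard argument for this classical result: the paper itself states Theorem \ref{thm: Magnus' theorem} without proof, attributing it to Magnus, and the ``deep'' content is precisely the normal-closure (conjugacy) theorem you invoke --- if $\llangle u\rrangle=\llangle v\rrangle$ in a free group then $u$ is conjugate to $v^{\pm1}$ --- with the rest being the formal unwinding of Nielsen equivalence of tuples into an automorphism $\phi$ of $F_m$ with $p'=p\circ\phi$, exactly as you do. Your disposal of the inversion ambiguity is also right: since $\langle \alpha(w)\rangle$ is then conjugate to $\langle w'\rangle$, the induced equivariant bijection of coset sets gives an isomorphism of the associated group pairs, which is Nielsen equivalence of the presentations in the sense of Definition \ref{def: Nielsen equivalence}.
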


This raises the possibility that our invariants could, at least in principle, be used to distinguish Nielsen equivalence classes of generating sets of one-relator groups.

\begin{corollary}[Nielsen equivalence of generating sets for one-relator groups]
Consider two one-relator presentations
\[
\langle a_1,\ldots, a_m\mid w\rangle~\textrm{and}~\langle a'_1,\ldots, a'_m\mid w'\rangle
\]
for a group, with corresponding presentation complexes $X$ and $X'$ respectively. If $\{a_1,\ldots, a_m\}$ and $\{a'_1,\ldots, a'_m\}$ are Nielsen equivalent as generating sets then $\rho_\pm(X)=\rho_\pm(X')$ and $\sigma_\pm(X)=\sigma_\pm(X')$.
\end{corollary}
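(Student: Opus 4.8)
The plan is to chain together the two ingredients already in place: Magnus' theorem (Theorem \ref{thm: Magnus' theorem}), which upgrades Nielsen equivalence of generating sets to Nielsen equivalence of presentations, and the Nielsen invariance of the curvature invariants (Propositions \ref{prop: Nielsen invariance of irreducible curvatures} and \ref{prop: Nielsen invariance of surface curvatures}). The corollary is then a purely formal deduction, so the work is entirely in identifying the correct references and checking that their hypotheses are met.

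First I would observe that the hypothesis gives Nielsen equivalence of the generating sets $\{a_1,\ldots,a_m\}$ and $\{a'_1,\ldots,a'_m\}$. Applying Theorem \ref{thm: Magnus' theorem}, the two one-relator presentations are Nielsen equivalent \emph{as presentations}; that is, there is an isomorphism of the free groups on the two generating sets carrying $w$ to a conjugate of $w'$ (up to reindexing the single relator). By the discussion following Definition \ref{def: Nielsen equivalence}, Nielsen equivalence of presentations is exactly Nielsen equivalence of the associated presentation complexes $X$ and $X'$ in the sense of their group pairs. With this established, I would invoke Proposition \ref{prop: Nielsen invariance of irreducible curvatures} to conclude $\rho_\pm(X)=\rho_\pm(X')$, and Proposition \ref{prop: Nielsen invariance of surface curvatures} to conclude $\sigma_\pm(X)=\sigma_\pm(X')$.

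The only point requiring care is that Proposition \ref{prop: Nielsen invariance of surface curvatures} is stated for \emph{irreducible} complexes, whereas the corollary as phrased does not impose irreducibility; the irreducible-curvature invariance of Proposition \ref{prop: Nielsen invariance of irreducible curvatures} carries no such restriction. For the surface-curvature conclusion I would note that reducibility is itself a property of the group pair, by Whitehead's lemma (Lemma \ref{lem: Whitehead's lemma}), so that Nielsen equivalence forces $X$ to be irreducible precisely when $X'$ is; one may then apply Proposition \ref{prop: Nielsen invariance of surface curvatures} in the irreducible case, which is the case of interest, and in the reducible case both complexes are homotopy equivalent to graphs and the surface invariants degenerate identically. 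There is no genuine obstacle here: the mathematical substance lives entirely in Magnus' theorem and the two invariance propositions, and the remaining deduction is bookkeeping.
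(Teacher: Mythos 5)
Your proposal is correct and takes essentially the same route as the paper, which presents the corollary as an immediate consequence of Magnus' theorem (Theorem \ref{thm: Magnus' theorem}) combined with the Nielsen-invariance results (Propositions \ref{prop: Nielsen invariance of irreducible curvatures} and \ref{prop: Nielsen invariance of surface curvatures}). Your extra observation that irreducibility is itself a Nielsen-invariant property via Whitehead's lemma (Lemma \ref{lem: Whitehead's lemma}), so the hypothesis of Proposition \ref{prop: Nielsen invariance of surface curvatures} transfers between $X$ and $X'$, is a point the paper leaves implicit and is handled correctly.
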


\subsection{Geometric 2-complexes}

The next definition allows us to apply these ideas to 3-manifolds. 

\begin{definition}[Geometric 2-complex]\label{def: 3-manifold spine}
A  2-complex $X$ is called \emph{geometric} if it is endowed with the following additional structure.
\begin{enumerate}[(i)]
\item For each vertex $v$, the link $\Lk(v)$ is equipped with a planar embedding, i.e.\ a cellular embedding into $S^2$. This determines how to canonically thicken the 1-skeleton $X_{(1)}$ to a (possibly non-orientable) handlebody.
\item For each face $f$, the corresponding curve on the surface of the handlebody is 2-sided.
\end{enumerate}
In particular, a presentation is called \emph{geometric} if its presentation complex is geometric.
\end{definition}

By thickening the 1-skeleton to a handlebody and the faces to 2-handles, a geometric 2-complex $X$ determines a 3-manifold with boundary $M=M(X)$ that deformation retracts to $X$. Note that the structure of a geometric complex pulls back along an essential map.

If $X$ is an irreducible geometric 2-complex then the boundary $\partial M$ of the thickened 3-manifold is naturally endowed with the structure of a 2-complex, and the inclusion map induces an immersion $\partial M\to X$. The next fact  is morally a consequence of  Poincar\'e duality, which implies that $\chi(\partial M)=2\chi(M)$ for any compact 3-manifold, although we give a direct proof using the spherical links of $X$ (cf.\ \cite[Theorem 10.2]{wise_sectional_2004}).

\begin{proposition}[Poincar\'e duality]\label{prop: Poincar\'e duality}
If $X$ is a finite, irreducible, geometric, branched 2-complex then $\kappa(\partial M(X))=\kappa(X)$.
\end{proposition}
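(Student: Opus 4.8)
The plan is to reduce the identity $\kappa(\partial M)=\kappa(X)$ to two separate bookkeeping statements: that the area doubles, $\Area(\partial M)=2\Area(X)$, and that the Euler characteristic doubles, $\chi(\partial M)=2\chi(X)$. Since $\partial M$ is a closed surface and $\kappa(\partial M)=\tau(\partial M)/\Area(\partial M)$ with $\tau(\partial M)=\Area(\partial M)+\chi((\partial M)_{(1)})$, these two facts combine cleanly. Writing $\#F_{\partial M}$ for the number of faces of $\partial M$ and using $\chi(\partial M)=\chi((\partial M)_{(1)})+\#F_{\partial M}$, I obtain $\chi((\partial M)_{(1)})=2\chi(X)-\#F_{\partial M}$; once I know $\#F_{\partial M}=2\#F_X$ this becomes $2\chi(X_{(1)})$, whence $\tau(\partial M)=2\Area(X)+2\chi(X_{(1)})=2\tau(X)$ and the result follows.

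First I would pin down the natural 2-complex structure on $\partial M$ and the area computation. Thickening each face $f$ to a $2$-handle $D^2\times I$ contributes its two tops $D^2\times\{0\}$ and $D^2\times\{1\}$ to $\partial M$; the deformation retraction $M\to X$ carries each top homeomorphically onto $f$, so each is a face of multiplicity one and therefore of area $\Area(f)$. Thus every face of $X$ gives exactly two faces of $\partial M$ of the same area, so $\#F_{\partial M}=2\#F_X$ and $\Area(\partial M)=2\Area(X)$. Here condition (ii) of Definition \ref{def: 3-manifold spine}, that the boundary curve of each face is two-sided, is exactly what guarantees the $2$-handle is attached along an embedded annulus (rather than a M\"obius band), so that $\partial M$ really is a surface and the two tops are genuinely distinct faces. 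Note that the branched structure plays no role here: the areas are carried along as labels, while the underlying thickening sees only the topological $2$-complex.

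Next I would compute $\chi(\partial M)=2\chi(X)$, and here the spherical links enter. Condition (i) of Definition \ref{def: 3-manifold spine} lets me thicken $X_{(1)}$ to a handlebody $H$ whose boundary is a closed surface with $\chi(\partial H)=2\chi(H)=2\chi(X_{(1)})$. Because the attaching curves of the faces are disjoint and embedded on $\partial H$ (their traces run through distinct corners of each spherical link $\Lk(v)\subset S^2$ and through distinct longitudes of each $1$-handle), each $2$-handle is glued along a disjoint annulus, and a standard computation shows that each such attachment replaces an annulus ($\chi=0$) by two discs ($\chi=2$), raising the Euler characteristic of the boundary by $2$. Summing over the $\#F_X$ faces gives $\chi(\partial M)=2\chi(X_{(1)})+2\#F_X=2\chi(X)$. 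Equivalently, one may read off the same count directly from the links: the complementary regions of the graphs $\Lk(v)$ become the vertices of $\partial M$ and the face-corners become its edges, so that Euler's formula applied on each link sphere yields $\chi((\partial M)_{(1)})=2\chi(X_{(1)})$ after subtracting the $2\#F_X$ tops. Combining with the area computation gives $\tau(\partial M)=2\tau(X)$ and hence $\kappa(\partial M)=\kappa(X)$.

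The main obstacle, and the step deserving the most care, is justifying that $\partial M$ carries a \emph{bona fide} 2-complex structure in which the faces are precisely the $2$-handle tops, so that $\#F_{\partial M}=2\#F_X$ and the remaining (product-like) regions lying over the vertices and edges of $X$ collapse into the $1$-skeleton rather than contributing spurious faces. This is where the hypotheses that $X$ is irreducible and geometric are used: they ensure that the links contain no valence-one vertices or other degeneracies, that the map $\partial M\to X$ induced by the retraction is genuinely an immersion after this collapse, and that $\partial M$ is a single closed surface to which the relation $\tau=\Area+\chi_{(1)}$ applies. Once the cell structure is in place, the remainder is the elementary accounting above, which is morally the assertion of Poincar\'e duality that $\chi(\partial M)=2\chi(M)$.
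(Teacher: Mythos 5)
Your proposal is correct, but your main route differs from the paper's. The paper never invokes any 3-manifold topology in the proof itself: after noting that each face doubles (so $\Area(\partial M)=2\Area(X)$), it reduces to showing $\chi(\partial M_{(1)})=2\chi(X_{(1)})$ and proves this by a purely combinatorial count on an auxiliary 2-complex $L$, the disjoint union of the cellularly embedded link spheres, using $\chi(L)=2\#V_X$ and $\#V_L=2\#E_X$ to get $2\chi(X_{(1)})=\#F_L-\#E_L$, and then the bijections $F_L\leftrightarrow V_{\partial M}$ and $E_L\leftrightarrow E_{\partial M}$. You instead run the genuine Poincar\'e-duality argument: $\chi(\partial H)=2\chi(H)=2\chi(X_{(1)})$ for the handlebody, then each 2-handle attachment along a disjoint embedded annulus trades an annulus ($\chi=0$) for two discs ($\chi=2$), giving $\chi(\partial M)=2\chi(X)$, which combined with $\#F_{\partial M}=2\#F_X$ recovers $\chi((\partial M)_{(1)})=2\chi(X_{(1)})$ --- this is exactly the route the paper explicitly flags as the ``moral'' proof while opting for the direct link computation instead (and indeed, having granted that $M$ is a compact 3-manifold deformation retracting to $X$, you could shortcut your handle-by-handle surgery by citing $\chi(\partial M)=2\chi(M)$ outright). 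The trade-off: your version is conceptually transparent and explains the proposition's name, but it carries the burden of verifying smooth-topological facts --- that the attaching annuli are disjointly embedded (which you correctly extract from the planar link embeddings) and that 2-sidedness rules out M\"obius-band attachments; the paper's version is self-contained cellular bookkeeping that needs only the cellularity of the link embeddings in Definition \ref{def: 3-manifold spine} and works verbatim in the branched setting. Your secondary ``equivalently'' remark, reading the count off the complementary regions and face-corners of the links, is precisely the paper's proof, and your closing concern about the cell structure on $\partial M$ (tops as faces, link regions as vertices) is handled in the paper not inside the proof but in the preamble paragraph asserting that $\partial M$ is naturally a 2-complex immersing in $X$.
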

\begin{proof}
Every face of $X$ appears twice in $\partial M$, so $\Area(\partial M(X))=2\Area(X)$, and it suffices to prove that $\chi(\partial M_{(1)})=2\chi(X_{(1)})$.  To compute $\chi(\partial M_{(1)})$, we introduce an auxiliary 2-complex $L$, the disjoint union of the cellular decompositions of the 2-spheres given by the links of the vertices of $X$.

Since $L$ consists of one sphere for each vertex of $X$, we have $\chi(L)=2\#V_X$, while $2\#E_X=\#V_L$ by definition. Therefore
\begin{eqnarray*}
2\chi(X_{(1)})&=&2\#V_X - 2\#E_X \\
&=& \chi(L)-\#V_L\\
&=&\#F_L-\#E_L\,.
\end{eqnarray*}
But $F_L$ naturally bijects with $V_{\partial M}$, while $E_{\partial M}$ bijects with $E_L$, so $2\chi(X_{(1)})=\chi(\partial M_{(1)})$ as required.
\end{proof}

This in turn implies that the irreducible curvature of a 3-manifold spine is always equal to the surface curvature.

\begin{corollary}[Curvatures of geometric complexes]\label{cor: Curvatures of 3-manifold spines}
If $X$ is a geometric 2-complex then $\rho_\pm(X)=\sigma_\pm(X)$.
\end{corollary}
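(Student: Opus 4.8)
The plan is to leverage Proposition \ref{prop: Poincar\'e duality} to convert essential maps from irreducible complexes into essential maps from surfaces without changing the average curvature. By Lemma \ref{lem: Trivial inequalities} we already know that
\[
\rho_-(X)\leq\sigma_-(X)\leq\sigma_+(X)\leq\rho_+(X)\,,
\]
so it suffices to establish the two reverse inequalities $\rho_+(X)\leq\sigma_+(X)$ and $\sigma_-(X)\leq\rho_-(X)$.

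First I would take an arbitrary $Y\in\Irred(X)$. Since the geometric structure on $X$ pulls back along the essential map $Y\to X$, the complex $Y$ is itself a finite, irreducible, geometric, branched 2-complex, so it has an associated thickened 3-manifold $M(Y)$ with boundary $\partial M(Y)$. Proposition \ref{prop: Poincar\'e duality} then gives $\kappa(\partial M(Y))=\kappa(Y)$. The boundary $\partial M(Y)$ is a closed surface, and the inclusion induces an immersion $\partial M(Y)\to Y$; as an immersion is essential and a composition of essential maps is essential (Remark \ref{rem: Composition of essential maps is essential}), the composite $\partial M(Y)\to Y\to X$ is an essential map from a closed surface. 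Hence each connected component of $\partial M(Y)$ lies in $\Surf(X)$.

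To finish, I would feed this construction into the two suprema and infima. For the maximal curvature, given $Y\in\Irred(X)$ the surface $\partial M(Y)$ has $\kappa(\partial M(Y))=\kappa(Y)$; writing $\partial M(Y)$ as a disjoint union of its components and applying Remark \ref{rem: Average of connected components}, some component $S$ satisfies $\kappa(S)\geq\kappa(Y)$ and lies in $\Surf(X)$, so $\sigma_+(X)\geq\kappa(Y)$. Taking the supremum over $Y\in\Irred(X)$ yields $\sigma_+(X)\geq\rho_+(X)$. The symmetric argument, selecting instead a component of minimal average curvature, gives $\sigma_-(X)\leq\rho_-(X)$. Combined with the trivial inequalities above, this forces $\rho_\pm(X)=\sigma_\pm(X)$.

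The mathematical content is essentially all contained in Proposition \ref{prop: Poincar\'e duality}, and the corollary is a formal packaging of it. Accordingly I expect no serious obstacle; the only points requiring care are the verification that the boundary immersion $\partial M(Y)\to Y$ is genuinely essential (so that the composite to $X$ really records a point of $\Surf(X)$) and the componentwise bracketing, which is needed because $\partial M(Y)$ may be disconnected.
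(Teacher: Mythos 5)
Your proposal is correct and is essentially the paper's own argument: both deduce the corollary from Proposition \ref{prop: Poincar\'e duality} by observing that for each $Y\in\Irred(X)$ the composite $\partial M(Y)\to Y\to X$ is an essential map from a closed surface with $\kappa(\partial M(Y))=\kappa(Y)$, so the suprema and infima over $\Irred(X)$ and $\Surf(X)$ coincide. Your extra componentwise step via Remark \ref{rem: Average of connected components} is harmless but not needed, since disconnected surfaces are already admitted in $\Surf(X)$.
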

\begin{proof}
For any $Y\in\Irred(X)$, the proposition implies that $\kappa(\partial M(Y))=\kappa(Y)$. Since $\partial M(Y)\to Y\to X$ is an essential map from a surface, it follows that the range of values attained by maps from irreducible 2-complexes is equal to the range of values attained by maps from surfaces.
\end{proof}

Some 2-complexes are not geometric but nevertheless are \emph{virtually} geometric, meaning that some finite-sheeted branched cover is geometric \cite[Theorem 21]{gordon_surface_2010}. Virtually geometric 2-complexes were classified by Cashen \cite[Theorem 5.9]{cashen_splitting_2016}. Since Corollary \ref{cor: Curvatures of 3-manifold spines} applies to branched 2-complexes as well as standard ones, it follows that virtually geometric 2-complexes also satisfy this conclusion.

\begin{corollary}[Curvatures of virtually geometric 2-complexes]\label{cor: Curvatures of virtually geometric 2-complexes}
If $X$ is a virtually geometric 2-complex then $\rho_\pm(X)=\sigma_\pm(X)$.
\end{corollary}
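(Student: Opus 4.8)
The plan is to deduce the statement directly from Corollary~\ref{cor: Curvatures of 3-manifold spines}, combined with the invariance of the curvature invariants under finite-sheeted branched coverings recorded in Proposition~\ref{prop: Invariance under branched coverings}. The guiding idea is that ``virtually geometric'' means exactly that the geometric situation becomes accessible after passing to a finite cover, and that all four invariants $\rho_\pm,\sigma_\pm$ are left unchanged by such a passage; so the equality $\rho_\pm=\sigma_\pm$, known in the geometric case, descends.

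First I would unwind the definition of virtual geometricity: there is a finite-sheeted branched covering map $\phi:\widehat{X}\to X$ with $\widehat{X}$ geometric. Next, applying Corollary~\ref{cor: Curvatures of 3-manifold spines} to $\widehat{X}$ gives
\[
\rho_\pm(\widehat{X})=\sigma_\pm(\widehat{X})\,.
\]
Here it is important that $\widehat{X}$ may be a genuinely \emph{branched} (rather than standard) complex, but this causes no difficulty because Corollary~\ref{cor: Curvatures of 3-manifold spines}, and the Poincar\'e duality computation underlying it, are valid for branched geometric complexes. Finally, since $\phi$ is a finite-sheeted branched covering, Proposition~\ref{prop: Invariance under branched coverings} yields $\rho_\pm(\widehat{X})=\rho_\pm(X)$ and $\sigma_\pm(\widehat{X})=\sigma_\pm(X)$. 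Chaining these three equalities gives $\rho_\pm(X)=\sigma_\pm(X)$, as required.

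The only genuine point to verify is that the irreducibility hypotheses of the two cited results hold. Proposition~\ref{prop: Invariance under branched coverings} is stated for finite, irreducible complexes, so I would first note that a branched covering is a bijection on links, whence by Remark~\ref{rem: Reducibility and links} the complex $\widehat{X}$ is (visibly) irreducible precisely when $X$ is; thus, under the standing assumption that $X$ is irreducible, $\widehat{X}$ is irreducible as well. I expect this bookkeeping — checking that the cover furnished by the definition of virtual geometricity is a finite-sheeted branched covering in the sense of Definition~\ref{def: Branched coverings and immersions}, and that irreducibility transfers across it — to be the only (and quite minor) obstacle. The mathematical substance of the corollary is entirely contained in the two results being combined, so the proof is essentially a formal two-line deduction once the hypotheses are matched up.
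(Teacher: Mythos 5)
Your proposal is correct and follows essentially the same route as the paper, which likewise takes a geometric finite-sheeted branched cover $X'\to X$, applies Corollary~\ref{cor: Curvatures of 3-manifold spines} to get $\rho_\pm(X')=\sigma_\pm(X')$ (noting that that corollary holds for branched complexes), and descends via Proposition~\ref{prop: Invariance under branched coverings}. Your extra check that irreducibility transfers along the cover, since a branched covering is a bijection on links (Remark~\ref{rem: Reducibility and links}), is a point the paper leaves implicit and is a welcome bit of care.
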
 
\begin{proof}
If $X'\to X$ is a geometric branched covering, then $\rho_\pm(X')=\sigma_\pm(X')$ by Corollary \ref{cor: Curvatures of 3-manifold spines}, so
\[
\rho_\pm(X)=\rho_\pm(X')=\sigma_\pm(X')=\sigma_\pm(X)
\]
by Proposition \ref{prop: Invariance under branched coverings}.
\end{proof}

Conjecturally, Corollary \ref{cor: Curvatures of 3-manifold spines} should explain many of the unusually good properties of 3-manifold groups, such as coherence \cite{scott_finitely_1973} (cf.\ Conjecture \ref{conj: Non-positive irreducible curvature and coherence} below).  In order to apply it, however, one would need bounds on the surface curvatures of geometric 2-complexes. These bounds are the content of the next two conjectures. The first is a cousin of Papakyriakopoulos' sphere theorem \cite{papakyriakopoulos_dehn_1957}.

\begin{conjecture}[Non-positive curvature for geometric complexes]\label{conj: Non-positive curvature for 3-manifolds}
Let $X$ be a finite, irreducible, geometric 2-complex. If $\sigma_+ (X)>0$ then either $\partial M(X)$ has a spherical component or $\pi_1(X)$ splits freely.
\end{conjecture}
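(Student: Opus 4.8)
The plan is to run a $2$-complex analogue of the proof of the sphere theorem, extracting an embedded essential $2$-sphere in $M=M(X)$ from the hypothesis $\sigma_+(X)>0$ and then reading off the two alternatives from the Kneser--Milnor decomposition. First I would use the rationality theorem (Theorem \ref{thm: Rationality theorem}) together with $\rho_\pm(X)=\sigma_\pm(X)$ (Corollary \ref{cor: Curvatures of 3-manifold spines}) to fix an essential map $\phi\colon S\to X$ from a closed branched surface realising $\sigma_+(X)$, so that $\tau(S)>0$. Since a genuine (standard) closed surface with $\tau(S)=\chi(S)>0$ must be a sphere or a projective plane, the first task is to remove the branching: using invariance of $\sigma_+$ under finite branched covers (Proposition \ref{prop: Invariance under branched coverings}), I would pass to a finite branched cover $X'\to X$ to which $S$ lifts \emph{unramified}, producing an essential map from a genuine closed surface $S'\to X'$ with $\chi(S')=\tau(S')>0$; replacing $X'$ by its orientation cover (again by covering invariance) and lifting, we may take $M'=M(X')$ orientable and $S'=S^2$.

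The essential sphere $S^2\to X'\hookrightarrow M'$ represents a class in $\pi_2(M')=\pi_2(X')$, and the heart of the argument is to show that positivity of the curvature forces this class to be \emph{nonzero}. Granting this, Papakyriakopoulos' sphere theorem \cite{papakyriakopoulos_dehn_1957} yields an embedded $2$-sphere $\Sigma\subset M'$ that is essential, i.e.\ does not bound a ball. The standard innermost-sphere analysis then splits into exactly the two cases of the statement: if $\Sigma$ co-bounds a punctured ball with a boundary component of $M'$ then $\partial M'$ has a spherical component; otherwise $\Sigma$ is a reducing sphere and the Kneser--Milnor decomposition exhibits $\pi_1(M')=\pi_1(X')$ as a nontrivial free product (a separating $\Sigma$ gives $A*B$ with both factors nontrivial, a non-separating one a free factor $\mathbb{Z}$).

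It then remains to descend from the finite cover $X'$ to $X$. A spherical component of $\partial M'$ maps to a spherical component of $\partial M(X)$ under the induced branched covering of boundaries (cf.\ Proposition \ref{prop: Poincar\'e duality}), giving the first alternative. For the second, a free splitting of the finite-index subgroup $\pi_1(X')\le\pi_1(X)$ shows that $\pi_1(X')$, and hence $\pi_1(X)$, has infinitely many ends; by Stallings' theorem $\pi_1(X)$ then splits over a finite subgroup, and when $\pi_1(X)$ is torsion-free this is the desired free splitting. Equivalently, one may argue by contraposition: if neither alternative holds then $M$ is orientable, irreducible and has no spherical boundary component, hence aspherical with infinite torsion-free fundamental group, and the claim reduces to showing that an aspherical geometric complex satisfies $\sigma_+(X)\le 0$.

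The main obstacle is the nonvanishing step in the second paragraph: not every essential sphere in $X'$ is $\pi_2$-nontrivial --- for instance the double of a single face bounds the corresponding thickened $2$-handle --- so the positivity $\tau(S')>0$ must be converted into a genuine obstruction in $\pi_2$. This is precisely the analogue of the hard direction of the sphere theorem in our combinatorial setting, and is the reason the statement is only conjectural. A secondary difficulty is the unramification step: arranging a finite branched cover of $X$ that absorbs the ramification of an extremal surface requires control over the branch locus, and I expect this to need the finer structure of the extremal surfaces produced in \cite{wilton_rationality_2022}.
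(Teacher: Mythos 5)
This statement is Conjecture~\ref{conj: Non-positive curvature for 3-manifolds}: the paper offers no proof of it (it is explicitly posited as a ``cousin of Papakyriakopoulos' sphere theorem''), so there is no argument to compare yours against, and your text is in any case a programme rather than a proof --- as you yourself acknowledge. The crux you flag is indeed the genuine gap: the paper's notion of \emph{essential} is the combinatorial one of Definition~\ref{def: Essential map}, characterised by injectivity of the induced map of group pairs (Lemma~\ref{lem: Essential maps in group theory}), and nothing in the paper connects it to homotopical nontriviality in $\pi_2(M)$, which is what the sphere theorem needs as input. One correction, though: your illustrating example does not actually witness this gap. The double of a single face is \emph{not} essential in the paper's sense --- folding identifies the two faces, violating condition (ii) of Definition~\ref{def: Essential map}; equivalently, both elements of $\curlyA_Y$ map to the same coset in $\curlyA_X$, so the map of group pairs is not injective. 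Whether \emph{every} essential sphere is $\pi_2$-nontrivial is precisely the unresolved content of the conjecture, and your proposal leaves it unresolved.

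Two further steps would fail as written. First, the unramification step: Proposition~\ref{prop: Invariance under branched coverings} (via the pullback of branched immersions along essential maps) lets you lift surfaces to branched covers of $X$, but it does not supply a finite branched cover of $X$ that \emph{absorbs} the ramification of a given extremal surface; this is a Hurwitz-type realisation problem with no solution in the paper. Since $\tau(S)>0$ bounds only $\tau$ and not $\chi$ of the underlying surface (Remark~\ref{rem: Total curvature and Euler characteristic} gives $\tau(S)\geq\chi(S)$, with no reverse inequality), without this step you cannot even conclude that the extremal surface is a sphere. Second, the descent step: from a free splitting of the finite-index subgroup $\pi_1(X')$, Stallings' theorem gives only a splitting of $\pi_1(X)$ over a \emph{finite} subgroup, and the torsion-freeness you need to upgrade this to a free splitting is not available --- asphericity of $X$ is itself only conjectural here (Conjecture~\ref{conj: Non-positive surface curvature implies aspherical}), and that conjecture concerns $\sigma_+(X)\leq 0$, the opposite regime. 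There are genuine obstructions at this point: $\pi_1(M')$ could split as $\Z/2*\Z/2$ (the $\mathbb{RP}^3\#\mathbb{RP}^3$ pattern), which has two ends, so the downstairs group is virtually $\Z$ and admits no free splitting at all. Relatedly, in the first alternative the image of a spherical component of $\partial M'$ under the branched covering of boundaries could be $\mathbb{RP}^2$ rather than a sphere, since the handlebody in Definition~\ref{def: 3-manifold spine} may be non-orientable. In sum: your reduction is a sensible blueprint, consistent with the author's framing of the conjecture, but it contains at least three genuine gaps, of which the central one (essential implies $\pi_2\neq 0$) you correctly identified as the reason the statement remains open.
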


The analogous conjecture for negative curvature can be thought of as a weak form of hyperbolisation for Haken 3-manifolds.

\begin{conjecture}[Negative curvature for geometric complexes]\label{conj: Negative curvature for 3-manifolds}
Let $X$ be a finite, irreducible, geometric, branched 2-complex. If $\sigma_+ (X)=0$ then either $\partial M(X)$ has a toroidal boundary component or $\pi_1(X)$ has a $\Z^2$ subgroup.
\end{conjecture}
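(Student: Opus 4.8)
The plan is to produce an extremal \emph{flat} surface realizing the bound $\sigma_+(X)=0$ and then feed it into three-dimensional topology. By Corollary \ref{cor: Curvatures of 3-manifold spines} we have $\rho_+(X)=\sigma_+(X)=0$, so the rationality theorem (Theorem \ref{thm: Rationality theorem}) furnishes an essential map $S\to X$ from a surface with $\kappa(S)=0$, that is $\tau(S)=0$. Since $\kappa(S)=1+\chi(S_{(1)})/\Area(S)$, this forces $\chi(S_{(1)})=-\Area(S)$; in the standard case this reads $\chi(S)=0$, so $S$ is a torus or a Klein bottle. Passing to the orientation double cover, which remains essential by Remark \ref{rem: Composition of essential maps is essential} and has the same average curvature by Lemma \ref{lem: Branched coverings have equal average curvature}, I may assume $S=T$ is a torus, yielding a homomorphism $\pi_1(T)=\Z^2\to\pi_1(X)=\pi_1(M)$.

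The first, easy, alternative is when $\pi_1(T)\to\pi_1(M)$ is injective: then $\pi_1(X)$ contains a $\Z^2$ and we are done. There is a second, equally easy, source of the toroidal alternative: whenever the extremal value is already realized on the boundary, i.e.\ $\kappa(\partial M(X))=0$, the Poincar\'e duality computation of Proposition \ref{prop: Poincar\'e duality} forces $\chi(\partial M(X))=0$, so some component of $\partial M(X)$ is a torus (or a Klein bottle, whose group contains $\Z^2$). Thus the genuine difficulty is confined to the case in which the extremal torus $T$ is a \emph{new} surface, distinct from $\partial M(X)$, whose fundamental group fails to inject into $\pi_1(M)$.

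The heart of the argument is this non-injective case, where I would invoke three-dimensional topology. Pushing the essential immersion $T\to X$ into the thickened manifold $M$ and applying the loop theorem \cite{papakyriakopoulos_dehn_1957}, a non-injective torus yields a compressing disc. Compressing $T$ and using that $M$ is irreducible --- which should follow from irreducibility of $X$ together with the absence of a spherical boundary component, via the companion Conjecture \ref{conj: Non-positive curvature for 3-manifolds} --- I would reduce complexity while tracking $\kappa$. The extremality of $T$, namely that it realizes the \emph{maximum} of $\kappa$ over $\Surf(X)$, should prevent the compressed surface from carrying strictly larger curvature, forcing the compression to be boundary-parallel; a boundary-parallel flat torus then exhibits a toroidal component of $\partial M(X)$. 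Conceptually this is a characteristic-submanifold dichotomy: an extremal flat torus that does not inject must be detected by the Seifert-fibred or cusped part of the geometric decomposition of $M$.

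I expect the decisive obstacle to be exactly this last passage, which is why the statement is posed as a conjecture. The surface produced by the rationality theorem is essential only in the combinatorial sense of Definition \ref{def: Essential map} --- its free $1$-skeletal group and its face set inject --- and this is strictly weaker than $\pi_1$-injectivity of the \emph{surface} group into $\pi_1(M)$, so the hypotheses of the loop theorem and of standard incompressibility arguments are not automatically met. Bridging the gap between the combinatorial extremality of $\kappa$ and the geometric decomposition of $M$ is itself a form of hyperbolization: one must show that \emph{vanishing} rather than strictly negative surface curvature is detected precisely by the flat pieces of the geometric decomposition. I would therefore regard the appeal to geometrization, and the verification that the extremal surface can be homotoped into the characteristic submanifold, as the genuinely hard steps.
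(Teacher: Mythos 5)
You are attempting to prove a statement that the paper itself poses as an open problem: this is Conjecture \ref{conj: Negative curvature for 3-manifolds}, and the paper contains no proof of it, so there is nothing to compare your argument against --- and your own closing paragraph concedes that the decisive steps are missing. Beyond that concession, two of the steps you present as easy are incorrect as stated. First, the extremizer supplied by Theorem \ref{thm: Rationality theorem} is an essential map from a \emph{branched} surface $S$ (and the hypothesis of the conjecture explicitly allows $X$ itself to be branched), so $\kappa(S)=0$ says only $\tau(S)=0$; by Remark \ref{rem: Total curvature and Euler characteristic} this gives $\chi(S)\leq 0$, not $\chi(S)=0$. A branched genus-two surface whose total area exceeds its number of faces by $2$ satisfies $\tau(S)=0$, so your reduction to a torus or Klein bottle --- the input for everything that follows --- fails at the first step; nothing in the paper entitles you to assume the extremal surface is standard. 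Second, your ``second easy source of the toroidal alternative'' misreads Proposition \ref{prop: Poincar\'e duality}: it asserts $\kappa(\partial M(X))=\kappa(X)$, and $\kappa(X)$ need not equal $\sigma_+(X)=0$; moreover, even granting $\tau(\partial M(X))=0$, branching again blocks the conclusion $\chi(\partial M(X))=0$, and a vanishing \emph{total} Euler characteristic would in any case not force a toroidal \emph{component} (a sphere component plus a genus-two component also has $\chi=0$).

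In the core case your argument is doubly conditional. Irreducibility of $M(X)$ is imported from Conjecture \ref{conj: Non-positive curvature for 3-manifolds}, which is equally unproved, so at best you would be reducing one open conjecture to another. More seriously --- and you identify this yourself --- essentiality in the sense of Definition \ref{def: Essential map} is injectivity of the associated group pair (Lemma \ref{lem: Essential maps in group theory}), which concerns the free group of the $1$-skeleton and the face set, and is strictly weaker than $\pi_1$-injectivity of the surface group in $\pi_1(M)$; so the loop theorem does not apply to your singular torus as stated, and for singular surfaces the needed statement is of simple-loop-conjecture type (for tori one can salvage a simple essential curve in the kernel when $\pi_1(M)$ is torsion-free, since a proper power in the kernel whose root survives would create torsion in the image, but this already requires asphericity inputs you have not established). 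Finally, the extremality mechanism you hope for cannot run: compression is a homotopy, and homotopies destroy combinatorial essentiality, while $\sigma_+(X)$ is by Definition \ref{def: Surface curvatures} a supremum over $\Surf(X)$, the set of \emph{essential} maps; it therefore imposes no constraint whatsoever on the compressed, inessential sphere or on any surface obtained by surgery. In short, the reductions you label easy contain errors, the main step is conditional on a second open conjecture, and the bridge from combinatorial extremality of $\kappa$ to the geometric decomposition of $M$ --- which is the actual content of the conjecture --- is absent, exactly as you say.
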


\subsection{Graphs of free groups}\label{sec: Graphs of free groups}

Graphs of free groups have been a popular class of examples throughout the history of geometric group theory. They exhibit a rich range of behaviours and are very difficult to study in full generality. In the same spirit, I expect them to be an interesting test class for our rational curvature invariants. Nothing is currently known, so we will state some conjectures and questions.

Before doing so, however, we should note that there are various possible choices of 2-complex that one might associate to a given graph of free groups. Nielsen invariance provides some reassurance, but nevertheless the values of $\rho_\pm$ and $\sigma_\pm$ will depend on this choice. Making the correct choice of 2-complex will necessarily be part of resolving the conjectures and questions below.

Mapping tori of free group automorphisms and, more generally, endomorphisms, have proved an especially fruitful class of examples. The following conjecture posits that they should all be non-positively curved in our strongest sense.

\begin{conjecture}[Irreducible curvatures of mapping tori of free group endormorphisms]\label{conj: Irreducible curvature of mapping tori of free group endomorphisms}
If $X$ is a compact mapping torus of an injective endomorphism of a finitely generated free group then $\rho_+(X)= 0$.
\end{conjecture}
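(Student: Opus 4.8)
The plan is to separate the two bounds. The lower bound is immediate: writing $X$ via the ascending HNN presentation $\langle a_1,\dots,a_n,t\mid ta_it^{-1}\phi(a_i)^{-1}\ (1\le i\le n)\rangle$, the $1$-skeleton has one vertex and $n+1$ edges while $\Area(X)=n$, so $\tau(X)=n+\chi(X_{(1)})=n-n=0$ and hence $\kappa(X)=0$. As $X$ is irreducible, the identity map lies in $\Irred(X)$ and gives $\rho_+(X)\ge\kappa(X)=0$. Thus the entire content of the conjecture is the upper bound $\rho_+(X)\le 0$, equivalently the assertion that $\tau(Y)\le 0$ for every finite irreducible branched $Y$ admitting an essential map to $X$; this is precisely non-positive irreducible curvature in the sense of Theorem~\ref{thm: Consequences of upper bounds on irreducible curvature}. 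Two degenerate cases are already known: when $n=1$ (so $\phi\colon\Z\to\Z$, giving the groups $BS(1,m)$) the complex is one-relator and torsion-free, so $\rho_+(X)\le 0$ by Theorem~\ref{thm: Irreducible curvature of one-relator groups}. The substance is therefore the case $n\ge 2$.

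My main line of attack on the upper bound is to exploit the HNN/graph-of-spaces structure together with Bass--Serre theory. The presentation exhibits $G=\pi_1(X)$ as an HNN extension of $F=\langle a_1,\dots,a_n\rangle$ with stable letter $t$ and associated inclusions $F\xrightarrow{\ \id\ }F$ and $\phi(F)\into F$; geometrically $X$ is the mapping torus of a graph self-map $f\colon R_n\to R_n$ realising $\phi$ on the rose $R_n$. Given an essential map $Y\to X$ with $Y$ irreducible, Lemma~\ref{lem: Essential maps in group theory} identifies it with an injection $\pi_1(Y)\into G$, and the action of $\pi_1(Y)$ on the Bass--Serre tree of this splitting yields a graph-of-spaces decomposition of $Y$ in which every vertex space and edge space is a \emph{core graph} immersing into $R_n$. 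Writing the decomposition over the quotient graph, one has
\[
\tau(Y)=\sum_{v}\chi(V_v)-\sum_{e}\chi(E_e)
\]
in the standard case, where each $\chi(V_v),\chi(E_e)\le 0$ unless the corresponding core graph is a point. The crucial resource is the injectivity of $\phi$: it renders both edge-to-vertex boundary maps of the decomposition $\pi_1$-injective, so each edge core graph has rank no larger than that of the vertex core graphs it meets. The goal of this step is to convert that rank comparison, balanced over the whole quotient graph, into the inequality $\sum_v\chi(V_v)\le\sum_e\chi(E_e)$, i.e.\ $\tau(Y)\le 0$; the interpretation via Remark~\ref{rem: Irreducible curvature and deficiency} is that every non-free, freely indecomposable, finitely presented subgroup of $G$ should have deficiency at least one.

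The hard part will be the genuinely ascending case $\phi(F)\subsetneq F$. When $\phi$ is an automorphism, $G$ is free-by-cyclic, the fibre is the compact graph $R_n$, and the Euler-characteristic bookkeeping above is balanced edge-by-edge; here one can also bring in the vanishing of the $L^2$-Euler characteristic of $F_n\rtimes\Z$, and I would expect the automorphism case to be provable first. For a proper injective endomorphism, however, the associated subgroup $\phi(F)$ may be of infinite index and badly distorted, the algebraic fibre $\varinjlim(F\xrightarrow{\phi}F\to\cdots)$ is an infinitely generated free group rather than a compact fibre, and folding the map $Y\to X$ can introduce branching and multiplicities that are not obviously compatible with the height grading by $t$-exponent. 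Controlling this interaction---so that non-surjectivity of $\phi$ does not manufacture positive total curvature---is the central difficulty. In particular one must rule out essential maps from positive-$\tau$ irreducible complexes such as the contractible complex of Example~\ref{eg: Contractible 2-complex}; since $X$ is aspherical (being the mapping torus of a $\pi_1$-injective self-map of a graph), such maps should not exist, but extracting this from the combinatorics, rather than assuming it, is exactly where injectivity of $\phi$ must be used decisively. A softer alternative would be to construct an angle structure of non-positive sectional curvature and invoke Remark~\ref{rem: Wise's sectional curvature}, but the exponential distortion present already in $BS(1,m)$ makes it doubtful that such a metric exists in general, so I expect the combinatorial graph-of-spaces route to be the viable one.
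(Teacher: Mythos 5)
This statement is a \emph{conjecture} in the paper, not a theorem: the paper offers no proof, recording only that the lower bound $\rho_+(X)\geq 0$ follows from $\chi(X)=0$, that Wise proved the strictly weaker non-positive immersions property \cite[Theorem 6.1]{wise_coherence_2022}, and that ``adapting his argument may be sufficient.'' Your proposal should therefore be judged as an attempted proof of an open statement, and it is not one: you establish the (easy) lower bound and then present a programme for the upper bound whose central step you yourself leave open. Your lower-bound computation is essentially right, though you assert without justification that $X$ is irreducible; this is not part of the hypotheses and should be patched, e.g.\ by applying Lemma \ref{lem: Unfolding 2-complexes} and using that $\pi_1(X)$ is freely indecomposable and not free, so that the unfolding has exactly one irreducible wedge summand $X_1$ carrying $\pi_1$, with $\tau(X_1)=\tau(X)=0$ and hence $\rho_+(X)\geq\kappa(X_1)=0$.

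For the upper bound there is a genuine error, not merely an omission. Your ``crucial resource'' --- that injectivity of $\phi$ makes the edge-to-vertex maps of the induced graph-of-spaces decomposition $\pi_1$-injective, ``so each edge core graph has rank no larger than that of the vertex core graphs it meets'' --- is false: a free group of any rank embeds in $F_2$, so $\pi_1$-injectivity gives no rank comparison whatsoever. Concretely, a decomposition with a single vertex space of rank $2$ and a single loop edge space of rank $3$ has $\chi(V)-\chi(E)=(1-2)-(1-3)=+1>0$; ruling out such configurations requires the finer ascending/height structure (this is exactly what Wise's ordering-and-counting argument in \cite{wise_coherence_2022} exploits), not injectivity alone. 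Moreover, even a complete proof that $\chi(Y)\leq 0$ for every immersed finite $Y\to X$ would only recover Wise's non-positive immersions and would not prove the conjecture: one needs $\tau(Y)\leq 0$ for all \emph{branched} essential $Y$, and $\tau(Y)\geq\chi(Y)$ with strict inequality in the presence of branching (Remark \ref{rem: Total curvature and Euler characteristic}); the gap between the two notions is precisely the open issue the paper flags around Lemma \ref{lem: Non-positive irreducible curvature implies non-positive immersions}. So your plan, as stated, would at best reprove the known weaker result, and its stated mechanism for the new content does not work.
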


The lower bound $\rho_+(X)\geq 0$ follows from the fact that $\chi(X)=0$. Wise proved that such $X$ have the slightly weaker \emph{non-positive immersions} property \cite[Theorem 6.1]{wise_coherence_2022}. Adapting his argument may be sufficient to prove Conjecture \ref{conj: Irreducible curvature of mapping tori of free group endomorphisms}. 

The question of which of these examples have negative surface curvature is likely to be more subtle. The following conjecture is the most optimistic possible statement.

\begin{conjecture}[Surface curvatures of mapping tori of free group endormorphisms]\label{conj: Surface curvature of mapping tori of free group endomorphisms}
If $X$ is a compact mapping torus of an injective endomorphism of a finitely generated free group then $\sigma_+(X)< 0$ unless $\pi_1(X)$ has a Baumslag--Solitar subgroup.
\end{conjecture}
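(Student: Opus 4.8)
The plan is to prove the contrapositive: assuming $\pi_1(X)$ has no Baumslag--Solitar subgroup, I would show $\sigma_+(X)<0$. Write $G=\pi_1(X)=F*_\phi$ for the ascending HNN extension determined by the injective endomorphism $\phi$, and recall that $X$ is aspherical with $\chi(X)=0$, so that $\kappa(X)=0$. The first ingredient is the bound $\sigma_+(X)\le 0$. This I would deduce from the companion statement $\rho_+(X)=0$ (Conjecture~\ref{conj: Irreducible curvature of mapping tori of free group endomorphisms}) together with $\sigma_+(X)\le\rho_+(X)$ from Lemma~\ref{lem: Trivial inequalities}; as noted after that conjecture, the inequality $\rho_+(X)\le 0$ should follow by adapting Wise's non-positive-immersions argument for these mapping tori, and this is the main analytic input.

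Granting $\sigma_+(X)\le 0$, it remains to exclude $\sigma_+(X)=0$. By the rationality theorem (Theorem~\ref{thm: Rationality theorem}) the supremum defining $\sigma_+$ is attained, so $\sigma_+(X)=0$ would yield an essential map $\psi\colon Y\to X$ from a closed surface with $\kappa(Y)=0$; by Remark~\ref{rem: Average of connected components} I may take $Y$ connected, and after folding (Lemma~\ref{lem: Folding 2-complexes}) I may assume $\psi$ is a branched immersion without changing $\kappa$. The condition $\kappa(Y)=0$ says precisely that $\tau(Y)=\Area(Y)+\chi(Y_{(1)})=0$, so $Y$ realises the extreme value $\rho_+(X)=0$. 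The clean case is when $\psi$ is unbranched: then $\chi(Y)=\tau(Y)=0$, so $Y$ is a torus or Klein bottle, and an essential (hence, as $X$ is aspherical, $\pi_1$-injective) map exhibits $\mathbb{Z}^2=BS(1,1)$ or the Klein bottle group $BS(1,-1)$ as a subgroup of $G$, contradicting the hypothesis.

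The substance of the argument is the branched case, which cannot be avoided: since ramification enlarges areas, it raises $\kappa$ toward $1$, so the extremal surface realising $\sigma_+(X)=0$ will in general be branched with negative Euler characteristic, and its own $\pi_1$ is a hyperbolic surface group containing no Baumslag--Solitar subgroup, so the naive $\pi_1$-injectivity argument fails. The proposed mechanism is to study $\psi$ through the action of $G$ on the Bass--Serre tree $T$ of the decomposition $G=F*_\phi$, pulling back the edge spaces to a system of tracks on $Y$. Extremality of $\tau(Y)=0$ should force this track system to contain a component carried by a \emph{periodic} conjugacy class of $\phi$ --- concretely, an essential annulus in $Y$ whose core maps to an element $g\in F$ with $\phi^{k}(g)$ conjugate to $g$ --- and such a periodic class produces exactly a $\mathbb{Z}^2$ or solvable Baumslag--Solitar subgroup of $G$. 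Absent any Baumslag--Solitar subgroup no such periodic piece can occur, which upgrades the bound to the strict inequality $\tau(Y)<0$ for every essential surface, i.e.\ $\sigma_+(X)<0$.

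I expect two points to be the real obstacles. The first is the non-positive bound $\sigma_+(X)\le 0$ itself, which rests on the still-open Conjecture~\ref{conj: Irreducible curvature of mapping tori of free group endomorphisms}; this is where a careful curvature-theoretic reworking of Wise's argument is required. The second, and genuinely new relative to the irreducible version, is the rigidity statement in the branched case: turning the heuristic ``an extremal surface in a mapping torus must be toroidal'' into a theorem. I would attempt this either via the tree/track analysis above --- combined with Proposition~\ref{prop: Invariance under branched coverings}, which lets me pass to branched covers of $X$ that resolve the ramification of $\psi$ into an honest essential torus --- or, failing a direct argument, by importing the train-track structure theory of periodic orbits of free-group endomorphisms to locate the annular piece.
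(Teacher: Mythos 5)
You should first note what you are up against: in the paper this statement is Conjecture~\ref{conj: Surface curvature of mapping tori of free group endomorphisms} --- it is stated as open, with no proof offered, so your proposal cannot be measured against a paper argument; the question is whether it closes the conjecture, and it does not. The two pillars you yourself flag are genuinely open, and the paper says so. For the bound $\sigma_+(X)\leq 0$ you invoke Conjecture~\ref{conj: Irreducible curvature of mapping tori of free group endomorphisms}, but Wise's theorem cited there gives only the non-positive immersions property, which constrains $\chi(Y)$ for honest immersions of standard complexes; the curvature bound requires $\tau(Y)\leq 0$ for all essential maps, which after folding are \emph{branched} immersions, and the passage from immersions to branched immersions is exactly the gap the paper leaves open (Lemma~\ref{lem: Non-positive irreducible curvature implies non-positive immersions} goes only one way, and the converse is explicitly posed as an open question). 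For the rigidity step, the claim that an extremal branched surface with $\tau(Y)=0$ must carry an annulus over a $\phi$-periodic conjugacy class is asserted, not argued. The paper's own caution immediately after the conjecture is relevant here: Example~\ref{eg: Brady--Crisp} exhibits a hyperbolic free-by-cyclic group with a 2-complex of $\sigma_+=0$, so any track/extremality analysis must use the specific mapping-torus cell structure of $X$, and nothing in your sketch identifies which feature of that structure excludes a branched extremal surface of negative Euler characteristic carrying no periodic piece.

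One step is wrong as stated: the appeal to Proposition~\ref{prop: Invariance under branched coverings} to ``resolve the ramification of $\psi$ into an honest essential torus.'' A finite-sheeted \emph{branched} covering $X'\to X$ is not a covering space, and $\pi_1(X')$ is in general not a subgroup of $\pi_1(X)$ (a genus-2 surface branch-covers the torus, but its fundamental group does not embed in $\Z^2$); so even if you located an unbranched essential torus or Klein bottle in some branched cover $X'$, this would produce no Baumslag--Solitar subgroup of $G=\pi_1(X)$, and moreover a branched cover of a mapping torus need not itself be a mapping torus, so the HNN/Bass--Serre structure your track analysis relies on may be lost. What the proposition does give is equality of the numerical invariants $\sigma_\pm$, not a transfer of subgroups. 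The unbranched case of your argument is correct and is the easy part: an essential unbranched map from a surface with $\chi=0$ is $\pi_1$-injective by Lemma~\ref{lem: Essential maps in group theory}, yielding $BS(1,\pm 1)\leq G$. But the branched case, which you rightly identify as the substance of the conjecture, remains untouched by the tools available in the paper, and your proposal should be read as a plausible research programme rather than a proof.
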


Caution is needed here! Example \ref{eg: Brady--Crisp} below provides an example of a 2-complex $X$ with $\sigma_+(X)=0$ but $G=\pi_1(X)$ is hyperbolic, so in particular has no Baumslag--Solitar subgroups. Furthermore, $G$ is free-by-cyclic. However, the 2-complex $X$ is not constructed from the representation of $G$ as a free-by-cyclic group. This example emphasises the importance of choosing the 2-complex $X$ correctly.

Graphs of free groups with cyclic edge groups have also been a popular source of examples. I expect the following conjecture to be quite easy to prove, but it will be a useful warm up for more challenging examples.

\begin{conjecture}[Graphs of free groups with cyclic edge groups]\label{conj: Curvatures for graphs of free groups with cyclic edge groups}
If $X$ is a compact, connected, irreducible graph of graphs with circular edge spaces, then $\rho_+(X)\leq 0$. Furthermore, $\sigma_+(X)<0$ unless $\pi_1(X)$ has a Baumslag--Solitar subgroup.
\end{conjecture}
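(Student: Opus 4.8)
The plan is to read off both statements from a graph-of-groups Euler characteristic computation. Realise $X$ as a graph of spaces with graph vertex spaces (free vertex groups) and circle edge spaces (infinite cyclic edge groups); cutting each edge annulus along a longitudinal arc presents $X$ as a standard $2$-complex whose faces are the edge relators $t_e\gamma_e^-t_e^{-1}(\gamma_e^+)^{-1}$, the arcs becoming stable-letter edges. For $\rho_+(X)\le 0$, take any essential $Y\to X$ with $Y$ finite, irreducible and, by Remark \ref{rem: Average of connected components}, connected. Pulling back the edge circles decomposes $Y$ as a graph of spaces with vertex graphs $Y_w$ and circle edge spaces, one edge per face of $Y$. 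Two pieces of bookkeeping — that each stable-letter edge of $Y_{(1)}$ is traversed exactly twice by face boundaries, so $\#\{\text{stable-letter edges}\}=\Area(Y)$, and that $\chi(Y_{(1)})=\sum_w\chi(Y_w)-\#\{\text{stable-letter edges}\}$ — combine to the clean formula $\tau(Y)=\Area(Y)+\chi(Y_{(1)})=\sum_w\chi(Y_w)$, i.e.\ the cyclic edge groups contribute nothing. Since $Y\to X$ is essential and the edge inclusions of $X$ are immersions, each attaching circle immerses essentially in its vertex graph, so every $Y_w$ has nontrivial free fundamental group and $\chi(Y_w)\le 0$; hence $\tau(Y)\le 0$, $\kappa(Y)\le 0$, and $\rho_+(X)\le 0$.

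By Lemma \ref{lem: Trivial inequalities} we have $\sigma_+(X)\le\rho_+(X)\le 0$, so it remains to show that $\sigma_+(X)=0$ forces a Baumslag--Solitar subgroup. By the rationality theorem (Theorem \ref{thm: Rationality theorem}) there is an essential surface $S\to X$ with $\kappa(S)=0$, and by Lemma \ref{lem: Branched coverings have equal average curvature} I may pass to the orientation double cover to assume $S$ is orientable. The same decomposition applies to $S$, and $\tau(S)=\sum_w\chi(S_w)=0$ with each $\chi(S_w)\le 0$ forces $\chi(S_w)=0$ for every $w$. Geometrically, cutting $S$ along the preimages of the edge-core circles writes the closed orientable surface $S$ as a union of \emph{vertex pieces}, each (using $\chi(S_w)=0$) an annulus with core $g_w$, glued along circles to \emph{edge pieces}, each a branched cover of an annulus. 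Writing $a_w:=\phi_*(g_w)$, each $a_w$ has infinite order and is elliptic in the Bass--Serre tree $T_X$ of $\pi_1(X)$, and every boundary circle of every piece maps to a power of some $a_w$.

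To produce the subgroup, let $\Lambda$ be the dual graph of this decomposition (pieces as vertices, gluing circles as edges). An annulus has two boundary circles, and a connected branched cover of an annulus covers both boundary circles of the base and so also has at least two; thus every vertex of $\Lambda$ has degree at least two, and a finite graph of minimal degree two contains a cycle. Traversing an embedded cycle of $\Lambda$ based at a vertex piece with core image $a:=a_w$ yields a word $T$ in the stable letters; since the two ends of each edge piece conjugate a power of one vertex core to a power of the next and the cycle returns to its basepoint, composing these relations gives an identity
\[
T\,a^{P}\,T^{-1}=a^{Q}
\]
in $\pi_1(X)$ with $P,Q\ge 1$. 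This is exactly a homomorphism $\mathrm{BS}(P,Q)\to\pi_1(X)$.

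The crux is to upgrade this to an injection, so that $\pi_1(X)$ genuinely contains $\mathrm{BS}(P,Q)$ rather than a proper quotient. Here I would use the action on $T_X$: the element $a$ is elliptic while the cycle can be chosen essential, so $T$ is hyperbolic with an axis crossing the edges dual to $a$; a normal-form argument along this axis, using that edge stabilisers are cyclic, should identify $\langle a,T\rangle$ with $\mathrm{BS}(P,Q)$. Establishing this injectivity — and, upstream of it, verifying that the branched surface really does decompose into annular vertex pieces and that $\tau(S)=\sum_w\chi(S_w)$ remains valid once areas are counted — is where essentially all the work lies; granting it, the contrapositive gives $\sigma_+(X)<0$ whenever $\pi_1(X)$ has no Baumslag--Solitar subgroup, completing the plan.
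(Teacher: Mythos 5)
First, a point of order: the statement you were asked to prove is Conjecture \ref{conj: Curvatures for graphs of free groups with cyclic edge groups}, which the paper explicitly leaves open (``Nothing is currently known, so we will state some conjectures and questions''). There is no proof in the paper to compare against, so I can only assess your proposal on its merits.

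Your first half ($\rho_+(X)\leq 0$) has the right shape and is plausibly the ``quite easy'' argument the author anticipates, but both pieces of bookkeeping are asserted where they need proof, and the justification you offer for the second is off-target. (a) ``Each stable-letter edge of $Y_{(1)}$ is traversed exactly twice'': a priori an edge of an irreducible complex is traversed \emph{at least} twice, which gives only $\#\{\text{stable-letter edges}\}\leq\Area(Y)$ and hence $\tau(Y)\geq\sum_w\chi(Y_w)$ --- the wrong direction. The fix is a link argument: in $X$ each half-edge of $t_e$ has exactly two adjacent corners, so after folding (replacing $Y$ by its folded representative $\bar Y$, which has the same group pair, hence is still irreducible and has the same $\tau$ and $\kappa$) injectivity on links gives valence $\leq 2$, while irreducibility --- via Lemma \ref{lem: Whitehead's lemma}, which shows an irreducible complex cannot be visibly reducible --- gives valence $\geq 2$. (b) ``Every $Y_w$ has nontrivial free fundamental group'': this does \emph{not} follow from the attaching circles immersing in the vertex graphs of $X$; a priori a vertex piece of $Y$ could have tree components. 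Ruling this out needs another link argument: since $\gamma_e^{\pm}$ are cyclically reduced, the two corners at a stable half-edge of $X$ attach to \emph{distinct} vertex-graph half-edges, so (by injectivity on links) every vertex of every vertex piece of $\bar Y$ carries at least two vertex-graph half-edges, whence the vertex pieces have minimal valence $2$ and $\chi(Y_w)\leq 0$. Both gaps are fixable, so this half stands modulo these lemmas.

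The second half is, by your own admission, a programme rather than a proof, and it stops exactly where the conjecture's difficulty lies. The relation $Ta^{P}T^{-1}=a^{Q}$ with $a$ of infinite order does \emph{not} yield a $\mathrm{BS}(P,Q)$ subgroup: in a free group any such relation forces $P=Q$ and $T$ to lie in the centraliser of $a$, so $\langle a,T\rangle$ is cyclic, and degenerate cycles in your dual graph $\Lambda$ (e.g.\ an edge piece with both boundary circles on the same vertex annulus) produce precisely this collapse. So you must both show that the cycle can be chosen so that $T$ is hyperbolic in the Bass--Serre tree and not in the commensurator of $\langle a\rangle$, and then run the normal-form argument identifying $\langle a,T\rangle$ (or a suitable finite-index/power modification, with possibly different exponents) with a Baumslag--Solitar group; neither step is carried out, and this is essentially the whole content of the conjecture's second assertion. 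Two smaller points: cutting along core circles through the branch points does not cut along a $1$-manifold (the preimage of a core arc through a face centre is a $2m$-pronged star), so you must perturb the core circles off the centres, after which the face centres land in the \emph{vertex} pieces and the edge pieces are honest, not branched, covers of thin annuli --- the opposite of what you wrote; and passing to the orientation double cover requires the observation that a finite branched cover composed with an essential map is essential, which the paper uses implicitly (cf.\ Proposition \ref{prop: Invariance under branched coverings}) but which you should justify via Lemma \ref{lem: Essential maps in group theory}.
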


In the case of general graphs of free groups, it is unclear what the correct classification should be. However, finding it may shed further light on this mysterious class.

\begin{problem}[General graphs of free groups]\label{prob: Curvatures of general graphs of free groups}
Determine when a compact, connected graph of graphs $X$ has non-positive or negative surface or irreducible curvatures.
\end{problem}

\subsection{A botanical map}

We finish this section by computing, or at least estimating, the curvatures of several examples. The intention is to exhibit examples with every combination of signs of $\rho_+$ and $\sigma_+$ allowed by Theorem \ref{thm: Curvature inequalities}. The examples are illustrated in Figure \ref{fig: Conjectural botany}. 

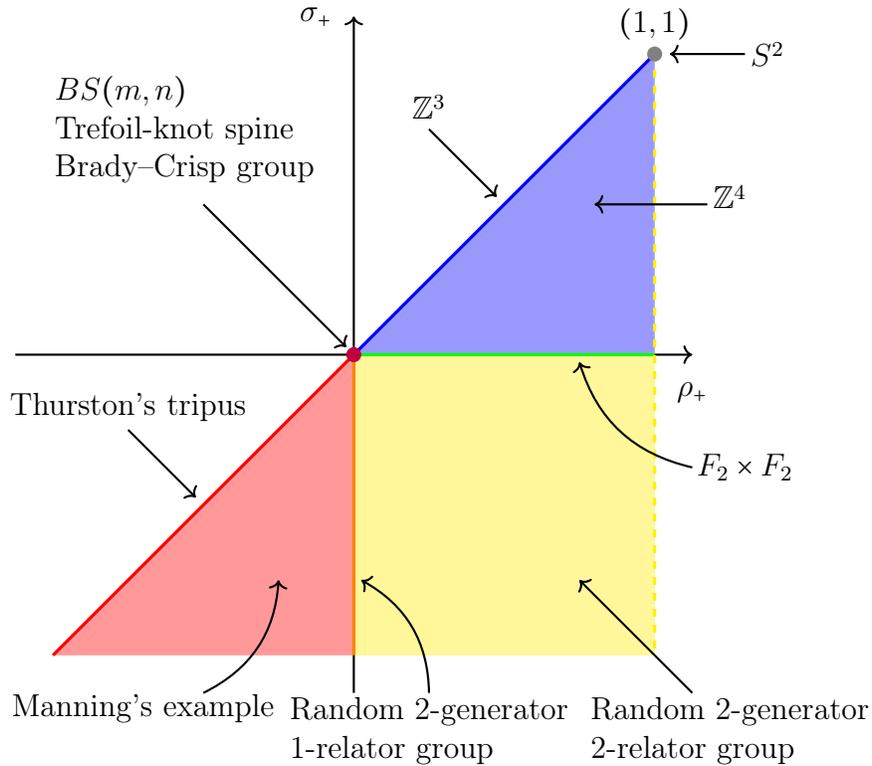
\begin{figure}[htp]
\begin{center}
\begin{tikzpicture}
\fill[blue!40!white] (0,0) -- (4,4) -- (4,0) -- cycle;
\fill[red!40!white] (0,0) -- (-4,-4) -- (0,-4) -- cycle;
\fill[yellow!50!white] (0,0) rectangle (4,-4);
\draw[thick,->] (-4.5,0) -- (4.5,0);
\draw[thick,->] (0,-4.5) -- (0,4.5);
\draw[very thick, blue] (0,0) -- (4,4);
\draw[very thick, red] (0,0) -- (-4,-4);
\draw[very thick, orange] (0,0) -- (0,-4);
\draw[very thick, green] (0,0) -- (4,0);
\draw[very thick, yellow, dashed] (4,4) -- (4,-4);
\fill[purple] (0,0) circle (0.1);
\fill[gray] (4,4) circle (0.1);
\node at (-0.5,4.5) {$\sigma_+$};
\node at (4.5,-0.5) {$\rho_+$};
\draw[thick,->] (-2,2) -- (-0.1,0.1);
\node[align=left] at (-2.25,3) {$BS(m,n)$\\Trefoil-knot spine\\Brady--Crisp group};
\draw[thick, ->] (-2,-4.5) to [bend right] (-1,-3);
\node[align=left] at (-2.8,-4.7) {Manning's example};
\draw[thick, ->] (1,-4.5) to [bend right]  (0.1,-3);
\node[align=left] at (1,-5) {Random 2-generator\\1-relator group};
\draw[thick, ->] (4.5,-4.5) to (3,-3);
\node[align=left] at (5,-5) {Random 2-generator\\2-relator group};
\draw[thick, ->] (4.5,-1.5) to [bend left]  (3,-0.1);
\node[align=left] at (5.2,-1.5) {$F_2\times F_2$};
\node[align=left] at (4,4.4) {$(1,1)$};
\draw[thick, ->] (5.2,4) to (4.2,4);
\node at (5.5,4) {$S^2$};
\draw[thick, ->] (4.7,2) to (3.2,2);
\node at (5,2.1) {$\mathbb{Z}^4$};
\draw[thick, ->] (-3,-1) to (-2.1,-1.9);
\node[align=left] at (-3,-0.7) {Thurston's tripus};
\draw[thick, ->] (1,3) to (1.9,2.1);
\node[align=left] at (1,3.3) {$\mathbb{Z}^3$};
\end{tikzpicture}
\caption{Some examples of standard, finite, irreducible 2-complexes $X$ (sometimes referred to by their fundamental groups) located according to the values $\sigma_+(X)$ and $\rho_+(X)$. See Examples \ref{eg: 2-sphere} for $S^2$, \ref{eg: n-torus} for $\Z^3$ and $\Z^4$, \ref{eg: FxF} for $F_2\times F_2$, \ref{eg: Trefoil knot} for the trefoil-knot complement, \ref{eg: Baumslag--Solitar groups revisited} for $BS(m,n)$, \ref{eg: Brady--Crisp} for the Brady--Crisp group, \ref{eg: Tripus} for Thurston's tripus and \ref{eg: Manning's example} for Manning's example. Random 2-generator, 2-relator groups have $\rho_+(X)>0>\sigma_+(X)$ by Example \ref{eg: Few-relator complexes}, while random 2-generator, 1--relator groups have $\sigma_+(X)<0$ by Example \ref{eg: Few-relator complexes} and $\rho_+(X)=0$ by Corollary \ref{cor: Irreducible curvature of two-generator, one-relator groups}.
}\label{fig: Conjectural botany}
\end{center}
\end{figure}

We first give an example of a 2-complex at the apex.

\begin{example}[2-sphere]\label{eg: 2-sphere}
The standard cellulation $X$ of the 2-sphere $S^2$ with two 2-cells has $\kappa(X)=1$, and so $\sigma_+(X)=\rho_+(X)=1$ by Example \ref{eg: Surfaces}.
\end{example}

More generally, any 2-complex $X$ at the apex fails to be concise: it either has two faces with homotopic attaching maps, or it has a face for which the attaching map is a proper power.

By Proposition \ref{prop: rho_+=1}, the region $\sigma_+(X)<\rho_+(X)=1$ is empty. The 2-skeleta of the $n$-tori provide convenient examples with $\sigma_+(X)>0$. 

\begin{example}[$n$-torus]\label{eg: n-torus}
Let $X$ be the standard presentation complex for $\mathbb{Z}^n$, which can also be thought of as the 2-skeleton of the standard cellulation of the $n$-torus.

If $n=2$ then $\rho_+(X)=\sigma_+(X)=0$ by Example \ref{eg: Surfaces}.

For $n\geq 3$, we compute
\[
\kappa(X)=\frac{1-n+\binom{n}{2}}{\binom{n}{2}} = 1-2/n
\]
so $1>\rho_+(X)\geq 1-2/n$.\footnote{I expect this last estimate to be an equality; it should be possible to check this using Wise's sectional curvature for angle structures \cite{wise_sectional_2004}.}

To compute $\sigma_+(X)$, on the one hand Corollary \ref{cor: Surface curvature and girths of links} gives that $\sigma_+(X)\leq 1/3$ since every face of $X$ is a square and the link of the unique vertex has girth 3. On the other hand, since $n\geq 3$ there is an obvious immersion $C\to X$, where $C$ is the 2-skeleton of a cube, and so $\sigma_+(X)\geq \kappa(C)=1/3$. In summary, we have
\[ 
\sigma_+(X)=1/3 \leq1-2/n\leq \rho_+(X)<1\,.
\]
If $n=3$, $X$ is geometric and so we have $\rho_+(X)=\sigma_+(X)=1/3$, while for $n>3$ we have $\rho_+(X)>\sigma_+(X)$.
\end{example}

Thus, $n$-tori provide a family of examples in the region $0<\sigma_+(X)\leq\rho_+(X)<1$.  Moving downwards in Figure \ref{fig: Conjectural botany}, the direct product of two free groups provides an easy example of a 2-complex with $0=\sigma_+(X)<\rho_+(X)$.

\begin{example}[$F_2\times F_2$]\label{eg: FxF}
The standard complex $X$ associated to the presentation
\[
\langle a_1,a_2,b_1,b_2\mid [a_i,b_j]=1\,,\,i,j=1,2\rangle
\]
of the group $F_2\times F_2$ is a locally $CAT(0)$ Euclidean complex, so has $\sigma_+(X)= 0$. Indeed, Corollary \ref{cor: Surface curvature of CAT(0) 2-complexes} implies that $\sigma_+(X)\leq 0$, while the obvious geodesic inclusion of a torus implies that $\sigma_+(X)\geq 0$ by monotonicity. On the other hand,
\[
\rho_+(X)\geq \frac{\tau(X)}{\Area(X)} = 1/4>0\,.
\]
\end{example}

Likewise, there are many examples with $\sigma_+(X)<0<\rho_+(X)$.  For instance, we saw in Example \ref{eg: Few-relator complexes} that the standard complex associated to a randomly chosen 2-generator, 2-relator presentation has this property.

We now turn our attention to examples with $\rho_+(X)\leq 0$. The origin $\sigma_+(X)=\rho_+(X)=0$ exhibits an especially rich array of behaviours. The 2-torus is the most obvious example, but we list three more illustrative examples here.

\begin{example}[Trefoil knot]\label{eg: Trefoil knot}
Let $X$ be the standard presentation complex of
\[
\langle a,b\mid a^2b^{-3}\rangle\,,
\]
the fundamental group of the trefoil-knot complement. As a one-relator presentation complex it has $\rho_+(X)\leq 0$ by Theorem \ref{thm: Irreducible curvature of one-relator groups}, but $\kappa(X)=0$ so $\rho_+(X)=0$. Since $X$ is geometric, $\sigma_+(X)=\rho_+(X)=0$ by Corollary \ref{cor: Curvatures of 3-manifold spines}.
\end{example}

We already saw that presentation complexes of Baumslag--Solitar groups have $\rho_+(X)=0$. In fact, the same is true of their maximal surface curvature.

\begin{example}[Baumslag--Solitar groups revisited]\label{eg: Baumslag--Solitar groups revisited}
Let $X$ be the presentation complex of
\[
\langle a,b\mid ba^mb^{-1}a^{-n}\rangle\,,
\]
the standard presentation of the $(m,n)$ Baumslag--Solitar group. We saw that $\rho_+(X)=0$ in Example \ref{eg: Baumslag-Solitar groups}. But \cite[Theorem 21]{gordon_surface_2010} asserts that $X$ is virtually geometric, so $\sigma_+(X)=0$ by Corollary \ref{cor: Curvatures of virtually geometric 2-complexes}.
\end{example}

The next example is less well known, but is important because it provides an example with $\rho_+(X)=\sigma_+(X)=0$ and with hyperbolic fundamental group. This example was studied by Brady and Crisp \cite{brady_CAT0_2007}, and we name it for them,

\begin{example}[Brady--Crisp group]\label{eg: Brady--Crisp}
Let $X$ be the presentation complex of the \emph{Brady--Crisp group}
\[
G=\langle a,b\mid aba^2b^{-2}\rangle\,.
\]
As a 2-generator, one-relator presentation, we have $\rho_+(X)=0$ by Corollary \ref{cor: Irreducible curvature of two-generator, one-relator groups}. The link $L$ of the unique vertex of $X$ is a tetrahedron, which is in particular a 3-valent planar graph. But any such 2-complex is virtually geometric, via an argument similar to \cite[\S6]{gordon_surface_2010}. We sketch the argument here. 

Because $L$ is planar, we can think of it as embedded on the surface of a 3-ball $B$. We now thicken each vertex $v$ of $L$ to a disc $D_v$ on $\partial B$. Each vertex of $L$ has an opposite vertex $\opp{v}$. The edges of $L$ incident at $v$ are identified with the edges of $L$ incident at $\opp{v}$, and since $L$ is trivalent we can extend this identification to a homeomorphism $D_v\cong D_{\opp{v}}$.  Gluing up the discs in pairs using these homeomorphisms constructs a (possibly non-orientable) handlebody $U$, which is naturally a thickening of the 1-skeleton $X_{(1)}$, and by construction the attaching curves of the faces are embedded on the boundary $\partial U$. This does not show that $X$ is geometric, because the curves may not be 2-sided. However, passing to an orientable double cover $U'\to U$ and pulling back the attaching maps of the faces constructs a degree-two branched cover $X'\to X$ which is certainly geometric, because every embedded curve on an orientable surface is 2-sided.

In summary, $X$ is virtually geometric, so $\sigma_+(X)=\rho_+(X)=0$ by Corollary \ref{cor: Curvatures of virtually geometric 2-complexes}.
\end{example}

The example of the Brady--Crisp group emphasises that, in some sense, our invariants only see the 2-dimensional features of a 2-complex. Indeed, Brady and Crisp studied $G$ as an example of a hyperbolic group with CAT(0) dimension $2$ but CAT(-1) dimension $3$. If it were possible to put a polyhedral CAT(-1) structure on $X$ then we would have $\sigma_+(X)<0$, by Corollary \ref{cor: Surface curvature of CAT(-1) 2-complexes}.

Any $C(7)$ two-generator, one-relator presentation (for instance, a randomly chosen one will do) provides an example with $\rho_+(X)=0$ but $\sigma_+(X)<0$.

Finally, we give examples with $\rho_+(X)<0$. Corollary \ref{cor: Curvatures of 3-manifold spines} suggests the spine of a hyperbolic 3-manifold with totally geodesic boundary. Thurston's tripus provides a famous example.

\begin{example}[Thurston's tripus]\label{eg: Tripus}
In \cite[\S3.2]{thurston_geometry_1979}, Thurston constructed a compact hyperbolic 3-manifold $T$ with boundary a totally geodesic surface of genus 2, by identifying faces of a hyperbolic polyhedron. The manifold $T$ is sometimes called the \emph{tripus}. Paoluzzi--Zimmerman computed a presentation of the fundamental group as
\[
\pi_1(T)=\langle a,b,c\mid ab^{-1}a^{-1}bc^{-1}b^{-1}ca^{-1}c^{-1}\rangle\,,
\]
and indeed this presentation can be seen as dual to the polyhedron description, which means that the presentation complex $X$ is geometric and $M(X)\cong T$ \cite{paoluzzi_class_1996}.

The presentation complex $X$ provides an example with $\rho_+(X)=\sigma_+(X)<0$. To see this, note that the link of the unique vertex of $X$ is the 1-skeleton of a triangular prism, and in particular has girth equal to 3. The face has length 9 and so
\[
\sigma_+(X)\leq 1-\frac{9}{2}\left( 1- \frac{2}{3}\right) = -\frac{1}{2}
\]
by Corollary \ref{cor: Surface curvature and girths of links}. Meanwhile, $\rho_+(X)=\sigma_+(X)$ since $X$ is geometric.
\end{example}

Finally, we finish with an example satisfying $\sigma_+(X)<\rho_+(X)<0$. This example was studied by Manning \cite{manning_virtually_2010}, who exhibited it as the first example of a one-relator presentation complex that is not virtually geometric.  The invariants $\rho_+$ and $\sigma_+$ enable us to recover Manning's result by other means.

\begin{example}[Manning's example]\label{eg: Manning's example}
Let $X$ be the standard complex associated to the presentation
\[
\langle a,b,c\mid b^2a^2c^2abc\rangle\,.
\]
The link of the vertex is the complete bipartite graph $K_{3,3}$ \cite[Example 2.1]{manning_virtually_2010}, which has girth 4, so
\[
\sigma_+(X)\leq 1-\frac{9}{2}\left(1-\frac{2}{4}\right)=-5/4
\]
by Corollary \ref{cor: Surface curvature and girths of links}. On the other hand, $\rho_+(X)\geq \chi(X)=-1$. Therefore, $X$ provides an example with
\[
\sigma_+(X)<\rho_+(X)\,.
\]
By Corollary \ref{cor: Curvatures of virtually geometric 2-complexes}, it follows that $X$ is not virtually geometric, which gives an alternative proof of \cite[Corollary 4.2]{manning_virtually_2010}

To prove that $\rho_+(X)<0$ requires a more delicate argument.  One approach is to compute the primitivity rank,
\[
\pi(b^2a^2c^2abc)=3\,.
\]
Theorem \ref{thm: Primitivity rank is an absolute extremum} now implies that every $Y\in\Irred(X)$ has $\tau(Y)\leq -1$, so they also have $\kappa(Y)<0$, and therefore $\rho_+(X)<0$ by the rationality theorem. 

Alternatively, for an effective estimate, one can put an angle structure on $X$ and adapt Wise's generalised sectional curvature to the setting of branched immersions, as suggested in Remark \ref{rem: Wise's sectional curvature}. In fact, setting the length of every edge in the link equal to $5\pi/9$ leads to the estimate $\rho_+(X)\leq -1$, and since $\chi(X)=-1$ we actually have equality.
\end{example}

\section{Properties of groups with curvature bounds}\label{sec: Properties of groups with curvature bounds}

In this section, we collect known consequences of curvature bounds on 2-complexes, each of which can be considered as a contribution to the geography problem.  The currently known consequences all first appeared in work on one-relator groups. We also make some further conjectures. The conjectural picture is summarised in a geographical map -- see Figure \ref{fig: Conjectural geography}.

\subsection{Concise 2-complexes}\label{sec: Concise 2-complexes}

We start by noting the proof of Proposition \ref{prop: rho_+=1}, which classifies the 2-complexes with $\rho_+(X)=1$.

\begin{proof}[Proof of Proposition \ref{prop: rho_+=1}]
To see that (\ref{prop: rho_+=1 item (i)}) implies (\ref{prop: rho_+=1 item (iii)}), suppose that $\rho_+(X)=1$. By the rationality theorem, $\rho_+(X)$ is realised by an essential map from an irreducible 2-complex $Y$ with $\kappa(Y)=1$, so $\chi(Y_{(1)})=0$. Without loss of generality $Y$ is connected, so $Y_{(1)}$ is a circle; write $\pi_1(Y_{(1)})=\langle w\rangle$. Therefore, the attaching maps of the faces of $Y$ represent non-zero powers of $w$ in the fundamental group. If $Y$ were concise then there could be only one such face, and it would be represented by $w$, but this would contradict the fact that $Y$ is irreducible. So $X$ is not concise.

To show that (\ref{prop: rho_+=1 item (iii)}) implies (\ref{prop: rho_+=1 item (ii)}), note that, if $X$ is not concise, then there is an immersion $Y\to X$, where as above $Y$ is an irreducible 2-complex with 1-skeleton a circle. Now $Y$ is an irrigid complex in the sense of Defiinition \ref{def: Surface decomposition and irrigid complexes} and so, by Proposition \ref{lem: Surface to surface type}, there is an essential map from a surface $S\to Y$ with $\kappa(S)=1$. Thus
\[
1=\kappa(S)\leq \sigma_+(Y)\leq \sigma_+(X)
\]
by Lemma \ref{lem: Monotonicity}, so $\sigma_+(X)=1$ by Theorem \ref{thm: Curvature inequalities}.

Finally, (\ref{prop: rho_+=1 item (ii)}) implies (\ref{prop: rho_+=1 item (i)}) immediately from Theorem \ref{thm: Curvature inequalities}.
\end{proof}

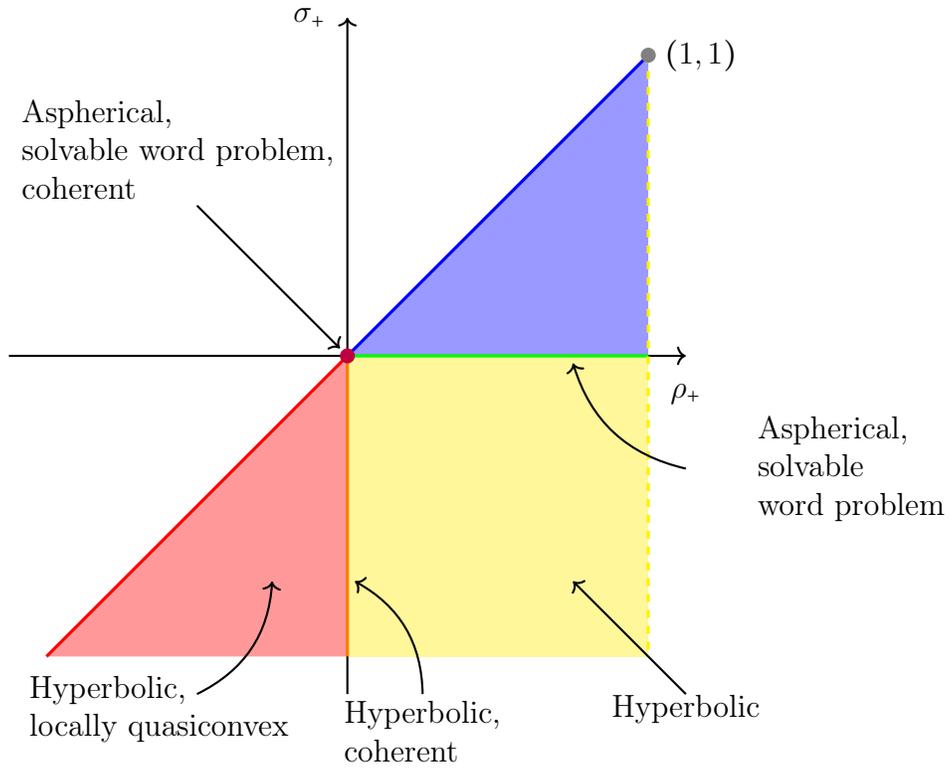
\begin{figure}[htp]
\begin{center}
\begin{tikzpicture}
\fill[blue!40!white] (0,0) -- (4,4) -- (4,0) -- cycle;
\fill[red!40!white] (0,0) -- (-4,-4) -- (0,-4) -- cycle;
\fill[yellow!50!white] (0,0) rectangle (4,-4);
\draw[thick,->] (-4.5,0) -- (4.5,0);
\draw[thick,->] (0,-4.5) -- (0,4.5);
\draw[very thick, blue] (0,0) -- (4,4);
\draw[very thick, red] (0,0) -- (-4,-4);
\draw[very thick, orange] (0,0) -- (0,-4);
\draw[very thick, green] (0,0) -- (4,0);
\draw[very thick, yellow, dashed] (4,4) -- (4,-4);
\fill[purple] (0,0) circle (0.1);
\fill[gray] (4,4) circle (0.1);
\node at (-0.5,4.5) {$\sigma_+$};
\node at (4.5,-0.5) {$\rho_+$};
\draw[thick,->] (-2,2) -- (-0.1,0.1);
\node[align=left] at (-2.25,2.75) {Aspherical,\\solvable word problem,\\coherent};
\draw[thick, ->] (-2,-4.5) to [bend right] (-1,-3);
\node[align=left] at (-2.5,-4.7) {Hyperbolic,\\locally quasiconvex};
\draw[thick, ->] (1,-4.5) to [bend right]  (0.1,-3);
\node[align=left] at (1,-5) {Hyperbolic,\\coherent};
\draw[thick, ->] (4.5,-4.5) to (3,-3);
\node[align=left] at (4.5,-4.7) {Hyperbolic};
\draw[thick, ->] (4.5,-1.5) to [bend left]  (3,-0.1);
\node[align=left] at (6.7,-1.5) {Aspherical,\\solvable\\word problem};
\node[align=left] at (4.7,4) {$(1,1)$};
\end{tikzpicture}
 \caption{Conjectural properties of a finite, irreducible 2-complex $X$ and its fundamental group $G=\pi_1(X)$, depending on the invariants $\sigma_+(X)$ and $\rho_+(X)$. The vertex $\sigma_+(X)=\rho_+(X)=1$ consists of 2-complexes with either two equal faces or a single face for which the attaching map is a proper power. The dashed line $\sigma_+(X)<\rho_+(X)=1$ is empty.}\label{fig: Conjectural geography}
\end{center}
\end{figure}

\subsection{Non-positive irreducible curvature}

Recall that a map of 2-complexes $Y\to X$ is an \emph{immersion} if it is locally injective everywhere. In 2003, Wise introduced the following definition \cite{wise_nonpositive_2003}.

\begin{definition}[Non-positive immersions]\label{def: Non-positive immersions}
A finite, standard 2-complex $X$ has \emph{non-positive immersions} if, for any immersion from a finite, connected, standard 2-complex $Y\to X$, either $\chi(Y)\leq 0$ or $Y$ is simply connected. If $\chi(Y)>0$ implies that $Y$ is contractible, then $X$ is said to have \emph{contracting non-positive immersions}.
\end{definition}
The ideas around this definition have been developed by Wise more recently in \cite{wise_coherence_2022} and \cite{wise_invitation_2020}.

It is not hard to see that the non-positive-immersions property is a slight weakening of our notion of non-positive irreducible curvature.

\begin{lemma}[Non-positive irreducible curvature implies non-positive immersions]\label{lem: Non-positive irreducible curvature implies non-positive immersions}
Let $X$ be a finite, standard 2-complex. If $\rho_+(X)\leq 0$ then $X$ has contracting non-positive immersions.
\end{lemma}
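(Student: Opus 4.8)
The plan is to verify the defining property directly. Let $\phi\colon Y\to X$ be an immersion from a finite, connected, standard 2-complex with $\chi(Y)>0$; I must show that $Y$ is contractible. The first point to record is that $\phi$ is \emph{essential}: an immersion is injective on links and hence already folded, so in Definition~\ref{def: Essential map} one may take $\phi_0=\id$, which trivially satisfies conditions (i) and (ii). Thus $\phi$ is essential, and since a composition of essential maps is essential (Remark~\ref{rem: Composition of essential maps is essential}), any essential map into $Y$ composes with $\phi$ to yield an essential map into $X$.

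The heart of the argument is to unfold $Y$. Applying Lemma~\ref{lem: Unfolding 2-complexes} produces an essential equivalence
\[
Y'=G\vee X_1\vee\cdots\vee X_m\vee D_1\vee\cdots\vee D_n\to Y,
\]
where $G$ is a finite connected graph, each $X_i$ is irreducible, and each $D_j$ is a disc. Since $Y$ is standard and unfolding introduces no branching, every factor is standard, so $\tau=\chi$ throughout. As an essential equivalence preserves area and induces a homotopy equivalence of $1$-skeleta, it preserves $\tau$; hence $\chi(Y')=\chi(Y)>0$. Moreover, because an essential equivalence is an isomorphism of group pairs, it induces an isomorphism of fundamental groups, a fact I will use at the end.

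Next I would feed the hypothesis $\rho_+(X)\le 0$ into this decomposition. For each $i$, the inclusion $X_i\hookrightarrow Y'$ is injective on group pairs, since $\pi_1(X_i)$ is a free factor of $\pi_1(Y'_{(1)})$ and the faces of $X_i$ form a subset of those of $Y'$; so it is essential by Lemma~\ref{lem: Essential maps in group theory}, and composing with $Y'\to Y\to X$ shows $X_i\in\Irred(X)$. Hence $\kappa(X_i)\le\rho_+(X)\le 0$, which (as $\Area(X_i)>0$) means $\chi(X_i)=\tau(X_i)\le 0$. The wedge formula for Euler characteristic gives
\[
\chi(Y')=\chi(G)+\sum_{i=1}^m\chi(X_i)-m,
\]
and since $\chi(G)\le 1$ for a connected graph while each $\chi(X_i)\le 0$, the inequality $\chi(Y')>0$ forces $m=0$ and $\chi(G)=1$. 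Thus $G$ is a tree and $Y'=G\vee D_1\vee\cdots\vee D_n$ is a wedge of contractible complexes, so $Y'$ is contractible. Finally, the essential equivalence $Y'\to Y$ gives $\pi_1(Y)\cong\pi_1(Y')=1$ and $\chi(Y)=\chi(Y')=1$, whence $H_2(Y)=0$; by the Hurewicz theorem $Y$ is contractible, as required.

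The routine ingredients are the Euler-characteristic bookkeeping and the observation that immersions are essential. The step that carries the real weight is identifying each irreducible wedge factor $X_i$ as an element of $\Irred(X)$ — that is, checking that the inclusion of a wedge summand is injective on the associated group pair and that essentiality survives the composition through $Y'\to Y\to X$ — since this is precisely what licenses applying the global hypothesis $\rho_+(X)\le 0$ separately to each piece of the unfolding. Once that is in place, the constraints $\chi(X_i)\le 0$ combine with the wedge formula to eliminate every irreducible factor and every loop of $G$, leaving only a contractible complex.
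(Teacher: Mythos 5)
Your proof is correct and follows the paper's argument essentially verbatim: unfold $Y$ via Lemma~\ref{lem: Unfolding 2-complexes}, feed $\rho_+(X)\le 0$ into each irreducible wedge factor to get $\chi(X_i)=\tau(X_i)\le 0$, and conclude from the wedge formula that $\chi(Y)>0$ forces $Y$ to be homotopy equivalent to a tree wedged with discs, hence contractible. The only difference is that you make explicit several steps the paper leaves implicit --- that immersions are essential, that each wedge summand $X_i$ maps essentially to $X$ and so lies in $\Irred(X)$, and the final Hurewicz argument in place of simply invoking the homotopy equivalence $Y\simeq Y'$ --- all of which are verified correctly.
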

\begin{proof}
For any immersion from a finite, connected complex  $Y\to X$, consider an unfolding
\[
Y'=G\vee Y_1\vee\ldots\vee Y_m\vee D_1\vee\ldots\vee D_n
\]
as in Lemma \ref{lem: Unfolding 2-complexes}.  Since $Y$ and $Y'$ are homotopy equivalent, the Euler characteristic of $Y$ can be written as
\[
\chi(Y)=\chi(G)+\sum_{i=1}^m(\chi(Y_i)-1)\,.
\]
The graph $G$ is connected so $\chi(G)\leq 1$, and the hypothesis on $\rho_+(Y_i)$ implies that
\[
\chi(Y_i)=\tau(Y_i)\leq 0\,,
\]
so $\chi(Y)\leq 1$ with equality only if $G$ is a tree and $m=0$. Therefore, $\chi(Y)>0$ implies that $Y$ is homotopy equivalent to a tree wedged with discs, and is in particular contractible, as required.
\end{proof}

It is an open question whether there is an example of a 2-complex $X$ with contracting non-positive immersions but $\rho_+(X)>0$. In any case, I believe that the class of 2-complexes with non-positive irreducible curvature may be more useful than the possibly larger class of 2-complexes with non-positive immersions. Indeed, the fact that non-positive irreducible curvature can be recognised algorithmically is one argument in favour of this point of view.

In any case, Lemma \ref{lem: Non-positive irreducible curvature implies non-positive immersions} enables us to exploit the known properties of 2-complexes with non-positive immersions that Wise has established.  The following theorem, which contributes to the geography problem, follows from \cite[Theorems 1.3, 1.6]{wise_coherence_2022} using Lemma \ref{lem: Non-positive irreducible curvature implies non-positive immersions}. Recall that a group $G$ is said to be \emph{locally indicable} if every non-trivial, finitely generated subgroup $H$ of $G$ surjects $\Z$.

\begin{theorem}[Geography with non-positive irreducible curvature]\label{thm: Geography with non-positive irreducible curvature}
Let $X$ be a finite, standard, irreducible 2-complex. If $\rho_+(X)\leq 0$ then:
\begin{enumerate}[(i)]
\item $X$ is aspherical, and
\item $\pi_1(X)$ is locally indicable.
\end{enumerate}
\end{theorem}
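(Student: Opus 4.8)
The plan is to deduce both items from the established theory of $2$-complexes with non-positive immersions, using Lemma~\ref{lem: Non-positive irreducible curvature implies non-positive immersions} as the sole bridge from the curvature hypothesis. The first step is to apply that lemma: since $X$ is a finite, standard $2$-complex with $\rho_+(X)\leq 0$, it has \emph{contracting} non-positive immersions in the sense of Definition~\ref{def: Non-positive immersions}. This is the only place where the curvature hypothesis $\rho_+(X)\leq 0$ enters; everything downstream is a property of complexes with non-positive immersions, so items (i) and (ii) each reduce to a theorem of Wise.

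For item (i), I would invoke Wise's asphericity result \cite[Theorem 1.3]{wise_coherence_2022}. The heuristic to keep in mind is that asphericity of a standard $2$-complex is detected by the absence of a reduced spherical diagram: if $X$ failed to be aspherical then $\pi_2(X)$ would be non-trivial as a $\Z[\pi_1 X]$-module, and a minimal-area non-trivial spherical diagram would, after folding, yield an immersion $Y\to X$ from a complex $Y$ of positive Euler characteristic that is not simply connected, contradicting non-positive immersions. Since the hypothesis is already in exactly the form Wise requires, item (i) follows directly.

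For item (ii), local indicability, I would cite \cite[Theorem 1.6]{wise_coherence_2022}; here the \emph{contracting} refinement supplied by Lemma~\ref{lem: Non-positive irreducible curvature implies non-positive immersions} is what does the work. The mechanism is that, for each non-trivial finitely generated subgroup $H\leq\pi_1(X)$, one represents $H$ by a finite immersion $Y\to X$ with $\pi_1(Y)=H$ and controls $\chi(Y)$ tightly enough to force the first Betti number of $Y$ to be positive, whence $H$ surjects $\Z$. The contracting hypothesis is precisely what rules out the degenerate case where $\chi(Y)>0$ could occur without $Y$ being contractible.

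The main obstacle is not a computation but a careful matching of hypotheses: one must verify that the contracting non-positive immersions property produced by Lemma~\ref{lem: Non-positive irreducible curvature implies non-positive immersions} is exactly the hypothesis under which \cite[Theorems 1.3, 1.6]{wise_coherence_2022} are stated, and that finiteness and the standard structure persist throughout the reduction. Granting the lemma, this matching is routine, so the argument amounts to a single reduction followed by two citations.
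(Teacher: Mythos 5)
Your proposal is correct and follows essentially the same route as the paper: the paper also derives the theorem by applying Lemma \ref{lem: Non-positive irreducible curvature implies non-positive immersions} to obtain contracting non-positive immersions and then citing \cite[Theorems 1.3, 1.6]{wise_coherence_2022} for asphericity and local indicability respectively. Your heuristic sketches of the mechanisms inside Wise's theorems are only motivational (the actual proofs there use tower arguments, as the paper notes), but since you cite those theorems as doing the work, the argument is the same single reduction plus two citations.
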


The proofs in \cite{wise_coherence_2022} use tower arguments, a technique going back to Papakyriakopoulos' proof of Dehn's lemma \cite{papakyriakopoulos_dehn_1957} and also used by Howie in his locally indicable Freiheitssatz  \cite{howie_pairs_1981}.  The folding technique of Lemma \ref{lem: Folding 2-complexes} can also be used to give a direct proof. Combined with Theorem \ref{thm: Irreducible curvature of one-relator groups}, the theorem recovers the Cockroft--Lyndon asphericity theorem for torsion-free one-relator groups \cite{lyndon_cohomology_1950,cockcroft_two-dimensional_1954}, as well as Brodski\u{\i}--Howie's theorem that torsion-free one-relator groups are locally indicable \cite{brodski_equations_1980,howie_locally_1982}.

A group $G$ is \emph{coherent} if every finitely generated subgroup is finitely presented.  A notorious question of Baumslag, recently answered by Jaikin-Zapirain--Linton \cite{jaikin-zapirain_coherence_2023}, asks whether every one-relator group is coherent.  Motivated by Baumslag's question, Wise conjectured that the fundamental group of any finite 2-complex $X$ with non-positive immersions is coherent \cite[Conjecture 1.10]{wise_coherence_2022}.  The following conjecture sharpens Wise's conjecture slightly.

\begin{conjecture}[Non-positive irreducible curvature implies coherence]\label{conj: Non-positive irreducible curvature and coherence}
Let $X$ be a finite, standard, irreducible 2-complex. If $\rho_+(X)\leq 0$ then $\pi_1(X)$ is coherent.
\end{conjecture}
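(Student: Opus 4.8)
Since this statement is a conjecture rather than a theorem, what follows is a programme of attack rather than a proof. Recall that coherence of $G=\pi_1(X)$ means that every finitely generated subgroup $H\leq G$ is finitely presented. Because $\rho_+(X)\leq 0$ already forces $X$ to be aspherical (Theorem \ref{thm: Geography with non-positive irreducible curvature}), the cover $\hat X_H\to X$ corresponding to $H$ is a $K(H,1)$, and finite presentability of $H$ is equivalent to the existence of a compact subcomplex $K\subseteq\hat X_H$ with $\pi_1(K)\xrightarrow{\sim}H$. The whole difficulty is thus to produce a \emph{compact core} for $\hat X_H$: finite generation of $H$ yields a finite subgraph of the $1$-skeleton carrying $H$, but infinitely many $2$-cells may be attached over it, and coherence is the assertion that finitely many of them suffice. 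The curvature hypothesis enters only through Euler characteristic: by the rationality theorem (Theorem \ref{thm: Rationality theorem}), $\rho_+(X)\leq 0$ is equivalent to the statement that every essential map $Y\to X$ from a finite irreducible complex satisfies $\tau(Y)\leq 0$, so each irreducible piece that appears is constrained by its first Betti number.

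The first step is to isolate the genuinely new case. When $\rho_+(X)<0$ the conclusion is already known, since coherence is part of Theorem \ref{thm: Consequences of upper bounds on irreducible curvature}(ii), proved via Theorem \ref{thm: Subgroups of groups with negative irreducible curvature}; so the content of the conjecture lies entirely on the critical locus $\rho_+(X)=0$. I would therefore fix $H$, choose an exhausting chain of finite subcomplexes $K_1\subseteq K_2\subseteq\cdots$ of the core of $\hat X_H$ with $\varinjlim\pi_1(K_n)=H$, and at each stage factor $K_n$, up to essential equivalence, through the unfolding and irreducible-core machinery of Lemmas \ref{lem: Unfolding 2-complexes} and \ref{lem: Irreducible cores}, writing it as a graph wedged with finitely many irreducible factors. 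Monotonicity ensures that each factor again has non-positive maximal irreducible curvature, and the non-positive immersions property (Lemma \ref{lem: Non-positive irreducible curvature implies non-positive immersions}) controls the Euler characteristic at every stage. The aim is to use $\tau$ as a monovariant and show that the sequence $\pi_1(K_n)$ stabilises.

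The obstruction to immediate stabilisation is concentrated in the \emph{flat} factors, those with $\kappa=0$, for which $\tau$ fails to strictly decrease. Here I would bring in the structure theory behind the classification of constant irreducible curvature (Theorem \ref{thm: Constant irreducible curvature}): a freely indecomposable flat factor should behave like an irrigid complex, so that its fundamental group is assembled from surface and cyclic groups along cyclic subgroups, in the sense of Definition \ref{def: Surface decomposition and irrigid complexes}. The programme is then to show that the flat part of $H$ organises into such a graph-of-groups, whose vertex groups (surface groups, cyclic groups, and the negatively curved pieces already handled) are coherent, and to combine these with the negatively curved factors using standard combination results for coherence over slender edge groups.

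The main obstacle, and the reason the conjecture is hard, is \emph{accessibility}: one must bound, uniformly in $H$, the number of irreducible factors and the depth of the resulting hierarchy, so that the building-up process terminates in a genuinely finite presentation. This is exactly where non-positive immersions alone is insufficient, since it constrains $\chi$ but places no bound on how a subgroup can split. I expect the rationality theorem to supply the missing ingredient: because $\rho_+(X)$ is the maximum of a rational linear programme, there should be a uniform positive gap governing how much $\tau$ must drop when one passes to a proper essential subcomplex that is not itself flat, and such a gap would bound the number of non-flat stages in the spirit of Bestvina--Feighn or Dunwoody accessibility. Making this gap effective, and simultaneously controlling how the flat irrigid pieces can be stacked so that their contribution does not accumulate, is the crux on which a proof of the conjecture would turn.
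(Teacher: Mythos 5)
This statement is a conjecture, and the paper contains no proof of it; the only established results in its vicinity are that the conclusion holds under the stronger hypothesis $\rho_+(X)<0$ (Theorem \ref{thm: Subgroups of groups with negative irreducible curvature}, via uniform negative immersions) and that $\rho_+(X)\leq 0$ gives asphericity and local indicability (Theorem \ref{thm: Geography with non-positive irreducible curvature}). You correctly identify both facts, and your reduction to the critical locus $\rho_+(X)=0$ matches the paper's framing exactly, so your proposal is rightly presented as a programme rather than a proof. Judged as a programme, however, two of its steps are flawed as stated. First, the claimed equivalence between finite presentability of $H$ and the existence of a compact subcomplex $K\subseteq\widehat{X}_H$ with $\pi_1(K)\to H$ an isomorphism is only correct in one direction: a compact core certainly gives finite presentability, but a finitely presented subgroup need not admit a compact core inside the given cover (the standard argument only produces a compact $K$ containing a subgroup of $\pi_1(K)$ mapping isomorphically onto $H$, while $\pi_1(K)$ itself may be larger). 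The existence of compact cores is essentially Wise's formulation of the goal, so your opening equivalence quietly assumes a form of what is to be proved; this is harmless for the sufficiency direction your programme pursues, but it should not be stated as an equivalence.

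The more serious gap is the treatment of the flat factors. Theorem \ref{thm: Constant irreducible curvature} requires the full hypothesis $\rho_+(Y)=\rho_-(Y)$, whereas a flat factor only satisfies $\kappa(Y)=\rho_+(Y)=0$, and $\rho_-(Y)$ can be strictly negative: for instance, a $C(7)$ two-generator one-relator presentation complex has $\rho_+(Y)=0$ by Corollary \ref{cor: Irreducible curvature of two-generator, one-relator groups} but $\rho_-(Y)=\sigma_-(Y)\leq\sigma_+(Y)<0$ by Proposition \ref{prop: Surface curvature of C(7) complexes} and Theorem \ref{thm: Curvature inequalities}, and such complexes are generically not irrigid. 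These flat, non-irrigid pieces are precisely where the conjecture bites --- coherence of general two-generator one-relator groups is part of Baumslag's open question --- so the step in which the flat part is organised into a graph of surface and cyclic groups and handled by combination theorems fails on exactly the hard cases. Relatedly, the \emph{uniform gap} you hope to extract from the rationality theorem does not exist in the form you need: since $\Area(Y)\in\Z_{>0}$ and $\chi(Y_{(1)})\in\Z$, the quantity $\tau(Y)$ is an integer, so every non-flat essential piece automatically satisfies $\tau(Y)\leq -1$ with no input from rationality; the genuine obstruction is that flat pieces can have unbounded area, i.e.\ precisely the failure of the uniform bound $\kappa(Y)\leq-\epsilon$ (uniform negative immersions) on the critical locus $\rho_+(X)=0$, which is the engine of the known $\rho_+(X)<0$ case and is unavailable here by definition. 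Your diagnosis that accessibility is the crux is reasonable, but the mechanism you propose for it would not deliver.
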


We shall see below that this conjecture is true when $\rho_+(X)<0$.

\subsection{Negative irreducible curvature}

Negative irreducible curvature puts even stronger constraints on a 2-complex $X$. Recently, Louder and the author strengthened Wise's definition of non-positive immersions to \emph{negative immersions} \cite[Definition 1.1]{louder_negative_2022}, and then further to \emph{uniform negative immersions} \cite[Definition 3.23]{louder_uniform_2024}.

\begin{definition}\label{def: UNI}
A finite, standard 2-complex $X$ has \emph{negative immersions} if, for every immersion from a finite, connected complex $Y\to X$, either $\chi(Y)<0$ or $Y$ is reducible. If, furthermore,  there is an $\epsilon>0$ such that
\[
\kappa(Y)=\frac{\chi(Y)}{\Area(Y)}\leq -\epsilon
\]
for every such immersion $Y\to X$, then $X$ has \emph{uniform negative immersions}.
\end{definition}

\begin{remark}\label{rem: Wise's NI}
Wise makes use of a different definition of negative immersions for 2-complexes in \cite{wise_coherence_2022}. See \cite[\S3.4]{louder_uniform_2024} for a comparison between Definition \ref{def: UNI} and Wise's definition.
\end{remark}

Negative irreducible curvature is clearly stronger than uniform negative immersions, since the definitions are the same except that the class of maps considered expands from immersions to branched immersions. The next lemma is therefore immediate.

\begin{lemma}[Negative irreducible curvature implies uniform negative immersions]\label{lem: Negative irreducible curvature implies UNI}
If $X$ is a finite, standard, 2-complex and $\rho_+(X)<0$ then $X$ has uniform negative immersions.
\end{lemma}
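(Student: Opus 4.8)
The plan is to observe that the lemma is essentially a matter of unwinding definitions: the class of maps quantified over in the definition of uniform negative immersions (immersions from finite, connected, irreducible \emph{standard} complexes) is a subclass of the maps quantified over in the definition of $\rho_+(X)$ (essential maps from finite, irreducible \emph{branched} complexes, i.e.\ the members of $\Irred(X)$). First I would invoke the rationality theorem (Theorem~\ref{thm: Rationality theorem}) to write $\rho_+(X)=\max_{Y\in\Irred(X)}\kappa(Y)$; since this maximum is attained and is negative by hypothesis, setting $\epsilon:=-\rho_+(X)$ produces a genuine positive constant.

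The key step is to check that an immersion is captured by $\Irred(X)$. Given an immersion $\phi\colon Y\to X$ with $Y$ finite, connected and irreducible, regard $Y$ as a branched complex by assigning area $1$ to every face. An immersion of standard complexes is in particular a branched immersion (it is injective on links), so the folding of $\phi$ is trivial: $\bar Y=Y$ and $\phi_0=\id_Y$. Conditions (i) and (ii) of Definition~\ref{def: Essential map} then hold automatically, so $\phi$ is essential and hence $\phi\in\Irred(X)$. Consequently $\kappa(Y)\le\rho_+(X)=-\epsilon$. Because $Y$ is standard we have $\chi(Y)=\tau(Y)=\Area(Y)\kappa(Y)$, and since $\Area(Y)>0$ this simultaneously yields $\chi(Y)<0$ and the uniform bound $\kappa(Y)\le-\epsilon$ demanded in Definition~\ref{def: UNI}.

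It remains to dispatch the non-uniform clause: for an arbitrary immersion $Y\to X$ from a finite, connected complex, $Y$ is either irreducible, in which case the previous paragraph gives $\chi(Y)<0$, or reducible; either way the defining disjunction for negative immersions is satisfied. I do not expect a genuine obstacle here --- as the surrounding text notes, the result is immediate --- but the one point requiring care is precisely the verification that immersions are essential maps, so that the supremum defining $\rho_+(X)$ really does control them; the rest is bookkeeping, together with the observation that the uniform bound in Definition~\ref{def: UNI} is to be read over irreducible $Y$ (for reducible $Y$ no bound on $\kappa$ can hold).
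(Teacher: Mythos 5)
Your proposal is correct and takes essentially the same route as the paper, whose proof consists of the observation that the definitions nest: an immersion from a finite, connected, irreducible standard complex is a branched immersion, hence already folded and therefore essential, so it lies in $\Irred(X)$ and the supremum defining $\rho_+(X)$ controls $\kappa(Y)$; your verification of the folding step is exactly the point the paper leaves implicit, and your reading of the uniform bound in Definition \ref{def: UNI} as quantified over irreducible $Y$ is the intended one (it matches \cite[Definition 3.23]{louder_uniform_2021}). One correction, though: you should not invoke the rationality theorem here. Theorem \ref{thm: Rationality theorem} is stated only for \emph{irreducible} $X$, whereas the present lemma assumes only that $X$ is finite and standard; moreover $\rho_+(X)$ may equal $-\infty$ when $\Irred(X)=\varnothing$, in which case $-\rho_+(X)$ is not a real constant. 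Fortunately the appeal is also unnecessary: since $\rho_+(X)$ is by definition the supremum of $\kappa$ over $\Irred(X)$, the hypothesis $\rho_+(X)<0$ already yields $\kappa(Y)\leq\rho_+(X)\leq-\epsilon$ for every $Y\in\Irred(X)$, with $\epsilon:=\min\{-\rho_+(X),1\}>0$ covering the degenerate case; whether the supremum is attained buys you nothing.
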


The following theorem, which contributes to the geography problem for complexes of negative irreducible curvature, lists some very strong constraints on the finitely generated subgroups of the fundamental groups of complexes with negative irreducible curvature.

\begin{theorem}[Geography with negative irreducible curvature]\label{thm: Subgroups of groups with negative irreducible curvature}
If $X$ is a finite, irreducible, standard 2-complex and $\rho_+(X)<0$, then the fundamental group $G$ satisfies the following conditions:
\begin{enumerate}[(i)]
\item $G$ is 2-free (i.e.\ every 2-generator subgroup is free); 
\item $G$ is coherent (i.e.\ every finitely generated subgroup is finitely presented);
\item each finitely generated, one-ended subgroup $H$ of $G$ is co-Hopfian (i.e.\ every injective self-homomorphism of $H$ is an automorphism);
\item for each integer $r$, there are only finitely many conjugacy classes of finitely generated, one-ended subgroups $H$ of $G$ such that the abelianisation of $H$ has rational rank at most $r$;
\item each finitely generated subgroup $H$ of $G$ is either cyclic or large in the sense of Pride (i.e.\ some subgroup $H_0$ of finite index in $H$ surjects a non-abelian free group); 
\end{enumerate}
\end{theorem}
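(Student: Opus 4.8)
The plan is to reduce every one of the five conclusions to the single hypothesis of uniform negative immersions, and then to import the corresponding subgroup-theoretic theorems of Louder and the author. The opening move is immediate: since $\rho_+(X)<0$, Lemma \ref{lem: Negative irreducible curvature implies UNI} shows that $X$ has uniform negative immersions, with uniform constant $\epsilon=-\rho_+(X)>0$. This reduction is clean because uniform negative immersions differs from negative irreducible curvature only in that the extremal class is narrowed from essential maps of irreducible branched complexes to immersions of standard complexes; since every immersion is in particular an essential map (Definition \ref{def: Branched coverings and immersions}), the strict bound $\kappa(Y)\leq\rho_+(X)$ is inherited by all the immersed complexes that uniform negative immersions concerns. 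The one point requiring a little care is that the invariant in Definition \ref{def: UNI} is $\chi(Y)/\Area(Y)$, which agrees with $\kappa(Y)$ precisely because $\tau=\chi$ for standard complexes (Remark \ref{rem: Total curvature and Euler characteristic}).

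With uniform negative immersions in hand, each item follows by citing the appropriate result. The common mechanism is that a finitely generated subgroup $H\leq G$ is carried by an immersion $Y\to X$ from a finite complex, obtained by folding a map realising the inclusion (Lemma \ref{lem: Folding 2-complexes}), and $H$ is one-ended exactly when the irreducible core $\widehat{Y}$ of Lemma \ref{lem: Irreducible cores} is non-trivial; the uniform bound then forces $\tau(\widehat{Y})=\chi(\widehat{Y})$ to be negative and bounded away from zero relative to area. Concretely, 2-freeness (i) and coherence (ii) are principal conclusions of \cite{louder_uniform_2021}; the co-Hopf property for one-ended subgroups (iii), the finiteness of conjugacy classes of one-ended subgroups with bounded rational abelianisation rank (iv), and the cyclic-or-large dichotomy (v) are likewise established there, building on the negative-immersions theory of \cite{louder_negative_2022}.

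The main obstacle is not in the logical scaffolding above, which is essentially bookkeeping to match each conclusion with its source, but in the cited work itself: proving that uniform negative immersions forces these properties requires delicate control of how Euler characteristic behaves under fibre products and folding, and in particular a quantitative form of the principle that Euler characteristic cannot increase on passing to subgroup complexes. For the purposes of this survey it therefore suffices to observe that $\rho_+(X)<0$ is a faithful repackaging of uniform negative immersions, so that the full strength of \cite{louder_uniform_2021} applies without modification.
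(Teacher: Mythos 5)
Your reduction to uniform negative immersions via Lemma \ref{lem: Negative irreducible curvature implies UNI} is exactly the paper's first move, and your treatment of items (ii)--(v) coincides with the paper's proof: those four items are obtained precisely by combining that lemma with \cite[Corollary 6.4 and Theorem 6.5]{louder_uniform_2021}. Your justification of the lemma itself (immersions are already folded and hence essential, and $\tau=\chi$ for standard complexes by Remark \ref{rem: Total curvature and Euler characteristic}) is also correct.

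The genuine gap is item (i). You assert that 2-freeness is a ``principal conclusion'' of \cite{louder_uniform_2021}, but no such statement exists there in the generality required: the 2-freeness theorems of \cite{louder_negative_2022} are specific to one-relator groups, and for a general 2-complex with uniform negative immersions the statement is, as the paper itself notes, only \emph{implicit} in that work. The paper therefore supplies an explicit argument that your proposal omits entirely: given a 2-generator subgroup $H\leq G$, first reduce by Grushko's theorem to the case where $H$ is freely indecomposable --- this step is essential, because only then does \cite[Theorem 6.3]{louder_uniform_2021} represent $H$ up to conjugacy by an immersion $Y\to X$ with $Y$ finite, connected and \emph{irreducible}; the hypothesis $\rho_+(X)<0$ then gives $\tau(Y)=\chi(Y)=1-b_1(Y)+b_2(Y)<0$, and since $b_1(Y)\leq 2$ (as $H$ is 2-generated) this forces $b_1(Y)=2$ and $b_2(Y)=0$; finally Stallings' theorem \cite{stallings_homology_1965} shows that a 2-generated group with $H_1\cong\Z^2$ and vanishing $H_2$ is free. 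Without this chain --- Grushko reduction, irreducible-immersion representation, Euler characteristic count, Stallings --- your proof of (i) rests on a citation that does not exist. A minor further quibble: your parenthetical claim that a subgroup is one-ended exactly when the irreducible core $\widehat{Y}$ of Lemma \ref{lem: Irreducible cores} is non-trivial is not how the cited results are organised, though it is not load-bearing in your argument.
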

\begin{proof}
Items (ii), (iii), (iv) and (v) are the immediate results of combining Lemma \ref{lem: Negative irreducible curvature implies UNI} with \cite[Corollary 6.4 and Theorem 6.5]{louder_uniform_2024}.

Item (i) is implicit in \cite{louder_negative_2022}, but we give an explicit proof here. Suppose that $H$ is a 2-generator subgroup of $G$, which we may assume is freely indecomposable by Grushko's theorem. Now $H$ is represented up to conjugacy by an immersion $Y\to X$, where $Y$ is a finite, connected, irreducible 2-complex, by \cite[Theorem 6.3]{louder_uniform_2024}.  The hypothesis that $\rho_+(X)<0$ implies that
\[
\tau(Y)=\chi(Y)=1-b_1(Y)+b_2(Y)<0
\]
so, since $b_1(Y)\leq 2$, it follows that both $b_1(Y)=2$ and $b_2(Y)=0$. Finally, a theorem of Stallings implies that $H$ is free, as required \cite{stallings_homology_1965}.
\end{proof}

The combination of Theorems \ref{thm: NI for one-relator groups} and \ref{thm: Subgroups of groups with negative irreducible curvature} recover the properties of one-relator groups with negative immersions discovered in \cite{louder_negative_2022} and \cite{louder_uniform_2024}.

Theorems \ref{thm: Geography with non-positive irreducible curvature} and \ref{thm: Subgroups of groups with negative irreducible curvature}(i) together imply that if $\rho_+(X)<0$ then $G=\pi_1(X)$ is a group of finite type without Baumslag--Solitar subgroups, which naturally raises the question of whether $G$ is hyperbolic. If so, then one might further hope that the some of the constraints on the subgroups of $G$ implied by Theorem \ref{thm: Subgroups of groups with negative irreducible curvature} come from the geometry of those subgroups. These considerations motivate the following conjecture, which sharpens \cite[Conjecture 1.9]{louder_negative_2022} and \cite[Conjecture 5.1]{wise_coherence_2022}.

\begin{conjecture}[Negative irreducible curvature implies locally quasiconvex]\label{conj: Negative irreducible curvature implies hyperbolic and locally quasiconvex}
Let $X$ be a finite, standard, irreducible 2-complex. If $\rho_+(X)< 0$ then $\pi_1(X)$ is a locally quasiconvex hyperbolic group.
\end{conjecture}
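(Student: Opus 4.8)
The plan is to split the conjecture into its two conclusions---that $G=\pi_1(X)$ is hyperbolic and that it is locally quasiconvex---and to build on the structural results of Theorem \ref{thm: Subgroups of groups with negative irreducible curvature} throughout. By Lemma \ref{lem: Negative irreducible curvature implies UNI} the hypothesis $\rho_+(X)<0$ already supplies uniform negative immersions, so there is a fixed $\epsilon>0$ with $\kappa(Y)\le-\epsilon$ for every $Y\in\Irred(X)$. From Theorem \ref{thm: Subgroups of groups with negative irreducible curvature} we moreover know that $G$ is $2$-free and coherent, that its one-ended finitely generated subgroups are co-Hopfian, and---decisively for quasiconvexity---that for each $r$ there are only finitely many conjugacy classes of one-ended finitely generated subgroups whose abelianisation has rational rank at most $r$. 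Together with the absence of Baumslag--Solitar subgroups (which follows from $2$-freeness, since such groups are $2$-generated but not free), these are exactly the algebraic shadows one expects of a locally quasiconvex hyperbolic group, so the task is to promote them to geometric conclusions.

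For hyperbolicity the natural target is a linear isoperimetric inequality for reduced disc diagrams $D\to X$, bounding $\Area(D)$ linearly in the boundary length $|\partial D|$. The tempting route---combinatorial Gauss--Bonnet against a negatively curved angle structure, as in Proposition \ref{prop: Face curvature bound}---is \emph{not} available, because $\rho_+(X)<0$ is a homological condition on the Euler characteristics of immersed irreducible complexes rather than a metric one; the Brady--Crisp complex of Example \ref{eg: Brady--Crisp} shows that the homological and metric notions genuinely diverge, so no angle structure can be read off from the hypothesis. Instead I would argue asymptotically. If no linear isoperimetric inequality held, there would be a sequence of reduced disc diagrams $D_n\to X$ with $\Area(D_n)/|\partial D_n|\to\infty$. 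The idea is to renormalise and pass to a limit---in the spirit of the extraction of essential surfaces in \cite{wilton_essential_2018} (cf.\ Theorem \ref{thm: Essential surfaces exist}) and of Calegari's extremal surfaces in \cite{calegari_stable_2009}---so that the bulk of the $D_n$ concentrates on a subcomplex which, after folding, produces an essential map $Y\to X$ from a finite irreducible branched complex with $\kappa(Y)\ge 0$. This contradicts $\rho_+(X)<0$ via the rationality theorem (Theorem \ref{thm: Rationality theorem}); hence a linear isoperimetric inequality does hold and $G$ is hyperbolic. By the Bowditch--Gromov dichotomy even a subquadratic bound would suffice here, which gives some welcome slack in the limiting step.

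For local quasiconvexity I would combine hyperbolicity with the finiteness and coherence inputs. Every finitely generated subgroup $H\le G$ is represented up to conjugacy by an immersion $Y\to X$ with $Y$ finite and irreducible (as in the proof of Theorem \ref{thm: Subgroups of groups with negative irreducible curvature}), and by coherence $H$ is finitely presented, so $Y$ may be chosen with $\pi_1(Y)=H$. The uniform bound $\kappa(Y)\le-\epsilon$ should force this immersion to be a quasi-isometric embedding: a geodesic of $H$ that strayed far from a geodesic of $G$ would produce a reduced annular or disc diagram of small perimeter but large area, again contradicting the isoperimetric inequality established above. The finiteness statement, item (iv) of Theorem \ref{thm: Subgroups of groups with negative irreducible curvature}, provides uniform control over the one-ended pieces and, through a Scott--Swarup style argument, should upgrade pointwise distortion bounds into a single quasiconvexity constant valid across all subgroups of bounded rank.

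The hard part will be the limiting argument for hyperbolicity. Passing from a super-linear sequence of reduced diagrams to a single immersed irreducible complex of non-negative curvature requires a compactness theory for diagrams over $2$-complexes---some analogue of geodesic currents or measured laminations---that does not yet exist in this setting. The difficulty is precisely that reduced disc diagrams are simply connected and carry no $\pi_1$ of their own, so the non-negatively curved object must be assembled from their \emph{internal} combinatorics rather than read off directly. This is the step that keeps the statement a conjecture, and it is the natural place where the machinery behind the rationality theorem of \cite{wilton_rationality_2022} would need to be extended from the extremal problem for $\rho_\pm$ to the asymptotics of arbitrary van Kampen diagrams.
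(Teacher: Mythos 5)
You are attempting to prove a statement that the paper itself leaves open: this is Conjecture \ref{conj: Negative irreducible curvature implies hyperbolic and locally quasiconvex}, and the paper offers no proof, only the remark that Linton \cite{linton_one-relator_2022} has confirmed the hyperbolicity half in the one-relator case (by one-relator hierarchies, a completely different mechanism from anything in your sketch). You correctly identify the decisive gap in your hyperbolicity step yourself, but it is worth spelling out why it is a gap and not merely a technical chore. A reduced van Kampen diagram $D$ is simply connected and has free faces along its boundary, so it is visibly reducible in the sense of Definition \ref{def: Visibly reducible and unfoldable}; the hypothesis $\rho_+(X)<0$ constrains only \emph{essential} maps from \emph{irreducible} branched complexes and says nothing directly about such diagrams. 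Your proposed limit would have to manufacture, from the internal combinatorics of a superlinear sequence $D_n$, a finite irreducible complex $Y$ together with a map $Y\to X$ that is essential --- i.e.\ whose folded representative is a $\pi_1$-injective homotopy equivalence on $1$-skeleta with area preserved --- and there is no reason folding pieces of diagrams should produce $\pi_1$-injectivity. This is exactly the nonexistent compactness theory you name, and it is the whole content of the conjecture, not a refinement of the rationality theorem's linear programming.

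The quasiconvexity step, by contrast, contains an inference that is actually false rather than merely incomplete. A finitely generated subgroup of a hyperbolic group that fails to be quasiconvex does \emph{not} produce small-perimeter, large-area diagrams and does not contradict the ambient linear isoperimetric inequality: the fundamental group of a fibered hyperbolic $3$-manifold is hyperbolic and coherent, yet its fiber subgroup is exponentially distorted while every disc diagram still satisfies the linear bound. So once hyperbolicity is (hypothetically) established, distortion of subgroups cannot be excluded by re-invoking the isoperimetric inequality; local quasiconvexity needs an independent mechanism, presumably an analogue of Wise's compact-cores argument under negative sectional curvature \cite{wise_sectional_2004} (cf.\ Remark \ref{rem: Wise's sectional curvature}), which is unavailable here precisely because, as you note via Example \ref{eg: Brady--Crisp}, the homological hypothesis $\rho_+(X)<0$ supplies no angle structure. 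Finally, item (iv) of Theorem \ref{thm: Subgroups of groups with negative irreducible curvature} gives finiteness only among one-ended subgroups of bounded abelianisation rank, so it cannot by itself yield a uniform quasiconvexity constant over all finitely generated subgroups; a ``Scott--Swarup style argument'' would at best handle the one-ended pieces of bounded rank, leaving the general case untouched.
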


This should be compared to Conjecture \ref{conj: Negative surface curvature implies hyperbolic} below, which relaxes the hypotheses to $\sigma_+(X)<0$. Italiano, Martelli and Migliorini recently gave the first example of a non-hyperbolic group of finite type without Baumslag--Solitar subgroups \cite{italiano_hyperbolic_2023}, but no 2-dimensional examples are known. A recent theorem of Linton  \cite{linton_one-relator_2022} proves that one-relator groups with negative immersions -- i.e.\ with $\rho_+(X)<0$ -- are hyperbolic, confirming part of Conjecture \ref{conj: Negative irreducible curvature implies hyperbolic and locally quasiconvex} in the one-relator case.

\subsection{Surface curvature bounds}

As we have seen in the previous two sections, recent developments in the theory of one-relator groups can be seen as explorations of the consequences of upper bounds on irreducible curvature. Surface curvature has not yet been as thoroughly investigated, and so the constraints that surface curvature bounds impose on $X$ are still conjectural. We list some of those conjectures here. We have already seen many examples of complexes with upper bounds on their surface curvature, and these conjectures are consistent with those examples. Throughout, $X$ is a finite, standard, irreducible 2-complex.

Non-positive curvature is frequently used in geometric group theory to prove that a space is aspherical: this is the Cartan--Hadamard theorem for CAT(0) spaces \cite[Theorem II.4.1]{bridson_metric_1999}, a standard result about small-cancellation complexes \cite{chiswell_aspherical_1981}, and the Lyndon asphericity theorem for one-relator groups \cite{lyndon_cohomology_1950,cockcroft_two-dimensional_1954}. We have already seen that $X$ is aspherical if it has non-positive irreducible curvature, but we conjecture that this result extends to the much wider context of non-positive surface curvature.

\begin{conjecture}[Non-positive surface curvature implies aspherical]\label{conj: Non-positive surface curvature implies aspherical}
If $\sigma_+(X)\leq 0$ then $X$ is aspherical.
\end{conjecture}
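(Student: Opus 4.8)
The plan is to argue by contradiction through the theory of spherical diagrams. Since $X$ is two-dimensional, asphericity is equivalent to $\pi_2(X)=0$, so suppose $\pi_2(X)\neq 0$. A nontrivial element of $\pi_2(X)$ is carried by a combinatorial map $\psi\colon S\to X$ from a cellulated $2$-sphere $S$; cancelling adjacent mirror-image pairs of faces whenever possible, we may take $\psi$ to be a \emph{reduced} spherical diagram, and since the homotopy class is nontrivial $S$ must contain at least one face, so $\Area(S)\geq 1$.

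The whole strategy rests on one clean observation. If such a reduced spherical diagram were an \emph{essential} map in the sense of Definition \ref{def: Essential map}, we would be finished immediately: $S$ is a closed surface with $\tau(S)=\chi(S^{2})=2$, so
\[
\kappa(S)=\frac{\tau(S)}{\Area(S)}=\frac{2}{\Area(S)}>0,
\]
and an essential map from a closed surface exhibits $S\in\Surf(X)$, forcing $\sigma_+(X)\geq\kappa(S)>0$ and contradicting $\sigma_+(X)\leq 0$. So the first step is this positivity computation, and everything reduces to promoting a reduced spherical diagram to an essential surface. The natural way to attempt this is to show that reducedness already forces $\psi$ to be an immersion, since an immersion is its own folded representative and is therefore essential by Lemma \ref{lem: Essential maps in group theory}.

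The hard part --- and the reason the statement is only a conjecture --- is precisely that reducedness does \emph{not} force $\psi$ to be locally injective on links. Cancelling a mirror pair removes one specific kind of fold across an edge, but the link of a vertex of $S$, which is a circle, may still map non-injectively into the link of its image in $X$ for reasons unrelated to any cancelling pair. If one responds by folding $\psi$ as in Lemma \ref{lem: Folding 2-complexes}, the quotient $\bar S$ is a simply-connected $2$-complex whose vertex links are no longer circles, so $\bar S$ is not a surface. Such a $\bar S\to X$ is governed by $\rho_+(X)$ rather than $\sigma_+(X)$: indeed $\bar S$ simply connected gives $\tau(\bar S)=\chi(\bar S)\geq 1>0$, which is exactly how the $\rho_+(X)\leq 0$ case (Theorem \ref{thm: Geography with non-positive irreducible curvature}) is forced, but this argument is invisible to the weaker hypothesis $\sigma_+(X)\leq 0$, which only constrains genuine surfaces. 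Bridging this gap is the crux of the conjecture.

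To cross it I would explore three routes. First, an extremal argument: among all reduced spherical diagrams representing nontrivial classes, choose one of minimal area and try to show that minimality prohibits any link-fold, so that the chosen diagram is automatically an immersion and the positivity computation above applies. Second, a surgery argument: at each vertex whose link map fails to be injective, split the vertex so as to remain within the category of closed surfaces, carefully tracking how $\chi$ changes and arguing that the resulting essential surfaces cannot all have $\kappa\leq 0$ while their construction still accounts for $\chi(S^{2})=2$. Third, a branched route: reinterpret each non-injective link as a ramification point, producing an essential \emph{branched} surface over $X$ and invoking Theorem \ref{thm: Essential surfaces exist} together with the machinery of \S\ref{sec: Irrigid complexes}. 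Each route must somehow manufacture a genuine surface --- rather than an arbitrary simply-connected complex --- out of a $\pi_2$ class, and it is this step, converting the positive curvature of the sphere into positive \emph{surface} curvature detected by $\sigma_+$, that I expect to be the decisive obstacle and to require a genuinely new idea beyond the folding techniques that settle the $\rho_+$ case.
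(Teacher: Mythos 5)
You have not proved anything, and you say so yourself --- which is the correct assessment, because the statement you were given is Conjecture~\ref{conj: Non-positive surface curvature implies aspherical}, and the paper contains no proof of it. The paper's only commentary is the remark immediately following the conjecture: any map of a $2$-sphere into $X$ can be taken to be a morphism of $2$-complexes $S\to X$ with $S$ a cellulated sphere, and ``the challenge \ldots is to improve this morphism to an essential map.'' Your writeup reproduces exactly this situation. The easy half is correct: if a nontrivial class in $\pi_2(X)$ were carried by an \emph{essential} map from a standard cellulated sphere $S$, then $\tau(S)=\chi(S)=2$, so $\kappa(S)=2/\Area(S)>0$ and $S\in\Surf(X)$ forces $\sigma_+(X)>0$, contradicting the hypothesis. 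And your diagnosis of the obstruction is also accurate and matches the paper's: reducedness of a spherical diagram kills cancelling pairs across edges but does not make the link maps injective; folding via Lemma~\ref{lem: Folding 2-complexes} repairs this at the cost of producing a simply-connected complex $\bar S$ that is no longer a surface, whose positive total curvature is visible only to $\rho_+(X)$ (this is precisely the mechanism behind Theorem~\ref{thm: Geography with non-positive irreducible curvature}(i)), not to the weaker hypothesis $\sigma_+(X)\leq 0$.

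The gap, then, is not a technical step but the entire content of the conjecture, and none of your three proposed routes closes it. Two concrete cautions about them. First, the extremal route as you phrase it aims to show a minimal-area reduced spherical diagram is an \emph{immersion}; this target is too strong, since Example~\ref{eg: Folded surfaces} exhibits complexes $X$ admitting no branched immersions from closed surfaces at all, even though essential maps from surfaces exist --- so any successful argument must produce an essential-but-folded sphere, i.e.\ one where the fold map $S\to\bar S$ is a homotopy equivalence of $1$-skeleta and area-preserving (Definition~\ref{def: Essential map}), and showing that area-minimality delivers this is essentially a restatement of the conjecture. Second, the branched route cannot lean on Theorem~\ref{thm: Essential surfaces exist}: that theorem produces \emph{some} essential surface with no control on the sign of its curvature, and under the hypothesis $\sigma_+(X)\leq 0$ every essential surface has $\kappa\leq 0$ by definition, so bare existence yields nothing. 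What is needed is an essential surface of \emph{positive} curvature manufactured from the $\pi_2$ class, and the paper's surface-producing machinery (Lemma~\ref{lem: Culler's lemma}, Lemma~\ref{lem: Surface to surface type}, Lemma~\ref{lem: Minimal average curvature implies surface type}) all operates at the minimal end of the spectrum, where $\rho_-=\sigma_-$ is realised, not at the maximal end where this conjecture lives. In short: your positivity computation is the correct (and known) easy half, your identification of the crux agrees with the paper, but the decisive step is missing, and as of this paper it is open.
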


A suitable formulation of van Kampen's lemma implies that any map from a 2-sphere to $X$ is homotopic to a morphism of 2-complexes $S\to X$, where the realisation of $S$ is a 2-sphere. The challenge of Conjecture \ref{conj: Non-positive surface curvature implies aspherical} is to improve this morphism to an essential map.

Another frequent use of non-positive curvature in group theory is to the solve the word problem: this follows from the quadratic isoperimetric inequality for CAT(0) groups \cite[Proposition III.$\Gamma$.1.6]{bridson_metric_1999}, from Greendlinger's lemma for small-cancellation groups \cite{lyndon_combinatorial_1977}, and is a famous theorem of Magnus for one-relator groups \cite{magnus_identitatsproblem_1932}.  Our second conjecture proposes that, in the case of 2-complexes, all of these can be deduced from a bound on surface curvature.

\begin{conjecture}[Non-positive surface curvature implies solvable word problem]\label{conj: Non-positive surface curvature implies solvable word problem}
If $\sigma_+(X)\leq 0$ then the word problem is solvable in $\pi_1(X)$.
\end{conjecture}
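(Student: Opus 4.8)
The plan is to show that the hypothesis $\sigma_+(X)\leq 0$ forces the Dehn function of $X$ to be bounded by a recursive function, which for a finitely presented group is equivalent to solvability of the word problem. Concretely, a word $w$ in the generators represents the trivial element of $\pi_1(X)$ if and only if it bounds a van Kampen diagram $D\to X$, and the word problem is solvable as soon as one has a \emph{computable} upper bound on the minimal area of such a $D$ in terms of the length $|w|$. Since the consequences of the relators are recursively enumerable, it suffices to certify recursively when no diagram of bounded area exists; thus the whole problem reduces to producing an effective isoperimetric inequality.

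First I would use the asphericity of $X$ (conditionally on Conjecture \ref{conj: Non-positive surface curvature implies aspherical}) to ensure that spherical diagrams are trivial and that minimal-area diagrams are diagrammatically reduced, with no adjacent pair of faces cancelling. Next, and this is the conceptual heart of the proposal, I would convert the global hypothesis $\sigma_+(X)\leq 0$ into a usable local certificate. The natural tool is linear-programming duality applied to the rationality theorem (Theorem \ref{thm: Rationality theorem}): extremising $\kappa$ over $\Surf(X)$ is a rational linear program, and a dual optimal solution should take the shape of an angle structure on the corners of $X$, that is, a weighting of the edges of the vertex links whose combinatorial Gauss--Bonnet defect is non-positive on every face and whose link systoles are at least $2\pi$. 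This is exactly the data appearing in Proposition \ref{prop: Face curvature bound}, now obtained \emph{a posteriori} from $\sigma_+(X)\leq 0$ rather than assumed at the outset.

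With such an angle structure in hand, I would run the combinatorial Gauss--Bonnet argument of Lemma \ref{lem: Combinatorial Gauss--Bonnet} on a minimal van Kampen diagram $D$ in place of a closed surface. Because $D$ is a disc, $\chi(D)=1$, so the curvature that the lemma distributes among faces and interior vertices, all of which is non-positive, must instead be absorbed along the boundary $\partial D$. Bounding the total boundary defect by a function of $|w|$ then yields an isoperimetric inequality; even if the resulting bound is merely exponential, which it must be, since $BS(m,n)$ and the Baumslag--Gersten group already satisfy $\sigma_+(X)\leq 0$, it is manifestly recursive, and solvability of the word problem follows.

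The main obstacle is the gap between \emph{essential} surfaces and arbitrary diagrams. The hypothesis $\sigma_+(X)\leq 0$ constrains only essential closed surfaces $S\to X$, whereas a van Kampen diagram folds non-trivially onto $X$ and is typically very far from essential; indeed, doubling any disc diagram produces a sphere of positive average curvature that is never essential once $X$ is aspherical. The crux is therefore to prove that a dual angle-structure certificate genuinely exists and, crucially, that its Gauss--Bonnet inequality remains valid on folded or branched configurations, where faces may be glued to themselves and links may immerse rather than embed. Controlling the contribution of these degenerate corners, and ruling out the possibility that the linear-programming dual is attained only in a limit that does not descend to an honest angle structure, is where the real difficulty lies.
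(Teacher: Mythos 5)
First, a point of order: the statement you are proving is Conjecture \ref{conj: Non-positive surface curvature implies solvable word problem} --- the paper offers no proof of it, and indeed presents it as open precisely because of the obstruction you yourself identify at the end. So there is no ``paper proof'' to match; the question is whether your proposal closes the conjecture, and it does not. Your argument has two load-bearing steps, and both fail. The first is the claim that linear-programming duality applied to the rationality theorem (Theorem \ref{thm: Rationality theorem}) converts $\sigma_+(X)\leq 0$ into a non-positively curved angle structure on $X$. This is asserting the \emph{converse} of Proposition \ref{prop: Face curvature bound}, which the paper nowhere proves, and which is false: the linear program computing $\sigma_+$ extremises over essential maps of surfaces, whose defining constraint (injectivity of the induced map of group pairs, Lemma \ref{lem: Essential maps in group theory}) is not encoded by local linear conditions at corners of $X$, so the dual variables have no reason to assemble into edge-lengths on links satisfying a systole bound. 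You flag this as ``the real difficulty,'' but it is not a technical difficulty to be controlled --- it is refuted by an example already in the paper.

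Concretely, the Baumslag--Gersten complex $X$ for $\langle a,b\mid b^aa(b^a)^{-1}a^{-2}\rangle$ satisfies $\rho_+(X)\leq 0$ by Theorem \ref{thm: Irreducible curvature of one-relator groups}, hence $\sigma_+(X)\leq 0$ by Lemma \ref{lem: Trivial inequalities}, yet its Dehn function grows like a tower of exponentials of height roughly $\log n$, which eventually dominates \emph{every} fixed iterated exponential. Your second step --- running Lemma \ref{lem: Combinatorial Gauss--Bonnet} on a reduced disc diagram with a fixed angle structure --- distributes a bounded curvature defect per face and per boundary edge, and such arguments yield an isoperimetric bound of at most exponential type for the fixed complex carrying the certificate. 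Since no exponential function bounds the Baumslag--Gersten Dehn function, no such certificate can exist on that $X$: this simultaneously kills the duality step (the dual optimum is not an angle structure there) and shows your concession that the bound ``must be merely exponential'' is already too optimistic --- within the class $\sigma_+(X)\leq 0$ the Dehn function is not bounded by any fixed elementary function, so any proof of the conjecture must produce a recursive bound by a mechanism that depends on $X$ in a much less uniform way than a Gauss--Bonnet count. Finally, note that your appeal to Conjecture \ref{conj: Non-positive surface curvature implies aspherical} makes the argument conditional on another open conjecture even where it does work; the doubling observation you make (doubled disc diagrams give inessential spheres) is exactly the gap between $\Surf(X)$ and van Kampen diagrams that the paper's discussion identifies, and nothing in the proposal bridges it.
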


The last two conjectures suggest that non-positive surface curvature is similar to other notions of non-positive curvature commonly used in geometric group theory. However, as noted after Theorem \ref{thm: Irreducible curvature of one-relator groups}, there are 2-complexes of non-positive surface curvature with very large isoperimetric inequality, and in particular $\sigma_+(X)$ does not imply any of the standard notions of non-positive curvature already used in geometric group theory.

However, it is likely that negative surface curvature is closely related to the dominant notion of negative curvature for groups, namely word-hyperbolicity.

\begin{conjecture}[Negative surface curvature implies hyperbolic]\label{conj: Negative surface curvature implies hyperbolic}
If $\sigma_+(X)<0$ then $\pi_1(X)$ is word-hyperbolic.
\end{conjecture}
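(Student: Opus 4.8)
The plan is to derive hyperbolicity from a linear isoperimetric inequality for the finite presentation associated to $X$, obtaining that inequality by a combinatorial Gauss--Bonnet argument. The crucial new ingredient, and the place where the hypothesis $\sigma_+(X)<0$ must enter, is a converse to Proposition \ref{prop: Face curvature bound}: namely, that $\sigma_+(X)<0$ forces the existence of an angle structure on $X$ whose links all have systole at least $2\pi$ and each of whose faces has \emph{strictly} negative curvature $c(f)<0$ in the sense of Definition \ref{def: Curvature of a 2-cell}, with a uniform gap $c(f)\leq-\delta$.

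First I would realise this converse as an instance of linear-programming duality. The proof of the rationality theorem (Theorem \ref{thm: Rationality theorem}) in \cite{wilton_rationality_2022} expresses $\sigma_+(X)$ as the optimum of a rational linear program whose feasible region parametrises branched surfaces mapping essentially to $X$, the constraints being exactly the local gluing conditions that force the link of every vertex of the domain to be a disjoint union of circles. Proposition \ref{prop: Face curvature bound} shows that any angle structure on the links of $X$ with systole at least $2\pi$ is a \emph{dual-feasible} point, certifying $2\pi\sigma_+(X)\leq\max_f c(f)$. The strategy is to identify the full dual program with the space of such angle structures --- this is the role played by the introduction of \emph{branched} surfaces, which enrich the primal enough to preclude a duality gap, exactly as rational weights do in Calegari's treatment of scl \cite{calegari_stable_2009} --- and then to invoke strong duality: since the primal optimum $\sigma_+(X)$ is strictly negative, some dual-optimal angle structure achieves $\max_f c(f)<0$.

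With such an angle structure in hand, I would run the standard combinatorial Gauss--Bonnet argument on reduced van Kampen diagrams. Let $w$ be a null-homotopic word and $D\to X$ a reduced disc diagram for $w$, with the angle structure pulled back. Because $D$ is reduced, the link of each interior vertex immerses into a link of $X$ and hence has length at least $2\pi$, so its combinatorial curvature is non-positive. Applying the disc-with-boundary version of Lemma \ref{lem: Combinatorial Gauss--Bonnet} gives
\[
2\pi\;\leq\;\sum_{f\in F_D}c(f)\;+\;B(\partial D)\,,
\]
where the boundary correction $B(\partial D)$ is bounded by a constant times $|\partial D|=|w|$. Since every face satisfies $c(f)\leq-\delta$, this rearranges to $\delta\cdot\Area(D)\leq C|w|$, a linear isoperimetric inequality. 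A finite presentation whose Dehn function is bounded above by a linear function defines a word-hyperbolic group \cite{bridson_metric_1999}, so $\pi_1(X)$ is hyperbolic.

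The main obstacle is the middle step: establishing strong duality with no gap between the surface program computing $\sigma_+(X)$ and the angle-structure program. Two points need care. First, the primal extremises over \emph{closed} surfaces, whereas the Gauss--Bonnet estimate is applied to \emph{discs}; this is harmless because an angle structure is a purely local datum and the combinatorial estimate is valid on any surface with or without boundary, but it means the dual certificate must genuinely be an angle structure on $X$ itself rather than a weaker object tied only to closed surfaces. Second, the dual optimum might be attained only on a finite branched cover of $X$, in which case one would produce the angle structure upstairs and then descend, using that $\sigma_+$ is invariant under finite branched covers (Proposition \ref{prop: Invariance under branched coverings}); alternatively, one could argue directly that the branched nature of the primal already makes the relevant polytope rich enough for the dual optimum to be realised by angles on $X$. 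Verifying the absence of a duality gap --- equivalently, that negative surface curvature always admits an angle-structure certificate --- is precisely what distinguishes this conjecture from the easy direction recorded in Proposition \ref{prop: Face curvature bound}.
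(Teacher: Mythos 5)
You are attempting to prove a statement that the paper explicitly leaves open: Conjecture \ref{conj: Negative surface curvature implies hyperbolic} has no proof in the paper, so the only question is whether your proposal closes it, and it does not. The second half of your argument is fine and standard: it is the Gersten--Wise weight-test machinery alluded to in Remark \ref{rem: Wise's sectional curvature}. In a reduced diagram $D\to X$ a backtrack in the link of an interior vertex would produce a cancellable pair of faces, so interior vertex links are immersed circuits in links of $X$; if the angles are positive and every cycle in every link has length at least $2\pi$, each interior vertex contributes at least $2\pi$, and $c(f)\le-\delta$ then yields $\delta\,\Area(D)\le C\,|\partial D|$ and hence hyperbolicity via \cite{bridson_metric_1999}. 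The genuine gap is the first half, the ``strong duality'' step, which is asserted rather than proved and which is essentially the entire content of the conjecture. Proposition \ref{prop: Face curvature bound} is only weak duality: it exhibits angle structures with systole at least $2\pi$ as feasible certificates for upper bounds on $\sigma_+$. The claim that the dual of the linear program computing $\sigma_+(X)$ in \cite{wilton_rationality_2022} \emph{is} the space of such angle structures is established nowhere, and the analogy with \cite{calegari_stable_2009} does not supply it: Calegari's rationality argument produces extremal surfaces on the primal side, and the dual theory for scl (Bavard duality, with homogeneous quasimorphisms as certificates) is a separate, non-polyhedral theorem whose dual objects look nothing like combinatorial angle structures. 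So your proposal replaces the conjecture by an unproven --- and apparently stronger --- completeness statement for the angle-structure test.

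Even granting that strong LP duality holds with no gap, the optimal dual multipliers are a priori sign-unconstrained real weights on corners, not a path metric on links, and your Gauss--Bonnet step needs strictly more than dual feasibility. First, the boundary correction $B(\partial D)$ is only $O(|\partial D|)$ if the angles are uniformly bounded below (e.g.\ nonnegative); nothing in LP duality forces this. Second, interior vertex links of reduced diagrams are immersed circuits that may wind many times around a link of $X$; when some angles are negative, ``total angle at least $2\pi$ on every immersed circuit'' is \emph{not} implied by the finitely many constraints on simple cycles that a polyhedral dual would hand you, whereas with positive angles it is --- so positivity is exactly where the certificate may fail to be an angle structure in the sense of Definition \ref{def: Curvature of a 2-cell}. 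There is also a mismatch of local data to check: the primal program parametrises closed branched surfaces, i.e.\ immersed \emph{circuits} in links, while the diagram argument also encounters boundary vertices whose links are immersed paths; this is harmless precisely when the certificate is a genuine positive angle structure, which again is the unproven point. Your proposal does correctly isolate where the difficulty lives --- a converse to Proposition \ref{prop: Face curvature bound} --- and the descent-to-a-branched-cover remark via Proposition \ref{prop: Invariance under branched coverings} is a reasonable technical manoeuvre, but as it stands the argument begs the question rather than answering it.
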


This conjecture posits a common generalisation of Greendlinger's lemma for small-cancellation groups and the B.B.\ Newman spelling theorem for one-relator groups with torsion. It should also shed light on the cases of non-positively curved Euclidean 2-complexes and one-relator presentation complexes, where there is not yet a complete understanding of which groups are word-hyperbolic. However, Example \ref{eg: Brady--Crisp} provides an example of a non-positively curved, Euclidean, one-relator 2-complex, for which hyperbolicity of the fundamental group is not a consequence of Conjecture \ref{conj: Negative surface curvature implies hyperbolic}.

The conjectures to this point all concern upper curvature bounds. However, it is possible that lower bounds may also provide useful information.

\begin{conjecture}[Positive surface curvature implies no surface subgroups]\label{conj: Surface subgroups and positive curvature}
If $\sigma_-(X)>0$ then no subgroup of $\pi_1(X)$ is isomorphic to the fundamental group of a closed surface $S$ with $\chi(S)\leq 0$.
\end{conjecture}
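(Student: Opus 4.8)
The plan is to argue by contraposition. Assuming $\pi_1(X)$ contains a subgroup $H\cong\pi_1(S)$ for some closed surface $S$ with $\chi(S)\leq 0$, I would construct an essential map $S'\to X$ from a closed surface $S'$ of non-positive Euler characteristic. Since such an $S'$ is standard with $\chi(S')\leq 0$, it satisfies $\kappa(S')=\chi(S')/\#F_{S'}\leq 0$, and as $S'\in\Surf(X)$ this forces $\sigma_-(X)\leq\kappa(S')\leq 0$, contradicting the hypothesis $\sigma_-(X)>0$. If the construction only yields an essential map from a finite, irreducible, irrigid complex $\Sigma$ with $\kappa(\Sigma)\leq 0$, one reaches the same conclusion either by replacing $\Sigma$ with a branched surface via Lemma~\ref{lem: Surface to surface type}, or directly through the equality $\rho_-(X)=\sigma_-(X)$ of Theorem~\ref{thm: Curvature inequalities}, which gives $\sigma_-(X)=\rho_-(X)\leq\kappa(\Sigma)\leq 0$.

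First I would realise $H$ combinatorially. Since $S$ is aspherical, the inclusion $H\into\pi_1(X)$ is induced by a continuous map $f\colon S\to X$; the single surface relator $R$ has $f_*(R)$ trivial in $\pi_1(X)$, so $f_*(R)$ lies in the normal closure of the attaching words and is filled by a van Kampen diagram $D$ over $X$. Replacing the $2$-cell of $S$ by $D$ yields a combinatorial map $\hat f\colon\hat S\to X$ with $\pi_1(\hat S)=H$; when $D$ can be taken to be an honest disc the source $\hat S$ is homeomorphic to $S$, so $\hat f_*$ is injective on $\pi_1(\hat S)=H$ and $\chi(\hat S)=\chi(S)\leq 0$. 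Singular diagrams would be handled by the minimisation below, since pinching and discarding sphere summands only lowers $\chi$.

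The heart of the matter is to upgrade the $\pi_1$-injective map $\hat f$ to an \emph{essential} map without letting the average curvature rise above $0$. By Lemma~\ref{lem: Essential maps in group theory}, $\hat f$ fails to be essential exactly when $\pi_1(\hat S_{(1)})$ does not inject into $F_X$, that is, when some loop $\gamma\subset\hat S_{(1)}$ is non-trivial in $\pi_1(\hat S_{(1)})$ but dies in $\pi_1(X)$. Such a $\gamma$ must be trivial in the surface group $\pi_1(\hat S)=H$, for otherwise $\pi_1$-injectivity of $\hat f$ would be contradicted; hence $\gamma$ bounds in $\hat S$ and can in principle be removed by a tightening (disc-swap) move that keeps the source a surface of the same Euler characteristic. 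I would therefore minimise the area over all $\pi_1$-injective combinatorial representatives of $H$ by surfaces of Euler characteristic $\leq 0$, and argue that a minimiser is essential: no folding loop $\gamma$ can survive, since tightening along it strictly decreases the area. A minimal, essential representative $S'$ is then a standard surface with $\chi(S')\leq 0$, which is what is required.

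The main obstacle is precisely this last tightening step, a combinatorial incompressibility statement. The naive alternative --- simply folding $\hat f$ via Lemma~\ref{lem: Folding 2-complexes} --- does \emph{not} work, because folding typically raises the Euler characteristic: Hopf's theorem forces the map $H_2(\bar Y)\to H_2(H)$ to be surjective, so $b_2(\bar Y)\geq 1$ and hence $\chi(\bar Y)\geq\chi(S)$, with the inequality often strict and the curvature pushed back into the positive range. One therefore cannot afford to fold away the negative Euler characteristic; the content of the conjecture is that the negative curvature carried by the surface subgroup persists to an essential representative. Making the disc-swap moves precise --- ensuring that they terminate, that they preserve both the surface type (or at least irrigidity) and the Euler characteristic, and that they interact correctly with branching and with non-orientable $S$ --- is where the real work lies. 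This is the combinatorial analogue of passing from a $\pi_1$-injective surface to an incompressible one, and I expect it to require the full strength of the essential-surface technology of \cite{wilton_essential_2018}.
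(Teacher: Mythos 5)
You should first be aware that this statement is an open conjecture in the paper: no proof is given, and the author explicitly flags the difficulty immediately afterwards (``The challenge of Conjecture \ref{conj: Surface subgroups and positive curvature} is to prove that \emph{any} finite 2-complex $X$ with $\pi_1(X)$ a surface group has $\rho_-(X)\leq 0$''). So there is no argument in the paper to compare yours against; your proposal has to stand on its own as a proof, and it does not. Your reduction steps are fine as far as they go --- the contrapositive set-up, realising $H$ by a combinatorial map $\hat f\colon \hat S\to X$ via a van Kampen diagram, the observation that $\kappa(S')\leq 0$ for a standard surface of non-positive Euler characteristic, and the correct diagnosis that naive folding fails because it can raise $\chi$ (your Hopf-theorem computation is sound). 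But everything is then funnelled into the ``tightening/disc-swap'' minimisation, and that step is not an argument: it is a restatement of the conjecture. You assume that at an area-minimal $\pi_1$-injective surface representative, any failure of essentiality can be repaired by a move that strictly decreases area while preserving both surface type and Euler characteristic, and you give no such move, no invariant showing it terminates, and no reason the repaired object is still a surface. A loop $\gamma$ that is trivial in $\pi_1(\hat S)$ bounds only a \emph{singular} disc of faces in $\hat S$; cutting it out and regluing is exactly the combinatorial analogue of the loop theorem, for which no analogue is known for maps to 2-complexes --- if it existed, it would essentially be a proof of the conjecture, which is why you correctly sense that ``the full strength'' of \cite{wilton_essential_2018} (and more) would be needed.

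There are also two smaller but genuine inaccuracies in your characterisation of essentiality. First, by Lemma \ref{lem: Essential maps in group theory}, essentiality requires injectivity of the map of group \emph{pairs} $(F_{\hat S},\curlyA_{\hat S})\to(F_X,\curlyA_X)$: injectivity of the free-group map $F_{\hat S}\to F_X$ is necessary but not sufficient, and your proposal never addresses the injectivity of the map on $\curlyA$ (failure there means, e.g., two faces of $\hat S$ becoming parallel over $X$, and it is not clear this can be repaired while keeping a surface of the same $\chi$ --- this is also where branching and the area condition enter). Second, the relevant free group is $F_X=\pi_1(X_{(1)})$, not $\pi_1(X)$: a folding obstruction is a loop of $\hat S_{(1)}$ dying in the free group $F_X$, which is a stronger condition than dying in $\pi_1(X)$ (your deduction that such a $\gamma$ is trivial in $H$ still goes through, since triviality in $F_X$ implies triviality in $\pi_1(X)$, but the conflation obscures which loops your moves actually need to kill). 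In summary: your proposal is a reasonable programme that correctly identifies where the difficulty lies, but the central step --- the area-decreasing, $\chi$-preserving, surface-preserving tightening --- is precisely the open content of the conjecture, so the proof has a genuine gap there.
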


The challenge of Conjecture \ref{conj: Surface subgroups and positive curvature} is to prove that \emph{any} finite 2-complex $X$ with $\pi_1(X)$ a surface group has $\rho_-(X)\leq 0$.

In \S\ref{sec: Irrigid complexes}, we saw that Baumslag--Solitar groups are closely related to surfaces, because both can be thought of as the fundamental groups of irrigid complexes. An even more intriguing possibility is that $\sigma_-(X)>0$ also implies that $\pi_1(X)$ has no Baumslag--Solitar subgroups. There are very few known examples of infinite, finitely presented groups without surface or Baumslag--Solitar subgroups.

\bibliographystyle{plain}

\Addresses

\end{document}